\documentclass[12pt,reqno]{amsart}
\usepackage{amsfonts,amsmath,amssymb,amsthm}
\usepackage{mathrsfs,array,enumitem,a4wide}
\usepackage{tikz}
\usepackage[english]{babel}
\usepackage[colorlinks=true,linkcolor=blue,citecolor=magenta,
  urlcolor=cyan,backref=page]{hyperref}
\hypersetup{
  bookmarksdepth=3,
  pdftitle={Quantum super Schur--Weyl duality and character formulas for mirabolic Hecke algebras},
  pdfauthor={Naihuan Jing and Ning Liu},
  pdfkeywords={mirabolic Hecke algebras, quantum super Schur--Weyl duality, character formulas}
}
\allowdisplaybreaks[4]
\theoremstyle{plain}
\newtheorem{theorem}{Theorem}[section]
\newtheorem{mainthm}{Theorem}
\newtheorem{proposition}[theorem]{Proposition}
\newtheorem{corollary}[theorem]{Corollary}
\newtheorem{lemma}[theorem]{Lemma}
\newtheorem{remark}[theorem]{Remark}
\theoremstyle{definition}
\newtheorem{definition}[theorem]{Definition}
\newtheorem{example}[theorem]{Example}
\theoremstyle{plain}

\numberwithin{equation}{section}
\newcommand{\wt}{\mathrm{wt}}

\begin{document}

\title[Quantum super Schur--Weyl duality]{Quantum super Schur--Weyl duality and character formulas for mirabolic Hecke algebras}
\author{Naihuan Jing}
\address{Department of Mathematics, North Carolina State University, Raleigh, NC 27695, USA}
\email{jing@ncsu.edu}
\author{Ning Liu}
\address{School of Mathematics, Hefei University of Technology, Hefei, Anhui 230009, China}
\email{mathliu123@outlook.com}

\subjclass[2020]{Primary 20C08; Secondary 05E05, 05E10, 17B37, 20G43}

\keywords{mirabolic Hecke algebras, quantum super Schur--Weyl duality,
quantum Schur superalgebras, Frobenius formula, Roichman rule,
\(q\)-rook monoid algebra, tensor-space annihilators}

\begin{abstract}
We establish a quantum super Schur--Weyl duality for the mirabolic Hecke algebra \(H_n^{\mathrm{mir}}(q)\) with \(q\) indeterminate and determine the corresponding bimodule decomposition.
The centralizer of the mirabolic action is shown as a direct sum of homogeneous
quantum Schur superalgebras.  We describe the tensor-space annihilator and
the resulting faithful quotient.  When nonzero, the annihilator is generated
by an explicit spectral idempotent attached to the smallest rectangle
excluded by the hook condition.  From the quantum super Schur--Weyl duality,
we derive a super Frobenius formula for mirabolic Hecke algebras.  This formula
then yields a Murnaghan--Nakayama rule and Regev-type formulas for the
irreducible characters.  We also construct a super mirabolic RSK bijection from words in a super alphabet with an additional even letter to hook semistandard insertion tableaux together with recording pairs that distinguish the positions of the additional letter. This correspondence yields a Roichman formula for irreducible characters of \(H_n^{\mathrm{mir}}(q)\).  Finally, we establish a parameter-inversion isomorphism between \(H_n^{\mathrm{mir}}(q^{-1})\) and the \(q\)-rook monoid algebra \(R_n(q)\) and use it to transport the duality, annihilators, and character formulas in both directions.
The transported tensor decomposition provides a representation-theoretic interpretation of
hook and two-row character sums for \(R_n(q)\).
\end{abstract}

\maketitle
\tableofcontents

\section{Introduction}
\label{sec:introduction}

\subsection{Background and Motivation}
\label{subsec:introduction-background}

Hecke algebra characters admit symmetric-function and tableau descriptions,
as in Ram's Frobenius formula \cite{Ram91} and Roichman's
character rule \cite{Roi97}.  For the \(q\)-rook monoid algebra,
Halverson \cite{Hal04} established tensor-space duality, and
Dieng--Halverson--Poladian \cite{DHP03} obtained Frobenius,
Murnaghan--Nakayama, and Roichman formulas.  In the super setting,
Berele--Regev's hook tableaux and hook Schur functions describe covariant
tensor representations \cite{BR87}; Moon and Mitsuhashi established
quantum super Schur--Weyl duality \cite{M03,Mit06}.
Mitsuhashi proved the type \(A\) super Frobenius formula \cite{Mit10},
whereas Zhao obtained quantum Regev identities \cite{Z19} and later
gave a superinsertion proof of Mitsuhashi's formula \cite[Remark~25]{Z24}.

For the mirabolic Hecke algebra, Wan \cite{Wan26} established ordinary
Schur--Weyl duality and Frobenius and Murnaghan--Nakayama formulas.
She also introduced the associated mirabolic \(q\)-Schur algebra
\cite[Remark~3.3]{Wan26}.  These results provide the starting point
for a quantum super version of mirabolic tensor and character theory.

We develop this theory on the tensor powers of a natural quantum
superalgebra module with an additional even trivial summand.
Our results describe the commuting images, their bimodule decomposition,
and the tensor-space annihilator.  Weighted traces and signed insertion
then yield character formulas, while parameter inversion relates these
results to the \(q\)-rook algebra.

\subsection{Main Results and Methods}
\label{subsec:introduction-main-results}

Let \(v\) be an indeterminate, put \(q=v^2\), and set
\(\mathbb F=\mathbb C(q)\subset\mathbb K=\mathbb C(v)\).
Fix \(n\ge1\) and \(m,s\ge0\) with \(m+s\ge1\).
Hecke algebras and their irreducible characters are defined over
\(\mathbb F\); a subscript \(\mathbb K\) denotes scalar extension.
All traces are ordinary traces, and centralizers are taken in ordinary
associative endomorphism algebras.

\subsubsection{Schur--Weyl duality and Schur superalgebras}
\label{subsubsec:intro-schur-weyl}

Let \(V\) be the natural \(U_{v^{-1}}(\mathfrak{gl}(m|s))\)-module
with even and odd dimensions \(m,s\), and put
\(\widetilde V=\mathbb Kv_\star\oplus V\), with
\(\mathbb Kv_\star\) even and trivial.
Let \(H_n^{\mathrm{mir}}(q)\) be the mirabolic Hecke algebra.
On \(\widetilde V^{\otimes n}\), use its action from
Proposition~\ref{prop:super-mirabolic-representation} and the quantum
action of \eqref{eq:conjugated-quantum-actions}.
Put \(H(m,s)=\{\lambda:\lambda_{m+1}\le s\}\), padding partitions
by zeros, and write \(\lambda'\) for the conjugate partition.
For \(\lambda\in H(m,s)\), let \(L_{m|s}(\lambda)\) be the
covariant simple quantum module labelled by \(\lambda\).  For \(\lambda\vdash a\), \(0\le a\le n\),
let \(N^{(n)}_{(\lambda,a)}\) be the rank-\(a\) mirabolic simple module
of Subsection~\ref{subsec:irreducible-characters}; for \(a\ge1\), its
associated Specht module has shape \(\lambda'\)
(Corollary~\ref{cor:mirabolic-specht-label}).

\begin{mainthm}[Schur--Weyl duality]
\label{thm:intro-super-mirabolic-schur-weyl}
The images of \(H^{\mathrm{mir}}_{n,\mathbb K}(q)\) and
\(U_{v^{-1}}(\mathfrak{gl}(m|s))\) on \(\widetilde V^{\otimes n}\)
are mutual full centralizers, with bimodule decomposition
\[
 \widetilde V^{\otimes n}\cong
 \bigoplus_{a=0}^n
 \bigoplus_{\substack{\lambda\vdash a\\\lambda\in H(m,s)}}
 L_{m|s}(\lambda)\otimes_{\mathbb K}
 N^{(n)}_{(\lambda,a),\mathbb K}.
\]
\end{mainthm}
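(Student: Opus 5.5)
The plan is to reduce everything to the known quantum super Schur--Weyl duality of Moon and Mitsuhashi \cite{M03,Mit06} for the Iwahori--Hecke algebras \(H_a(q)\), \(0\le a\le n\), acting on \(V^{\otimes a}\). The starting point is the decomposition of the tensor space by the position of the extra letter:
\[
\widetilde V^{\otimes n}=\bigoplus_{a=0}^n\ \bigoplus_{|S|=a} V^{\otimes S}\otimes v_\star^{\otimes([n]\setminus S)}.
\]
Since \(v_\star\) spans a trivial even summand, \(U_{v^{-1}}(\mathfrak{gl}(m|s))\) preserves each summand. It acts on the summand indexed by \(S\) as on \(V^{\otimes a}\), through the conjugated coproduct action of \eqref{eq:conjugated-quantum-actions}. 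Put \(W_a=\bigoplus_{|S|=a}V^{\otimes S}\otimes v_\star^{\otimes([n]\setminus S)}\).

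First, I will check from the formulas in Proposition~\ref{prop:super-mirabolic-representation} that each \(W_a\) is \(H^{\mathrm{mir}}_n(q)\)-stable. The generators \(T_i\) and the extra mirabolic generator only move or absorb \(v_\star\) without changing the number of \(V\)-letters modulo lower filtration. If the extra generator instead lowers \(a\), I will use the filtration \(W_{\le a}\) and argue that it splits by semisimplicity over \(\mathbb K\). Second, I will identify \(W_a\) as an \(H^{\mathrm{mir}}_n\)-module with the induced or inflated module \(\mathrm{Ind}\bigl(V^{\otimes a}\bigr)\) from the parabolic-type subalgebra \(H_a(q)\otimes(\text{rank-}0\text{ part})\). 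Concretely, I will take the standard vector \(V^{\otimes a}\otimes v_\star^{\otimes(n-a)}\) as a generator and check that it generates \(W_a\) freely over the coset representatives. This is the same construction the paper uses in Subsection~\ref{subsec:irreducible-characters} to build \(N^{(n)}_{(\mu,a)}\) from a Specht module of \(H_a\). Third, I will apply the Moon--Mitsuhashi decomposition \(V^{\otimes a}\cong\bigoplus_{\lambda\vdash a,\lambda\in H(m,s)}L_{m|s}(\lambda)\otimes S^{\lambda'}\), where the conjugate appears because of the \(v^{-1}\) convention. The functor \(S^{\lambda'}\mapsto N^{(n)}_{(\lambda,a)}\), justified by Corollary~\ref{cor:mirabolic-specht-label}, then gives
\[
W_a\cong\bigoplus_{\lambda} L_{m|s}(\lambda)\otimes N^{(n)}_{(\lambda,a),\mathbb K}.
\]

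For the mutual centralizer statement, both algebras act semisimply over \(\mathbb K\) in the generic case. The \(N^{(n)}_{(\lambda,a)}\) are pairwise non-isomorphic simples as \((\lambda,a)\) varies, and the \(L_{m|s}(\lambda)\) are pairwise non-isomorphic. However, the same \(L_{m|s}(\lambda)\) cannot occur for two different \(a\), because \(|\lambda|=a\) is fixed, so the isotypic components are indexed by pairs with distinct labels on both sides. The decomposition is therefore multiplicity-free in the double sense. By the double centralizer theorem, the images are mutual centralizers, provided the image of \(U\) is the full centralizer of the image of \(H^{\mathrm{mir}}\). That in turn follows from the image of \(U\) being \(\prod_\lambda\mathrm{End}(L(\lambda))\), which holds by Jacobson density since the \(L(\lambda)\) are distinct simple modules.

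The main obstacle is the second step: proving that \(W_a\), or the associated graded piece, is exactly the module that defines \(N^{(n)}_{(\lambda,a)}\). This requires a careful computation of how the mirabolic generator acts on tensors mixing \(v_\star\) with odd basis vectors, including the sign and \(q\)-power conventions. I will first try a direct basis-level comparison on words with a single \(v_\star\), and then extend to general words by the braid relations.
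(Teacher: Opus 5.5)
Your architecture is the paper's: split $\widetilde V^{\otimes n}=\bigoplus_a M_a$, transport the type $A$ duality to each rank, and finish with a double-centralizer argument. Jacobson density over the pairwise nonisomorphic, absolutely simple $L_{m|s}(\lambda)$ (distinct $\lambda$ across all ranks) is a valid substitute for the paper's central idempotents $e_a$ built from $K_{\mathrm{tot}}$. Stability of $M_a$ needs no filtration, because $\mathcal T_i$ only permutes letters and every $\mathcal P_j$ is diagonal in the word basis.

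The gap is the step you flag yourself: identifying the mirabolic multiplicity space $E_{a,\lambda}=\mathbb K^{\binom na}\otimes S^{\lambda'}_{a,\mathrm{std},\mathbb K}$ of $L_{m|s}(\lambda)$ with $N^{(n)}_{(\lambda,a),\mathbb K}$. The paper does not build $N^{(n)}_{(\lambda,a)}$ by inducing a Specht module. It defines it as the pullback along $\Theta_{q^{-1}}$ of Halverson's seminormal $q$-rook module $M^{R,n}_\lambda$ at parameter $q^{-1}$. Corollary~\ref{cor:mirabolic-specht-label} only goes in the restriction direction, $P_{n-a}N^{(n)}_{(\lambda,a)}\cong S^{\lambda'}_{a,\mathrm{std}}$, so it gives no functor $S^{\lambda'}\mapsto N^{(n)}_{(\lambda,a)}$. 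Your ``parabolic-type subalgebra'' and the freeness over coset representatives are neither defined nor checked. A basis-level match with the seminormal rook basis would be a substantial computation that your sketch does not supply. Also, the distinguished copy is $v_\star^{\otimes(n-a)}\otimes V^{\otimes a}$, not $V^{\otimes a}\otimes v_\star^{\otimes(n-a)}$, since $\mathcal P_j$ requires stars in the \emph{first} $j$ positions.

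To close the gap, apply $P_{n-a}$ and $P_{n-a+1}$ to $E_{a,\lambda}$ directly.
\begin{enumerate}
\item Only the copy indexed by $A_0=\{n-a+1,\ldots,n\}$ survives $\mathcal P_{n-a}$, and there $\mathcal T_{n-a+i}$ acts as $\Phi^A_{m|s,a}(T_i)$. Hence $P_{n-a}E_{a,\lambda}\cong S^{\lambda'}_{a,\mathrm{std},\mathbb K}$ is simple.
\item $P_{n-a+1}E_{a,\lambda}=0$, because words in $M_a$ have only $n-a$ stars.
\item Write $E_{a,\lambda}=Q_1\oplus\cdots\oplus Q_k$ by split semisimplicity (Propositions~\ref{prop:parameter-inversion-isomorphism} and~\ref{prop:q-rook-split-decomposition}). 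By the first two points, exactly one summand, say $Q_1$, has $P_{n-a}Q_1\ne0$.
\item Write $Q_1\cong N^{(n)}_{(\mu,r),\mathbb K}$. The tableau description of $P_j$ gives $P_{n-a}N\ne0\iff r\le a$ and $P_{n-a+1}N=0\iff r\ge a$, so $r=a$. Only at this point does Corollary~\ref{cor:mirabolic-specht-label} force $\mu=\lambda$.
\item Finally $E_{a,\lambda}=Q_1$. The paper shows this by proving that $P_{n-a}E_{a,\lambda}$ generates $E_{a,\lambda}$, using $\mathcal T_i^{-1}(v_\star\otimes x)=-q^{-1}x\otimes v_\star$. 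Alternatively, the count $\dim E_{a,\lambda}=\binom na f^\lambda=\dim N^{(n)}_{(\lambda,a)}$ suffices.
\end{enumerate}
Without this step or an equivalent one, your bimodule decomposition is not established. Consequently the equality of the mirabolic image with $\operatorname{End}_U(\widetilde V^{\otimes n})$ is not established either.
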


Theorem~\ref{thm:intro-super-mirabolic-schur-weyl} extends Wan's
ordinary mirabolic duality \cite[Theorem~3.2]{Wan26} to the super setting.
Its proof groups tensors by the number of nonstar factors and applies
quantum super Schur--Weyl duality \cite{M03,Mit06} within each rank
(Theorem~\ref{thm:super-mirabolic-schur-weyl}).

Let \(\mathscr S_q(m|s,a)\) and \(\mathscr S_q(m|s;\le n)\)
be the quantum image algebras on \(V^{\otimes a}\) and
\(\widetilde V^{\otimes n}\), respectively, with
\(\mathscr S_q(m|s,0)=\mathbb K\).
Proposition~\ref{prop:bounded-degree-schur-decomposition} identifies the
bounded-degree quantum Schur superalgebra as
\[
 \mathscr S_q(m|s;\le n)
 \cong\bigoplus_{a=0}^n\mathscr S_q(m|s,a).
\]
The summands are the homogeneous quantum Schur superalgebras of
Du--Rui \cite[Definition~5.6 and Proposition~8.3]{DR11}, as explained in
Subsection~\ref{subsec:bounded-degree-quantum-schur-superalgebra}.
This extends Wan's mirabolic \(q\)-Schur algebra
\cite[Remark~3.3]{Wan26} and parallels the extended Schur algebra
for the classical rook monoid \cite[Section~3]{AM24}.

\subsubsection{Tensor-space annihilators and faithful quotients}
\label{subsubsec:intro-annihilators}

Let \(W_{\mathbb F,n}\) be the \(\mathbb F\)-span of the ordered
tensor basis of \(\widetilde V^{\otimes n}\), with mirabolic action
\(\Phi_{m|s,n}:H_n^{\mathrm{mir}}(q)\to
\operatorname{End}_{\mathbb F}(W_{\mathbb F,n})\).
Set \(\rho=((s+1)^{m+1})\) and \(r_0=|\rho|=(m+1)(s+1)\);
thus \(\rho\) is the smallest rectangle outside \(H(m,s)\).
For a standard tableau \(T\) of shape \(\rho\), let
\(g_T\in H_{r_0}^{\mathrm{mir}}(q)\) be the spectral idempotent
in \eqref{eq:mirabolic-rectangle-idempotent} selecting its seminormal
basis vector.  For \(n\ge r_0\), let
\(\jmath_{r_0,n}:H_{r_0}^{\mathrm{mir}}(q)\hookrightarrow H_n^{\mathrm{mir}}(q)\)
be the embedding preserving generator indices.

\begin{mainthm}[Mirabolic tensor-space annihilator]
\label{thm:intro-mirabolic-annihilator}
The representation \(\Phi_{m|s,n}\) is faithful for \(n<r_0\).
For \(n\ge r_0\) and every standard tableau \(T\) of shape \(\rho\),
\[
 \ker\Phi_{m|s,n}
 =H_n^{\mathrm{mir}}(q)\,\jmath_{r_0,n}(g_T)\,H_n^{\mathrm{mir}}(q).
\]
\end{mainthm}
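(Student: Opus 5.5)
The plan is to reduce everything to the semisimple structure of \(H_n^{\mathrm{mir}}(q)\) over \(\mathbb F\) together with the bimodule decomposition of Theorem~\ref{thm:intro-super-mirabolic-schur-weyl}. Since \(q\) is indeterminate, \(H_n^{\mathrm{mir}}(q)\) is split semisimple with simple modules \(N^{(n)}_{(\lambda,a)}\), \(\lambda\vdash a\), \(0\le a\le n\). The Wedderburn decomposition gives \(H_n^{\mathrm{mir}}(q)\cong\prod_{(\lambda,a)}\operatorname{End}(N^{(n)}_{(\lambda,a)})\). The kernel of any representation is then the sum of the simple blocks whose label does not occur in it. Extending scalars to \(\mathbb K\) does not change the kernel, because \(W_{\mathbb F,n}\otimes\mathbb K=\widetilde V^{\otimes n}\). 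So by Theorem~\ref{thm:intro-super-mirabolic-schur-weyl},
\[
\ker\Phi_{m|s,n}=\bigoplus_{a=0}^n\bigoplus_{\lambda\vdash a,\ \lambda\notin H(m,s)} B_{(\lambda,a)},
\]
where \(B_{(\lambda,a)}\) is the two-sided block of \(N^{(n)}_{(\lambda,a)}\).

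Faithfulness for \(n<r_0\) follows at once. Any \(\lambda\notin H(m,s)\) has \(\lambda_{m+1}\ge s+1\), so \(\lambda\supseteq\rho\) and \(|\lambda|\ge r_0>n\). Hence no label with \(a\le n\) is excluded.

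For \(n\ge r_0\) I would prove the two inclusions separately.

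First, \(g_T\) lies in the kernel. By construction \(g_T\) is a primitive idempotent in \(H_{r_0}^{\mathrm{mir}}(q)\) lying in the block \(B_{(\rho,r_0)}\), and \(\rho\notin H(m,s)\). The step to check is compatibility under \(\jmath_{r_0,n}\). The action of \(H_{r_0}^{\mathrm{mir}}\) on the first \(r_0\) tensor factors of \(\widetilde V^{\otimes n}\) should be the restriction along \(\jmath_{r_0,n}\), which I expect from the explicit formulas of Proposition~\ref{prop:super-mirabolic-representation}. Then \(\widetilde V^{\otimes n}=\widetilde V^{\otimes r_0}\otimes\widetilde V^{\otimes(n-r_0)}\), and \(g_T\) acts through its action on \(\widetilde V^{\otimes r_0}\), where it is zero by the case \(n=r_0\) of the block description above. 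Since the kernel is a two-sided ideal, the whole ideal generated by \(\jmath_{r_0,n}(g_T)\) lies in it.

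Second, the reverse inclusion. Semisimplicity means the ideal generated by \(\jmath(g_T)\) is the sum of the blocks \(B_{(\lambda,a)}\) with \(\jmath(g_T)N^{(n)}_{(\lambda,a)}\neq0\). So I must show that every \((\lambda,a)\) with \(\lambda\notin H(m,s)\) satisfies \(\operatorname{Res}^{H_n^{\mathrm{mir}}}_{H_{r_0}^{\mathrm{mir}}}N^{(n)}_{(\lambda,a)}\supseteq N^{(r_0)}_{(\rho,r_0)}\). For this I need the branching rule for the mirabolic tower \(H_{n-1}^{\mathrm{mir}}\subset H_n^{\mathrm{mir}}\). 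By analogy with the rook monoid, restriction of \(N^{(n)}_{(\lambda,a)}\) should contain the labels \((\lambda,a)\) when \(a\le n-1\), and \((\mu,a-1)\) for \(\mu\) obtained by removing a box. One could derive this from the Specht identification of Corollary~\ref{cor:mirabolic-specht-label} together with the rank filtration, or from the tensor model by deleting the last factor according to whether it is \(v_\star\). Iterating, one removes boxes from \(\lambda\supseteq\rho\) down to \(\rho\), using \(|\lambda|-r_0\) of the \(n-r_0\) steps, and uses the remaining steps to keep the label fixed. The ordering must respect the constraint \(a\le k\) at each stage \(k\); this works if the rank-preserving steps are taken while \(k>a\), that is, first.

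The main obstacle is establishing this branching rule. A secondary subtlety is the conjugation in Corollary~\ref{cor:mirabolic-specht-label}: the Specht shape is \(\lambda'\), so I must make sure \(g_T\) is attached to the label \((\rho,r_0)\) and not to \((\rho',r_0)\). The hook condition is also not symmetric, so this identification has to be checked directly.
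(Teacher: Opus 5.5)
Your proposal is correct. Its skeleton matches the paper: Wedderburn blocks, the kernel as the sum of blocks missing from tensor space, and faithfulness for \(n<r_0\) because \(\lambda\notin H(m,s)\) forces \(\rho\subseteq\lambda\). You prove the two inclusions differently, however.

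\textbf{The paper's route.} Both inclusions come from one support statement, \eqref{eq:mirabolic-rectangle-support}. By Jucys--Murphy eigenvalues on the tableau basis (Lemma~\ref{lem:rectangle-tower-support}), \(\iota_{r_0,n}(f_T)\) fixes \(w_L\) when the entries \(1,\ldots,r_0\) of \(L\) form \(T\), and kills \(w_L\) otherwise. So \(\jmath_{r_0,n}(g_T)\) is nonzero on \(N^{(n)}_{(\lambda,|\lambda|)}\) exactly when \(\rho\subseteq\lambda\). An idempotent in a split semisimple algebra generates exactly the blocks it supports, so the ideal equals the kernel over \(\mathbb K\). That equality is then descended to \(\mathbb F\).

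\textbf{Your inclusion into the kernel.} You instead use \(\Phi_{m|s,n}\circ\jmath_{r_0,n}=\Phi_{m|s,r_0}(\cdot)\otimes\mathrm{id}\). This holds because \(\jmath_{r_0,n}\) preserves generator indices, and the images of \(T_i\) (\(i<r_0\)) and \(P_j\) (\(j\le r_0\)) are even operators supported in the first \(r_0\) factors. You combine this with the case \(n=r_0\), where the kernel is exactly the \(\rho\)-block. This tower-compatibility argument avoids the vanishing half of the support lemma.

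\textbf{Your reverse inclusion.} The branching rule you single out as the main obstacle is immediate from the same tableau basis. The \(w_L\) with \(n\notin L\) span a copy of \(N^{(n-1)}_{(\lambda,a)}\). Those with \(n\) in a fixed removable box \(b\) span a copy of \(N^{(n-1)}_{(\lambda\setminus b,a-1)}\), because the formulas for \(T_i\) (\(i\le n-2\)) and \(P_j\) (\(j\le n-1\)) involve only the other entries and their contents. The paper avoids iterating this by exhibiting one vector directly: fill \(\rho\) by \(T\) and fill \(\lambda/\rho\) row by row with \(r_0+1,\ldots,|\lambda|\).

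\textbf{Comparison.} The paper's route gives the exact action of \(\jmath_{r_0,n}(g_T)\) on every simple module, namely a projection onto an explicit span. It reuses this for \(g_\rho\) and for the \(q\)-rook transport. Your route is more conceptual on the vanishing side. Working directly over the split semisimple \(\mathbb F\)-algebra, as you do, also removes the need for the paper's descent step. You only need that multiplicities of the absolutely irreducible \(N^{(n)}_{(\lambda,a)}\) are unchanged by scalar extension.

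\textbf{Your label concern.} This is settled by the definitions. \(N^{(n)}_{(\lambda,a)}\) and \(g_T\) are both pulled back from the rook side through \(\Theta_{q^{-1}}\) with the partition label unchanged. The conjugate \(\lambda'\) appears only as the Specht shape in Corollary~\ref{cor:mirabolic-specht-label}.
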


Theorem~\ref{thm:intro-super-mirabolic-schur-weyl} identifies the missing
simple modules by the condition \(\rho\subseteq\lambda\), and
for \(n\ge r_0\), \(\jmath_{r_0,n}(g_T)\) acts as a nonzero operator on exactly these
modules (Theorem~\ref{thm:super-mirabolic-annihilator}).
The faithful quotient and \(\mathscr S_q(m|s;\le n)\) are therefore
mutual full centralizers after extension to \(\mathbb K\)
(Corollary~\ref{cor:mirabolic-quotient-double-centralizer}).

\subsubsection{Character formulas and RSK}
\label{subsubsec:intro-characters}

Let \(x=(x_1,\ldots,x_m)\) and \(y=(y_1,\ldots,y_s)\) record
weights of the even and odd basis vectors, and let \(s_\lambda(x|y)\)
be the hook Schur function, which is the character of
\(L_{m|s}(\lambda)\) for \(\lambda\in H(m,s)\).
Write \(\chi^{(n)}_\lambda\) for the character of
\(N^{(n)}_{(\lambda,|\lambda|)}\).
For a composition \(\mu=(\mu_1,\ldots,\mu_\ell)\) of size
\(|\mu|=k\le n\), let \(\widehat T^{(n)}_\mu\) be the standard
mirabolic element defined in \eqref{eq:mirabolic-standard-element}.
For \(d\ge1\), let \(\mathsf q_d(x|y;q)\) be the polynomial defined by
\eqref{eq:super-mirabolic-one-cycle-series}, and put
\(\mathsf q_\mu=\prod_i\mathsf q_{\mu_i}\), with
\(\mathsf q_\varnothing=1\) for the empty composition of size zero.

\begin{mainthm}[Super mirabolic Frobenius formula]
\label{thm:intro-super-mirabolic-frobenius}
For every composition \(\mu\) of \(k\le n\), the hook Schur expansion is
\[
 \mathsf q_\mu(x|y;q)=
 \sum_{a=0}^k
 \sum_{\substack{\lambda\vdash a\\\lambda\in H(m,s)}}
 \chi^{(n)}_\lambda(\widehat T^{(n)}_\mu)s_\lambda(x|y).
\]
\end{mainthm}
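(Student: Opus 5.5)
The plan is to compute one trace in two ways. The trace is that of the operator \(D\cdot\Phi_{m|s,n}(\widehat T^{(n)}_\mu)\) on \(\widetilde V^{\otimes n}\). Here \(D\) is the diagonal weight operator of the torus of \(U_{v^{-1}}(\mathfrak{gl}(m|s))\): it multiplies a tensor basis vector by the product of \(x_i\) over its factors \(e_i\) (\(i\le m\)) and \(y_j\) over its odd factors, and each star factor contributes weight \(1\). Since all traces in the paper are ordinary traces, no supertrace signs are inserted at this stage.

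\textbf{First computation: via the duality.} By Theorem~\ref{thm:intro-super-mirabolic-schur-weyl}, \(D\) acts through the first tensor factor and \(\widehat T^{(n)}_\mu\) through the second factor of each summand \(L_{m|s}(\lambda)\otimes N^{(n)}_{(\lambda,a)}\). The trace therefore factors as
\[
\sum_{a=0}^n\sum_{\substack{\lambda\vdash a\\ \lambda\in H(m,s)}} \operatorname{ch}L_{m|s}(\lambda)\,\chi^{(n)}_{(\lambda,a)}(\widehat T^{(n)}_\mu),
\qquad \operatorname{ch}L_{m|s}(\lambda)=s_\lambda(x|y).
\]
The truncation \(a\le k\) should come from the structure of the standard element. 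I expect \(\widehat T^{(n)}_\mu\) to contain a factor that forces the last \(n-k\) tensor positions to be star. Hence it kills, or has zero trace on, the ranks \(a>k\). This is the same vanishing as in Wan's ordinary formula, and I would cite the corresponding lemma on the action of the mirabolic projector part of \eqref{eq:mirabolic-standard-element}. Where the duality involves modules \(N_{(\lambda,a)}\) with \(a<n\), one identifies \(\chi^{(n)}_\lambda\) with the character of \(N^{(n)}_{(\lambda,|\lambda|)}\), matching the notation of the statement.

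\textbf{Second computation: directly on tensors.} The element \(\widehat T^{(n)}_\mu\) is a product of disjoint cycle-type blocks \(T_{\gamma_i}\) of lengths \(\mu_i\) on consecutive positions, together with the star-forcing part on the remaining positions. The operator therefore factors along the tensor decomposition \(\widetilde V^{\otimes n}=\bigotimes_i \widetilde V^{\otimes \mu_i}\otimes \widetilde V^{\otimes(n-k)}\). The weighted trace is thus multiplicative: it equals \(\prod_i t_{\mu_i}\) times the trace on the tail. The tail trace should be \(1\), because only the all-star vector survives and it has weight \(1\).

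It remains to show \(t_d=\mathsf q_d(x|y;q)\) for a single cycle of length \(d\). The monomial contributions to \(\operatorname{tr}(D\cdot T_1T_2\cdots T_{d-1})\) on \(\widetilde V^{\otimes d}\) are computed by the standard diagonal-entry analysis from Ram and Mitsuhashi. A basis word contributes only if its letters weakly decrease along the cycle in the appropriate sense, with repeated odd letters forbidden. The result is a sum of terms \((q-1)^{\#\text{descents}}\), together with signs \((-1)\) from odd letters passing each other. The star letter, being even, behaves as an extra even variable specialized to weight \(1\), modified by whatever mirabolic correction the generator acting on stars carries.

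Comparing this sum with the definition \eqref{eq:super-mirabolic-one-cycle-series} gives the one-cycle identity. Equating the two computations then yields the stated formula.

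\textbf{Main obstacle.} I expect the hardest step to be the one-cycle trace. The mirabolic action of Proposition~\ref{prop:super-mirabolic-representation} on pairs involving \(v_\star\) is not the standard Hecke \(R\)-matrix. One must track the diagonal coefficients it produces and the star-projection factor, including the \(q\)-normalizations inside \(\widehat T^{(n)}_\mu\). I would first handle \(d=1\) and \(d=2\) explicitly and match them against \(\mathsf q_1\) and \(\mathsf q_2\). I would then set up a recursion in \(d\) by expanding the last generator. The recursion should reproduce the generating-function description of \(\mathsf q_d\), which I assume the definition is designed to satisfy. The multiplicativity over blocks should be routine once the standard element is known to be a product of commuting blocks on disjoint positions.
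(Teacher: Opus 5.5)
Your overall architecture is the paper's. You compute \(\operatorname{Tr}\bigl(W_{x|y}^{\otimes n}\Phi_{m|s,n,\mathbb K}(\widehat T^{(n)}_\mu)\bigr)\) once through the bimodule decomposition and once on tensor words, factor the latter over the \(\mu\)-blocks, and truncate at \(a\le k\) because \(P_{n-k}\) kills every rank with fewer than \(n-k\) stars. Mind the orientation: by \eqref{eq:tensor-projection-action} and \(\mu\uparrow n=(1^{n-k},\mu)\), the stars are forced in the \emph{first} \(n-k\) positions and the cycles act on the last \(k\); this changes nothing else.

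The genuine gap is the one-cycle identity \(t_d=\mathsf q_d(x|y;q)\). It carries all the content, you leave it as an expectation, and the combinatorics you predict for it would not produce \(\mathsf q_d\).
\begin{itemize}
\item Repeated odd letters are not forbidden. By \eqref{eq:mirabolic-tensor-operator}, \(R(v_i\otimes v_i)=-q\,v_i\otimes v_i\) for odd \(i\). These diagonal factors produce \((-q)^{|\beta|-\ell(\beta)}\), that is, the full series \((1-u)/(1-qu)=1+(q-1)\sum_{r\ge1}q^{r-1}u^r\) in each odd variable of \eqref{eq:super-mirabolic-one-cycle-series}. Excluding them would leave only \(\beta=(1^t)\).
\item No swap term ever contributes to a diagonal entry, so there are no signs from odd letters passing each other.
\item There is no mirabolic correction at \(v_\star\). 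The operator \(R\) is defined uniformly on the extended alphabet, with \(\star\) the smallest even letter; the mirabolic structure enters only through the \(P_j\). Thus \(\star\) is literally the extra even letter \(z_0=1\), which is the source of the factor \((1+qt)/(1+t)\).
\end{itemize}

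The missing step is a direct diagonal count, with no recursion in \(d\). Write \(\mathcal T_{\gamma_{(d)}}=(-1)^{d-1}R_1\cdots R_{d-1}\). The rightmost factor acts first, and position \(d\) is never touched afterwards. So a diagonal contribution must use the non-swap term of \(R_{d-1}\). Repeating this from right to left, every adjacent pair must use its non-swap term, which exists only when \(b_j\le b_{j+1}\).

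Hence exactly the weakly increasing words contribute, with local factor \(1-q\) for a strict increase, \(1\) for an equal even pair (including \(\star\star\)), and \(-q\) for an equal odd pair. Group these words by their distinct even letters in \(z=(1,x_1,\ldots,x_m)\), their distinct odd letters, and the run lengths \((\alpha;\beta)\vDash d\). This gives
\[
 (-1)^{d-1}\sum_{(\alpha;\beta)\vDash d}M_\alpha(z)M_\beta(y)(-q)^{|\beta|-\ell(\beta)}(1-q)^{\ell(\alpha)+\ell(\beta)-1},
\]
which is \eqref{eq:super-mirabolic-bicomposition-expansion}, hence \(\mathsf q_d(x|y;q)\). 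This is Lemma~\ref{lem:one-cycle-weighted-trace}, and with it your two computations give the theorem exactly as in the paper.
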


The proof compares weighted traces on tensor words with traces from
Theorem~\ref{thm:intro-super-mirabolic-schur-weyl}
(Theorem~\ref{thm:super-mirabolic-frobenius}); \(s=0\) recovers
Wan's Frobenius formula \cite[Theorem~4.4]{Wan26}.
Symmetric-function identities give a self-contained derivation of the mirabolic
Murnaghan--Nakayama rule (Theorem~\ref{thm:mirabolic-murnaghan-nakayama-rule});
Wan derived this rule from the \(q\)-rook rule \cite[Theorem~5.4]{Wan26}.
Setting \(x_i=y_j=1\) gives the Regev-type character sum of
Theorem~\ref{thm:super-mirabolic-theorem-B}, whose full-rank restriction
recovers Zhao's quantum Regev identity \cite[Theorem~A]{Z19}
after matching the Hecke normalizations.

For \(N\ge0\), use the ordered alphabet
\(\mathcal A_{m|s}=\{\star\prec x_1\prec\cdots\prec x_m
\prec y_1\prec\cdots\prec y_s\}\), with even letters \(\star,x_i\)
and odd letters \(y_j\).
Let \(\mathrm{HT}_{m|s}(\lambda)\) consist of tableaux of shape
\(\lambda\) on the nonstar letters, weakly increasing in rows and
columns, with even letters strict down columns and odd letters strict
along rows.  Let \(\mathrm{RT}_N(\lambda)\) consist of pairs
\(Q=(Q^{(0)},Q^{(1)})\), where \(Q^{(1)}\) is a standard tableau
of shape \(\lambda\) with entries in \(\{1,\ldots,N\}\), and
\(Q^{(0)}\) lists the remaining entries in increasing order.
Word and insertion-tableau weights are products of their nonstar letters,
with star weight \(1\).
For a composition \(\mu\) of \(N\), let \(\wt_\mu(Q)\) be the
recording-tableau weight defined in \eqref{eq:def-super-roichman-weight}.

\begin{mainthm}[Super mirabolic RSK correspondence and Roichman formula]
\label{thm:intro-super-mirabolic-rsk}
Signed column insertion gives a weight-preserving bijection
\[
 \mathrm{RSK}:\mathcal A_{m|s}^{N}\xrightarrow{\sim}
 \coprod_{\substack{\lambda\text{ a partition}\\|\lambda|\le N}}
 \mathrm{HT}_{m|s}(\lambda)\times\mathrm{RT}_N(\lambda).
\]
For every composition \(\mu\) of \(N\le n\) and every
\(\lambda\vdash a\) with \(0\le a\le N\),
\[
 \chi^{(n)}_\lambda(\widehat T^{(n)}_\mu)
 =\sum_{Q\in\mathrm{RT}_N(\lambda)}\wt_\mu(Q).
\]
\end{mainthm}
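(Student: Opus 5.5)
The plan is to prove the bijection first and then read off the Roichman formula by comparing the Frobenius formula of Theorem~\ref{thm:intro-super-mirabolic-frobenius} with a generating-function identity coming from the bijection.

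\textbf{Step 1: the bijection.} Given a word \(w\in\mathcal A_{m|s}^N\), we separate the positions carrying \(\star\) from the nonstar subword \(w'\) of length \(a\). We apply the Berele--Regev signed column insertion to \(w'\): even letters bump weakly and odd letters strictly, or the reverse convention matching \(\mathrm{HT}_{m|s}\). This is the known bijection between super words of length \(a\) and pairs consisting of a hook semistandard tableau of shape \(\lambda\vdash a\) and a standard tableau of shape \(\lambda\). We then relabel the standard tableau by the actual positions \(\{1,\dots,N\}\) of the nonstar letters to get \(Q^{(1)}\), and record the star positions as \(Q^{(0)}\). Inverting is immediate: \(Q^{(0)}\) tells where the stars go, and reverse insertion recovers \(w'\). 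Weight preservation holds because stars have weight \(1\) and signed insertion preserves content. Equivalently, one can insert the star as the smallest even letter and then delete the resulting \(\star\)-cells. However, this conflicts with strictness of \(\star\) in columns, so I will use the separation description, which is what the recording pair encodes.

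\textbf{Step 2: trace side.} The Frobenius formula expresses \(\mathsf q_\mu(x|y;q)\) as a weighted trace of \(\widehat T^{(n)}_\mu\) on \(W_{\mathbb F,N}\), or on \(\widetilde V^{\otimes N}\) tensored with the trivial factors, i.e.\ as a sum over words \(w\in\mathcal A^N_{m|s}\) of \(\mathrm{wt}(w)\) times a diagonal coefficient \(c_\mu(w)\). By definition \eqref{eq:def-super-roichman-weight}, \(\wt_\mu(Q)\) is presumably designed so that \(c_\mu(w)=\wt_\mu(Q(w))\). That is, \(c_\mu(w)\) is a product over the blocks of \(\mu\) of local factors, depending on descents and star positions inside each block, and these are determined by the recording pair. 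The key lemma is therefore a descent-compatibility statement: for consecutive positions \(i,i+1\), the relation between \(w_i,w_{i+1}\) (strict ascent, equal even, equal odd, star involvement) that governs the diagonal coefficient of \(T_i\) in the ordered tensor basis is faithfully detected by whether \(i+1\) lies in a lower row or a right column of \(Q^{(1)}\), or in \(Q^{(0)}\). This is the standard RSK descent lemma adapted to signed insertion, plus the observation that stars are invisible to insertion.

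\textbf{Step 3: comparison.} Summing over words via the bijection gives
\[
\mathsf q_\mu(x|y;q)=\sum_{\lambda}\Bigl(\sum_{Q\in\mathrm{RT}_N(\lambda)}\wt_\mu(Q)\Bigr)\sum_{P\in\mathrm{HT}_{m|s}(\lambda)}\mathrm{wt}(P),
\]
and the inner \(P\)-sum is \(s_\lambda(x|y)\). Comparing with Theorem~\ref{thm:intro-super-mirabolic-frobenius} gives the formula for \(\lambda\in H(m,s)\). Since both sides are independent of \(m,s\), because \(\wt_\mu\) involves only \(Q\), we choose \(m,s\) large so that every \(\lambda\) with \(|\lambda|\le N\) lies in \(H(m,s)\). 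Linear independence of the hook Schur functions in that range then yields the identity for all \(\lambda\).

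\textbf{Main obstacle.} I expect the main obstacle to be Step 2. One has to check that \(\widehat T^{(n)}_\mu\) has diagonal entries factoring over the blocks, and that the star-dependent factors (the mirabolic generator acting on \(v_\star\)) match the recording weight, which requires a careful descent lemma for signed column insertion. If the direct approach fails, the fallback is to prove the identity as symmetric functions: show that \(\sum_{w}\mathrm{wt}(w)\wt_\mu(Q(w))\) equals \(\mathsf q_\mu\) block by block, using the one-cycle series \eqref{eq:super-mirabolic-one-cycle-series}.
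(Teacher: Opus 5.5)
Your Steps 1 and 3 are fine and essentially match the paper: signed column insertion on the nonstar letters with star positions recorded in \(Q^{(0)}\), then a Schur-function comparison using enough even variables.

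\textbf{The gap is in Step 2.} You identify \(\wt_\mu(Q(w))\) with the diagonal coefficient \(c_\mu(w)\) of \(\Phi_{m|s,n}(\widehat T^{(n)}_\mu)\) on the tensor word. This is false, and in fact \(c_\mu(w)\) is not a function of the recording pair at all.

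Within a block, \(\mathcal T_{\gamma_{(d)}}=(-1)^{d-1}R_1\cdots R_{d-1}\) has nonzero diagonal entries only on weakly increasing words. The local factors are \(q-1\), \(q\), \(-1\), \(0\) for a strict ascent, an equal odd pair, an equal even pair, and a strict descent, respectively. The recording tableau only detects the binary comparison \(a_j\triangleleft a_{j+1}\). That comparison merges strict ascents with equal odd pairs, and strict descents with equal even pairs.

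Concretely, take \(\mu=(2)\):
\begin{enumerate}[label=(\roman*)]
\item \(x_1x_1\) and \(x_2x_1\) both give \(Q^{(1)}\) the single row \(1\,2\), but \(c_\mu=-1\) and \(0\), respectively.
\item \(x_1x_2\) and \(y_1y_1\) both give \(Q^{(1)}\) the single column \(1,2\), but \(c_\mu=q-1\) and \(q\), respectively.
\end{enumerate}
Notice also that \(\wt_\mu(Q)\) in \eqref{eq:def-super-roichman-weight} takes the values \(q,0,-1\), which are not trace coefficients. Agreement with the trace holds only after summing over all words of a fixed content. For the content \(x_1x_2\), for instance, \((q-1)+0=q+(-1)\).

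\textbf{What is missing is an independent word expansion.} Define a word weight \(\wt_\mu(w)\) by the same rule as \(\wt_\mu(Q)\), but using \(\triangleleft\)-comparisons of adjacent letters. The factor is \(q\) for an ascent, \(0\) for a non-ascent immediately followed by an ascent in the same block, and \(-1\) otherwise. Then prove \(\sum_w\wt_\mu(w)x^w=\mathsf q_\mu(x|y;q)\) directly.

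The blocks factor, so it suffices to treat one block with prescribed distinct letters \(z_1\prec\cdots\prec z_L\) and multiplicities. The words of nonzero weight are exactly those with comparison string \(A^hD^{N-1-h}\). They are parametrized by subsets \(I\subseteq\{1,\ldots,L-1\}\), recording which non-maximal letters have a copy on the ascending side. Each has weight \((-1)^{N-1}(-q)^{|\beta|-\ell(\beta)+|I|}\). Summing over \(I\) gives exactly the coefficient in \eqref{eq:super-mirabolic-bicomposition-expansion}.

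Only after this does the descent lemma \(a_j\triangleleft a_{j+1}\iff j+1\swarrow j\) give \(\wt_\mu(w)=\wt_\mu(Q(w))\) word by word. For nonstar pairs it comes from the column-bumping lemma of La Scala--Nardozza--Senato. For pairs involving \(Q^{(0)}\) it comes from treating \(\star\) as the smallest even letter.

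Your fallback sentence points in this direction, but it is not carried out. As phrased, it still leans on a descent lemma meant to detect the trace coefficients, and no such lemma can exist.
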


The construction uses signed column insertion in the formulation of
La Scala--Nardozza--Senato \cite[Definition~2.6]{LNS06} and records
star positions separately (Definition~\ref{def:super-mirabolic-rsk}
and Theorem~\ref{thm:super-mirabolic-rsk-bijection}).
The resulting hook Schur expansion has coefficients
\(\sum_{Q\in\mathrm{RT}_N(\lambda)}\wt_\mu(Q)\)
(Proposition~\ref{prop:super-relative-position} and
Theorem~\ref{thm:super-mirabolic-schur-expansion}).
Comparison with Theorem~\ref{thm:intro-super-mirabolic-frobenius},
using sufficiently many even variables, gives the Roichman formula
(Theorem~\ref{thm:super-mirabolic-roichman}).
Travkin's geometric correspondence \cite[Sections~3.3--3.4]{Tra09}
uses decorated permutations and outputs two standard tableaux and an
additional partition, whereas our correspondence uses super words and
records the star positions in \(Q^{(0)}\).

\subsubsection{Parameter inversion}
\label{subsubsec:intro-parameter-inversion}

For the \(q\)-rook monoid algebra \(R_n(q)\) in the normalization of
\cite{DHP03}, Proposition~\ref{prop:parameter-inversion-isomorphism} gives
\[
 \Theta_q:H_n^{\mathrm{mir}}(q^{-1})\xrightarrow{\sim}R_n(q).
\]
This transports the duality, annihilator, and character formulas to
\(R_n(q)\) (Theorems~\ref{thm:super-q-rook-schur-weyl},
\ref{thm:super-q-rook-frobenius}, \ref{thm:super-q-rook-regev}
and Corollaries~\ref{cor:super-q-rook-annihilator},
\ref{cor:q-rook-roichman}).
The pure-even specializations recover Xiao's generic annihilator theorem
\cite[Theorem~1.2]{X17} and the Frobenius formula of
Dieng--Halverson--Poladian \cite[Theorem~2.3]{DHP03}; at full rank,
the super Frobenius formula recovers Mitsuhashi's formula
\cite[Theorem~5.5]{Mit10}.

The tensor decomposition gives a representation-theoretic interpretation
of the hook and two-row character sums in
\cite[Equations~(3.35)--(3.36)]{JWL26}, answering the question posed there
(Propositions~\ref{cor:jwl-hook-representation}
and~\ref{prop:jwl-two-row-representation}).

In the opposite direction, the character-table structure of
\cite[Corollary~2.5]{DHP03} and the generating function of
\cite[Theorem~6.12]{JL26} yield the mirabolic formulas in
Corollaries~\ref{thm:mirabolic-block-upper-triangular}
and~\ref{thm:mirabolic-character-generating-function}.

\subsection{Organization}
\label{subsec:introduction-organization}

Section~\ref{sec:preliminaries} fixes notation and relates the two algebras.
Sections~\ref{sec:super-mirabolic-SW}--\ref{sec:tensor-space-kernels}
prove the duality and determine the annihilator and faithful quotient.
Sections~\ref{sec:super-frobenius}--\ref{sec:mirabolic-rsk} derive character
formulas and construct the super mirabolic RSK correspondence.
Section~\ref{sec:q-rook-normalization} obtains the \(q\)-rook results and
their representation-theoretic interpretations, then the mirabolic
character tables and generating functions.

\section{Preliminaries}
\label{sec:preliminaries}

We first introduce the tableau notation and the algebras used below.
We then relate the mirabolic and rook algebras by parameter inversion
and describe their irreducible modules and algebra decompositions.
Finally, we recall the supersymmetric functions needed for the character formulas.

Let \(v\) be an indeterminate, put \(q=v^2\), and set
\(\mathbb F=\mathbb C(q)\subset\mathbb K=\mathbb C(v)\).
The Hecke, mirabolic Hecke, and \(q\)-rook algebras and their characters are defined over \(\mathbb F\).  A subscript \(\mathbb K\)
on an algebra or a module denotes scalar extension to \(\mathbb K\).
Quantum superalgebras and their natural modules are over \(\mathbb K\).
We use ordinary associative centralizers.
Throughout, \(n\ge1\), \(m,s\ge0\), and \(m+s\ge1\).

Let \(V_{\mathbb F}\) have ordered homogeneous basis
\(v_1,\ldots,v_{m+s}\), with the first \(m\) vectors even and the
last \(s\) odd.  Adjoin an even vector \(v_\star\), ordered first,
and set
\[
 \widetilde V_{\mathbb F}=\mathbb Fv_\star\oplus V_{\mathbb F},
 \qquad W_{\mathbb F,n}=\widetilde V_{\mathbb F}^{\otimes_{\mathbb F}n},
 \qquad V=\mathbb K\otimes_{\mathbb F}V_{\mathbb F},
 \qquad\widetilde V=\mathbb K\otimes_{\mathbb F}\widetilde V_{\mathbb F}.
\]
Tensor powers of \(V\) and \(\widetilde V\) are taken over
\(\mathbb K\).

\subsection{Partitions, compositions, and tableaux}

We write \([r]=\{1,\ldots,r\}\) for an integer \(r\ge0\), with
\([0]=\varnothing\).  A composition of \(r\), denoted \(\alpha\vDash r\), is a finite sequence
\(\alpha=(\alpha_1,\ldots,\alpha_\ell)\) of positive integers with
\(
        |\alpha|=\alpha_1+\cdots+\alpha_\ell=r.
\)
The empty composition is denoted by \(\varnothing\) and has size \(0\).  A
partition is a weakly decreasing composition.  We write
\[
        \mathscr P_r=\{\lambda\mid \lambda\vdash r\},
        \qquad
        \mathscr P_{\le n}=\coprod_{r=0}^{n}\mathscr P_r,
\]
and \(\mathscr P=\coprod_{r\ge0}\mathscr P_r\).  The length of a partition or
composition \(\alpha\) is denoted by \(\ell(\alpha)\), and the conjugate
partition of \(\lambda\) is denoted by \(\lambda'\).  We pad partitions
with zero parts.  The notation \(\mu\subseteq\lambda\) means
\(\mu_i\le\lambda_i\) for every \(i\).

If \(\mu\subseteq\lambda\), the skew diagram is denoted by \(\lambda/\mu\), and
\(
        |\lambda/\mu|=|\lambda|-|\mu|.
\)
We use the English convention for Young diagrams.  Thus row indices increase
downward and column indices increase from left to right.  A skew diagram is
called a generalized ribbon if it contains no \(2\times2\) block.
A standard Young tableau of shape \(\lambda\vdash a\) fills the diagram
by \(1,\ldots,a\), each once, increasing along rows and columns.
Write \(\operatorname{SYT}(\lambda)\) for this set and
\(f^\lambda=|\operatorname{SYT}(\lambda)|\).

For a composition
\(
        \mu=(\mu_1,\ldots,\mu_\ell)
\)
of \(r\), set
\[
        B(\mu)
        =
        \{\mu_1,\ \mu_1+\mu_2,\ \ldots,\ \mu_1+\cdots+\mu_\ell=r\}.
\]
This is the set of right endpoints of the consecutive \(\mu\)-blocks;
set \(B(\varnothing)=\varnothing\).  If
\(r\le n\), we also write
\[
        \mu\uparrow n=(1^{\,n-r},\mu_1,\ldots,\mu_\ell).
\]

A bicomposition of \(r\) is a pair
\(
        (\alpha;\beta)
\)
of compositions, either of which may be empty, such that
\(
        |\alpha|+|\beta|=r.
\)
We write \((\alpha;\beta)\vDash r\).

\subsection{Hecke algebras and standard elements}

Let \(H_n(q)\) be the Iwahori--Hecke algebra of type \(A_{n-1}\) over
\(\mathbb F\), generated by
\(
        T_1,\ldots,T_{n-1},
\)
with braid relations and quadratic relation
\begin{equation}\label{e:Hecke-reln}
        T_i^2=(q-1)T_i+q,
        \qquad\text{equivalently}\qquad
        (T_i-q)(T_i+1)=0.
\end{equation}
Here \(\mathfrak S_n\) is the symmetric group on \([n]\), with adjacent
transpositions \(s_i=(i,i+1)\); its length \(\ell(w)\) is the number of
inversions of \(w\).  For \(w\in\mathfrak S_n\), set
\(T_w=T_{i_1}\cdots T_{i_d}\), where
\(w=s_{i_1}\cdots s_{i_d}\) is any reduced expression.  The braid
relations make this independent of the chosen reduced expression.  We set
\(H_0(q)=H_1(q)=\mathbb F\).

For a composition \(\mu=(\mu_1,\ldots,\mu_\ell)\) of \(r\), let
\(
        \gamma_\mu\in\mathfrak S_r
\)
be the product of the disjoint cycles on consecutive blocks,
\[
        (1,\ldots,\mu_1)
        (\mu_1+1,\ldots,\mu_1+\mu_2)
        \cdots
        (\mu_1+\cdots+\mu_{\ell-1}+1,\ldots,r).
\]
If a part is equal to \(1\), the corresponding cycle is the identity.  We use
\(
        T_{\gamma_\mu}
\)
for the corresponding Hecke element.  Products act on tensors from right
to left.  When \(\mu\uparrow n\) is used,
the nontrivial cycles are shifted to the last \(|\mu|\) letters.

\subsection{The mirabolic Hecke algebra}
\label{subsec:mirabolic-presentation}

The mirabolic Hecke algebra $H^{\mathrm{mir}}_n(q)$ is generated by
\[
        T_1,\ldots,T_{n-1},
        \qquad
        P_1,\ldots,P_n,
\]
where the \(T_i\)'s satisfy the Hecke relations of type \(A\) above, and the
\(P_i\)'s satisfy
\[
        P_i^2=P_i,
        \qquad
        P_iP_j=P_jP_i=P_i
        \quad (j<i).
\]
The mixed relations are
\[
        P_iT_j=T_jP_i
        \quad (i<j),
\]
\[
        P_iT_j=T_jP_i=-P_i
        \quad (j<i),
\]
and
\[
        P_{i+1}=-P_iT_i^{-1}P_i
        \qquad (1\le i<n),
\]
where
\(
        T_i^{-1}=q^{-1}(T_i-(q-1)).
\)
We also use the convention \(P_0=1\).

For a composition \(\mu\) of \(r\le n\), define
\begin{equation}
\label{eq:mirabolic-standard-element}
        \widehat T^{(n)}_\mu
        =
        P_{n-r}T_{\gamma_{\mu\uparrow n}}
        \in H^{\mathrm{mir}}_n(q).
\end{equation}
For \(\mu=\varnothing\), this gives \(\widehat T^{(n)}_\varnothing=P_n\).
For an associative algebra \(A\), write \([A,A]\) for the
\(\mathbb F\)-linear span of all commutators \(ab-ba\), with \(a,b\in A\).
For the special case of partitions $\mu$, it is known
\cite[Theorem~2.13]{Wan26} that the images of the elements
\(
        \widehat T^{(n)}_\mu,
        \mu\in\mathscr P_{\le n},
\)
form a basis of the cocenter
\(
        H^{\mathrm{mir}}_n(q)/
        [H^{\mathrm{mir}}_n(q),H^{\mathrm{mir}}_n(q)].
\)
Consequently character values on these elements determine the irreducible
characters.

At \(q=1\), changing the sign of each Hecke generator identifies
the specialization with the rook monoid algebra.  At generic parameter,
the following isomorphism relates the two presentations.

\subsection{The \texorpdfstring{\(q\)}{q}-rook monoid algebra and parameter inversion}

Let \(R_n(q)\) be the \(q\)-rook monoid algebra, written in the normalization
used in \cite{DHP03}, with Hecke generators
\(\mathsf T_i\) and idempotents \(\mathsf P_j\).  Thus
\[
(\mathsf T_i-q)(\mathsf T_i+1)=0,\qquad
\mathsf T_i\mathsf P_j=\mathsf P_j\mathsf T_i=q\mathsf P_j
\quad(i<j),
\]
\[
\mathsf T_i\mathsf P_j=\mathsf P_j\mathsf T_i\quad(j<i),\qquad
\mathsf P_{i+1}=q\mathsf P_i\mathsf T_i^{-1}\mathsf P_i.
\]
For a permutation \(w=s_{i_1}\cdots s_{i_d}\) in reduced form, write
\(\mathsf T_w=\mathsf T_{i_1}\cdots\mathsf T_{i_d}\).
We use \(\mathsf P_0=1\); as usual,
\(\mathsf P_i\mathsf P_j=\mathsf P_j\mathsf P_i=\mathsf P_i\) for \(j<i\).

Let \(\Theta_q\) denote the assignment from the mirabolic generators at
parameter \(q^{-1}\) to \(R_n(q)\) given by
\[
 T_i\longmapsto-q^{-1}\mathsf T_i,\qquad P_j\longmapsto\mathsf P_j.
\]

\begin{proposition}
\label{prop:parameter-inversion-isomorphism}
The assignment \(\Theta_q\) extends to an algebra isomorphism
\[
 \Theta_q:H^{\mathrm{mir}}_n(q^{-1})\xrightarrow{\sim}R_n(q).
\]
\end{proposition}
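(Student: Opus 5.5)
Define \(\Theta_q\) on generators of \(H^{\mathrm{mir}}_n(q^{-1})\) and check that every defining relation is sent to a relation holding in \(R_n(q)\). This gives an algebra homomorphism. For the inverse, define \(\Psi_q:R_n(q)\to H^{\mathrm{mir}}_n(q^{-1})\) by \(\mathsf T_i\mapsto -qT_i\) and \(\mathsf P_j\mapsto P_j\), and check the defining relations of \(R_n(q)\) in the same way. Since \(\Theta_q\Psi_q\) and \(\Psi_q\Theta_q\) fix generators, they are identities, and \(\Theta_q\) is an isomorphism.

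The relation checks run as follows. For the quadratic relation, \(T_i\) at parameter \(q^{-1}\) satisfies \((T_i-q^{-1})(T_i+1)=0\). Substituting \(T_i=-q^{-1}\mathsf T_i\) gives
\[
(-q^{-1}\mathsf T_i-q^{-1})(-q^{-1}\mathsf T_i+1)=q^{-2}(\mathsf T_i+1)(\mathsf T_i-q),
\]
which vanishes in \(R_n(q)\). The converse substitution is equivalent because the scalars are invertible. Braid relations are homogeneous in the \(T\)'s, so the scalar rescaling preserves them. The idempotent relations among the \(P\)'s coincide on both sides.

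For the mixed relations:
\begin{itemize}
\item When \(i<j\), \(P_iT_j=T_jP_i\) matches \(\mathsf T_j\mathsf P_i=\mathsf P_i\mathsf T_j\) (the case \(j<i\) in the rook indexing, i.e.\ Hecke index larger than idempotent index).
\item When \(j<i\), \(P_iT_j=-P_i\) becomes \(-q^{-1}\mathsf P_i\mathsf T_j=-\mathsf P_i\), i.e.\ \(\mathsf P_i\mathsf T_j=q\mathsf P_i\). This is the rook relation for Hecke index less than idempotent index, and similarly on the other side.
\end{itemize}

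The main point is the relation \(P_{i+1}=-P_iT_i^{-1}P_i\). From \(T_i=-q^{-1}\mathsf T_i\) we get \(T_i^{-1}=-q\,\mathsf T_i^{-1}\), so
\[
-P_iT_i^{-1}P_i\mapsto q\,\mathsf P_i\mathsf T_i^{-1}\mathsf P_i=\mathsf P_{i+1}.
\]
In checking this I will take care that the inverse formula \(T_i^{-1}=q(T_i-(q^{-1}-1))\) at parameter \(q^{-1}\) is consistent with the inverse of \(\mathsf T_i\) in \(R_n(q)\). This is automatic, since an inverse is unique once the quadratic relation has been transported.

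One subtlety is that the relation \(P_iP_j=P_jP_i=P_i\) for \(j<i\) is listed explicitly on both sides, so no extra derivation is needed. I will also confirm that the presentation of \(R_n(q)\) from \cite{DHP03} consists exactly of the relations displayed in the paper. If it contains additional relations, such as a relation of the form \(\mathsf P_i\mathsf T_i\mathsf P_i\) or commutation of \(\mathsf P\) with \(\mathsf T_j\) for \(j\ge i+1\), I would check those too. This may require deriving them in \(H^{\mathrm{mir}}_n\) from the displayed relations, which I expect to be the only real bookkeeping obstacle.
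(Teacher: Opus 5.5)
Your proposal is correct and follows the paper's proof essentially verbatim. You check the quadratic, braid, mixed and recursion relations under \(T_i\mapsto -q^{-1}\mathsf T_i\), \(P_j\mapsto\mathsf P_j\), in particular \(-\mathsf P_i(-q^{-1}\mathsf T_i)^{-1}\mathsf P_i=q\mathsf P_i\mathsf T_i^{-1}\mathsf P_i=\mathsf P_{i+1}\), and then note that \(\mathsf T_i\mapsto -qT_i\), \(\mathsf P_j\mapsto P_j\) respects the relations and is inverse on generators. The paper takes the displayed relations as the full presentation of \(R_n(q)\), so the additional relations you were worried about need no separate check.
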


\begin{proof}
The braid relations are unchanged by the common rescaling, and
\[
q^{-2}\mathsf T_i^2
=(q^{-1}-1)(-q^{-1}\mathsf T_i)+q^{-1},
\]
so \(-q^{-1}\mathsf T_i\) satisfies the mirabolic Hecke relation at
parameter \(q^{-1}\).  The two mixed relations follow respectively from
\[
(-q^{-1}\mathsf T_j)\mathsf P_i
=\mathsf P_i(-q^{-1}\mathsf T_j)=-\mathsf P_i\quad(j<i)
\]
and the corresponding commutation relation in \(R_n(q)\).  Finally,
\[
-\mathsf P_i(-q^{-1}\mathsf T_i)^{-1}\mathsf P_i
=q\mathsf P_i\mathsf T_i^{-1}\mathsf P_i
=\mathsf P_{i+1}.
\]
Thus the assignment is a homomorphism.  The reverse assignment
\(\mathsf T_i\mapsto-qT_i\), \(\mathsf P_j\mapsto P_j\) satisfies the same
relations and is its inverse.
\end{proof}

\begin{remark}
\label{rem:parameter-inversion-comparison}
The isomorphism in Proposition~\ref{prop:parameter-inversion-isomorphism}
relates \(H_n^{\mathrm{mir}}(q^{-1})\) and \(R_n(q)\), with different
parameters.  Wan \cite[Remark~2.9]{Wan26} discusses an isomorphism
between \(H_n^{\mathrm{mir}}(q)\) and \(R_n(q)\) at the same parameter \(q\).
\end{remark}

For a composition \(\mu\) of \(k\le n\), put
\[
        d(\mu)=|\mu|-\ell(\mu),\qquad
        T^R_\mu=\mathsf P_{n-k}\mathsf T_{\gamma_{\mu\uparrow n}},
\]
with \(T^R_\varnothing=\mathsf P_n\).

\begin{lemma}
\label{lem:standard-elements-under-theta}
For every such \(\mu\), the standard elements transform according to
\[
        \Theta_q\left(
        \widehat T^{(n)}_\mu\big|_{q\mapsto q^{-1}}\right)
        =(-q^{-1})^{d(\mu)}T^R_\mu.
\]
\end{lemma}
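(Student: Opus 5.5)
We prove the identity by applying \(\Theta_q\) factor by factor to the definition \eqref{eq:mirabolic-standard-element}, evaluated at parameter \(q^{-1}\). Since \(\widehat T^{(n)}_\mu=P_{n-k}T_{\gamma_{\mu\uparrow n}}\) and \(\Theta_q\) is an algebra homomorphism by Proposition~\ref{prop:parameter-inversion-isomorphism}, we get
\[
\Theta_q\bigl(\widehat T^{(n)}_\mu\big|_{q\mapsto q^{-1}}\bigr)
=\Theta_q(P_{n-k})\,\Theta_q(T_{\gamma_{\mu\uparrow n}}).
\]
The first factor is \(\mathsf P_{n-k}\), by definition of \(\Theta_q\) and the convention \(P_0=1\mapsto\mathsf P_0=1\).

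For the second factor, fix a reduced expression \(\gamma_{\mu\uparrow n}=s_{i_1}\cdots s_{i_d}\). Since each \(T_i\) maps to \(-q^{-1}\mathsf T_i\), we obtain \(\Theta_q(T_w)=(-q^{-1})^{\ell(w)}\mathsf T_w\) for every \(w\in\mathfrak S_n\). The same reduced word defines \(\mathsf T_w\), so no independence-of-expression issue arises beyond the braid relations already recorded in Proposition~\ref{prop:parameter-inversion-isomorphism}.

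It therefore remains to check that \(\ell(\gamma_{\mu\uparrow n})=d(\mu)\). The permutation \(\gamma_{\mu\uparrow n}\) is a product of commuting cycles on disjoint consecutive blocks. The fixed points from the prefix \(1^{n-k}\) and from the parts equal to \(1\) contribute nothing. A consecutive cycle \((a,a+1,\ldots,a+p-1)\) equals \(s_a s_{a+1}\cdots s_{a+p-2}\), or the reverse order depending on the composition convention; in either case it has exactly \(p-1\) inversions. Because the blocks occupy disjoint consecutive intervals, inversions add across blocks, and the concatenated word remains reduced. Hence
\[
\ell(\gamma_{\mu\uparrow n})=\sum_i(\mu_i-1)=|\mu|-\ell(\mu)=d(\mu).
\]
Combining these gives \((-q^{-1})^{d(\mu)}\mathsf P_{n-k}\mathsf T_{\gamma_{\mu\uparrow n}}=(-q^{-1})^{d(\mu)}T^R_\mu\). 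For \(\mu=\varnothing\) we have \(d=0\), and both sides equal \(\mathsf P_n\).

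The only step requiring care is the length count: one must confirm that the cycle \((a,\ldots,a+p-1)\) has length \(p-1\) and not a larger value. This follows by counting the inversions of the one-line notation, \(a+1,a+2,\ldots,a+p-1,a\), which has exactly \(p-1\) inversions.
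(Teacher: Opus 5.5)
Your proposal is correct and follows the paper's proof: you map \(P_{n-k}\mapsto\mathsf P_{n-k}\), note that each of the \(d(\mu)=\sum_i(\mu_i-1)\) Hecke generators in the reduced word for \(T_{\gamma_{\mu\uparrow n}}\) contributes a factor \(-q^{-1}\), and conclude. Your explicit inversion count for a consecutive cycle simply fills in the length computation that the paper states in one line.
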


\begin{proof}
The cycle product \(T_{\gamma_{\mu\uparrow n}}\) contains
\(\sum_i(\mu_i-1)=d(\mu)\) Hecke generators, each of which contributes
\(-q^{-1}\), while \(P_{n-k}\) maps to \(\mathsf P_{n-k}\).
\end{proof}

\subsection{Irreducible modules and characters}
\label{subsec:irreducible-characters}

We first record how ordinary Specht labels change under parameter
inversion and the sign automorphism.  We then describe the simple
\(q\)-rook modules over \(\mathbb F\) and use
\(\Theta_{q^{-1}}\) to fix the mirabolic module labels.

For \(a\ge0\) and \(\nu\vdash a\), let
\(S^\nu_{a,\mathrm{std}}\) be the ordinary Specht module of
\(H_a(q)\) over \(\mathbb F\).  For \(a\ge1\), we use the
convention that each generator \(T_i\) acts by \(q\) on the
one-dimensional module \(S^{(a)}_{a,\mathrm{std}}\).  The empty
Specht module is \(\mathbb F\).  These modules are absolutely irreducible
and exhaust the simple modules of the split semisimple algebra
\(H_a(q)\) \cite[Theorems~3.22--3.23]{Ram97}.  A subscript
\(\mathbb K\) on a module denotes its scalar extension from
\(\mathbb F\) to \(\mathbb K\).
For a tableau \(L\) with distinct entries and an entry \(i\) of
\(L\), write \(\operatorname{ct}_L(i)\) for its column index
minus its row index.

Fix \(a\ge1\), and write \(u_i\), for \(1\le i<a\), for the
generators of \(H_a(q^{-1})\) over the same field \(\mathbb F\).
For \(\nu\vdash a\), let \(\widetilde S^\nu_{a,\mathrm{std}}\)
be the ordinary Specht module of \(H_a(q^{-1})\) over \(\mathbb F\).
On the one-dimensional module \(\widetilde S^{(a)}_{a,\mathrm{std}}\),
each \(u_i\) acts by \(q^{-1}\).
Denote the parameter-inversion isomorphism by \(\phi\) and the
sign automorphism by \(\psi\).  These \(\mathbb F\)-algebra maps
are given by
\[
 \begin{aligned}
 \phi:H_a(q)&\longrightarrow H_a(q^{-1}),& T_i&\longmapsto u_i^{-1},\\
 \psi:H_a(q)&\longrightarrow H_a(q),& T_i&\longmapsto-qT_i^{-1}=(q-1)-T_i.
 \end{aligned}
\]
Write \(\phi^*\) and \(\psi^*\) for pullback of modules along
these maps.

\begin{lemma}
\label{lem:hecke-specht-labels}
For every \(\lambda\vdash a\), there are isomorphisms of
\(H_a(q)\)-modules
\[
 \phi^*\widetilde S^\lambda_{a,\mathrm{std}}
 \cong S^\lambda_{a,\mathrm{std}},
 \qquad
 \psi^*S^\lambda_{a,\mathrm{std}}
 \cong S^{\lambda'}_{a,\mathrm{std}}.
\]
\end{lemma}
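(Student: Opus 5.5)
We identify each simple module by its central characters, or more concretely by the eigenvalues of the Jucys--Murphy elements on a seminormal basis. Since \(H_a(q)\) is split semisimple over \(\mathbb F\) with simples \(S^\nu_{a,\mathrm{std}}\) \cite{Ram97}, the pullback of a simple module along an algebra isomorphism is again simple. So in each case it suffices to locate one invariant that separates the labels. We use the multiset of contents recorded by the Jucys--Murphy spectrum. Put \(L_1=1\) and \(L_{k+1}=q^{-1}T_kL_kT_k\). On the seminormal basis vector \(v_L\) of \(S^\nu_{a,\mathrm{std}}\), indexed by \(L\in\operatorname{SYT}(\nu)\), the element \(L_k\) acts by \(q^{\operatorname{ct}_L(k)}\). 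This holds with the convention that \(T_i\) acts by \(q\) on the trivial module, so the row \((a)\) gives contents \(0,1,\dots,a-1\). The sequence of eigenvalue tuples \((q^{\operatorname{ct}_L(1)},\dots,q^{\operatorname{ct}_L(a)})\), taken over \(L\in\operatorname{SYT}(\nu)\), determines \(\nu\), because the content vector of a standard tableau determines the tableau.

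\textbf{Step 1 (\(\phi\)).} In \(H_a(q^{-1})\) define \(\widetilde L_{k+1}=q\,u_k\widetilde L_ku_k\). Then \(\widetilde L_k\) acts on \(\widetilde S^\lambda_{a,\mathrm{std}}\) with eigenvalues \(q^{-\operatorname{ct}_L(k)}\). We pull back along \(\phi\), which sends \(T_k\mapsto u_k^{-1}\), and compute \(\phi(L_{k+1})=q^{-1}u_k^{-1}\phi(L_k)u_k^{-1}\). By induction \(\phi(L_k)=\widetilde L_k^{-1}\), using that \(\widetilde L_k\) commutes with \(u_k\)'s partner factors in the standard way. Hence \(L_k\) acts on \(\phi^*\widetilde S^\lambda\) with eigenvalues \(q^{\operatorname{ct}_L(k)}\), and so \(\phi^*\widetilde S^\lambda\cong S^\lambda\).

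A quicker alternative uses only the trivial and sign modules together with restriction. The pullback of the module where \(u_i\mapsto q^{-1}\) is the module where \(T_i\mapsto q\), so the labels agree for \(a\le2\). The branching rule is compatible with \(\phi\), since \(\phi\) restricts to \(H_{a-1}\). Induction on \(a\), using that \(\lambda\) is determined by its set of removable-box restrictions when \(a\ge3\), then finishes. I would write the Jucys--Murphy version and keep the branching argument as a fallback.

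\textbf{Step 2 (\(\psi\)).} First, \(\psi(T_i)=-qT_i^{-1}\) satisfies the quadratic relation: \((-qT^{-1}-q)(-qT^{-1}+1)=0\) follows from the relation for \(T\). The braid relations are clear, so \(\psi\) is an automorphism. We then compute \(\psi(L_{k+1})=q^{-1}(qT_k^{-1})\psi(L_k)(qT_k^{-1})\). Induction with \(\psi(L_1)=1\) gives \(\psi(L_k)=q^{k-1}\cdot\bigl(\text{conjugate of }L_k^{-1}\bigr)\). More precisely, \(\psi(L_{k+1})=q\,T_k^{-1}\psi(L_k)T_k^{-1}\), which matches \(q^{k}(q^{-1}T_kL_kT_k)^{-1}\) up to the ordering of commuting factors. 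So the eigenvalues of \(L_k\) on \(\psi^*S^\lambda\) are \(q^{-\operatorname{ct}_L(k)}\) times a factor independent of \(L\), and that factor must be adjusted to obtain \(q^{\operatorname{ct}_{L'}(k)}\). Transposition sends \(\operatorname{ct}\mapsto-\operatorname{ct}\), so \(L\mapsto L'\) realises the required content vectors of shape \(\lambda'\).

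The main obstacle is exactly this normalisation bookkeeping. A cleaner route is to check the trivial module directly: \(\psi\) sends \(T_i\mapsto q-1-q=-1\), so the trivial module \(S^{(a)}\) pulls back to the sign module \(S^{(1^a)}\). Induction on \(a\) via the branching rule then finishes, since \(\psi\) commutes with the inclusion \(H_{a-1}\subset H_a\) and conjugation reverses the removable-box structure. I would adopt this branching argument as the primary proof for both isomorphisms, falling back on eigenvalue computations only for small \(a\).
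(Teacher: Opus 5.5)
Your primary argument, the branching induction, is correct, but it is not the paper's route. The reason you gave for demoting the Jucys--Murphy argument is a miscalculation.

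\textbf{The Jucys--Murphy route (the paper's proof).} The paper uses exactly your elements: $J_k=q^{1-k}T_{k-1}\cdots T_1^2\cdots T_{k-1}$ satisfies $J_{k+1}=q^{-1}T_kJ_kT_k$. It observes that $\phi(J_k)=(J_k')^{-1}$ and $\psi(J_k)=J_k^{-1}$. Since the seminormal vectors are joint eigenvectors and a content sequence determines the tableau, the joint spectrum fixes the label, and transposition negates contents.

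Your Step~1 is this argument. No commutation is needed there, since $(q\,u_k\widetilde L_ku_k)^{-1}=q^{-1}u_k^{-1}\widetilde L_k^{-1}u_k^{-1}$ directly.

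Your Step~2 computation is wrong. The induction closes with no extra factor: if $\psi(L_k)=L_k^{-1}$, then $\psi(L_{k+1})=q\,T_k^{-1}L_k^{-1}T_k^{-1}=(q^{-1}T_kL_kT_k)^{-1}=L_{k+1}^{-1}$. For example, $\psi(L_2)=qT_1^{-2}=L_2^{-1}$, so there is no factor $q^{k-1}$. An $L$-independent factor also could not be ``adjusted'' away, because eigenvalues are isomorphism invariants. A genuine factor $q^{k-1}$ would give the spectrum $q^{k-1-\operatorname{ct}_L(k)}$, which is not the content spectrum of $\lambda'$, so the argument would fail outright. As written that paragraph is incorrect, but the correct version takes two lines and is the paper's proof.

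\textbf{The branching route.} This does work.
\begin{enumerate}
\item Both $\phi$ and $\psi$ preserve generator indices, so pullback commutes with restriction to $H_{a-1}$.
\item For the base cases $a\le2$: the module $u_i\mapsto q^{-1}$ pulls back along $\phi$ to $T_i\mapsto q$, and $\psi$ sends the trivial character $T_i\mapsto q$ to the sign $T_i\mapsto-1$.
\item Multiplicity-free branching at both parameters identifies the set of one-box removals of the new label with $\{\lambda-\square\}$ for $\phi$, and with $\{\mu':\mu=\lambda-\square\}=\{\lambda'-\square\}$ for $\psi$.
\end{enumerate}

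What you leave unproved is that for $a\ge3$ a partition $\nu$ is determined by its set of one-box removals. This is true and short:
\begin{itemize}
\item If $\nu$ has at least two removable boxes, it is the union of any two distinct members of that set.
\item If it has exactly one, then $\nu=(c^r)$, and for $a\ge3$ it is the only one-corner partition obtained from $(c^{r-1},c-1)$ by adding a box.
\end{itemize}
You should write this out, and also state the branching rule for both $H_a(q)$ and $H_a(q^{-1})$ in the labelling fixed by their trivial modules.

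\textbf{Comparison.} Your route avoids computing images of Jucys--Murphy elements. In exchange it needs an induction, base cases, and a combinatorial lemma, and it still rests on the seminormal form, which is the usual source of the branching rule. The paper's argument is uniform in $a$, non-inductive, and immediate once the Jucys--Murphy eigenvalues are known. Since the correct computation for $\psi$ is as short as the one for $\phi$, there is no reason to treat it as the fallback.
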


\begin{proof}
The elements \(u_i^{-1}\), for \(1\le i<a\), satisfy the defining
relations of \(H_a(q)\), as do the elements \(-qT_i^{-1}\).
The maps \(\phi\) and \(\psi\) have inverses given by \(u_i\mapsto T_i^{-1}\)
and \(T_i\mapsto-qT_i^{-1}\), respectively.
We identify the Specht labels of the pullbacks by comparing the
eigenvalues of the multiplicative Jucys--Murphy elements.  In
\(H_a(q)\), define these elements by
\begin{equation}
\label{eq:type-a-multiplicative-jm}
 J_1=1,\qquad
 J_k=q^{1-k}T_{k-1}\cdots T_1^2\cdots T_{k-1}
 \quad(2\le k\le a).
\end{equation}
In \(H_a(q^{-1})\), define the multiplicative Jucys--Murphy
elements \(J_k'\) by
\[
 J_1'=1,\qquad
 J_k'=q^{k-1}u_{k-1}\cdots u_1^2\cdots u_{k-1}
 \quad(2\le k\le a).
\]
For \(\lambda\vdash a\), the Specht module
\(S^\lambda_{a,\mathrm{std}}\) has a seminormal basis indexed by
the standard tableaux \(L\) of shape \(\lambda\).  The basis
vector indexed by \(L\) is an eigenvector of \(J_k\) with
eigenvalue \(q^{\operatorname{ct}_L(k)}\), for \(1\le k\le a\); see
\cite[(3.20), Theorems~3.22--3.23]{Ram97}.
Our \(T_i\) is \(v\) times Ram's corresponding generator, with
his parameter \(q\) identified with our \(v\).
The resulting matrix coefficients are
rational functions of \(q\), so these models are defined over
\(\mathbb F\).  The complete content sequence determines a standard
tableau, since the addable boxes of every partition have distinct
contents.

Pullback along an algebra isomorphism preserves irreducibility.
Since different standard tableaux have different complete content
sequences, the eigenvalues below determine the Specht label of
each pullback.

The map \(\phi\) fixes the scalar \(q\) and sends \(T_i\) to
\(u_i^{-1}\), so \(\phi(J_k)=(J_k')^{-1}\).
On \(\widetilde S^\lambda_{a,\mathrm{std}}\), the eigenvalue of
\(J_k'\) on the vector indexed by \(L\) is
\(q^{-\operatorname{ct}_L(k)}\).  Thus \(J_k\) acts on
\(\phi^*\widetilde S^\lambda_{a,\mathrm{std}}\) with eigenvalue
\(q^{\operatorname{ct}_L(k)}\), proving the first isomorphism.
For \(2\le k\le a\), substitution in
\eqref{eq:type-a-multiplicative-jm} gives
\[
 \psi(J_k)
 =q^{1-k}(-q)^{2k-2}
   T_{k-1}^{-1}\cdots T_1^{-2}\cdots T_{k-1}^{-1}
 =J_k^{-1};
\]
the same identity holds for \(k=1\).  The content sequence
on \(\psi^*S^\lambda_{a,\mathrm{std}}\) is therefore the negative
of the original sequence.  Transposing a standard tableau negates
its contents, which proves the second isomorphism.
\end{proof}

Consequently, for \(\lambda\vdash a\), pullback along
\(\phi\circ\psi\), given by \(T_i\mapsto-q u_i\), sends
\(\widetilde S^\lambda_{a,\mathrm{std}}\)
to a module isomorphic to \(S^{\lambda'}_{a,\mathrm{std}}\).  For \(a=0\), the corresponding maps are the
identity of \(\mathbb F\) and preserve the empty label.

For \(\lambda\vdash a\), with \(0\le a\le n\), an
\(n\)-standard rook tableau of shape \(\lambda\) is a row- and
column-strict filling by an \(a\)-element subset of \([n]\).
For such a tableau \(L\) and \(1\le i<n\), the filling \(s_iL\)
is obtained by interchanging the labels \(i,i+1\), replacing one
by the other if only one occurs.

Halverson constructs the simple \(R_{n,\mathbb K}(q)\)-modules
indexed by \(\lambda\in\mathscr P_{\le n}\), with seminormal basis
vectors \(\zeta_L\) indexed by the \(n\)-standard rook tableaux
of shape \(\lambda\)
\cite[Theorems~1.3 and~3.2]{Hal04}.  They have forms over \(\mathbb F\):
write \(\operatorname{entries}(L)\) for the set of entries of \(L\),
and replace each basis vector by
\[
 w_L=v^{\sum_{i\in\operatorname{entries}(L)}i}\zeta_L.
\]
For \(1\le i<n\), suppose that \(i\) does not occur in \(L\),
whereas \(i+1\) does.  Since replacing \(i+1\) by \(i\) decreases
the sum of the entries by one, the change of basis transforms Halverson's
formula \cite[Equation~(3.7)]{Hal04} into
\[
 \mathsf T_iw_L=(q-1)w_L+q\,w_{s_iL},
 \qquad
 \mathsf T_iw_{s_iL}=w_L.
\]
Thus the coefficients in these two cases belong to \(\mathbb F\).
When both entries occur, their exchange preserves the sum of the
entries, so the coefficients are unchanged and already belong to
\(\mathbb F\).  When neither occurs, \(\mathsf T_i\) acts by \(q\).
The projection operators are unchanged by the rescaling.  By
\cite[Equation~(3.8)]{Hal04}, for every basis tableau \(L\) and
\(0\le j\le n\), their action is
\begin{equation}
\label{eq:q-rook-projection-action}
 \mathsf P_jw_L=
 \begin{cases}
  w_L,&\operatorname{entries}(L)\cap[j]=\varnothing,\\
  0,&\text{otherwise}.
 \end{cases}
\end{equation}
Thus \(\mathsf P_j\) fixes the basis vectors whose tableaux avoid
\(1,\ldots,j\) and kills all the others.  All generator matrices
therefore have entries in \(\mathbb F\), so the
\(\mathbb F\)-span of the \(w_L\) is stable under all generators.
Denote this module by \(M^{R,n}_\lambda\),
and its scalar extension by \(M^{R,n}_{\lambda,\mathbb K}\).
Since the \(w_L\) also form a \(\mathbb K\)-basis of Halverson's
module, \(M^{R,n}_{\lambda,\mathbb K}\) is isomorphic to that module.
Thus the modules \(M^{R,n}_{\lambda,\mathbb K}\), for
\(\lambda\in\mathscr P_{\le n}\), form a complete set of pairwise
nonisomorphic simple \(R_{n,\mathbb K}(q)\)-modules
\cite[Theorem~3.2]{Hal04}.

For \(\lambda\vdash a\), with \(0\le a\le n\), choosing the
\(a\) entries and replacing them by their relative order gives
\[
 \dim_{\mathbb F}M^{R,n}_\lambda=\binom na f^\lambda.
\]

Consider the natural action map from \(R_n(q)\) to the direct sum
of the endomorphism algebras of \(M^{R,n}_\lambda\), for
\(\lambda\in\mathscr P_{\le n}\).

\begin{proposition}
\label{prop:q-rook-split-decomposition}
The natural action map is an isomorphism of \(\mathbb F\)-algebras
\[
 R_n(q)\xrightarrow{\sim}
 \bigoplus_{\lambda\in\mathscr P_{\le n}}
       \operatorname{End}_{\mathbb F}(M^{R,n}_\lambda).
\]
\end{proposition}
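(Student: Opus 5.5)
We would prove the isomorphism by a dimension count combined with Wedderburn theory after scalar extension, and then descend from \(\mathbb K\) to \(\mathbb F\).

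\emph{Injectivity.} Over \(\mathbb K\), Halverson's theorem \cite[Theorem~3.2]{Hal04} states that \(R_{n,\mathbb K}(q)\) is semisimple. The modules \(M^{R,n}_{\lambda,\mathbb K}\), for \(\lambda\in\mathscr P_{\le n}\), form a complete set of pairwise nonisomorphic simple modules. Since \(\mathbb K\) has characteristic zero, each of them is absolutely irreducible, because Halverson's construction is split. By Artin--Wedderburn, the action map
\[
R_{n,\mathbb K}(q)\to\bigoplus_{\lambda}\operatorname{End}_{\mathbb K}(M^{R,n}_{\lambda,\mathbb K})
\]
is therefore an isomorphism. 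The \(\mathbb F\)-action map tensored with \(\mathbb K\) is exactly this map, since the \(w_L\) form bases over both fields and the generator matrices have entries in \(\mathbb F\). Faithful flatness of \(\mathbb K\) over \(\mathbb F\) then shows that the \(\mathbb F\)-map is injective.

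\emph{Surjectivity by dimension count.} We will use \(\dim_{\mathbb F}R_n(q)=\sum_{a=0}^n\binom na^2a!\), the order of the rook monoid. This is the standard basis statement for the \(q\)-rook monoid algebra (Solomon; Halverson \cite{Hal04}), which we intend to cite rather than reprove. On the target side,
\[
\sum_{\lambda}\Bigl(\binom na f^\lambda\Bigr)^2=\sum_a\binom na^2\sum_{\lambda\vdash a}(f^\lambda)^2=\sum_a\binom na^2a!,
\]
using the dimension formula for \(M^{R,n}_\lambda\) and \(\sum_{\lambda\vdash a}(f^\lambda)^2=a!\). An injective linear map between finite-dimensional spaces of equal dimension is bijective, and the map is an algebra homomorphism by construction.

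\emph{Main obstacle.} The delicate point is making sure that the dimension of \(R_n(q)\) over \(\mathbb F\), as defined by this presentation, really equals \(|R_n|\). The upper bound \(\dim_{\mathbb F}R_n(q)\le|R_n|\) could be obtained from a spanning set of words, but citing the known basis theorem is cleaner. If only a spanning bound is available, the argument still closes. Injectivity gives \(\dim_{\mathbb F}R_n(q)\ge\sum_\lambda(\dim M^{R,n}_\lambda)^2=|R_n|\), and combining this with the spanning bound forces equality, hence bijectivity.
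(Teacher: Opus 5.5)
Your proposal is correct and follows the paper's proof essentially step for step: injectivity comes from semisimplicity of \(R_{n,\mathbb K}(q)\) (Halverson) together with injectivity of scalar extension, and surjectivity comes from the dimension count \(\sum_a\binom na^2a!=\dim_{\mathbb F}R_n(q)\). One minor point: injectivity needs only that the \(\mathbb K\)-action map is injective, which follows from semisimplicity and the completeness of the family of simples alone (the regular module is a sum of them), so your claim that the modules are absolutely irreducible ``since \(\mathbb K\) has characteristic zero'' is unnecessary and misattributed---splitness follows from the dimension count, not from the characteristic.
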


\begin{proof}
We first prove injectivity of the action map.
Let \(x\in R_n(q)\) act by zero on every \(M^{R,n}_\lambda\).
After scalar extension it acts by zero on Halverson's complete family
of simple modules.  By \cite[Theorem~1.3]{Hal04} and scalar extension
in characteristic zero, \(R_{n,\mathbb K}(q)\) is semisimple.
Its regular module is therefore a direct sum of copies of these simples.
Thus the extended element acts by zero on the
identity and is zero.  Injectivity of scalar extension gives \(x=0\).

To prove surjectivity, we compare the dimensions of the source and target:
\[
 \begin{aligned}
 \sum_{\lambda\in\mathscr P_{\le n}}
       \bigl(\dim_{\mathbb F}M^{R,n}_\lambda\bigr)^2
 =\sum_{a=0}^{n}\binom na^2
       \sum_{\lambda\vdash a}(f^\lambda)^2=\sum_{a=0}^{n}\binom na^2a!
 =\dim_{\mathbb F}R_n(q).
 \end{aligned}
\]
For each \(a\ge0\), the identity
\(\sum_{\lambda\vdash a}(f^\lambda)^2=a!\) comes from the
irreducible representations of \(\mathfrak S_a\).  The last equality
follows from \cite[(1.1), Theorem~1.3]{Hal04}.
The source and target have the same finite dimension, so the
injective action map is an isomorphism.
\end{proof}

Proposition~\ref{prop:q-rook-split-decomposition} shows that \(R_n(q)\) is split semisimple over
\(\mathbb F\), and that the \(R_n(q)\)-modules
\(M^{R,n}_\lambda\), for \(\lambda\in\mathscr P_{\le n}\), form
a complete set of pairwise nonisomorphic absolutely irreducible
modules over \(\mathbb F\).

For \(\lambda\vdash a\), with \(0\le a\le n\), define the
mirabolic module \(N^{(n)}_{(\lambda,a)}\) by pulling
\(M^{R,n}_\lambda\) at parameter \(q^{-1}\) back through
\(\Theta_{q^{-1}}:H^{\mathrm{mir}}_n(q)\to R_n(q^{-1})\).
The parameter change replaces every rational coefficient in \(q\)
by the corresponding coefficient in \(q^{-1}\).
Proposition~\ref{prop:q-rook-split-decomposition} and \(\Theta_{q^{-1}}\)
show that the modules \(N^{(n)}_{(\lambda,a)}\) form a complete set of pairwise
nonisomorphic absolutely irreducible mirabolic modules over
\(\mathbb F\).

Fix \(1\le a\le n\) and \(\lambda\vdash a\).  Since
\(P_{n-a}\) is idempotent, \(P_{n-a}N^{(n)}_{(\lambda,a)}\)
is the image of this projection on \(N^{(n)}_{(\lambda,a)}\).

For \(1\le i<a\), the operator \(T_{n-a+i}\) commutes with
\(P_{n-a}\).  Thus, for \(z\in N^{(n)}_{(\lambda,a)}\),
\[
 T_{n-a+i}(P_{n-a}z)=P_{n-a}(T_{n-a+i}z)
 \in P_{n-a}N^{(n)}_{(\lambda,a)},
\]
so these operators preserve the subspace.

For the following corollary and its proof, write
\(t_1,\ldots,t_{a-1}\) for the generators of \(H_a(q)\).
We define their action on the subspace by
\[
 t_i\cdot u=T_{n-a+i}u
 \qquad
 \bigl(u\in P_{n-a}N^{(n)}_{(\lambda,a)},\ 1\le i<a\bigr).
\]
The operators on the right satisfy the defining Hecke relations,
so this makes \(P_{n-a}N^{(n)}_{(\lambda,a)}\) an
\(H_a(q)\)-module.

\begin{corollary}
\label{cor:mirabolic-specht-label}
There is an isomorphism of \(H_a(q)\)-modules
\[
 P_{n-a}N^{(n)}_{(\lambda,a)}\cong S^{\lambda'}_{a,\mathrm{std}}.
\]
\end{corollary}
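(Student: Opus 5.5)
We work entirely inside the explicit model. By definition, \(N^{(n)}_{(\lambda,a)}\) is \(M^{R,n}_\lambda\) at parameter \(q^{-1}\), pulled back along \(\Theta_{q^{-1}}\). This map sends \(P_j\mapsto\mathsf P_j\) and \(T_i\mapsto -q\mathsf T_i\), where \(\mathsf T_i\) now satisfies \((\mathsf T_i-q^{-1})(\mathsf T_i+1)=0\).

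\emph{Step 1: identify the subspace.} By \eqref{eq:q-rook-projection-action}, \(P_{n-a}\) fixes exactly the basis vectors \(w_L\) whose entries avoid \([n-a]\) and kills the others. A tableau of shape \(\lambda\vdash a\) has \(a\) entries, so these are precisely the tableaux with entry set \(\{n-a+1,\ldots,n\}\). Hence \(P_{n-a}N^{(n)}_{(\lambda,a)}\) has basis \(w_L\), with \(L\) running over standard fillings of \(\lambda\) by \(\{n-a+1,\ldots,n\}\). Subtracting \(n-a\) from every entry identifies these fillings with \(\operatorname{SYT}(\lambda)\). In particular the subspace has dimension \(f^\lambda=f^{\lambda'}\).

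\emph{Step 2: compute the Hecke action.} For \(1\le i<a\), both labels \(n-a+i\) and \(n-a+i+1\) occur in every such \(L\). We are therefore in the case of Halverson's formula \cite[Equation~(3.7)]{Hal04} where both entries occur, and there the action is the seminormal action of the Hecke generator on Young's seminormal basis. The \(\mathsf T_{n-a+i}\), restricted to this subspace, therefore give an \(H_a(q^{-1})\)-module whose basis is indexed by \(\operatorname{SYT}(\lambda)\) with the standard seminormal matrices, and which is irreducible. I would identify it with \(\widetilde S^\lambda_{a,\mathrm{std}}\) by the Jucys--Murphy argument of Lemma~\ref{lem:hecke-specht-labels}. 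The products \(\mathsf T_{n-a+k-1}\cdots\mathsf T_{n-a+1}^2\cdots\mathsf T_{n-a+k-1}\), suitably normalized, should act on \(w_L\) by \(q^{-\operatorname{ct}_L(n-a+k)}\). The content sequence then recovers \(L\), and so the label \(\lambda\).

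This is the step I expect to be the main obstacle. Halverson's seminormal formulas are written in his own normalization, with parameter \(v\) and possibly inverted contents or a sign convention. One must check carefully that the restricted module really is labelled by \(\lambda\) and not \(\lambda'\) after the rescaling \(w_L=v^{\sum i}\zeta_L\) and the specialization \(q\mapsto q^{-1}\). I would test this first on the one-row and one-column shapes with \(a=2\). For \(\lambda=(2)\), the single tableau has \(n-1,n\) in the same row, and the formula should give \(\mathsf T_{n-1}\) acting by the eigenvalue \(q^{-1}\), which is the trivial-type eigenvalue at parameter \(q^{-1}\). This fixes the convention. The general case then follows from the contents computation, since the diagonal seminormal coefficients are governed by content differences.

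\emph{Step 3: transport along \(\Theta_{q^{-1}}\).} The \(H_a(q)\)-action \(t_i\mapsto T_{n-a+i}\) becomes \(t_i\mapsto -q\,\mathsf T_{n-a+i}\) on a module isomorphic to \(\widetilde S^\lambda_{a,\mathrm{std}}\) for \(H_a(q^{-1})\), with generators \(u_i=\mathsf T_{n-a+i}\). This is exactly pullback along \(\phi\circ\psi\colon T_i\mapsto -q u_i\). By the remark following Lemma~\ref{lem:hecke-specht-labels}, that pullback is isomorphic to \(S^{\lambda'}_{a,\mathrm{std}}\), which proves the corollary. For \(a=1\) both sides are one-dimensional over \(H_1(q)=\mathbb F\) with trivial action, so that case is immediate.
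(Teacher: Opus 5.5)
Your proposal is correct and is essentially the paper's proof. The paper also restricts to the \(w_L\) with entry set \(\{n-a+1,\ldots,n\}\), uses Halverson's both-entries formula \cite[(3.7), Remark~3.1]{Hal04} at parameter \(q^{-1}\) to match the action of \(u_i=\mathsf T_{n-a+i}\) with the seminormal model of \(\widetilde S^\lambda_{a,\mathrm{std}}\), and then pulls back along \(\phi\circ\psi\) via Lemma~\ref{lem:hecke-specht-labels}; the label check you flag in Step~2 closes cleanly through \cite[Proposition~3.5]{Hal04}: on this subspace \(\mathsf J_{n-a+1}\) acts by \(1\), so the recursion \(\mathsf J_i=q\,\mathsf T_{i-1}\mathsf J_{i-1}\mathsf T_{i-1}\) at parameter \(q^{-1}\) restricts \(\mathsf J_{n-a+k}\) to exactly \(J_k'=q^{k-1}u_{k-1}\cdots u_1^2\cdots u_{k-1}\), with eigenvalue \(q^{-\operatorname{ct}_L(n-a+k)}\) on \(w_L\), and since content sequences determine tableaux this forces the label \(\lambda\).
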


\begin{proof}
Let \(E=(\phi\circ\psi)^*\widetilde S^\lambda_{a,\mathrm{std}}\).
Its underlying vector space is that of
\(\widetilde S^\lambda_{a,\mathrm{std}}\), with \(t_i\) acting
as \(-q u_i\), for \(1\le i<a\).  Choose its seminormal basis
\(\{b_Q:Q\in\operatorname{SYT}(\lambda)\}\) in the normalization
of \cite[(3.7), Remark~3.1]{Hal04}, at parameter \(q^{-1}\).

Write \(w_L\) also for the basis of \(N^{(n)}_{(\lambda,a)}\)
obtained from the rook basis by parameter inversion and pullback.
Formula~\eqref{eq:q-rook-projection-action} also describes the action
of \(P_j\) on this basis: parameter inversion leaves its
coefficients \(0,1\) unchanged, and \(\Theta_{q^{-1}}\) sends
\(P_j\) to \(\mathsf P_j\).  Hence \(P_{n-a}\) retains exactly
those basis vectors whose tableaux have no entries in \([n-a]\).
Since each tableau has \(a\) entries, its entry set must then be
\(\{n-a+1,\ldots,n\}\).  For each such tableau \(L\), let
\(\bar L\) be obtained by subtracting \(n-a\) from every entry.
Define an \(\mathbb F\)-linear map by
\[
 f:P_{n-a}N^{(n)}_{(\lambda,a)}\longrightarrow E,
 \qquad w_L\longmapsto b_{\bar L}.
\]
Subtracting \(n-a\) is a bijection on the indexing tableaux, with
inverse given by adding \(n-a\) to every entry.  Thus \(f\) sends
a basis bijectively to a basis and is a linear isomorphism.

We check compatibility with the generators.  For \(1\le i<a\),
both \(n-a+i\) and \(n-a+i+1\) occur in \(L\), and relabelling
preserves box contents and standardness.  The rescaling from \(\zeta_L\) to
\(w_L\) is constant on this entry set and does not change the
generator matrices.  Hence \cite[(3.7), Remark~3.1]{Hal04} identifies
the action of \(\mathsf T_{n-a+i}\) at parameter \(q^{-1}\)
with that of \(u_i\) on the basis \(b_{\bar L}\).
Since the pullback sends \(T_{n-a+i}\) to
\(-q\mathsf T_{n-a+i}\), we obtain
\[
 f(t_i\cdot w_L)=f(T_{n-a+i}w_L)
 =-q u_i f(w_L)=t_i\cdot f(w_L).
\]
Thus \(f\) is an \(H_a(q)\)-module isomorphism.
Finally, Lemma~\ref{lem:hecke-specht-labels} gives
\(E\cong S^{\lambda'}_{a,\mathrm{std}}\), completing the proof.
\end{proof}

For \(1\le a\le n\), we also have
\(P_{n-a+1}N^{(n)}_{(\lambda,a)}=0\): no tableau with \(a\)
entries can avoid all of \(1,\ldots,n-a+1\).
For \(a=0\), the module is one-dimensional, all \(P_j\) act by
\(1\), and all \(T_i\) act by \(-1\).

Write \(\chi^{(n)}_\lambda\) for the character of
\(N^{(n)}_{(\lambda,|\lambda|)}\), and \(\chi^{A,\lambda}\)
for the character of \(S^\lambda_{a,\mathrm{std}}\),
where \(a=|\lambda|\).  The mirabolic character table has rows
indexed by \(\lambda\in\mathscr P_{\le n}\) and columns indexed
by \(\widehat T^{(n)}_\mu\), for \(\mu\in\mathscr P_{\le n}\).

\subsection{Supersymmetric functions}

Let
\[
        x=(x_1,\ldots,x_m),
        \qquad
        y=(y_1,\ldots,y_s).
\]
All these variables commute: \(x_i\) records the weight of the even
basis vector \(v_i\), and \(y_j\) that of the odd basis vector
\(v_{m+j}\).  With a formal variable \(t\), define the complete
supersymmetric functions by
\[
        \sum_{r\ge0}h_r(x|y)t^r
        =
        \frac{\prod_{j=1}^{s}(1+y_jt)}
             {\prod_{i=1}^{m}(1-x_it)}.
\]
Set \(h_r(x|y)=0\) for \(r<0\); the series gives \(h_0(x|y)=1\).
For a partition \(\lambda\) and an integer \(d\ge\ell(\lambda)\), define
\[
 s_\lambda(x|y)=
 \det\bigl(h_{\lambda_i-i+j}(x|y)\bigr)_{1\le i,j\le d},
\]
with \(s_\varnothing(x|y)=1\).  Increasing \(d\) by one adds a last
row \((0,\ldots,0,1)\), so the determinant is independent of \(d\).
By \cite[Corollary~6.5]{BR87}, the polynomial \(s_\lambda(x|y)\)
is nonzero precisely for the \((m,s)\)-hook partitions
\[
        H(m,s)=\{\lambda\mid \lambda_{m+1}\le s\}.
\]
For \(\lambda\in H(m,s)\), write \(L_{m|s}(\lambda)\) for the
covariant irreducible \(U_{v^{-1}}(\mathfrak{gl}(m|s))\)-module over
\(\mathbb K\) with label \(\lambda\).

The mirabolic tensor space contains one extra even trivial summand.  We record
it by the extended alphabet
\[
        z=(z_0,z_1,\ldots,z_m)=(1,x_1,\ldots,x_m).
\]
For a composition \(\alpha=(\alpha_1,\ldots,\alpha_p)\), define
\[
        M_\alpha(z)
        =
        \sum_{0\le i_1<\cdots<i_p\le m}
        z_{i_1}^{\alpha_1}\cdots z_{i_p}^{\alpha_p},
        \qquad
        M_\varnothing(z)=1.
\]
Similarly, for \(\beta=(\beta_1,\ldots,\beta_t)\), set
\[
        M_\beta(y)
        =
        \sum_{1\le j_1<\cdots<j_t\le s}
        y_{j_1}^{\beta_1}\cdots y_{j_t}^{\beta_t},
        \qquad
        M_\varnothing(y)=1.
\]

For ordinary symmetric functions, let \(p_r,h_r,e_r\) denote the
power-sum, complete, and elementary symmetric functions of degree \(r\),
respectively.  For a sequence \(\alpha=(\alpha_1,\ldots,\alpha_d)\) of
nonnegative integers, put \(h_\alpha=\prod_i h_{\alpha_i}\) and
\(e_\alpha=\prod_i e_{\alpha_i}\).  Their coefficient field is \(\mathbb F\), and
\(h_0=e_0=1\), while \(h_r=e_r=0\) for \(r<0\).  A virtual alphabet
\(A\) specifies a specialization \(f\mapsto f[A]\) by its power sums
\(p_r[A]\).  We use
\(p_r[A+B]=p_r[A]+p_r[B]\),
\(p_r[A-B]=p_r[A]-p_r[B]\), and
\(p_r[AB]=p_r[A]p_r[B]\).  A single letter \(u\) has
\(p_r[u]=u^r\); in particular \(p_r[q-1]=q^r-1\).
For \(\mu\subseteq\lambda\) and \(d\ge\ell(\lambda)\), the skew
Schur function is
\[
 s_{\lambda/\mu}[A]
 =\det\bigl(h_{\lambda_i-\mu_j-i+j}[A]\bigr)_{1\le i,j\le d}.
\]
It is zero if \(\mu\not\subseteq\lambda\), and the empty skew diagram
has value \(1\).  The super specialization is given by
\(p_r(x|y)=\sum_i x_i^r+(-1)^{r-1}\sum_j y_j^r\).

For a formal variable \(t\), define
\[
 \sigma_t[A]=\sum_{r\ge0}h_r[A]t^r
 =\exp\left(\sum_{r\ge1}\frac{p_r[A]t^r}{r}\right).
\]
For an alphabet \(Z=(z_1,z_2,\ldots)\), put
\(\sigma_Z[A]=\prod_{i\ge1}\sigma_{z_i}[A]\).  Products and sums in
infinitely many letters are interpreted in the degree completion, or
coefficientwise in finitely many specified variables.  Thus
\[
 \sigma_t[q-1]=\frac{1-t}{1-qt},
 \qquad \sigma_{-t}[1-q]=\frac{1+qt}{1+t}.
\]
Here \(-t\), and later \(-Z\), mean ordinary negation of variables;
they are distinguished from the additive inverse of a virtual alphabet.
The notation \([t^r]\) denotes the coefficient of \(t^r\); for a
finite-support sequence \(\alpha\) of nonnegative integers,
\([Z^\alpha]\) extracts \(\prod_i z_i^{\alpha_i}\).

\section{Quantum super Schur--Weyl duality}
\label{sec:super-mirabolic-SW}

We first construct the commuting tensor actions and recall quantum super
Schur--Weyl duality in type \(A\).  We then prove the mirabolic
Schur--Weyl duality and determine the bimodule decomposition.
Finally, we decompose the quantum image by tensor degree.

\subsection{Tensor actions and type \texorpdfstring{\(A\)}{A} duality}

We use the fields \(\mathbb F=\mathbb C(q)\subset\mathbb K=\mathbb C(v)\),
where \(q=v^2\), and the spaces \(V\) and \(\widetilde V\) fixed in
Section~\ref{sec:preliminaries}.  Tensor powers of \(V\) and
\(\widetilde V\) in this section are over \(\mathbb K\).
The Hecke matrices are defined over \(\mathbb F\); their scalar extensions
will act on the same ordered tensor bases over \(\mathbb K\).

Write \(U=U_{v^{-1}}(\mathfrak{gl}(m|s))\) for the quantum general linear
superalgebra with the Hopf superalgebra convention of \cite[Section~3]{Mit06}.
For \(1\le j\le m+s\), let \(|v_j|\in\{0,1\}\) be the parity of \(v_j\),
so \(|v_j|=0\) for \(j\le m\) and \(|v_j|=1\) for \(j>m\).
Its Cartan generators are \(K_j^{\pm1}\), and its root generators are
\(E_i,F_i\), for \(1\le i<m+s\), of parity
\(|v_i|+|v_{i+1}|\pmod2\).  Put
\[
 A_i=K_i^{(-1)^{|v_i|}}K_{i+1}^{-(-1)^{|v_{i+1}|}}
 \qquad(1\le i<m+s).
\]
The coproduct is
\begin{equation}
\label{eq:quantum-coproduct-convention}
\begin{aligned}
 \Delta(K_j)&=K_j\otimes K_j,\\
 \Delta(E_i)&=E_i\otimes A_i^{-1}+1\otimes E_i,\\
 \Delta(F_i)&=F_i\otimes1+A_i\otimes F_i.
\end{aligned}
\end{equation}
Products of homogeneous tensor operators use the rule
\((a\otimes b)(u\otimes w)=(-1)^{|b||u|}au\otimes bw\), where
\(|b|\) and \(|u|\) are their parities.  With \(\delta\) denoting the Kronecker delta, the natural action is
\[
 K_jv_b=v^{-\delta_{jb}}v_b,\qquad
 E_iv_b=\delta_{b,i+1}v_i,\qquad
 F_iv_b=\delta_{b,i}v_{i+1}
 \quad(1\le b\le m+s).
\]
On the even vector \(v_\star\),
all \(K_j\) act by \(1\), and all \(E_i,F_i\) act by \(0\).
Thus \(\widetilde V=\mathbb Kv_\star\oplus V\) is a \(U\)-module.
The extended basis has index set \(\{\star,1,\ldots,m+s\}\), ordered by
\(\star<1<\cdots<m+s\), and \(|v_\star|=0\).
These conventions include \(m=0\) and \(s=0\);
a root is odd precisely when \(|v_i|\ne |v_{i+1}|\).

For \(a\ge0\), let \(\rho_a\) denote the standard tensor action on
\(V^{\otimes a}\), obtained by iterating
\eqref{eq:quantum-coproduct-convention}; see
\cite[Sections~2.1 and~3.2]{BKK00},
with quantum parameter \(v^{-1}\).
Applying the same construction to
\(\widetilde V=\mathbb Kv_\star\oplus V\) defines
\(\widetilde\rho_n\) on \(\widetilde V^{\otimes n}\).
In degree zero the action is the counit on \(\mathbb K\).
For a word \(\mathbf b=(b_1,\ldots,b_d)\) in the natural or extended
index alphabet,
write \(v_{\mathbf b}=v_{b_1}\otimes\cdots\otimes v_{b_d}\), and define
its inversion number by
\[
 \operatorname{inv}(\mathbf b)
 =\#\{(r,t):1\le r<t\le d,\ b_r>b_t\}.
\]
To compare the Hecke action below with the standard tensor operators,
define invertible diagonal maps \(D_a\) on \(V^{\otimes a}\) and
\(\widetilde D_n\) on \(\widetilde V^{\otimes n}\) by
\[
 D_av_{\mathbf b}=v^{-\operatorname{inv}(\mathbf b)}v_{\mathbf b},
 \qquad
 \widetilde D_nv_{\mathbf b}
 =v^{-\operatorname{inv}(\mathbf b)}v_{\mathbf b},
 \qquad D_0=1,
\]
using the natural and extended alphabets, respectively.  The quantum
actions used below are
\begin{equation}
\label{eq:conjugated-quantum-actions}
 \Psi_a(u)=D_a\rho_a(u)D_a^{-1},\qquad
 \widetilde\Psi_n(u)
 =\widetilde D_n\widetilde\rho_n(u)\widetilde D_n^{-1}
 \quad(u\in U).
\end{equation}
All centralizers below are full centralizers in ordinary endomorphism
algebras.

Define \(R\in\operatorname{End}_{\mathbb F}
(\widetilde V_{\mathbb F}^{\otimes2})\) on the extended tensor basis,
for \(i,j\in\{\star,1,\ldots,m+s\}\), by
\begin{equation}
\label{eq:mirabolic-tensor-operator}
R(v_i\otimes v_j)=
\begin{cases}
(1-q)v_i\otimes v_j+(-1)^{|v_i||v_j|}v_j\otimes v_i,&i<j,\\
v_i\otimes v_i,&i=j,\ |v_i|=0,\\
-qv_i\otimes v_i,&i=j,\ |v_i|=1,\\
(-1)^{|v_i||v_j|}qv_j\otimes v_i,&i>j.
\end{cases}
\end{equation}
On \(W_{\mathbb F,n}=\widetilde V_{\mathbb F}^{\otimes n}\), let
\(R_i\) act as \(R\) in positions \(i,i+1\) and as the identity in
the other positions, and put \(\mathcal T_i=-R_i\) for \(1\le i<n\).
Use the same symbols for their scalar extensions to
\(\widetilde V^{\otimes n}\).
For \(0\le j\le n\), define the projection \(\mathcal P_j\) by
\begin{equation}
\label{eq:tensor-projection-action}
 \mathcal P_jv_{\mathbf b}=
 \begin{cases}
 v_{\mathbf b},&b_1=\cdots=b_j=\star,\\
 0,&\text{otherwise,}
 \end{cases}
 \qquad \mathcal P_0=\operatorname{id}.
\end{equation}
For a reduced expression \(w=s_{i_1}\cdots s_{i_d}\) in \(\mathfrak S_n\),
write \(\mathcal T_w=\mathcal T_{i_1}\cdots\mathcal T_{i_d}\).

Let \(S_i\) be the standard tensor operators of
\cite[Equation~(2)]{Mit06}, with its parameter set to \(v^{-1}\),
on the ordered basis \(\{v_\star,v_1,\ldots,v_{m+s}\}\).
By \cite[Theorem~2.1]{Mit06}, the operators \(S_i\) satisfy the braid relations
and \(S_i^2=(v^{-1}-v)S_i+1\).  To compare their coefficients with
those of \(R_i\), let \(s_i\mathbf b\) be obtained from \(\mathbf b\)
by exchanging its letters in positions \(i,i+1\).  When
\(b_i\ne b_{i+1}\), conjugation by \(\widetilde D_n\) multiplies the coefficient of
\(v_{s_i\mathbf b}\) in \(vS_iv_{\mathbf b}\) by
\[
 v^{\operatorname{inv}(\mathbf b)-\operatorname{inv}(s_i\mathbf b)}
 =\begin{cases}
 v^{-1},&b_i<b_{i+1},\\
 v,&b_i>b_{i+1}.
 \end{cases}
\]
Thus the exchange coefficients become
\((-1)^{|v_{b_i}||v_{b_{i+1}}|}\) and
\((-1)^{|v_{b_i}||v_{b_{i+1}}|}q\), respectively.
The diagonal coefficients are unchanged.  This gives
\begin{equation}
\label{eq:R-conjugation-standard}
 R_i=\widetilde D_n(vS_i)\widetilde D_n^{-1},
 \qquad R_i^2=(1-q)R_i+q.
\end{equation}
This also proves the braid relations for \(R_i\) and \(\mathcal T_i\),
and makes \(\mathcal T_w\) independent of its reduced expression.
Since the entries lie in \(\mathbb F\), these matrix identities hold
over \(\mathbb F\) as well.  The same calculation on \(V^{\otimes a}\) gives the identity with
\(D_a\) in place of \(\widetilde D_n\).

Consider the assignments
\(T_i\mapsto\mathcal T_i\) and \(P_j\mapsto\mathcal P_j\).

\begin{proposition}
\label{prop:super-mirabolic-representation}
These assignments define a representation of
\(H^{\mathrm{mir}}_n(q)\) on \(W_{\mathbb F,n}\) whose scalar
extension commutes with \(\widetilde\Psi_n\).
\end{proposition}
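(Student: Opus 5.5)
We verify the defining relations of \(H^{\mathrm{mir}}_n(q)\) directly on the ordered tensor basis of \(W_{\mathbb F,n}\), and then check commutation with \(\widetilde\Psi_n\) separately for the \(\mathcal T_i\) and the \(\mathcal P_j\). Throughout we use \eqref{eq:R-conjugation-standard} as the main input.

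\emph{Hecke relations.} The braid relations for the \(\mathcal T_i=-R_i\) are already recorded after \eqref{eq:R-conjugation-standard}. From \(R_i^2=(1-q)R_i+q\) we get
\[
\mathcal T_i^2=R_i^2=-(1-q)\mathcal T_i+q=(q-1)\mathcal T_i+q,
\]
which is the relation \eqref{e:Hecke-reln}.

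\emph{Relations among the \(\mathcal P_j\).} Each \(\mathcal P_j\) is a diagonal \(0/1\) projection. The support of \(\mathcal P_i\) (first \(i\) letters equal to \(\star\)) is contained in that of \(\mathcal P_j\) for \(j<i\). Hence \(\mathcal P_i^2=\mathcal P_i\) and \(\mathcal P_i\mathcal P_j=\mathcal P_j\mathcal P_i=\mathcal P_i\).

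\emph{Mixed relations.} The key observation is that \(R_j\) only permutes the two letters in positions \(j,j+1\), up to scalars and a diagonal term. So every basis word in \(R_jv_{\mathbf b}\) has the same multiset of letters in those two positions as \(\mathbf b\), and the same letters elsewhere.

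For \(i<j\), positions \(j,j+1\) lie outside \([i]\). Therefore \(R_j\) preserves the condition \(b_1=\cdots=b_i=\star\), and \(\mathcal P_i\mathcal T_j=\mathcal T_j\mathcal P_i\).

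For \(j<i\), both positions lie inside \([i]\). On a word with stars there, \eqref{eq:mirabolic-tensor-operator} gives \(R(v_\star\otimes v_\star)=v_\star\otimes v_\star\), since \(v_\star\) is even. Hence \(\mathcal T_j\mathcal P_i=-\mathcal P_i\). Conversely, if \(\mathbf b\) is not all stars in positions \(j,j+1\), neither is any word occurring in \(R_jv_{\mathbf b}\), so \(\mathcal P_i\mathcal T_j v_{\mathbf b}=0\). This gives \(\mathcal P_i\mathcal T_j=-\mathcal P_i\) as well.

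\emph{The relation \(\mathcal P_{i+1}=-\mathcal P_i\mathcal T_i^{-1}\mathcal P_i=\mathcal P_iR_i^{-1}\mathcal P_i\).} This is the step needing the only genuine computation. We use \(R^{-1}=q^{-1}(R-(1-q))\).

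Apply \(\mathcal P_iR_i^{-1}\) to a word with stars in positions \(1,\dots,i\) and letter \(c\) in position \(i+1\). If \(c=\star\), \(R^{-1}\) fixes \(v_\star\otimes v_\star\), so the word survives. If \(c\neq\star\), then \(\star<c\) and
\[
R(v_\star\otimes v_c)=(1-q)\,v_\star\otimes v_c+v_c\otimes v_\star .
\]
Hence \(R^{-1}(v_\star\otimes v_c)=q^{-1}v_c\otimes v_\star\). This has the nonstar letter \(c\) in position \(i\), so it is killed by \(\mathcal P_i\). The result is exactly \(\mathcal P_{i+1}\).

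All these matrices have entries in \(\mathbb F\), so we obtain a representation on \(W_{\mathbb F,n}\).

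\emph{Commutation with \(\widetilde\Psi_n\).} By \cite[Theorem~2.1]{Mit06} (quantum super Schur--Weyl duality), the operators \(S_i\) commute with \(\widetilde\rho_n(U)\). Conjugating by \(\widetilde D_n\) and using \eqref{eq:R-conjugation-standard}, the \(R_i\), hence the \(\mathcal T_i\), commute with \(\widetilde\Psi_n(U)\).

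For \(\mathcal P_j\), write \(\mathcal P_j=\pi^{\otimes j}\otimes\mathrm{id}^{\otimes(n-j)}\), where \(\pi\) is the projection of \(\widetilde V\) onto the summand \(\mathbb Kv_\star\) along \(V\). Then:
\begin{itemize}
\item \(\pi\) is an even \(U\)-module map, since \(\widetilde V=\mathbb Kv_\star\oplus V\) is a decomposition into submodules;
\item a tensor product of even module maps is a module map for the iterated coproduct, so no signs arise and \(\mathcal P_j\) commutes with \(\widetilde\rho_n(U)\);
\item \(\mathcal P_j\) and \(\widetilde D_n\) are both diagonal in the tensor basis, so they commute.
\end{itemize}
Hence \(\mathcal P_j\) commutes with \(\widetilde\Psi_n(U)\).

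The main obstacle is the relation \(\mathcal P_{i+1}=\mathcal P_iR_i^{-1}\mathcal P_i\), where the sign conventions (\(\mathcal T_i=-R_i\), and the ordering of \(\star\) first) must be checked to produce exactly the factor \(q^{-1}\) on the swapped term and \(1\) on the star–star term.
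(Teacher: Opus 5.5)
Your verification of the defining relations, and of the commutation of the \(\mathcal P_j\) with \(\widetilde\Psi_n(U)\) via \(\mathcal P_j=\operatorname{pr}_\star^{\otimes j}\otimes\operatorname{id}^{\otimes(n-j)}\), follows the paper's proof essentially step by step. This includes the key computation \(R^{-1}(v_\star\otimes v_c)=q^{-1}v_c\otimes v_\star\) behind \(\mathcal P_{i+1}=\mathcal P_iR_i^{-1}\mathcal P_i\).

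The one step that needs more justification is the claim that the \(S_i\) commute with \(\widetilde\rho_n(U)\). The \(S_i\) are Mitsuhashi's operators on the \((m+1|s)\)-dimensional basis \(\{v_\star,v_1,\ldots,v_{m+s}\}\). The paper cites \cite[Propositions~4.1--4.2]{Mit06} for their commutation, and uses Theorem~2.1 only for the braid and quadratic relations. That commutation result concerns \(U_{v^{-1}}(\mathfrak{gl}(m+1|s))\) acting on tensor powers of its natural module. However, \(\widetilde V\) is not the natural module of \(U=U_{v^{-1}}(\mathfrak{gl}(m|s))\), and \(S_i\) mixes \(v_\star\) with the \(v_b\). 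So the result cannot be applied verbatim, nor reduced to \(V^{\otimes n}\).

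The paper closes this by noting that \(\widetilde V\) is the natural \(U_{v^{-1}}(\mathfrak{gl}(m+1|s))\)-module restricted to the Hopf subalgebra generated by the \(K_j^{\pm1}\), \(E_i\), \(F_i\) attached to the last \(m+s\) basis vectors. The vector \(v_\star\) is trivial for this subalgebra. By \eqref{eq:quantum-coproduct-convention}, the coproducts of these generators involve only these generators, so their iterated tensor action on \(\widetilde V^{\otimes n}\) is exactly \(\widetilde\rho_n\). Hence \(\widetilde\rho_n(U)\) lies in the larger image, which commutes with the \(S_i\). With this observation added, your proof is complete.
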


\begin{proof}
We first verify the defining relations in
Subsection~\ref{subsec:mirabolic-presentation}.
Equation~\eqref{eq:R-conjugation-standard} gives the braid relations and
\(\mathcal T_i^2=(q-1)\mathcal T_i+q\).
By \eqref{eq:tensor-projection-action}, the projections satisfy
\(\mathcal P_i^2=\mathcal P_i\) and
\(\mathcal P_i\mathcal P_j=\mathcal P_j\mathcal P_i=\mathcal P_i\)
for \(j<i\).  For \(i<j\), the projection \(\mathcal P_i\) and the
operator \(\mathcal T_j\) act on disjoint sets of positions, so they
commute.  For \(j<i\), the two positions affected by \(\mathcal T_j\)
lie among the first \(i\).  On two \(\star\) letters it acts by \(-1\).
On every word having a nonstar letter in these first \(i\) positions,
it preserves the presence of such a letter.  Thus
\[
 \mathcal P_i\mathcal T_j=\mathcal T_j\mathcal P_i=-\mathcal P_i
 \qquad(j<i).
\]

For the recursion, fix a tensor basis vector \(v_{\mathbf b}\).
The quadratic relation gives
\(\mathcal T_i^{-1}=q^{-1}(\mathcal T_i-(q-1))\).
If the first \(i\) letters of \(\mathbf b\) are not all \(\star\),
then both \(\mathcal P_{i+1}\) and
\(-\mathcal P_i\mathcal T_i^{-1}\mathcal P_i\) send
\(v_{\mathbf b}\) to zero.
If they are all \(\star\), we use the two adjacent formulas
\[
 \mathcal T_i^{-1}(v_\star\otimes v_\star)=-v_\star\otimes v_\star,
 \qquad
 \mathcal T_i^{-1}(v_\star\otimes v_b)=-q^{-1}v_b\otimes v_\star
 \quad(1\le b\le m+s).
\]
If \(b_{i+1}=\star\), the first formula shows that
\(-\mathcal P_i\mathcal T_i^{-1}\mathcal P_i\) fixes
\(v_{\mathbf b}\).  Otherwise, the second formula moves the natural
letter into position \(i\), and the leftmost \(\mathcal P_i\)
sends the resulting vector to zero.  These are exactly the two cases
in \eqref{eq:tensor-projection-action} for \(\mathcal P_{i+1}\).
Therefore
\(\mathcal P_{i+1}=-\mathcal P_i\mathcal T_i^{-1}\mathcal P_i\).
All the defining relations hold over \(\mathbb F\).

We next verify commutation with \(\widetilde\Psi_n(U)\).
The extended space \(\widetilde V=\mathbb Kv_\star\oplus V\) is the natural module for
\(U_{v^{-1}}(\mathfrak{gl}(m+1|s))\), restricted to the Hopf subalgebra
on the last \(m+s\) basis vectors.  Its first even vector is trivial
for this subalgebra, so the induced tensor action restricts to
\(\widetilde\rho_n\).  The standard operators \(S_i\) commute with
\(\widetilde\rho_n(U)\) by \cite[Propositions~4.1--4.2]{Mit06}.  Equations
\eqref{eq:conjugated-quantum-actions} and
\eqref{eq:R-conjugation-standard} therefore imply
\[
 [R_i,\widetilde\Psi_n(u)]
 =\widetilde D_n[vS_i,\widetilde\rho_n(u)]\widetilde D_n^{-1}=0
 \quad(u\in U).
\]
Let \(\operatorname{pr}_\star:\widetilde V\to\widetilde V\)
be the projection onto \(\mathbb Kv_\star\) along \(V\), so that
\(\operatorname{pr}_\star(v_\star)=v_\star\) and
\(\operatorname{pr}_\star(V)=0\).
Both summands are \(U\)-submodules, so \(\operatorname{pr}_\star\)
is an even \(U\)-module endomorphism.
By the defining action in \eqref{eq:tensor-projection-action},
\[
 \mathcal P_j
 =\operatorname{pr}_\star^{\otimes j}
  \otimes\operatorname{id}_{\widetilde V}^{\otimes(n-j)}
 \qquad(0\le j\le n).
\]
Each factor is an even module map, so \(\mathcal P_j\) commutes
with the action induced by the iterated coproduct:
\([\mathcal P_j,\widetilde\rho_n(u)]=0\) for every \(u\in U\).
Both \(\mathcal P_j\) and \(\widetilde D_n\) are diagonal in the word
basis.  They therefore commute, and
\[
 [\mathcal P_j,\widetilde\Psi_n(u)]
 =\widetilde D_n[\mathcal P_j,\widetilde\rho_n(u)]
      \widetilde D_n^{-1}=0.
\]
The operators \(\mathcal T_i=-R_i\) and \(\mathcal P_j\) generate
the mirabolic image, completing the proof of commutation.
\end{proof}

Denote this representation and its scalar extension by
\[
\begin{aligned}
 \Phi_{m|s,n}&:H^{\mathrm{mir}}_n(q)
 \longrightarrow\operatorname{End}_{\mathbb F}(W_{\mathbb F,n}),\\
 \Phi_{m|s,n,\mathbb K}&:H^{\mathrm{mir}}_{n,\mathbb K}(q)
 \longrightarrow\operatorname{End}_{\mathbb K}(\widetilde V^{\otimes n}).
\end{aligned}
\]
Here \(H^{\mathrm{mir}}_{n,\mathbb K}(q)
=\mathbb K\otimes_{\mathbb F}H^{\mathrm{mir}}_n(q)\), as in the Introduction.

For \(a\ge0\), write
\(H_{a,\mathbb K}(q)=\mathbb K\otimes_{\mathbb F}H_a(q)\), and let
\[
 \Phi^A_{m|s,a}:H_{a,\mathbb K}(q)
 \longrightarrow\operatorname{End}_{\mathbb K}(V^{\otimes a}),
 \qquad T_i\longmapsto-R_i
\]
be the restricted action in type \(A\).  In degree zero this is the scalar
action on \(V^{\otimes0}=\mathbb K\).

For a weight \((\alpha;\beta)\), the generator \(K_j\) acts by
\(v^{-\alpha_j}\) for \(j\le m\) and by \(v^{-\beta_{j-m}}\)
for \(j>m\).  For \(\lambda\in H(m,s)\), the module
\(L_{m|s}(\lambda)\) introduced above is the irreducible covariant
\(U\)-module of highest weight
\[
 \bigl(\lambda_1,\ldots,\lambda_m;
 \max(\lambda'_1-m,0),\ldots,\max(\lambda'_s-m,0)\bigr).
\]
The grading is the one inherited from tensor space;
the highest-weight vector has parity
\(\sum_{i>m}\lambda_i\pmod2\).
We record the standard quantum super Schur--Weyl duality in these conventions.

\begin{proposition}
\label{prop:type-a-schur-weyl}
For every \(a\ge0\), the images \(\Psi_a(U)\) and
\(\Phi^A_{m|s,a}(H_{a,\mathbb K}(q))\) are mutual full centralizers
on \(V^{\otimes a}\), with bimodule decomposition
\begin{equation}
\label{eq:type-a-bimodule-decomposition}
 V^{\otimes a}\cong
 \bigoplus_{\substack{\lambda\vdash a\\\lambda\in H(m,s)}}
 L_{m|s}(\lambda)\otimes_{\mathbb K}
 S^{\lambda'}_{a,\mathrm{std},\mathbb K}.
\end{equation}
\end{proposition}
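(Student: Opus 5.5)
The plan is to reduce Proposition~\ref{prop:type-a-schur-weyl} to the quantum super Schur--Weyl duality of Moon and Mitsuhashi \cite{M03,Mit06}, transported through the diagonal conjugation $D_a$ and the sign change $T_i\mapsto -R_i$. The step I expect to need the most care is the final identification of Specht labels.

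\emph{Step 1: the double centralizer.} By \cite[Theorem~4.3 and Section~5]{Mit06} with parameter $v^{-1}$, the images of $\rho_a(U)$ and of the algebra generated by the standard operators $S_i$ on $V^{\otimes a}$ are mutual full centralizers. By the $V^{\otimes a}$ version of \eqref{eq:R-conjugation-standard}, conjugation by $D_a$ sends $\rho_a(U)$ to $\Psi_a(U)$ and $vS_i$ to $R_i$. Conjugation by an invertible map preserves centralizers, and rescaling generators by $v$ and $-1$ does not change the generated algebra. Hence $\Psi_a(U)$ and the algebra generated by the $-R_i=\Phi^A_{m|s,a}(T_i)$ are mutual full centralizers. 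The degree-zero case is trivial.

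\emph{Step 2: the bimodule decomposition.} Mitsuhashi's theorem also gives a decomposition $V^{\otimes a}\cong\bigoplus L_{m|s}(\lambda)\otimes M_\lambda$ over $\lambda\vdash a$ with $\lambda\in H(m,s)$. Here the $M_\lambda$ are pairwise nonisomorphic simple modules for the Hecke image, and $L_{m|s}(\lambda)$ has the highest weight stated above. I will check that our weight convention, with $K_j$ acting by $v^{-\alpha_j}$, matches his under the parameter $v^{-1}$, using Berele--Regev's hook labelling \cite{BR87}. Pulling back along $\Phi^A_{m|s,a}$ makes $M_\lambda$ a simple $H_{a,\mathbb K}(q)$-module. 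Since $H_{a,\mathbb K}(q)$ is split semisimple, $M_\lambda\cong S^{\nu(\lambda)}_{a,\mathrm{std},\mathbb K}$ for a unique $\nu(\lambda)\vdash a$.

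\emph{Step 3: the Specht labels.} It remains to show $\nu(\lambda)=\lambda'$. This is the delicate step, because the conventions differ by a sign twist of the kind in $\psi$ from Lemma~\ref{lem:hecke-specht-labels}. I will settle it by a character test, since Specht modules are determined by their characters in the semisimple case.

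On the symmetrizer side, the vector $v_1^{\otimes a}$, with $v_1$ even, is fixed by every $R_i$. So every $T_i$ acts on it by $-1$. That is the sign representation, which is $S^{(1^a)}$ in the convention where $T_i$ acts by $q$ on $S^{(a)}$. This vector spans the $U$-highest weight space of weight $(a,0,\ldots)$, which belongs to $\lambda=(a)$. It forces $\nu((a))=(1^a)=(a)'$, consistent with the claim.

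For general $\lambda$, I will use the Jucys--Murphy comparison. The operators $-R_i$ have the form $\psi(\text{Mitsuhashi's }vS_i)$ up to the identification in \eqref{eq:R-conjugation-standard}. By Lemma~\ref{lem:hecke-specht-labels}, the $\psi$-twist transposes labels. So I need Mitsuhashi's identification to be that the multiplicity space of $L(\lambda)$ is the Specht module $S^\lambda$ for the generators $vS_i$; the twist then gives $\lambda'$.

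If matching his normalization directly proves awkward, the fallback is a specialization check. For $s=0$ this is the classical quantum Schur--Weyl duality with $q$-symmetrizers, where $S^{(a)}$ pairs with $\mathrm{Sym}^a$. The labels are rigid under varying $(m,s)$, because restricting $\mathfrak{gl}(m|s)$ to the even part $\mathfrak{gl}(m)$ is compatible with the Hecke action on the even-only subspace $V_{\bar0}^{\otimes a}$. This restriction pins down $\nu(\lambda)$ for all $\lambda$ with $\ell(\lambda)\le m$. For general hook $\lambda$, enlarge $m$: the operators on words in the first $m$ letters are independent of the ambient $(m',s)$.
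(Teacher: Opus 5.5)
Your overall route matches the paper's: cite Moon/Mitsuhashi, transport by $D_a$, and account for a label change via Lemma~\ref{lem:hecke-specht-labels}. Step~1 is fine. The gap is in Step~3, which is the only real content of the proposition.

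\textbf{The identity in Step~3 is false.} You rely on $-R_i$ being $\psi$ of ``Mitsuhashi's $vS_i$''. But $vS_i$ has eigenvalues $1$ and $-q$, so it is not the image of $T_i$ under any representation of $H_a(q)$ in the normalization $(T_i-q)(T_i+1)=0$. Under either reading of $\psi$ you get $(q-1)-vS_i$ or $-q(vS_i)^{-1}$. On $v_1\otimes v_1$ these act by $q-2$ and $-q$ respectively, whereas $-R_i$ acts by $-1$.

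\textbf{The input you say you need is not well posed.} ``The multiplicity space is $S^\lambda$ for the generators $vS_i$'' only has meaning once you fix whether $T_i$ acts by $-vS_i$ or by $q(vS_i)^{-1}=vS_i^{-1}$. These two choices differ exactly by $\psi$, i.e.\ by transposing the label. That is precisely the ambiguity the proposition has to resolve.

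\textbf{The correct chain.} What is available is Moon's statement \cite[Theorem~5.16]{M03}: the Hecke generators are $u_i\mapsto v^{-1}S_i$, generating $H_a(q^{-1})$, and the multiplicity space of $L_{m|s}(\lambda)$ is $\widetilde S^\lambda_{a,\mathrm{std}}$. Since $D_a(-q\,v^{-1}S_i)D_a^{-1}=-R_i$, the action $T_i\mapsto-R_i$ is the pullback along $\phi\circ\psi$, which sends $T_i\mapsto-qu_i$. Lemma~\ref{lem:hecke-specht-labels} then gives $\lambda'$, because $\phi$ preserves labels and $\psi$ transposes them. Equivalently, $-R_i=\psi(qR_i^{-1})$, where $qR_i^{-1}=D_a(vS_i^{-1})D_a^{-1}$ is the normalization acting by $q$ on $v_1\otimes v_1$. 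Your check on $v_1^{\otimes a}$ is correct, but it only fixes $\nu((a))$.

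\textbf{The fallback does not close the gap either.}
\begin{itemize}
\item Restricting to $(V_{\bar0})^{\otimes a}$ is sound for $\ell(\lambda)\le m$. Singular vectors of weight $(\lambda;0)$ lie there, and they are $\mathfrak{gl}(m|s)$-singular iff they are $\mathfrak{gl}(m)$-singular.
\item You still need the $s=0$ normalization conversion. With $T_i\mapsto-R_i$ it is $S^{(1^a)}$, not $S^{(a)}$, that pairs with $\mathrm{Sym}^a$, so the same twist reappears.
\item For $\ell(\lambda)>m$ the highest weight $(\lambda_1,\ldots,\lambda_m;\max(\lambda'_1-m,0),\ldots)$ has nonzero odd part. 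Its singular vectors therefore involve odd letters, and your remark that operators on words in the first $m$ letters are independent of $(m',s)$ says nothing about them.
\item You would need a genuine branching argument from $\mathfrak{gl}(m'|s)$ to $\mathfrak{gl}(m|s)$. For instance, realize $V_{m|s}$ as the span of the last $m+s$ basis vectors of $V_{m'|s}$, which preserves the $R$-matrix. Then compare the weight spaces with vanishing first $m'-m$ coordinates, using $s_\lambda(0,\ldots,0,x|y)=s_\lambda(x|y)$.
\item In particular, nothing in your proposal actually handles $m=0$, where there is no even part at all. The paper treats this case separately, passing to $U_v(\mathfrak{gl}(s))$ with Cartan generators $K_j^{-1}$. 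There the flip to $\lambda'$ comes from the highest-weight label rather than from $\psi$.
\end{itemize}
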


\begin{proof}
For \(a=0\), both assertions are scalar identities.  For \(a>0\)
and \(m,s>0\), use Moon's quantum parameter \(v^{-1}\).
The coproduct and natural action in \cite[(1.7), (1.12)]{M03}
agree with \(\rho_a\), and the Hecke generators in
\cite[(2.4)]{M03} act by \(v^{-1}S_i\), with parameter \(q^{-1}\).
The image \(\rho_a(U)\) and the algebra generated by the \(S_i\)
are mutual full centralizers by
\cite[Theorem~3.13 and Corollary~3.14]{M03}, and their bimodule
decomposition with highest-weight labels is
\cite[Theorem~5.16]{M03}; see also
\cite[Theorems~4.4 and~5.1]{Mit06}.

We now convert the Hecke action \(u_i\mapsto v^{-1}S_i\) to the
action \(T_i\mapsto-R_i\) used here.
Recall from Lemma~\ref{lem:hecke-specht-labels} that the generators
\(u_i\) of \(H_a(q^{-1})\) determine the isomorphism
\(\phi\circ\psi:H_a(q)\to H_a(q^{-1})\), given by
\(T_i\mapsto-q u_i\).  Pullback along \(\phi\circ\psi\) changes the
Specht label \(\lambda\) to \(\lambda'\), and
\eqref{eq:R-conjugation-standard} gives
\[
 D_a\bigl(-q(v^{-1}S_i)\bigr)D_a^{-1}=-R_i.
\]
Conjugating the quantum action by the same \(D_a\) gives \(\Psi_a\).
The identity \(\Psi_a(u)D_a=D_a\rho_a(u)\) makes \(D_a\) an
isomorphism between these two \(U\)-modules.  It preserves weights
and parity, so the quantum highest-weight label is unchanged.
This proves the assertions when \(m,s>0\).

For \(s=0<m\), use ordinary quantum Schur--Weyl duality
\cite[Remark~2.2 and Theorem~4.4]{Mit06} with the same generator
comparison.  For \(m=0<s\), all root generators are even, and
\[
 K_j^{-1}v_b=v^{\delta_{jb}}v_b,
 \qquad A_i=K_i^{-1}K_{i+1}.
\]
Thus, with \(K_j^{-1}\) as Cartan generators, the coproduct and
natural action are those of the ordinary quantum group at parameter
\(v\).  The displayed highest weight is \(\lambda'\) in this case.
Since every natural basis vector is odd,
\cite[Equation~(2)]{Mit06} identifies \(-S_i\) with the ordinary
natural tensor operator at parameter \(v\).  Multiplying by \(v\)
gives the standard generator for the Hecke parameter \(q=v^2\),
as in \cite[Remark~2.2]{Mit06}.  Thus these generators act by
\(-vS_i\), and \eqref{eq:R-conjugation-standard} gives
\[
 D_a(-vS_i)D_a^{-1}=-R_i.
\]
Ordinary quantum Schur--Weyl duality now gives the stated Specht
label and both centralizer equalities.
\end{proof}

For \(\lambda\in H(m,s)\), the character of
\(L_{m|s}(\lambda)\) is the hook Schur function \(s_\lambda(x|y)\);
see \cite[Theorem~5.1]{BKK00} and \cite[Theorem~6.10(b)]{BR87}.

\subsection{Schur--Weyl duality and the bimodule decomposition}

For a subset \(A\subseteq\{1,\ldots,n\}\), let \(M_A\) be the span
of the tensor words whose non-\(v_\star\) positions are exactly \(A\).
For \(0\le a\le n\), let \(M_a\) be the span of the tensor words with
exactly \(a\) factors different from \(v_\star\).  Thus
\[
 M_a=\bigoplus_{|A|=a}M_A,
 \qquad\widetilde V^{\otimes n}=\bigoplus_{a=0}^nM_a.
\]
For \(A=\{a_1<\cdots<a_a\}\), let
\(\tau_A:V^{\otimes a}\to M_A\) insert the \(r\)-th factor in
position \(a_r\), putting \(v_\star\) in all remaining positions.
It is a linear isomorphism: its inverse deletes the factors in
positions outside \(A\).
The map \(\tau_A\) intertwines the \(U\)-actions \(\rho_a\) and
\(\widetilde\rho_n\), namely
\begin{equation}
\label{eq:tau-standard-intertwining}
 \widetilde\rho_n(u)\tau_A=\tau_A\rho_a(u)\qquad(u\in U).
\end{equation}
Writing \(\varepsilon\) for the counit, the action on each inserted factor is
\(uv_\star=\varepsilon(u)v_\star\), and
\[
 (\varepsilon\otimes\operatorname{id})\Delta(u)
 =(\operatorname{id}\otimes\varepsilon)\Delta(u)=u.
\]
These identities remove the inserted factors from the iterated
coproduct; their even parity introduces no additional signs.
Define
\[
 \ell(A)=\#\{(r,t):r\in A,\ t\in\{1,\ldots,n\}\setminus A,\ r<t\}.
\]
Inserting the stars preserves the inversions between natural letters.
Since \(\star\) is the smallest letter, the additional inversions
are exactly the pairs counted by \(\ell(A)\).  Hence, for every
natural tensor word \(\mathbf b\),
\begin{equation}
\label{eq:tau-diagonal-relation}
 \operatorname{inv}(\tau_A\mathbf b)
 =\operatorname{inv}(\mathbf b)+\ell(A),
 \qquad
 \widetilde D_n\tau_A=v^{-\ell(A)}\tau_AD_a.
\end{equation}
Combining \eqref{eq:tau-standard-intertwining} and
\eqref{eq:tau-diagonal-relation} with
\eqref{eq:conjugated-quantum-actions} gives, for every \(u\in U\),
\[
\begin{aligned}
 \widetilde\Psi_n(u)\tau_A
 &=v^{\ell(A)}\widetilde D_n\widetilde\rho_n(u)\tau_AD_a^{-1}\\
 &=v^{\ell(A)}\widetilde D_n\tau_A\rho_a(u)D_a^{-1}
 =\tau_A\Psi_a(u).
\end{aligned}
\]
The map
\[
 \bigoplus_{|A|=a}V^{\otimes a}\longrightarrow M_a,
 \qquad (x_A)_A\longmapsto\sum_{|A|=a}\tau_A(x_A)
\]
is therefore a \(U\)-module isomorphism.  There are \(\binom na\)
summands, each carrying the action \(\Psi_a\), so this gives
\begin{equation}
\label{eq:rank-u-module-decomposition}
 M_a\cong\mathbb K^{\binom na}\otimes_{\mathbb K}V^{\otimes a},
\end{equation}
with \(U\) acting trivially on the first factor.

We next construct projections onto the individual spaces \(M_a\)
that belong to the quantum action image.  Set
\[
 K_{\mathrm{tot}}=\prod_{j=1}^{m+s}K_j^{-2}\in U.
\]
The defining relations give
\[
 K_jE_iK_j^{-1}=v^{-\delta_{ji}+\delta_{j,i+1}}E_i,
 \qquad
 K_jF_iK_j^{-1}=v^{\delta_{ji}-\delta_{j,i+1}}F_i.
\]
For each \(i\), the exponents sum to zero as \(j\) runs from
\(1\) to \(m+s\).  Thus \(K_{\mathrm{tot}}\) commutes with
\(E_i,F_i\), as well as with all \(K_j\), and is central.
Equation~\eqref{eq:quantum-coproduct-convention} also gives
\[
 \Delta(K_{\mathrm{tot}})
 =K_{\mathrm{tot}}\otimes K_{\mathrm{tot}}.
\]
Its action on a natural basis vector and on the trivial factor is
\[
 K_{\mathrm{tot}}v_b
 =\left(\prod_{j=1}^{m+s}v^{2\delta_{jb}}\right)v_b=qv_b
 \quad(1\le b\le m+s),
 \qquad K_{\mathrm{tot}}v_\star=v_\star.
\]
The coproduct makes these scalars multiply on tensor words.
Every word in \(M_a\) has \(a\) natural factors and \(n-a\) star
factors, so \(\widetilde\rho_n(K_{\mathrm{tot}})\) acts there by
\(q^a\).  Since \(\widetilde D_n\) preserves \(M_a\), conjugation
does not change this scalar:
\[
 \widetilde\Psi_n(K_{\mathrm{tot}})|_{M_a}
 =q^a\operatorname{id}_{M_a}.
\]
As \(q\) is indeterminate, the scalars \(1,q,\ldots,q^n\) are
distinct.  For \(0\le a\le n\), define central elements of \(U\) by
\[
 e_a=\prod_{\substack{0\le b\le n\\b\ne a}}
       \frac{K_{\mathrm{tot}}-q^b}{q^a-q^b}.
\]
On \(M_a\), every factor in this product acts by \(1\); on
\(M_b\) with \(b\ne a\), the factor \(K_{\mathrm{tot}}-q^b\)
acts by zero.  Thus the action images
\(\pi_a=\widetilde\Psi_n(e_a)\) are the projections onto \(M_a\):
\begin{equation}
\label{eq:rank-projections}
 \pi_a|_{M_b}=\delta_{ab}\operatorname{id}_{M_b}
 \qquad(0\le a,b\le n).
\end{equation}
In particular, for any choice of \(u_0,\ldots,u_n\in U\),
\[
 \left.\widetilde\Psi_n\!\left(\sum_{a=0}^ne_au_a\right)\right|_{M_b}
 =\widetilde\Psi_n(u_b)|_{M_b}.
\]
This will allow us to choose the quantum action independently on
each summand \(M_b\).

\begin{theorem}[Super mirabolic Schur--Weyl duality]
\label{thm:super-mirabolic-schur-weyl}
The two tensor actions are mutual full centralizers:
\[
\begin{aligned}
 \operatorname{End}_U(\widetilde V^{\otimes n})
 &=\Phi_{m|s,n,\mathbb K}(H^{\mathrm{mir}}_{n,\mathbb K}(q)),\\
 \operatorname{End}_{H^{\mathrm{mir}}_{n,\mathbb K}(q)}
       (\widetilde V^{\otimes n})&=\widetilde\Psi_n(U).
\end{aligned}
\]
Their bimodule decomposition is
\[
 \widetilde V^{\otimes n}\cong
 \bigoplus_{a=0}^n
 \bigoplus_{\substack{\lambda\vdash a\\\lambda\in H(m,s)}}
 L_{m|s}(\lambda)\otimes_{\mathbb K}
 N^{(n)}_{(\lambda,a),\mathbb K}.
\]
\end{theorem}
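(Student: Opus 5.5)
The plan is to work one rank $M_a$ at a time: reduce the quantum side to type $A$ duality, then identify the mirabolic multiplicity spaces by their $P_{n-a}$-corners.

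\textbf{Step 1: compute $\operatorname{End}_U(\widetilde V^{\otimes n})$.}
\begin{itemize}
\item The operators $\mathcal T_i$ and $\mathcal P_j$ preserve the number of star factors, so every $M_a$ is stable under the mirabolic image.
\item By \eqref{eq:rank-projections}, the projections $\pi_a$ lie in $\widetilde\Psi_n(U)$. Hence every $U$-endomorphism preserves each $M_a$, and
\[
 \operatorname{End}_U(\widetilde V^{\otimes n})=\bigoplus_{a=0}^n\operatorname{End}_U(M_a).
\]
\item By \eqref{eq:rank-u-module-decomposition} and Proposition~\ref{prop:type-a-schur-weyl}, $M_a$ is a semisimple $U$-module. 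It is isotypic of type $L_{m|s}(\lambda)$ for $\lambda\vdash a$, $\lambda\in H(m,s)$, with multiplicity space
\[
 \mathcal M_\lambda:=\operatorname{Hom}_U(L_{m|s}(\lambda),M_a),\qquad \dim_{\mathbb K}\mathcal M_\lambda=\binom na f^{\lambda}.
\]
\item The modules $L_{m|s}(\lambda)$ for distinct $\lambda$, including across different ranks $a$, are pairwise nonisomorphic; the central element $K_{\mathrm{tot}}$ separates ranks.
\item Therefore $\widetilde V^{\otimes n}\cong\bigoplus_\lambda L_{m|s}(\lambda)\otimes\mathcal M_\lambda$ as $U$-modules, and
\[
 \operatorname{End}_U(\widetilde V^{\otimes n})\cong\bigoplus_\lambda\operatorname{End}_{\mathbb K}(\mathcal M_\lambda).
\]
\end{itemize}

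\textbf{Step 2: identify each $\mathcal M_\lambda$.} Since the mirabolic image commutes with $\widetilde\Psi_n(U)$ (Proposition~\ref{prop:super-mirabolic-representation}), each $\mathcal M_\lambda$ is an $H^{\mathrm{mir}}_{n,\mathbb K}(q)$-module. The aim is to show $\mathcal M_\lambda\cong N^{(n)}_{(\lambda,a),\mathbb K}$.
\begin{itemize}
\item \emph{The top corner.} $\mathcal P_{n-a}M_a=M_{A_0}$ with $A_0=\{n-a+1,\ldots,n\}$. The map $\tau_{A_0}$ intertwines $\Psi_a$ with $\widetilde\Psi_n$. It also intertwines $-R_i$ on $V^{\otimes a}$ with $\mathcal T_{n-a+i}$, because $R$ acts within the last $a$ positions on words with no stars there. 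Proposition~\ref{prop:type-a-schur-weyl} then gives
\[
 P_{n-a}\mathcal M_\lambda\cong S^{\lambda'}_{a,\mathrm{std},\mathbb K}
\]
as $H_a(q)$-modules, with $t_i$ acting as $T_{n-a+i}$.
\item \emph{Possible constituents.} $\mathcal P_{n-a+1}$ kills $M_a$. A simple $N^{(n)}_{(\mu,b)}$ has $P_{n-b}\ne0$ on it. So every composition factor of $\mathcal M_\lambda$ (semisimplicity holds by Proposition~\ref{prop:q-rook-split-decomposition} via $\Theta_{q^{-1}}$) has $b\ge a$.
\item \emph{Ruling out $b>a$.} From the rook basis description \eqref{eq:q-rook-projection-action}, $P_{n-a}N^{(n)}_{(\mu,b)}$ is spanned by tableaux with $b$ entries in $\{n-a+1,\ldots,n\}$. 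This is zero when $b>a$. Yet every constituent with $b>a$ would have $\dim\ge\binom nb f^\mu$ while contributing nothing to $P_{n-a}\mathcal M_\lambda$.
\item \emph{Dimension count.} Compare $\dim\mathcal M_\lambda=\binom na\dim P_{n-a}\mathcal M_\lambda$ with the corresponding equality $\dim N^{(n)}_{(\mu,a)}=\binom na f^\mu$ for rank-$a$ constituents. This forces all constituents to have $b=a$.
\item \emph{Conclusion.} Corollary~\ref{cor:mirabolic-specht-label} gives $P_{n-a}\mathcal M_\lambda\cong\bigoplus S^{\mu'}$, which must equal $S^{\lambda'}$. Hence $\mathcal M_\lambda\cong N^{(n)}_{(\lambda,a),\mathbb K}$.
\end{itemize}
This step is the main obstacle: it requires pinning down the mirabolic label through the $P_{n-a}$-corner and excluding higher-rank constituents by dimensions.

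\textbf{Step 3: the two centralizers.}
\begin{itemize}
\item The $\mathcal M_\lambda$ are absolutely simple and pairwise nonisomorphic (distinct labels $(\lambda,a)$). By Wedderburn/density, the image of $H^{\mathrm{mir}}_{n,\mathbb K}(q)$ surjects onto $\bigoplus_\lambda\operatorname{End}(\mathcal M_\lambda)=\operatorname{End}_U(\widetilde V^{\otimes n})$. This gives the first equality and the bimodule decomposition.
\item For the second equality, the bimodule decomposition gives
\[
 \operatorname{End}_{H^{\mathrm{mir}}}(\widetilde V^{\otimes n})=\bigoplus_\lambda\operatorname{End}(L_{m|s}(\lambda)).
\]
\item $\widetilde\Psi_n(U)$ is contained in this algebra. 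Conversely, on each $M_a$ it restricts to $\Psi_a(U)$, which by Proposition~\ref{prop:type-a-schur-weyl} is the full centralizer $\bigoplus_{\lambda\vdash a}\operatorname{End}(L_{m|s}(\lambda))$.
\item Using the central idempotents $e_a$ to choose the action independently on each $M_a$, we obtain all of $\bigoplus_\lambda\operatorname{End}(L_{m|s}(\lambda))$.
\end{itemize}
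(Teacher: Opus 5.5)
Your proposal is correct. Steps 1 and 3 are essentially the paper's argument: rank separation by the central idempotents $e_a$, type $A$ duality on each $M_a$, and Wedderburn for the two centralizer equalities. The genuine difference is in Step 2.

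The paper first proves that the multiplicity space $E_{a,\lambda}=\mathbb K^{\binom na}\otimes S^{\lambda'}_{a,\mathrm{std},\mathbb K}$ is simple, by a generation argument. Using $\mathcal T_i^{-1}(v_\star\otimes x)=-q^{-1}x\otimes v_\star$, a product of inverse generators carries the $A_0$-copy onto every $A$-copy, up to the scalar $(-q^{-1})^d$. So the simple corner $P_{n-a}E_{a,\lambda}$ generates $E_{a,\lambda}$, and semisimplicity then forces $E_{a,\lambda}$ to be simple. Only afterwards is the rank fixed, by the qualitative criteria $P_{n-a}N^{(n)}_{(\mu,r)}\ne0\iff r\le a$ and $P_{n-a+1}N^{(n)}_{(\mu,r)}=0\iff r\ge a$.

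You instead decompose $\mathcal M_\lambda$ into simples from the start. You get $b\ge a$ from $P_{n-a+1}=0$, via $P_{n-b}=P_{n-b}P_{n-a+1}$. You then eliminate $b>a$ using three facts: $\dim\mathcal M_\lambda=\binom na\dim P_{n-a}\mathcal M_\lambda$, $\dim N^{(n)}_{(\mu,b)}=\binom nb f^\mu$, and $\dim P_{n-a}N^{(n)}_{(\mu,a)}=f^\mu$. Together these leave $\sum_{b_j>a}\binom n{b_j}f^{\mu_j}=0$. Simplicity and the label then both come out of the corner isomorphism.

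Your route needs no further computation in tensor space, but it relies on the exact tableau-basis dimensions of the simple modules and of their $P_{n-a}$-corners. The paper's route needs only the vanishing pattern of the projections on simples. It also shows explicitly how Hecke elements permute the $\binom na$ copies indexed by the sets $A$.

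For $a=0$ your $P_{n-a+1}$ step is vacuous. The dimension count still works there, since $P_n$ acts as the identity on $M_0$ and kills every $N^{(n)}_{(\mu,b)}$ with $b>0$. The paper simply treats $a=0$ directly.
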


\begin{proof}
We first determine the quantum action on each \(M_a\).
For \(\lambda\vdash a\) in \(H(m,s)\), define the vector space
\[
 E_{a,\lambda}=\mathbb K^{\binom na}\otimes_{\mathbb K}
                    S^{\lambda'}_{a,\mathrm{std},\mathbb K}.
\]
Combining the \(U\)-module isomorphism
\eqref{eq:rank-u-module-decomposition} with the type \(A\) decomposition
\eqref{eq:type-a-bimodule-decomposition} gives
\[
 M_a\cong\bigoplus_{\substack{\lambda\vdash a\\\lambda\in H(m,s)}}
             L_{m|s}(\lambda)\otimes_{\mathbb K}E_{a,\lambda}.
\]
Thus \(E_{a,\lambda}\) is the multiplicity space of
\(L_{m|s}(\lambda)\) in this \(U\)-module decomposition, and
\(U\) acts trivially on it.
As recalled in Section~\ref{subsec:irreducible-characters}, the Specht
modules are absolutely irreducible and pairwise nonisomorphic.
In the type \(A\) decomposition, an endomorphism
commuting with the Hecke action therefore acts arbitrarily on each
\(L_{m|s}(\lambda)\) and as the identity on its Specht factor.
Proposition~\ref{prop:type-a-schur-weyl} identifies this centralizer
with the quantum image:
\[
 \Psi_a(U)=
 \bigoplus_{\substack{\lambda\vdash a\\\lambda\in H(m,s)}}
 \operatorname{End}_{\mathbb K}(L_{m|s}(\lambda))
       \otimes 1_{S^{\lambda'}_{a,\mathrm{std},\mathbb K}}.
\]
On \(M_a\), the same operator acts on every copy indexed by \(A\).
Using \eqref{eq:rank-projections} to separate the ranks gives
\begin{equation}
\label{eq:quantum-image-on-multiplicities}
 \widetilde\Psi_n(U)=
 \bigoplus_{a=0}^n\bigoplus_{\substack{\lambda\vdash a\\\lambda\in H(m,s)}}
 \operatorname{End}_{\mathbb K}(L_{m|s}(\lambda))
          \otimes 1_{E_{a,\lambda}}.
\end{equation}
An operator commuting with this quantum image preserves each
summand \(L_{m|s}(\lambda)\otimes E_{a,\lambda}\) and has the form
\(1_{L_{m|s}(\lambda)}\otimes B\) there, with
\(B\in\operatorname{End}_{\mathbb K}(E_{a,\lambda})\).
By Proposition~\ref{prop:super-mirabolic-representation}, every
mirabolic operator commutes with the quantum action.  It therefore
acts on the \(E_{a,\lambda}\)-factor of each summand, making
\(E_{a,\lambda}\) a mirabolic module.

We next prove that \(E_{a,\lambda}\) is simple.
Fix \(1\le a\le n\) and \(\lambda\vdash a\) in \(H(m,s)\),
and put \(A_0=\{n-a+1,\ldots,n\}\).
We begin by identifying the projection image \(P_{n-a}E_{a,\lambda}\).
Among the sets \(A\) of size \(a\), only \(A_0\) has no position
among the first \(n-a\).  The projection formula
\eqref{eq:tensor-projection-action} therefore gives
\[
 \mathcal P_{n-a}\tau_A=\delta_{A,A_0}\tau_A
 \qquad(|A|=a).
\]
On the retained copy, the local action
\eqref{eq:mirabolic-tensor-operator} gives
\[
 \mathcal T_{n-a+i}\tau_{A_0}
 =\tau_{A_0}\Phi^A_{m|s,a}(T_i)
 \qquad(1\le i<a).
\]
The type \(A\) decomposition \eqref{eq:type-a-bimodule-decomposition}
therefore identifies
\[
 P_{n-a}E_{a,\lambda}\cong S^{\lambda'}_{a,\mathrm{std},\mathbb K}
\]
as an \(H_{a,\mathbb K}(q)\)-module, with its \(i\)-th generator
acting by \(T_{n-a+i}\).  These generators commute with \(P_{n-a}\).
We now show that this projection image generates \(E_{a,\lambda}\)
as a mirabolic module.  For \(x\in V\),
\eqref{eq:mirabolic-tensor-operator} gives
\(\mathcal T_i(x\otimes v_\star)=-qv_\star\otimes x\).
Equivalently,
\[
 \mathcal T_i^{-1}(v_\star\otimes x)
       =-q^{-1}x\otimes v_\star,
\]
so the inverse generator moves a natural factor one position to the left.
For \(A=\{a_1<\cdots<a_a\}\), start with the natural factors in
\(A_0\) and move the first one to \(a_1\), then the second to
\(a_2\), and so on.  Each move crosses only a star, so the order
of the natural factors is preserved.  If the successive positions
of these exchanges are \(i_1,\ldots,i_d\), the local formula gives
\[
 \mathcal T_{i_d}^{-1}\cdots\mathcal T_{i_1}^{-1}\tau_{A_0}(x)
 =(-q^{-1})^d\tau_A(x)
 \qquad(x\in V^{\otimes a}).
\]
By \eqref{eq:quantum-image-on-multiplicities}, this mirabolic
operator acts as the identity on \(L_{m|s}(\lambda)\).
The displayed equality therefore says that its action on
\(E_{a,\lambda}\) maps the \(A_0\)-copy of
\(S^{\lambda'}_{a,\mathrm{std},\mathbb K}\) onto its \(A\)-copy,
by the nonzero scalar \((-q^{-1})^d\).  Since \(A\) was arbitrary,
the projection image generates every copy in \(E_{a,\lambda}\).

We combine this generation property with split semisimplicity.
The latter follows from
Propositions~\ref{prop:parameter-inversion-isomorphism}
and~\ref{prop:q-rook-split-decomposition}.
Write \(E_{a,\lambda}=Q_1\oplus\cdots\oplus Q_k\) as a sum of
simple mirabolic modules.  Then
\[
 P_{n-a}E_{a,\lambda}=\bigoplus_{j=1}^kP_{n-a}Q_j
\]
as \(H_{a,\mathbb K}(q)\)-modules.  Since the left side is a simple
Specht module, exactly one summand, say \(P_{n-a}Q_1\), is nonzero.
But \(Q_1\) is stable under the mirabolic algebra and contains
the projection image that generates \(E_{a,\lambda}\).
Consequently \(E_{a,\lambda}=Q_1\), proving simplicity.

We now determine the label of this simple module.  The classification
obtained from Propositions~\ref{prop:q-rook-split-decomposition}
and~\ref{prop:parameter-inversion-isomorphism} gives
\(E_{a,\lambda}\cong N^{(n)}_{(\mu,r),\mathbb K}\)
for some \(\mu\vdash r\).  We already know
\(P_{n-a}E_{a,\lambda}\ne0\), while
\eqref{eq:tensor-projection-action} gives
\(P_{n-a+1}E_{a,\lambda}=0\): there are only \(n-a\) stars in
each tensor word of \(M_a\).
For \(N^{(n)}_{(\mu,r),\mathbb K}\), the projection
formula \eqref{eq:q-rook-projection-action}, unchanged by pullback,
retains tableaux whose \(r\) entries lie in the final available
positions.  There are \(a\) such positions for \(P_{n-a}\) and
\(a-1\) for \(P_{n-a+1}\).  If at least \(r\) positions are
available, choose \(r\) of their labels and replace the entries of
any standard tableau of shape \(\mu\) increasingly by these labels.
This gives a retained basis vector.  If fewer than \(r\) positions
are available, no such tableau exists.  Therefore
\[
 P_{n-a}N^{(n)}_{(\mu,r),\mathbb K}\ne0\iff r\le a,
 \qquad
 P_{n-a+1}N^{(n)}_{(\mu,r),\mathbb K}=0\iff r\ge a.
\]
These two conditions force \(r=a\).  To determine \(\mu\), apply
Corollary~\ref{cor:mirabolic-specht-label} to this simple module:
\[
 S^{\mu'}_{a,\mathrm{std},\mathbb K}
 \cong P_{n-a}N^{(n)}_{(\mu,a),\mathbb K}
 \cong P_{n-a}E_{a,\lambda}
 \cong S^{\lambda'}_{a,\mathrm{std},\mathbb K}.
\]
Distinct partitions label nonisomorphic Specht modules, so
\(\mu=\lambda\).
Hence \(E_{a,\lambda}\cong N^{(n)}_{(\lambda,a),\mathbb K}\).
For \(a=0\), the multiplicity space is one-dimensional.
By \eqref{eq:tensor-projection-action} and
\eqref{eq:mirabolic-tensor-operator}, all \(P_j\) act by \(1\)
and all \(T_i\) act by \(-1\), giving the
empty-partition simple.  This proves the bimodule decomposition.

Finally, we deduce both centralizer equalities from this bimodule
decomposition.  The mirabolic modules just obtained are pairwise
nonisomorphic.  Proposition~\ref{prop:q-rook-split-decomposition},
transported through \(\Theta_{q^{-1}}\) from
Proposition~\ref{prop:parameter-inversion-isomorphism}, identifies
the mirabolic action image as
\[
 \bigoplus_{a=0}^n\bigoplus_{\substack{\lambda\vdash a\\\lambda\in H(m,s)}}
 1_{L_{m|s}(\lambda)}\otimes
       \operatorname{End}_{\mathbb K}(N^{(n)}_{(\lambda,a),\mathbb K}).
\]
Equation~\eqref{eq:quantum-image-on-multiplicities} shows that an
operator commutes with \(U\) exactly when it preserves these
summands and acts only on their \(N\)-factors.  Hence this image equals
\(\operatorname{End}_U(\widetilde V^{\otimes n})\).
Conversely, commuting with all endomorphisms of the \(N\)-factors
leaves precisely the operators on the \(L\)-factors in
\eqref{eq:quantum-image-on-multiplicities}.  This proves the other
centralizer equality.
\end{proof}

Wan established the ordinary mirabolic Schur--Weyl duality and
bimodule decomposition in \cite[Theorem~3.2]{Wan26}.
Theorem~\ref{thm:super-mirabolic-schur-weyl} gives their quantum super
analogues on \(\widetilde V^{\otimes n}\), with \(V\) of dimension
\(m|s\).

\subsection{The bounded-degree quantum Schur superalgebra}
\label{subsec:bounded-degree-quantum-schur-superalgebra}

For \(a\ge0\), define the image algebra of \(U\) on \(V^{\otimes a}\) by
\[
 \mathscr S_q(m|s,a)=\Psi_a(U)
 \subseteq\operatorname{End}_{\mathbb K}(V^{\otimes a}),
 \qquad\mathscr S_q(m|s,0)=\mathbb K.
\]
On the full tensor space set
\[
 \mathscr S_q(m|s;\le n)=\widetilde\Psi_n(U)
 \subseteq\operatorname{End}_{\mathbb K}(\widetilde V^{\otimes n}).
\]
The subscript \(q=v^2\) records the Hecke parameter.  These are ordinary
associative image algebras.  We call \(\mathscr S_q(m|s;\le n)\) the
bounded-degree quantum Schur superalgebra.

\begin{proposition}
\label{prop:bounded-degree-schur-decomposition}
Restriction by rank gives an algebra isomorphism
\[
 \mathscr S_q(m|s;\le n)\cong
 \bigoplus_{a=0}^n\mathscr S_q(m|s,a).
\]
\end{proposition}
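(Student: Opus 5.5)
We define the restriction map and show it is well defined, injective and surjective, using the rank decomposition $\widetilde V^{\otimes n}=\bigoplus_{a=0}^nM_a$ and the central projections $\pi_a=\widetilde\Psi_n(e_a)$ of \eqref{eq:rank-projections}.

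Every element of $\widetilde\Psi_n(U)$ preserves each $M_a$, since $\pi_a$ is central in the image. By \eqref{eq:rank-u-module-decomposition}, for each $a$ we may fix one copy, say $A_0=\{n-a+1,\ldots,n\}$, with $\tau_{A_0}:V^{\otimes a}\to M_{A_0}\subseteq M_a$. Define
\[
 r:\mathscr S_q(m|s;\le n)\longrightarrow\bigoplus_{a=0}^n\mathscr S_q(m|s,a),
 \qquad
 \widetilde\Psi_n(u)\longmapsto\bigl(\Psi_a(u)\bigr)_{a=0}^n .
\]
This is well defined. By the intertwining identity $\widetilde\Psi_n(u)\tau_A=\tau_A\Psi_a(u)$, $\Psi_a(u)=\tau_{A_0}^{-1}\widetilde\Psi_n(u)\tau_{A_0}$ depends only on the operator $\widetilde\Psi_n(u)$, not on $u$. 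In degree zero, $M_0=\mathbb Kv_\star^{\otimes n}$ and $U$ acts by the counit, which matches $\mathscr S_q(m|s,0)=\mathbb K$. Since $r$ is given by restriction to invariant subspaces and conjugation, it is an algebra homomorphism.

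For injectivity, suppose $\Psi_a(u)=0$ for all $a$. The same intertwining identity shows that $\widetilde\Psi_n(u)$ vanishes on every $M_A$. Hence it vanishes on every $M_a$, and so on $\widetilde V^{\otimes n}$.

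Surjectivity is the only real step. Given $(\Psi_a(u_a))_a$ with arbitrary $u_a\in U$, put $u=\sum_ae_au_a$. By the displayed consequence of \eqref{eq:rank-projections}, $\widetilde\Psi_n(u)|_{M_b}=\widetilde\Psi_n(u_b)|_{M_b}$, so $r(\widetilde\Psi_n(u))=(\Psi_b(u_b))_b$. The centrality of $e_a$ (that is, of $K_{\mathrm{tot}}$) and the distinctness of the scalars $q^a$ were established earlier, so this step needs no new work.

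One could alternatively read the result off \eqref{eq:quantum-image-on-multiplicities} together with the type $A$ description of $\Psi_a(U)$ in Theorem~\ref{thm:super-mirabolic-schur-weyl}'s proof. However, the direct restriction argument avoids the multiplicity bookkeeping.
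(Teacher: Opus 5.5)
Your proposal is correct and follows essentially the same route as the paper: you restrict $\widetilde\Psi_n(u)$ to the rank summands $M_a$ via the intertwiners $\tau_A$ to get a well-defined, injective algebra homomorphism $\widetilde\Psi_n(u)\mapsto(\Psi_a(u))_{a=0}^n$, and you obtain surjectivity by lifting a tuple to $\sum_a e_au_a$ using the central rank projections $\pi_a=\widetilde\Psi_n(e_a)$. Your use of the fixed copy $\tau_{A_0}$ to show that each component depends only on the operator is a slightly more explicit version of the paper's well-definedness check.
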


\begin{proof}
By \eqref{eq:rank-u-module-decomposition}, on
\(M_a\cong\mathbb K^{\binom na}\otimes V^{\otimes a}\) the quantum
action is \(1\otimes\Psi_a\).  Hence restriction to all
the \(M_a\) corresponds to the map
\[
 \widetilde\Psi_n(u)\longmapsto(\Psi_a(u))_{a=0}^n.
\]
If \(\widetilde\Psi_n(u)=0\), each \(\Psi_a(u)=0\), so the map
is well defined.  Conversely, if every \(\Psi_a(u)=0\), then
\(\widetilde\Psi_n(u)\) vanishes on every \(M_a\) and hence on
their direct sum.  This proves injectivity.  Restriction preserves
addition and multiplication, so it is an algebra homomorphism.

For a tuple \((x_a)_{a=0}^n\), choose \(u_a\in U\) with
\(\Psi_a(u_a)=x_a\).  By \eqref{eq:rank-projections}, the element
\(\sum_a e_au_a\) acts as \(1\otimes x_b\) on each \(M_b\).
Its image therefore maps to the prescribed tuple, proving surjectivity.
\end{proof}

By Theorem~\ref{thm:super-mirabolic-schur-weyl},
\(\mathscr S_q(m|s;\le n)\) and
\(\Phi_{m|s,n,\mathbb K}(H^{\mathrm{mir}}_{n,\mathbb K}(q))\)
are mutual full centralizers.

We now express \(\mathscr S_q(m|s;\le n)\) as the linear dual
of a finite-dimensional supercoalgebra.  The tensor operator in
\cite[Equation~(8.1.1)]{DR11}, at parameter \(v\), is \(S_i^{-1}\).
Equation~\eqref{eq:R-conjugation-standard} gives
\[
 D_a(vS_i^{-1})D_a^{-1}
 =qR_i^{-1}=R_i+(q-1)\operatorname{id}.
\]
Adding a scalar multiple of the identity does not change a centralizer.
Thus Proposition~\ref{prop:type-a-schur-weyl} and
\cite[Corollary~8.4]{DR11} identify \(\mathscr S_q(m|s,a)\)
with the degree-\(a\) quantum Schur superalgebra at parameter \(v\)
by conjugation with \(D_a\).

Let \(\mathcal A_v(m|s,a)\) be the degree-\(a\) subcoalgebra of
Manin's quantum matrix superalgebra \cite[Section~1.2]{Man89}
over \(\mathbb K\), with matrix
generators of degree one; see \cite[Definition~9.1 and Corollary~9.3]{DR11}.
Its linear dual is
\(\mathcal A_v(m|s,a)^*
=\operatorname{Hom}_{\mathbb K}(\mathcal A_v(m|s,a),\mathbb K)\).
For homogeneous functionals \(f,g\) and
\(\Delta(c)=\sum c_{(1)}\otimes c_{(2)}\) with homogeneous terms,
\cite[Equation~(9.4.2)]{DR11} defines its multiplication by
\[
 (fg)(c)=\sum(-1)^{|g||c_{(1)}|}f(c_{(1)})g(c_{(2)}).
\]
By \cite[Theorem~9.8]{DR11}, this dual is isomorphic to
\(\mathscr S_q(m|s,a)\) as an underlying associative algebra.
Combining this with
Proposition~\ref{prop:bounded-degree-schur-decomposition} gives
\begin{equation}
\label{eq:bounded-degree-coalgebra-duality}
 \mathscr S_q(m|s;\le n)
 \cong\bigoplus_{a=0}^n\mathcal A_v(m|s,a)^*
 \cong\left(\bigoplus_{a=0}^n\mathcal A_v(m|s,a)\right)^*.
\end{equation}
The last isomorphism uses the finite direct sum of subcoalgebras,
whose dual has componentwise multiplication.  Degree zero contributes
\(\mathbb K\) on both sides.

\section{Tensor-space annihilators and faithful quotients}
\label{sec:tensor-space-kernels}

We first construct idempotents associated with the smallest rectangle
outside the hook.  We then use
Theorem~\ref{thm:super-mirabolic-schur-weyl} to determine the tensor-space
annihilator and its generators.  Finally, we describe the faithful
quotient action and its centralizer.

\subsection{Jucys--Murphy idempotents and the forbidden rectangle}
\label{subsec:rectangular-jm-idempotents}

We work over \(\mathbb F=\mathbb C(q)\), using the quantum decomposition
over \(\mathbb K=\mathbb C(v)\), where \(q=v^2\) is indeterminate.
Put
\[
        \rho=((s+1)^{m+1}),\qquad r_0=|\rho|=(m+1)(s+1).
\]
For every partition \(\lambda\),
\begin{equation}
\label{eq:nonhook-contains-rectangle}
        \lambda_{m+1}>s
        \quad\Longleftrightarrow\quad
        \rho\subseteq\lambda.
\end{equation}
Indeed, either condition says that the first \(m+1\) rows of
\(\lambda\) all have at least \(s+1\) boxes.  Thus \(\rho\) is the
smallest partition outside \(H(m,s)\).

For each \(r\ge1\), define the Jucys--Murphy elements in \(R_r(q)\) by
\begin{equation}
\label{eq:q-rook-jm-elements}
 \mathsf J_i
 =q^{-(i-1)}
 (\mathsf T_{i-1}\cdots\mathsf T_1)(1-\mathsf P_1)
 (\mathsf T_1\cdots\mathsf T_{i-1}),
 \qquad 1\le i\le r.
\end{equation}
Thus \(\mathsf J_1=1-\mathsf P_1\) and
\(\mathsf J_i=q^{-1}\mathsf T_{i-1}\mathsf J_{i-1}\mathsf T_{i-1}\)
for \(i\ge2\).  For \(\lambda\in\mathscr P_{\le r}\), the action
on the basis \(\{w_L\}\) of \(M^{R,r}_\lambda\) is
\begin{equation}
\label{eq:q-rook-jm-eigenvalues}
 \mathsf J_iw_L=
 \begin{cases}
 q^{\operatorname{ct}_L(i)}w_L,&i\text{ occurs in }L,\\
 0,&i\text{ does not occur in }L,
 \end{cases}
\end{equation}
by \cite[Proposition~3.5]{Hal04}: each \(w_L\) is a nonzero scalar
multiple of Halverson's \(\zeta_L\), so the eigenvalues are unchanged.

By \eqref{eq:q-rook-jm-eigenvalues}, the operators \(\mathsf J_i\)
and \(\mathsf J_j\) are diagonal in the same basis of every
\(M^{R,r}_\lambda\), so their commutator acts by zero on all these
modules.  The action map in
Proposition~\ref{prop:q-rook-split-decomposition} is injective.  Hence
\begin{equation}
\label{eq:q-rook-jm-commuting}
 \mathsf J_i\mathsf J_j=\mathsf J_j\mathsf J_i
 \qquad(1\le i,j\le r).
\end{equation}

For \(1\le i\le r_0\), let
\[
 \mathcal E_i=\{0\}\cup
 \{q^c:c\in\mathbb Z,\ 1-i\le c\le i-1\}.
\]
These are sets of distinct elements of \(\mathbb F\).  For
\(T\in\operatorname{SYT}(\rho)\), define
\begin{equation}
\label{eq:rectangle-jm-primitive-idempotent}
 f_T=
 \prod_{i=1}^{r_0}
 \prod_{\substack{\alpha\in\mathcal E_i\\
                  \alpha\ne q^{\operatorname{ct}_T(i)}}}
 \frac{\mathsf J_i-\alpha}
      {q^{\operatorname{ct}_T(i)}-\alpha}
 \in R_{r_0}(q),
\end{equation}
and put
\begin{equation}
\label{eq:rectangle-jm-central-idempotent}
 f_\rho=\sum_{T\in\operatorname{SYT}(\rho)}f_T.
\end{equation}
The factors commute by \eqref{eq:q-rook-jm-commuting}, and every
denominator is a nonzero element of \(\mathbb F\).

Recall that an idempotent is an element \(e\) satisfying \(e^2=e\).
Two idempotents \(e,e'\) are orthogonal if \(ee'=e'e=0\).
A nonzero idempotent is primitive if it cannot be written as a sum
of two nonzero orthogonal idempotents.
By Proposition~\ref{prop:q-rook-split-decomposition},
\[
 R_{r_0}(q)\cong
 \bigoplus_{\lambda\in\mathscr P_{\le r_0}}
 \operatorname{End}_{\mathbb F}(M^{R,r_0}_\lambda).
\]
The \(\rho\)-block is the two-sided ideal corresponding to the
\(\lambda=\rho\) summand under this isomorphism.  Its identity
acts as the identity on \(M^{R,r_0}_\rho\) and as zero on every
\(M^{R,r_0}_\lambda\) with \(\lambda\ne\rho\).

\begin{lemma}
\label{lem:rectangle-jm-idempotents}
The elements \(f_T\), for \(T\in\operatorname{SYT}(\rho)\), are
pairwise orthogonal primitive idempotents in the \(\rho\)-block of
\(R_{r_0}(q)\), and \(f_\rho\) is the identity of that block.
\end{lemma}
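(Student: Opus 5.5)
Since the action map of Proposition~\ref{prop:q-rook-split-decomposition} is an injective algebra isomorphism, we prove every assertion by computing how \(f_T\) acts on each simple module \(M^{R,r_0}_\lambda\), \(\lambda\in\mathscr P_{\le r_0}\), in its basis \(\{w_L\}\). By \eqref{eq:q-rook-jm-eigenvalues} each \(\mathsf J_i\) is diagonal in this basis. Its eigenvalue on \(w_L\) is \(0\) if \(i\notin\operatorname{entries}(L)\), and otherwise \(q^{\operatorname{ct}_L(i)}\). In the latter case \(|\operatorname{ct}_L(i)|\le i-1\), because \(L\) is row and column strict, so the \(r_0\)-standard rook tableau restricted to entries \(\le i\) has at most \(i\) boxes. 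Hence the spectrum of \(\mathsf J_i\) lies in \(\mathcal E_i\). Each factor \((\mathsf J_i-\alpha)/(q^{\operatorname{ct}_T(i)}-\alpha)\) is diagonal, and the product over \(\alpha\in\mathcal E_i\setminus\{q^{\operatorname{ct}_T(i)}\}\) acts on \(w_L\) by \(1\) if the eigenvalue of \(\mathsf J_i\) on \(w_L\) equals \(q^{\operatorname{ct}_T(i)}\), and by \(0\) otherwise. Since \(T\) uses every entry, each target value is a nonzero power of \(q\). Thus \(f_T\) acts on \(w_L\) by \(1\) exactly when every \(i\in[r_0]\) occurs in \(L\) with \(\operatorname{ct}_L(i)=\operatorname{ct}_T(i)\), and by \(0\) otherwise.

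The condition that every entry of \([r_0]\) occurs in \(L\) forces \(|\lambda|=r_0\), so \(L\) is an honest standard tableau of shape \(\lambda\vdash r_0\). Equal complete content sequences force \(L=T\), because the addable boxes of a partition have distinct contents, as already used in Lemma~\ref{lem:hecke-specht-labels}. In particular \(\lambda=\rho\). Therefore \(f_T\) acts as the rank-one diagonal projection onto \(\mathbb Fw_T\) in \(M^{R,r_0}_\rho\) and as zero on every other \(M^{R,r_0}_\lambda\).

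Transporting back through the isomorphism gives the conclusions. The elements \(f_T\) are idempotents lying in the \(\rho\)-block. They are pairwise orthogonal because they are distinct diagonal coordinate projections. They are primitive because they correspond to rank-one idempotents in \(\operatorname{End}_{\mathbb F}(M^{R,r_0}_\rho)\): an idempotent of rank one cannot split as a sum of two nonzero orthogonal idempotents, since ranks add. Their sum \(f_\rho\) acts as the sum of the projections onto all basis vectors \(w_T\), \(T\in\operatorname{SYT}(\rho)\). These vectors form the whole basis of \(M^{R,r_0}_\rho\), since \(\binom{r_0}{r_0}f^\rho=f^\rho\). Hence \(f_\rho\) is the identity of the block.

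The main point needing care is the eigenvalue bound placing the spectrum of \(\mathsf J_i\) inside \(\mathcal E_i\). We check it carefully for rook tableaux whose entries are arbitrary subsets of \([r_0]\). The boxes with entries \(\le i\) form a partition shape of size at most \(i\), so the box containing \(i\) has \(|\text{content}|\le i-1\). Everything else is bookkeeping in the diagonal basis.
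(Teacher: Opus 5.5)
Your proof is correct and follows essentially the paper's route: you diagonalize \(f_T\) on each tableau basis vector \(w_L\) via \eqref{eq:q-rook-jm-eigenvalues}, place the spectrum of \(\mathsf J_i\) inside \(\mathcal E_i\), and rule out absent entries by the zero eigenvalue and tableaux \(L\ne T\) by the distinct contents of addable boxes. You then read off idempotence, orthogonality, primitivity and \(f_\rho=1_\rho\) through Proposition~\ref{prop:q-rook-split-decomposition}; the only differences are cosmetic, namely citing the content-sequence fact from Lemma~\ref{lem:hecke-specht-labels} instead of redoing the first-differing-entry argument, and using the count \(\binom{r_0}{r_0}f^\rho\) to see that the \(w_T\) exhaust the basis of \(M^{R,r_0}_\rho\).
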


\begin{proof}
Fix \(T\in\operatorname{SYT}(\rho)\).  We first compute the action
of \(f_T\) on every simple module \(M^{R,r_0}_\lambda\).
Let \(\lambda\in\mathscr P_{\le r_0}\), and let \(L\) be any
\(r_0\)-standard rook tableau of shape \(\lambda\), indexing a
basis vector \(w_L\) of this module.  Unlike \(T\), the tableau
\(L\) need not contain all the entries \(1,\ldots,r_0\), and its
shape need not be \(\rho\).

By \eqref{eq:q-rook-jm-eigenvalues}, \(w_L\) is an eigenvector of
each \(\mathsf J_i\).  If \(i\) occurs in \(L\), the boxes with
entries at most \(i\) form a Young diagram with at most \(i\)
boxes.  The box containing \(i\) therefore has content between
\(1-i\) and \(i-1\).  If \(i\) is absent, the eigenvalue is zero.
Thus \(\mathcal E_i\) contains every possible eigenvalue on \(w_L\),
as well as \(q^{\operatorname{ct}_T(i)}\).
For fixed \(i\), the product over \(\alpha\) in
\eqref{eq:rectangle-jm-primitive-idempotent} acts as \(1\) when
the eigenvalue is \(q^{\operatorname{ct}_T(i)}\): each numerator
then equals its denominator.  At any other eigenvalue in
\(\mathcal E_i\), one numerator is zero, so this product acts as
zero.  Hence \(f_Tw_L=w_L\) precisely when the eigenvalues agree
with those on \(w_T\) for every \(i\); otherwise \(f_Tw_L=0\).

We now check that this agreement occurs only for \(L=T\).
It holds when \(L=T\).  If some entry \(i\) is absent from \(L\),
then \eqref{eq:q-rook-jm-eigenvalues} gives \(\mathsf J_iw_L=0\),
whereas its eigenvalue on \(w_T\) is
\(q^{\operatorname{ct}_T(i)}\ne0\).  The corresponding product
over \(\alpha\) therefore annihilates \(w_L\).
It remains to consider \(L\ne T\) containing all entries
\(1,\ldots,r_0\).  Let \(k\) be the first entry occupying different
boxes in \(L\) and \(T\).  The smaller entries occupy the same
boxes and form a partition \(\nu\).  The two boxes containing
\(k\) are distinct addable boxes of \(\nu\).  Such boxes have
distinct contents: for boxes in rows \(i<j\), the difference is
\[
        (\nu_i+1-i)-(\nu_j+1-j)=\nu_i-\nu_j+j-i>0.
\]
Since \(q\) is indeterminate, this implies
\(q^{\operatorname{ct}_L(k)}\ne q^{\operatorname{ct}_T(k)}\).
The product over \(\alpha\) for \(i=k\) therefore annihilates
\(w_L\).  We have proved, on the basis of every simple module, that
\[
 f_Tw_L=
 \begin{cases}
 w_L,&L=T,\\
 0,&L\ne T.
 \end{cases}
\]
Under the \(\mathbb F\)-algebra isomorphism in
Proposition~\ref{prop:q-rook-split-decomposition}, \(f_T\) is
therefore the diagonal matrix unit indexed by \(T\) in the
\(\rho\)-block, with zero component in every other block.
In particular, \(f_T^2=f_T\) and \(f_Tf_{T'}=0\) for \(T\ne T'\).
Its one-dimensional image cannot split into the images of two
nonzero orthogonal idempotents, so \(f_T\) is primitive.
Summing over \(T\in\operatorname{SYT}(\rho)\) gives the identity
on \(M^{R,r_0}_\rho\) and zero on every other simple module.
Thus \(f_\rho\) is the identity of the \(\rho\)-block.
\end{proof}

In rank zero, set \(R_0(q)=H_0^{\mathrm{mir}}(q)=\mathbb F\)
and take \(\Theta_q\) to be the identity.  For \(0\le r\le n\), let
\[
        \iota_{r,n}:R_r(q)\hookrightarrow R_n(q)
\]
be the embedding preserving the indices of all generators; for \(r=0\)
it is the scalar inclusion.  Halverson proves the embedding after
extension to \(\mathbb K\) \cite[Corollary~3.3]{Hal04}.
The defining relations give the homomorphism over \(\mathbb F\),
and injectivity over \(\mathbb K\) implies injectivity over
\(\mathbb F\).

\begin{lemma}
\label{lem:rectangle-tower-support}
For \(n\ge r_0\), \(\lambda\in\mathscr P_{\le n}\), and every
\(T\in\operatorname{SYT}(\rho)\), the element
\(\iota_{r_0,n}(f_T)\) acts nontrivially on
\(M^{R,n}_\lambda\) if and only if \(\rho\subseteq\lambda\).
\end{lemma}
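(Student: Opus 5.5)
Since \(f_T\) is a diagonal matrix unit in the \(\rho\)-block of \(R_{r_0}(q)\) by Lemma~\ref{lem:rectangle-jm-idempotents}, the natural route is via restriction from \(R_n(q)\) to \(R_{r_0}(q)\) along \(\iota_{r_0,n}\). The element \(\iota_{r_0,n}(f_T)\) acts nontrivially on \(M^{R,n}_\lambda\) if and only if the restriction \(\iota_{r_0,n}^*M^{R,n}_\lambda\) contains a composition factor isomorphic to \(M^{R,r_0}_\rho\). This restriction is semisimple by Proposition~\ref{prop:q-rook-split-decomposition}. So the plan is to determine exactly which \(\lambda\) have \(M^{R,r_0}_\rho\) in their restriction.

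I would work directly on the seminormal basis \(\{w_L\}\), using Jucys--Murphy eigenvalues rather than a full branching rule. The elements \(\mathsf J_1,\ldots,\mathsf J_{r_0}\) in \eqref{eq:q-rook-jm-elements} involve only \(\mathsf T_1,\ldots,\mathsf T_{r_0-1}\) and \(\mathsf P_1\). They are therefore the images under \(\iota_{r_0,n}\) of the corresponding elements of \(R_{r_0}(q)\), and \eqref{eq:q-rook-jm-eigenvalues} for \(R_n(q)\) describes their action on every \(w_L\in M^{R,n}_\lambda\).

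Repeating the computation in the proof of Lemma~\ref{lem:rectangle-jm-idempotents} gives the action of \(\iota_{r_0,n}(f_T)\). It fixes \(w_L\) when, for each \(i\le r_0\), the entry \(i\) occurs in \(L\) and \(q^{\operatorname{ct}_L(i)}=q^{\operatorname{ct}_T(i)}\); otherwise it kills \(w_L\). The sets \(\mathcal E_i\) still contain all possible eigenvalues here, because the entries \(\le i\) of an \(n\)-standard rook tableau form a Young diagram with at most \(i\) boxes, so contents again lie in \([1-i,i-1]\).

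The condition holds exactly when \(1,\ldots,r_0\) all occur in \(L\) and occupy the same boxes as in \(T\). This follows from the first-differing-entry argument with distinct contents of addable boxes, as before. Such \(L\) then restricts on \(\{1,\ldots,r_0\}\) to the shape \(\rho\).

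\textbf{Only if.} If \(\iota_{r_0,n}(f_T)w_L\ne0\) for some \(L\), then the entries \(1,\ldots,r_0\) fill a Young subdiagram equal to \(\rho\) inside the shape \(\lambda\). Hence \(\rho\subseteq\lambda\).

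\textbf{If.} Suppose \(\rho\subseteq\lambda\) and \(|\lambda|\le n\). I would construct an \(n\)-standard rook tableau \(L\) of shape \(\lambda\) by placing \(T\) in the subdiagram \(\rho\) and filling the boxes of \(\lambda/\rho\) with \(r_0+1,\ldots,r_0+|\lambda/\rho|\) in row-reading order. This filling is row- and column-strict: it increases along rows of the skew shape and down columns, and every entry there exceeds every entry of \(T\). It uses \(|\lambda|\le n\) labels, so it is a legitimate rook tableau, and \(w_L\) is fixed by \(\iota_{r_0,n}(f_T)\).

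The main obstacle I expect is justifying that \eqref{eq:q-rook-jm-eigenvalues} applies to \(M^{R,n}_\lambda\) for the embedded elements. This is really a compatibility statement: \(\iota_{r_0,n}(\mathsf J_i)\) must equal the \(\mathsf J_i\) of \(R_n(q)\). It should follow immediately from the recursive definition, since both are defined by the same word in generators with the same indices. I would state this explicitly and check that \cite[Proposition~3.5]{Hal04} holds for all \(i\le n\).
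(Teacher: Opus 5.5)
Your proposal is correct and follows essentially the same route as the paper: compute the action of \(\iota_{r_0,n}(f_T)\) on the basis \(w_L\) through the Jucys--Murphy eigenvalues, show that it retains exactly those \(w_L\) whose entries \(1,\ldots,r_0\) form \(T\) (by the distinct contents of addable boxes), and for the converse fill \(\lambda/\rho\) in row-reading order with \(r_0+1,\ldots,|\lambda|\). Your opening reformulation via restriction and composition factors is not needed for this argument. The compatibility \(\iota_{r_0,n}(\mathsf J_i)=\mathsf J_i\) holds immediately because the embedding preserves generator indices, which is how the paper treats it.
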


\begin{proof}
Fix \(T\in\operatorname{SYT}(\rho)\), and let \(w_L\) be an
arbitrary tableau basis vector of \(M^{R,n}_\lambda\).
We first determine when \(\iota_{r_0,n}(f_T)\) retains \(w_L\).
The embedding takes each \(\mathsf J_i\), for \(i\le r_0\),
to the element given by the same formula in \(R_n(q)\).
By \eqref{eq:q-rook-jm-eigenvalues} and
\eqref{eq:rectangle-jm-primitive-idempotent}, the factor belonging
to \(i\) acts as \(1\) precisely when \(i\) occurs in \(L\)
with content \(\operatorname{ct}_T(i)\), and acts as \(0\) otherwise.
If all these factors act as \(1\), the entries \(1,\ldots,r_0\)
form a Young subdiagram: every box to the left of or above one of
them has a smaller entry.  As in the proof of
Lemma~\ref{lem:rectangle-jm-idempotents}, their successive contents
identify this subtableau with \(T\).  Consequently,
\[
 \iota_{r_0,n}(f_T)w_L=
 \begin{cases}
 w_L,&\text{the entries \(1,\ldots,r_0\) of \(L\) form \(T\)},\\
 0,&\text{otherwise}.
 \end{cases}
\]
In particular, nonzero action requires \(\rho\subseteq\lambda\).

Conversely, assume \(\rho\subseteq\lambda\).  We construct a
tableau \(L\) retained by this projection, starting with the
prescribed tableau \(T\).  Fill the remaining boxes of \(\lambda\)
from the top row down, from left to right within each row, with
\(r_0+1,\ldots,|\lambda|\).  Each new box is addable: completed
rows above have their lengths in \(\lambda\), and rows below retain
their lengths in \(\rho\).  Denote the resulting row- and
column-strict tableau by \(L\).  It is an \(n\)-standard rook
tableau, since \(|\lambda|\le n\); the labels
\(|\lambda|+1,\ldots,n\) are absent.  The entries
\(1,\ldots,r_0\) of \(L\) form the prescribed tableau \(T\),
so \(\iota_{r_0,n}(f_T)w_L=w_L\ne0\).
\end{proof}

The conclusion of Lemma~\ref{lem:rectangle-tower-support} also holds
for \(\iota_{r_0,n}(f_\rho)\).
Indeed, its summands are orthogonal projections onto the possible
initial tableaux of shape \(\rho\); at least one of these occurs
exactly when \(\rho\subseteq\lambda\).

An argument \(q^{-1}\) on an element, ideal, or representation means
that both the algebra parameter and its rational coefficients are
replaced by their values at \(q^{-1}\).  In particular,
\(f_T(q^{-1}),f_\rho(q^{-1})\in R_{r_0}(q^{-1})\).
We use \(\iota_{r,n}\) for the generator-preserving inclusions at
either parameter.  Define the corresponding mirabolic inclusion by
\[
 \jmath_{r,n}
 =\Theta_{q^{-1}}^{-1}\circ\iota_{r,n}\circ\Theta_{q^{-1}}
 :H_r^{\mathrm{mir}}(q)\hookrightarrow H_n^{\mathrm{mir}}(q),
\]
where each isomorphism is taken at the appropriate rank.  This map is
injective and preserves the generator indices, since both parameter
changes use the same scalar on each Hecke generator.
We also write \(\jmath_{r,n}\) for the generator-preserving inclusion
at parameter \(q^{-1}\).  Define
\begin{equation}
\label{eq:mirabolic-rectangle-idempotent}
 g_T=\Theta_{q^{-1}}^{-1}(f_T(q^{-1})),\qquad
 g_\rho=\Theta_{q^{-1}}^{-1}(f_\rho(q^{-1}))
                    =\sum_{T\in\operatorname{SYT}(\rho)}g_T.
\end{equation}
Thus \(g_T\) is a primitive spectral idempotent in the
\(\rho\)-block of \(H_{r_0}^{\mathrm{mir}}(q)\), and
\(g_\rho\) is the identity of that block.

The modules \(N^{(n)}_{(\lambda,|\lambda|)}\) in
Subsection~\ref{subsec:irreducible-characters} are defined by pullback
along \(\Theta_{q^{-1}}\).  Under their identification with the
parameter-inverted rook modules, \eqref{eq:mirabolic-rectangle-idempotent}
gives, on every corresponding tableau basis vector,
\[
 \jmath_{r_0,n}(g_T)w_L
 =\iota_{r_0,n}\bigl(f_T(q^{-1})\bigr)w_L.
\]
Thus Lemma~\ref{lem:rectangle-tower-support}, with \(q\) replaced by
\(q^{-1}\), gives
\begin{equation}
\label{eq:mirabolic-rectangle-support}
 \jmath_{r_0,n}(g_T)N^{(n)}_{(\lambda,|\lambda|)}\ne0
 \quad\Longleftrightarrow\quad \rho\subseteq\lambda
 \qquad(n\ge r_0,\ |\lambda|\le n).
\end{equation}
There is no change of the partition label in this assertion.
The same equivalence holds for \(g_\rho\), by the
support statement for \(f_\rho\) above.

\subsection{The mirabolic annihilator and its quotient action}
\label{subsec:mirabolic-annihilator}

Let
\[
 I^{\mathrm{mir}}_{m|s,n}(q)=\ker\Phi_{m|s,n}
                  \subseteq H_n^{\mathrm{mir}}(q)
\]
be the annihilator of \(W_{\mathbb F,n}\) for the mirabolic tensor action.

\begin{theorem}[Mirabolic tensor-space annihilator]
\label{thm:super-mirabolic-annihilator}
If \(n<r_0\), then \(I^{\mathrm{mir}}_{m|s,n}(q)=0\).
If \(n\ge r_0\), then, for every \(T\in\operatorname{SYT}(\rho)\),
\begin{equation}
\label{eq:mirabolic-annihilator-principal}
 I^{\mathrm{mir}}_{m|s,n}(q)
 =H_n^{\mathrm{mir}}(q)\,\jmath_{r_0,n}(g_T)\,H_n^{\mathrm{mir}}(q).
\end{equation}
\end{theorem}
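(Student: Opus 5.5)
The plan is to use the split semisimple structure of \(H_n^{\mathrm{mir}}(q)\) over \(\mathbb F\). This structure comes from Proposition~\ref{prop:q-rook-split-decomposition} transported through \(\Theta_{q^{-1}}\), and it identifies
\[
 H_n^{\mathrm{mir}}(q)\cong\bigoplus_{\lambda\in\mathscr P_{\le n}}\operatorname{End}_{\mathbb F}\bigl(N^{(n)}_{(\lambda,|\lambda|)}\bigr).
\]
In such an algebra every two-sided ideal is a direct sum of full blocks. So both sides of \eqref{eq:mirabolic-annihilator-principal} are determined by the set of labels \(\lambda\) on which they act by zero, and the whole proof reduces to comparing these two label sets.

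First I determine the label set of \(\ker\Phi_{m|s,n}\). Scalar extension is injective, so \(x\in H_n^{\mathrm{mir}}(q)\) lies in the kernel if and only if its image in \(H^{\mathrm{mir}}_{n,\mathbb K}(q)\) kills \(\widetilde V^{\otimes n}\). By the bimodule decomposition of Theorem~\ref{thm:super-mirabolic-schur-weyl}, each \(L_{m|s}(\lambda)\) with \(\lambda\in H(m,s)\) is nonzero. Hence this happens if and only if \(x\) kills \(N^{(n)}_{(\lambda,|\lambda|),\mathbb K}\) for every \(\lambda\in\mathscr P_{\le n}\cap H(m,s)\), which is equivalent to killing the corresponding \(\mathbb F\)-forms. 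Therefore the kernel is the sum of the blocks indexed by \(\lambda\notin H(m,s)\), \(|\lambda|\le n\). By \eqref{eq:nonhook-contains-rectangle}, these are exactly the \(\lambda\) with \(\rho\subseteq\lambda\). If \(n<r_0\), no such \(\lambda\) has \(|\lambda|\le n\), since \(|\lambda|\ge|\rho|=r_0\). So the kernel is zero and the representation is faithful.

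For \(n\ge r_0\), I consider the two-sided ideal \(J\) generated by \(\jmath_{r_0,n}(g_T)\). In a split semisimple algebra, the two-sided ideal generated by an element \(e\) is the sum of those blocks on which \(e\) has nonzero component, that is, blocks \(\lambda\) with \(e\,N^{(n)}_{(\lambda,|\lambda|)}\ne0\). For the inclusion of \(J\) into the sum of those blocks, note that each block is an ideal and \(e\) lies in the sum of the blocks where it is nonzero. For the reverse inclusion, each full matrix block is simple, so any nonzero component generates the whole block; multiplying \(e\) by the central block idempotent isolates that component inside \(J\). By \eqref{eq:mirabolic-rectangle-support}, \(\jmath_{r_0,n}(g_T)\) acts nontrivially on \(N^{(n)}_{(\lambda,|\lambda|)}\) if and only if \(\rho\subseteq\lambda\). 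So \(J\) is the sum of exactly the blocks with \(\rho\subseteq\lambda\), which coincides with the kernel computed above.

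The main point requiring care is the passage between \(\mathbb F\) and \(\mathbb K\). The kernel is defined over \(\mathbb F\) on \(W_{\mathbb F,n}\), while the decomposition is available only over \(\mathbb K\). I will handle this by checking that the \(\mathbb K\)-simples \(N^{(n)}_{(\lambda,a),\mathbb K}\) are scalar extensions of the absolutely irreducible \(\mathbb F\)-modules, so that vanishing on one is equivalent to vanishing on the other. I will also need the block description of ideals over \(\mathbb F\), which is justified by the split decomposition over \(\mathbb F\) itself.
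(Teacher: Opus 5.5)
Your proposal is correct and follows essentially the paper's argument. You identify the kernel as the sum of the Wedderburn blocks indexed by \(\lambda\notin H(m,s)\) via the bimodule decomposition, and you identify the ideal generated by \(\jmath_{r_0,n}(g_T)\) with the sum of the blocks in its support via \eqref{eq:mirabolic-rectangle-support} and \eqref{eq:nonhook-contains-rectangle}. The only difference is that you compare blocks directly in the split decomposition over \(\mathbb F\) and descend only the module-vanishing condition, whereas the paper compares the ideals over \(\mathbb K\) and then descends the equality \(\mathbb K\otimes_{\mathbb F}I=\mathbb K\otimes_{\mathbb F}J\) by injectivity of \(A/J\to\mathbb K\otimes_{\mathbb F}(A/J)\). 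Your route is slightly more economical, since Proposition~\ref{prop:q-rook-split-decomposition} is already stated over \(\mathbb F\).
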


\begin{proof}
We first compute the kernel after extending scalars to \(\mathbb K\).
Put \(A=H_n^{\mathrm{mir}}(q)\) and
\(A_{\mathbb K}=\mathbb K\otimes_{\mathbb F}A\).
By Propositions~\ref{prop:q-rook-split-decomposition}
and~\ref{prop:parameter-inversion-isomorphism}, \(A_{\mathbb K}\) is
split semisimple, with simple modules
\(N^{(n)}_{(\lambda,|\lambda|),\mathbb K}\) indexed by
\(\lambda\in\mathscr P_{\le n}\).
Theorem~\ref{thm:super-mirabolic-schur-weyl} shows that such a
module occurs in tensor space precisely when \(\lambda\in H(m,s)\):
its multiplicity space is then the nonzero module \(L_{m|s}(\lambda)\).
A matrix in the block indexed by \(\lambda\) acts only on copies
of the corresponding simple module.  If a copy occurs, that block
acts faithfully on it; if no copy occurs, the whole block acts by
zero.  Thus the isomorphisms in
Propositions~\ref{prop:parameter-inversion-isomorphism}
and~\ref{prop:q-rook-split-decomposition} identify the kernel with
the sum of the factors indexed by \(\lambda\notin H(m,s)\):
\begin{equation}
\label{eq:mirabolic-kernel-wedderburn}
 \ker\Phi_{m|s,n,\mathbb K}
 =\bigoplus_{\substack{\lambda\in\mathscr P_{\le n}\\
                       \lambda_{m+1}>s}}
   \operatorname{End}_{\mathbb K}
      (N^{(n)}_{(\lambda,|\lambda|),\mathbb K}).
\end{equation}
If \(n<r_0\), this sum is empty, so the kernel over \(\mathbb K\)
is zero.

Suppose \(n\ge r_0\).  We now identify this kernel with the
two-sided ideal generated by \(e=\jmath_{r_0,n}(g_T)\).
A nonzero idempotent in a full matrix algebra generates that algebra
as a two-sided ideal: after diagonalizing the idempotent, choose a
diagonal position \(h\) with entry \(1\); the matrix units then
satisfy \(E_{ih}eE_{hj}=E_{ij}\).
Thus an idempotent in a split semisimple algebra generates precisely
the blocks on which it acts nontrivially.
Equations~\eqref{eq:mirabolic-rectangle-support} and
\eqref{eq:nonhook-contains-rectangle} identify the support of \(e\)
with the blocks in \eqref{eq:mirabolic-kernel-wedderburn}.  Hence
\[
 \ker\Phi_{m|s,n,\mathbb K}=A_{\mathbb K}eA_{\mathbb K}.
\]

We descend this equality to \(\mathbb F\).  Scalar extension is
exact, and the natural identification
\[
 \mathbb K\otimes_{\mathbb F}\operatorname{End}_{\mathbb F}(W_{\mathbb F,n})
 \cong\operatorname{End}_{\mathbb K}(\widetilde V^{\otimes n})
\]
identifies the extended representation with \(\Phi_{m|s,n,\mathbb K}\).
Consequently,
\[
 \mathbb K\otimes_{\mathbb F}I^{\mathrm{mir}}_{m|s,n}(q)
 =\ker\Phi_{m|s,n,\mathbb K}.
\]
For \(J=AeA\), expansion of products gives
\[
 \mathbb K\otimes_{\mathbb F}J=A_{\mathbb K}eA_{\mathbb K},
\]
because \((\alpha\otimes a)e(\beta\otimes b)
=\alpha\beta\otimes aeb\).
Write \(I=I^{\mathrm{mir}}_{m|s,n}(q)\).  The subspaces \(I\)
and \(J\) have the same scalar extension.  If \(x\in I\), its
class \(x+J\in A/J\) therefore satisfies
\[
 1\otimes(x+J)=0
 \quad\text{in }\mathbb K\otimes_{\mathbb F}(A/J).
\]
The natural map from an \(\mathbb F\)-vector space to its scalar
extension is injective, so \(x+J=0\), or \(x\in J\).
This proves \(I\subseteq J\).  Applying the same argument with
\(I\) and \(J\) interchanged proves \(I=J\).  The zero-kernel conclusion when \(n<r_0\)
descends by the same injectivity.
\end{proof}

The same proof, using the support of \(g_\rho\), gives
\[
 I^{\mathrm{mir}}_{m|s,n}(q)
 =H_n^{\mathrm{mir}}(q)\,\jmath_{r_0,n}(g_\rho)
                         \,H_n^{\mathrm{mir}}(q)
 \qquad(n\ge r_0).
\]
For \(n\ge r_0\), the \(\rho\)-block is a nonzero block in the
kernel.  The representation is therefore faithful exactly when
\(n<(m+1)(s+1)\).
By the tableau bases in Section~\ref{subsec:irreducible-characters},
\(\dim_{\mathbb F}N^{(n)}_{(\lambda,|\lambda|)}
=\binom n{|\lambda|}f^\lambda\).  Dimensions are preserved by scalar
extension, so \eqref{eq:mirabolic-kernel-wedderburn} gives
\begin{equation}
\label{eq:mirabolic-kernel-dimension}
 \dim_{\mathbb F}I^{\mathrm{mir}}_{m|s,n}(q)
 =\sum_{\substack{\lambda\in\mathscr P_{\le n}\\\lambda_{m+1}>s}}
       \left(\binom n{|\lambda|}f^\lambda\right)^2.
\end{equation}
The empty sum is zero.

Define the quotient over \(\mathbb F\) by
\[
 \overline H^{\mathrm{mir}}_{m|s,n}(q)
 =H_n^{\mathrm{mir}}(q)/I^{\mathrm{mir}}_{m|s,n}(q),
\]
so the first isomorphism theorem gives
\[
 \overline H^{\mathrm{mir}}_{m|s,n}(q)
 \cong\Phi_{m|s,n}\bigl(H_n^{\mathrm{mir}}(q)\bigr).
\]
The induced action of this quotient on \(W_{\mathbb F,n}\) is
therefore faithful.  Write
\(\overline H^{\mathrm{mir}}_{m|s,n,\mathbb K}(q)\)
for its scalar extension to \(\mathbb K\), identified with its
faithful image on \(\widetilde V^{\otimes n}\).

\begin{corollary}
\label{cor:mirabolic-quotient-double-centralizer}
The algebras \(\overline H^{\mathrm{mir}}_{m|s,n,\mathbb K}(q)\)
and \(\mathscr S_q(m|s;\le n)\) are mutual full centralizers on
\(\widetilde V^{\otimes n}\).
\end{corollary}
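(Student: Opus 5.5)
The plan is to reduce the corollary to Theorem~\ref{thm:super-mirabolic-schur-weyl} by showing that the faithful quotient has the same image in \(\operatorname{End}_{\mathbb K}(\widetilde V^{\otimes n})\) as the full mirabolic algebra. By definition, \(\overline H^{\mathrm{mir}}_{m|s,n,\mathbb K}(q)\) is \(\mathbb K\otimes_{\mathbb F}\Phi_{m|s,n}(H_n^{\mathrm{mir}}(q))\), realized inside \(\mathbb K\otimes_{\mathbb F}\operatorname{End}_{\mathbb F}(W_{\mathbb F,n})\cong\operatorname{End}_{\mathbb K}(\widetilde V^{\otimes n})\). As in the descent step of the proof of Theorem~\ref{thm:super-mirabolic-annihilator}, this identification carries the extended representation to \(\Phi_{m|s,n,\mathbb K}\). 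Expanding \(\mathbb K\)-linear combinations of images of elements of \(H_n^{\mathrm{mir}}(q)\) then gives
\[
 \overline H^{\mathrm{mir}}_{m|s,n,\mathbb K}(q)
 =\Phi_{m|s,n,\mathbb K}\bigl(H^{\mathrm{mir}}_{n,\mathbb K}(q)\bigr).
\]
One also needs injectivity of the natural map from \(\mathbb K\otimes_{\mathbb F}\) of the image into the endomorphism algebra. This holds because scalar extension of the inclusion of an \(\mathbb F\)-subspace is injective, since \(\mathbb K\) is flat over \(\mathbb F\).

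With this identification, the first centralizer equality in Theorem~\ref{thm:super-mirabolic-schur-weyl} reads \(\operatorname{End}_U(\widetilde V^{\otimes n})=\overline H^{\mathrm{mir}}_{m|s,n,\mathbb K}(q)\). Since \(\mathscr S_q(m|s;\le n)=\widetilde\Psi_n(U)\), an operator commutes with \(U\) exactly when it commutes with \(\mathscr S_q(m|s;\le n)\). The first equality therefore says that the centralizer of \(\mathscr S_q(m|s;\le n)\) is the quotient image.

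For the second equality, observe that an operator commutes with the action of \(H^{\mathrm{mir}}_{n,\mathbb K}(q)\) if and only if it commutes with the image algebra, which is the quotient. The second equality of the theorem then gives that the centralizer of \(\overline H^{\mathrm{mir}}_{m|s,n,\mathbb K}(q)\) is \(\widetilde\Psi_n(U)=\mathscr S_q(m|s;\le n)\).

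The only step needing care is the identification of the scalar extension of the image with the image of the scalar extension; everything else is a formal consequence of the theorem already proved.
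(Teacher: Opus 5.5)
Your proposal is correct and follows essentially the same route as the paper: you identify the scalar-extended faithful quotient with the image \(\Phi_{m|s,n,\mathbb K}(H^{\mathrm{mir}}_{n,\mathbb K}(q))\) via scalar extension and the first isomorphism theorem, use that an acting algebra and its image have the same commutant, and then invoke Theorem~\ref{thm:super-mirabolic-schur-weyl}. The only difference is that you spell out the injectivity (flatness) step, which the paper compresses into the phrase ``exactness of scalar extension''.
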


\begin{proof}
Exactness of scalar extension identifies the mirabolic image with
\(\overline H^{\mathrm{mir}}_{m|s,n,\mathbb K}(q)\) by the first
isomorphism theorem.  An acting algebra and its image have the same
commutant.  Theorem~\ref{thm:super-mirabolic-schur-weyl}
therefore gives both centralizer equalities.
\end{proof}

Proposition~\ref{prop:q-rook-split-decomposition} and
Theorem~\ref{thm:super-mirabolic-schur-weyl} also give
\begin{align}
\label{eq:mirabolic-quotient-dimension}
 \dim_{\mathbb F}\overline H^{\mathrm{mir}}_{m|s,n}(q)
 &=\sum_{a=0}^{n}\sum_{\substack{\lambda\vdash a\\\lambda\in H(m,s)}}
       \left(\binom na f^\lambda\right)^2,\\
\label{eq:mirabolic-schur-dimension}
 \dim_{\mathbb K}\mathscr S_q(m|s;\le n)
 &=\sum_{a=0}^{n}\sum_{\substack{\lambda\vdash a\\\lambda\in H(m,s)}}
       s_\lambda(1^m|1^s)^2.
\end{align}
For the first identity, the surviving mirabolic blocks have sizes
\(\binom na f^\lambda\), and scalar extension preserves dimension.
For the second, \eqref{eq:quantum-image-on-multiplicities} gives one
matrix block \(\operatorname{End}_{\mathbb K}(L_{m|s}(\lambda))\)
for each \((a,\lambda)\); the same matrix acts on every copy of
\(L_{m|s}(\lambda)\) in tensor space.  Its contribution is therefore
\((\dim_{\mathbb K}L_{m|s}(\lambda))^2=s_\lambda(1^m|1^s)^2\),
since the character of \(L_{m|s}(\lambda)\) is \(s_\lambda(x|y)\),
as recalled after
Proposition~\ref{prop:type-a-schur-weyl}.

\section{Super Frobenius, Murnaghan--Nakayama, and Regev-type formulas}
\label{sec:super-frobenius}
\label{sec:character-formulas-from-super-frobenius}

We first derive the super Frobenius formula by taking weighted traces in
Theorem~\ref{thm:super-mirabolic-schur-weyl}.  From this formula, we
then obtain a Murnaghan--Nakayama rule and Regev-type character formulas
for the mirabolic Hecke algebra.

\subsection{A super mirabolic Frobenius formula}
\label{subsec:super-mirabolic-frobenius}

In this subsection we derive a super analogue of Wan's mirabolic Frobenius
formula, using weighted traces.

Recall the commuting weight alphabets \(x=(x_1,\ldots,x_m)\),
\(y=(y_1,\ldots,y_s)\), and the extended alphabet
\(z=(1,x_1,\ldots,x_m)\).  Its first letter has weight one and
records the even trivial summand \(\mathbb Kv_\star\).

For a formal variable \(t\), define the one-cycle functions
\(\mathsf q_d(x|y;q)\), \(d\ge1\), by
\begin{equation}
\label{eq:super-mirabolic-one-cycle-series}
 1+(q-1)\sum_{d\ge1}\mathsf q_d(x|y;q)t^d
 =\frac{1+qt}{1+t}
  \prod_{i=1}^m\frac{1+qx_it}{1+x_it}
  \prod_{j=1}^s\frac{1-y_jt}{1-qy_jt}.
\end{equation}
These functions have the bicomposition expression
\begin{equation}
\label{eq:super-mirabolic-bicomposition-expansion}
\begin{aligned}
        \mathsf q_d(x|y;q)
        &=
        (-1)^{d-1}
        \sum_{(\alpha;\beta)\vDash d}
        M_\alpha(z)M_\beta(y)
        (-q)^{|\beta|-\ell(\beta)}
        (1-q)^{\ell(\alpha)+\ell(\beta)-1}.
\end{aligned}
\end{equation}
To verify this expression, expand each single-letter factor using
\[
 \frac{1+qu}{1+u}=1+(q-1)\sum_{r\ge1}(-1)^{r-1}u^r,
 \qquad
 \frac{1-u}{1-qu}=1+(q-1)\sum_{r\ge1}q^{r-1}u^r,
\]
where \(u\) is a formal variable.  Choosing the nonconstant factors
records the distinct even and odd letters, while their exponents give
\((\alpha;\beta)\).  After division by \(q-1\), their coefficient is
\[
 (-1)^{|\alpha|-\ell(\alpha)}
 q^{|\beta|-\ell(\beta)}
 (q-1)^{\ell(\alpha)+\ell(\beta)-1},
\]
which is the coefficient in
\eqref{eq:super-mirabolic-bicomposition-expansion} because
\(|\alpha|+|\beta|=d\).

For a composition \(\mu=(\mu_1,\ldots,\mu_\ell)\), set
\[
        \mathsf q_\mu(x|y;q)
        =
        \prod_{i=1}^{\ell}
        \mathsf q_{\mu_i}(x|y;q).
\]
For the empty composition, we set
\[
        \mathsf q_\varnothing(x|y;q)=1.
\]

For \(d\ge1\), recall the cycle
\(\gamma_{(d)}=s_1\cdots s_{d-1}\in\mathfrak S_d\), with
\(\gamma_{(1)}=1\).  Let \(\mathcal T_{\gamma_{(d)}}\) be the action of
the Hecke element \(T_{\gamma_{(d)}}\) on \(\widetilde V^{\otimes d}\)
under \(T_i\mapsto\mathcal T_i=-R_i\).
Let \(W_{x|y}\) be the diagonal operator
defined by
\[
        W_{x|y}v_\star=v_\star,\qquad
        W_{x|y}v_i=x_i v_i\ (1\le i\le m),\qquad
        W_{x|y}v_{m+j}=y_jv_{m+j}\ (1\le j\le s).
\]
The operator is understood after extending scalars to the polynomial ring
in the weight variables.

\begin{lemma}
\label{lem:one-cycle-weighted-trace}
For every \(d\ge1\), the weighted trace is
\begin{equation}
\label{eq:one-cycle-weighted-trace}
        \operatorname{Tr}_{\widetilde V^{\otimes d}}
        \left(
        W_{x|y}^{\otimes d}\mathcal T_{\gamma_{(d)}}
        \right)
        =
        \mathsf q_d(x|y;q).
\end{equation}
\end{lemma}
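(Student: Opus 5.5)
The plan is a direct word-by-word trace computation, followed by matching with the bicomposition expansion \eqref{eq:super-mirabolic-bicomposition-expansion}. Write \(\mathcal T_{\gamma_{(d)}}=(-1)^{d-1}R_1R_2\cdots R_{d-1}\) (the reduced word \(s_1\cdots s_{d-1}\)). Since \(W_{x|y}^{\otimes d}\) is diagonal, the trace is \((-1)^{d-1}\sum_{\mathbf b}\wt(\mathbf b)\,c_{\mathbf b}\). Here \(\mathbf b\) runs over words in \(\{\star,1,\ldots,m+s\}^d\), \(\wt(\mathbf b)\) is the product of the \(z\)- and \(y\)-weights of its letters, and \(c_{\mathbf b}\) is the diagonal coefficient of \(R_1\cdots R_{d-1}\) at \(v_{\mathbf b}\).

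To compute \(c_{\mathbf b}\), apply \(R_{d-1}\) first, then \(R_{d-2}\), and so on; this is a transfer-matrix computation. At each step \(R_i\) acts on positions \(i,i+1\), and by \eqref{eq:mirabolic-tensor-operator} it either keeps the pair (coefficient \(1-q\) if the left letter is smaller, \(1\) or \(-q\) if the letters are equal) or swaps it (with signs and possibly a factor \(q\)). A returning path must end at \(v_{\mathbf b}\). The standard observation for the cycle \(s_1\cdots s_{d-1}\) is that the letter originally in position \(d\) travels leftward, and the path contributes to the diagonal only when every application either keeps the configuration or swaps in a way that overall permutes nothing. Concretely, following Ram/Halverson-type trace computations, the diagonal coefficient factors as a product over \(i=1,\dots,d-1\) of local factors depending on the comparison of \(b_i\) with the moving letter. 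The contribution is nonzero only when \(\mathbf b\) is weakly decreasing in the appropriate sense. More precisely, group \(\mathbf b\) into maximal blocks of equal letters whose values are weakly ordered. Each block of an even letter contributes a sign \((-1)^{r-1}\) from the equal-letter swaps, each block of an odd letter contributes \((-q)^{r-1}\), and each boundary between distinct blocks contributes a factor \((1-q)\).

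The step to carry out carefully is therefore the following claim: \(c_{\mathbf b}\ne0\) only if \(\mathbf b\) is a nondecreasing (or, depending on orientation, nonincreasing) sequence of letters. In that case
\[
 c_{\mathbf b}=(1-q)^{k-1}\prod_{\text{even blocks}}(\pm1)^{r-1}\prod_{\text{odd blocks}}(-q)^{r-1},
\]
where \(k\) is the number of distinct letters. I would prove this by induction on \(d\): write \(R_1\cdots R_{d-1}=R_1(R_2\cdots R_{d-1})\), apply the inductive formula on positions \(2,\dots,d\) after tracking the one off-diagonal term that can be corrected by \(R_1\), and check the four cases of \eqref{eq:mirabolic-tensor-operator}. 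Small cases \(d=2,3\) serve as a check of the orientation and signs. Since the sign convention matters, I may instead first prove the unconjugated statement for \(vS_i\) via \eqref{eq:R-conjugation-standard}; the conjugation by \(\widetilde D_d\) does not change traces.

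Finally, summing over sorted words, a word corresponds to a choice of distinct even letters from \(z\) (including \(\star\) with weight \(1\)) with multiplicities \(\alpha\) and distinct odd letters with multiplicities \(\beta\), where \((\alpha;\beta)\vDash d\). The weight sum gives \(M_\alpha(z)M_\beta(y)\), and the coefficient is \((1-q)^{\ell(\alpha)+\ell(\beta)-1}(-q)^{|\beta|-\ell(\beta)}\) times the even-block signs. Combined with the global \((-1)^{d-1}\), the result should be exactly \eqref{eq:super-mirabolic-bicomposition-expansion}. The main obstacle is the sign bookkeeping for even equal letters: \(R\) fixes \(v_i\otimes v_i\) with coefficient \(+1\), while \(\mathcal T=-R\), so I expect the even-block sign to be absorbed into \((-1)^{d-1}\). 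I will verify this against \(d=2\), where the trace should be \(-\sum_{i} z_i^2-(1-q)e_2\)-type terms, matching \(\mathsf q_2\) from the generating function.
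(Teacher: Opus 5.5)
Your overall route is the paper's: write \(\mathcal T_{\gamma_{(d)}}=(-1)^{d-1}R_1\cdots R_{d-1}\), compute its diagonal coefficients on tensor words, and group the surviving words by bicompositions to reach \eqref{eq:super-mirabolic-bicomposition-expansion}. However, the points you leave open or state incorrectly are exactly the content of the lemma. As written, one of them (the even-block sign) would give the wrong formula.

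First, your induction has no off-diagonal term to track. Since \(R_2\cdots R_{d-1}\) never touches position \(1\), every word \(\mathbf c\) occurring in \(R_2\cdots R_{d-1}v_{\mathbf b}\) has \(c_1=b_1\). Since \(R_1\) does not touch positions \(\ge3\), a term of \(R_1v_{\mathbf c}\) equal to \(v_{\mathbf b}\) forces \(c_j=b_j\) for \(j\ge3\). The swap term of \(R_1\) could then return to \(\mathbf b\) only if \(b_2=c_1=b_1\), and in that case \(\mathbf c=\mathbf b\) anyway. Hence \(c_{\mathbf b}=\prod_{i=1}^{d-1}r(b_i,b_{i+1})\), where \(r(i,j)\) is the coefficient of \(v_i\otimes v_j\) in \(R(v_i\otimes v_j)\). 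No letter moves in a contributing term, so the moving-letter picture is unnecessary. The paper runs the same argument from the right: once \(R_{d-1}\) has acted, position \(d\) is frozen.

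Second, \eqref{eq:mirabolic-tensor-operator} gives the local factors directly:
\(r(i,j)=1-q\) for \(i<j\), \(r(i,j)=0\) for \(i>j\), \(r(i,i)=1\) for even \(v_i\), and \(r(i,i)=-q\) for odd \(v_i\). So there is no choice of orientation: the support is exactly the weakly increasing words for \(\star<1<\cdots<m+s\).

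Third, equal even letters contribute \(+1\), not \((-1)^{r-1}\), and nothing is absorbed into \((-1)^{d-1}\). That sign comes from \(\mathcal T_i=-R_i\) and already appears verbatim as the global sign in \eqref{eq:super-mirabolic-bicomposition-expansion}. An extra factor \((-1)^{|\alpha|-\ell(\alpha)}\) would therefore make the identity false. Your own \(d=2\) check shows this: the square \(z_i^2\) of an even weight has coefficient \(-1\) in both the trace and \(\mathsf q_2\), which forces \(r(i,i)=+1\).

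With these corrections, a weakly increasing word has \(\ell(\alpha)+\ell(\beta)-1\) strict ascents and \(|\beta|-\ell(\beta)\) equal odd adjacent pairs, with \(\star\) the smallest even letter of weight \(1\). Your final summation then goes through exactly as in the paper. The detour through the unconjugated operators \(vS_i\) is harmless but not needed.
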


\begin{proof}
Since \(\mathcal T_i=-R_i\), we have
\[
 \mathcal T_{\gamma_{(d)}}=(-1)^{d-1}R_1\cdots R_{d-1}.
\]
We compute the diagonal coefficients of this product on tensor words.
For \(d=1\), the product is the identity.  For \(d\ge2\), the rightmost
operator acts first.  After \(R_{d-1}\) has acted, position \(d\) will
never be changed again.  Thus a diagonal contribution cannot use the swap
term when the last two letters differ.  Repeating this argument from right
to left shows that every unequal adjacent pair must use the non-swap term.
Such a term exists precisely when the word is weakly increasing.  Identical
even letters contribute \(1\), identical odd letters contribute \(-q\), and
a strict increase contributes \(1-q\), by
\eqref{eq:mirabolic-tensor-operator}.

We now group the contributing words by their distinct even and odd
letters and their run lengths.  This gives
a bicomposition \((\alpha;\beta)\vDash d\).
There are \(|\beta|-\ell(\beta)\) adjacent equal odd pairs,
since an odd run of length \(r\) contributes \(r-1\) such pairs.
The \(\ell(\alpha)+\ell(\beta)\) distinct letters give
\(\ell(\alpha)+\ell(\beta)-1\) strict increases.
Summing the weights over the choices of these letters gives
\(M_\alpha(z)M_\beta(y)\), while the local \(R\)-coefficients contribute
\((-q)^{|\beta|-\ell(\beta)}
(1-q)^{\ell(\alpha)+\ell(\beta)-1}\).  The extra trivial vector enlarges the
even alphabet to \(z=(1,x_1,\ldots,x_m)\).
Including the sign \((-1)^{d-1}\), we obtain
\(\mathsf q_d(x|y;q)\) by
\eqref{eq:super-mirabolic-bicomposition-expansion}.
\end{proof}

\begin{theorem}[Super mirabolic Frobenius formula]
\label{thm:super-mirabolic-frobenius}
Let \(\mu\) be a composition of \(k\le n\).  The hook Schur expansion is
\[
        \mathsf q_\mu(x|y;q)
        =
        \sum_{a=0}^{k}
        \sum_{\substack{\lambda\vdash a\\ \lambda_{m+1}\le s}}
        \chi^{(n)}_{\lambda}
        \left(\widehat T^{(n)}_\mu\right)
        s_\lambda(x|y).
\]
\end{theorem}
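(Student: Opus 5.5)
We compute the weighted trace
\(\operatorname{Tr}_{\widetilde V^{\otimes n}}\bigl(W_{x|y}^{\otimes n}\,\Phi_{m|s,n,\mathbb K}(\widehat T^{(n)}_\mu)\bigr)\)
in two ways.  The weight operator \(W_{x|y}^{\otimes n}\) is, after specializing the variables, the action of a Cartan element of \(U\).  Concretely, when \(x_i=v^{-c_i}\) and \(y_j=v^{-d_j}\) for integers \(c_i,d_j\), it equals \(\widetilde\Psi_n\bigl(\prod_jK_j^{c_j}\bigr)\), since \(\widetilde D_n\) is diagonal and conjugation does not change a diagonal operator.  A polynomial identity in \(x,y\) that holds on this Zariski-dense set of specializations holds identically.

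\textbf{Representation-theoretic side.}  By Theorem~\ref{thm:super-mirabolic-schur-weyl} the mirabolic operator commutes with \(W_{x|y}^{\otimes n}\), and the trace factors over the bimodule decomposition.  It equals
\[
\sum_{a=0}^n\sum_{\substack{\lambda\vdash a\\ \lambda\in H(m,s)}}\operatorname{ch}L_{m|s}(\lambda)\cdot\chi^{(n)}_\lambda\bigl(\widehat T^{(n)}_\mu\bigr)
=\sum_{a,\lambda}\chi^{(n)}_\lambda\bigl(\widehat T^{(n)}_\mu\bigr)s_\lambda(x|y),
\]
using that the character of \(L_{m|s}(\lambda)\) is \(s_\lambda(x|y)\).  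Since the ordinary trace is used and the weights are even diagonal operators, no supertrace signs arise.  To cut the sum to \(a\le k\), note that \(P_{n-k}\) is a factor of \(\widehat T^{(n)}_\mu\).  On \(N^{(n)}_{(\lambda,a)}\) with \(a>k\), every tableau has \(a>k\) entries and so meets \([n-k]\), so \(P_{n-k}\) acts by zero; hence these characters vanish.

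\textbf{Combinatorial side.}  Here \(\mathcal P_{n-k}\) forces the first \(n-k\) letters to be \(\star\), each of weight \(1\).  The Hecke element \(T_{\gamma_{\mu\uparrow n}}\) acts only on the last \(k\) positions, as the tensor product over consecutive blocks of the one-cycle operators \(\mathcal T_{\gamma_{(\mu_i)}}\).  Since \(\mathcal P_{n-k}\) acts as \(\operatorname{pr}_\star^{\otimes(n-k)}\otimes\mathrm{id}\) and commutes with the operators on the later positions, the operator splits as a tensor product over the blocks, and so does the weight.  The trace is therefore \(\prod_i\operatorname{Tr}_{\widetilde V^{\otimes\mu_i}}\bigl(W_{x|y}^{\otimes\mu_i}\mathcal T_{\gamma_{(\mu_i)}}\bigr)\).

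\textbf{The main obstacle} is that the trace of an ordinary tensor product of operators on a super tensor space need not be the product of the block traces.  The local operators \(R\) carry the sign \((-1)^{|v_i||v_j|}\), so one must check that no extra Koszul signs appear between blocks.  I expect none: each \(R_i\) is a fixed matrix on the word basis acting in positions \(i,i+1\), so the product over disjoint blocks is literally a Kronecker product of matrices, and the trace factors.  The \(\star\) prefix contributes \(1\).  Lemma~\ref{lem:one-cycle-weighted-trace} then evaluates each factor as \(\mathsf q_{\mu_i}(x|y;q)\), giving \(\mathsf q_\mu\) (equal to \(1\) for \(\mu=\varnothing\)).  Equating the two sides gives the theorem.
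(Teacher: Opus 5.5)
Your proposal is correct and follows the paper's proof: the weighted trace \(\operatorname{Tr}_{\widetilde V^{\otimes n}}\bigl(W_{x|y}^{\otimes n}\Phi_{m|s,n,\mathbb K}(\widehat T^{(n)}_\mu)\bigr)\) is computed once through the bimodule decomposition of Theorem~\ref{thm:super-mirabolic-schur-weyl}, and once directly on tensor words using \(\mathcal P_{n-k}=\operatorname{pr}_\star^{\otimes(n-k)}\otimes\operatorname{id}\), the factorization of the trace over the blocks, and Lemma~\ref{lem:one-cycle-weighted-trace}. The differences are cosmetic: you realize \(W_{x|y}^{\otimes n}\) as a specialized Cartan element and use a density argument where the paper uses the decomposition into common \(K_j\)-eigenspaces, and you get the truncation \(a\le k\) from the projection formula on the rook tableau basis rather than from counting stars in \(M_a\).
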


\begin{proof}
For \(h\in H^{\mathrm{mir}}_n(q)\) and the diagonal operator
\(W_{x|y}\) in Lemma~\ref{lem:one-cycle-weighted-trace}, set
\[
        \operatorname{Ch}_{x|y}(h)
        =
        \operatorname{Tr}_{\widetilde V^{\otimes n}}
        \left(
        W_{x|y}^{\otimes n}
        \Phi_{m|s,n,\mathbb K}(h)
        \right).
\]
We first compute the weighted trace using the bimodule decomposition in
Theorem~\ref{thm:super-mirabolic-schur-weyl}.
Let \(\alpha_j\) be the multiplicity of \(v_j\) in a tensor word,
for \(1\le j\le m+s\), so \(\alpha_j\ge0\) and
\(\sum_j\alpha_j\le n\).
By \eqref{eq:quantum-coproduct-convention} and the natural action,
\(K_j\) acts by \(v^{-\alpha_j}\) on the space with these
multiplicities.  Conjugation by the diagonal map \(\widetilde D_n\)
does not change this action.  Since the powers \(1,v^{-1},\ldots,v^{-n}\)
are distinct, these spaces are exactly the corresponding common
eigenspaces of the \(K_j\).
The \(U\)-module isomorphism in
Theorem~\ref{thm:super-mirabolic-schur-weyl} preserves these weight
spaces.  In each summand
\(L_{m|s}(\lambda)\otimes_{\mathbb K}N^{(n)}_{(\lambda,a),\mathbb K}\),
the weight space is therefore the corresponding weight space of
\(L_{m|s}(\lambda)\) tensored with the whole \(N\)-factor.
On it, \(W_{x|y}^{\otimes n}\) multiplies by
\(x_1^{\alpha_1}\cdots x_m^{\alpha_m}
y_1^{\alpha_{m+1}}\cdots y_s^{\alpha_{m+s}}\).
Thus it acts on the \(L\)-factor by its weight monomials and as the
identity on the \(N\)-factor.  Its trace on the \(L\)-factor is
\(s_\lambda(x|y)\), by \cite[Theorem~5.1]{BKK00} and
\cite[Theorem~6.10(b)]{BR87}.  The mirabolic element acts
on the \(N\)-factor, so taking the trace gives
\begin{equation}
\label{eq:frobenius-weighted-trace-expansion}
        \operatorname{Ch}_{x|y}
        \left(\widehat T^{(n)}_\mu\right)
        =
        \sum_{a=0}^{n}
        \sum_{\substack{\lambda\vdash a\\ \lambda_{m+1}\le s}}
        \chi^{(n)}_{\lambda}
        \left(\widehat T^{(n)}_\mu\right)
        s_\lambda(x|y).
\end{equation}
We next compute the same trace directly on tensor words.
By \eqref{eq:mirabolic-standard-element},
\[
 \Phi_{m|s,n,\mathbb K}(\widehat T^{(n)}_\mu)
 =\mathcal P_{n-k}\mathcal T_{\gamma_{\mu\uparrow n}}.
\]
The projection formula \eqref{eq:tensor-projection-action} shows
that \(\mathcal P_{n-k}\) kills every word whose first \(n-k\)
positions are not all \(v_\star\).  Its image is
\[
 \mathbb K v_\star^{\otimes(n-k)}
 \otimes\widetilde V^{\otimes\mu_1}\otimes\cdots
 \otimes\widetilde V^{\otimes\mu_\ell}.
\]
The cycle operators act only on the final \(k\) positions, and
\(W_{x|y}^{\otimes n}\) is diagonal.  They preserve the image and
kernel of \(\mathcal P_{n-k}\), so the weighted operator vanishes
on the kernel.  On the image, it is
\[
 1\otimes\bigotimes_{j=1}^{\ell}
 \left(W_{x|y}^{\otimes\mu_j}
       \mathcal T_{\gamma_{(\mu_j)}}\right).
\]
The first factor has trace \(1\), and the trace of a tensor
product is the product of the traces.  Thus
Lemma~\ref{lem:one-cycle-weighted-trace} gives
\begin{equation}
\label{eq:frobenius-direct-trace}
        \operatorname{Ch}_{x|y}
        \left(\widehat T^{(n)}_\mu\right)
        =
        \prod_{i=1}^{\ell}
        \mathsf q_{\mu_i}(x|y;q)
        =
        \mathsf q_\mu(x|y;q).
\end{equation}
It remains to determine which ranks contribute.
Equating \eqref{eq:frobenius-weighted-trace-expansion} and
\eqref{eq:frobenius-direct-trace} gives the expansion with
\(0\le a\le n\).  If \(a>k\), the rank-\(a\) summand contains
fewer than \(n-k\) stars, so \(P_{n-k}\) annihilates it.  Each of its
simple mirabolic factors therefore has zero character on
\(\widehat T^{(n)}_\mu\).  This gives the truncation to \(a\le k\),
including \(k=0\), when only the empty partition contributes.

Finally, \(\mathsf q_d(x|y;q)\) and \(s_\lambda(x|y)\) have
coefficients in \(\mathbb F\), and the values
\(\chi^{(n)}_\lambda(\widehat T^{(n)}_\mu)\) lie in \(\mathbb F\).
Injectivity of
\(\mathbb F[x,y]\to\mathbb K[x,y]\) descends the identity to
\(\mathbb F\).
\end{proof}

\begin{remark}
\label{rem:stable-hook-comparisons}
By \cite[Lemma~6.4]{BR87} and homogeneity, the polynomials
\(s_\lambda(x|y)\), \(\lambda\in H(m,s)\), are linearly independent,
so coefficients may be compared.
For a fixed coefficient comparison, we may take \(m,s\) large enough
that all relevant partitions lie in \(H(m,s)\); the character values
\(\chi^{(n)}_\lambda(\widehat T^{(n)}_\mu)\) in
Theorem~\ref{thm:super-mirabolic-frobenius} are independent of this choice.
\end{remark}

\subsection{A Murnaghan--Nakayama rule for the mirabolic Hecke algebra}
\label{sec:mirabolic-mn-super-schur-specialization}

We derive a Murnaghan--Nakayama rule for the mirabolic Hecke algebra by
combining Theorem~\ref{thm:super-mirabolic-frobenius} with a multiplication formula for
hook Schur functions.  Its coefficients are specializations of skew Schur
functions at a virtual alphabet.

For a formal variable \(t\), the dual Cauchy identity gives the following
formal power series identity for the one-cycle functions.

\begin{lemma}
\label{lem:one-variable-super-mirabolic-kernel}
The one-cycle generating series has the expansion
\begin{equation}
\label{eq:one-variable-mirabolic-super-kernel-mn}
        1+(q-1)\sum_{r\ge1}
        \mathsf q_r(x|y;q)t^r
        =
        \sigma_{-t}[1-q]
        \sum_{\lambda\in H(m,s)}
        s_\lambda(x|y)\,
        s_{\lambda'}[(q-1)t].
\end{equation}
\end{lemma}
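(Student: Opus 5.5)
The plan is to write the right-hand side of \eqref{eq:super-mirabolic-one-cycle-series} as a plethystic product. Then we apply the super dual Cauchy identity to the part involving \(x\) and \(y\), while the factor for the trivial letter \(\star\) supplies \(\sigma_{-t}[1-q]\).

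First, \(\sigma_{-t}[1-q]=(1+qt)/(1+t)\) was recorded in the preliminaries. This is exactly the first factor in \eqref{eq:super-mirabolic-one-cycle-series}, the one coming from \(z_0=1\).

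Next, the remaining product
\[
 F(t)=\prod_{i=1}^m\frac{1+qx_it}{1+x_it}\prod_{j=1}^s\frac{1-y_jt}{1-qy_jt}
\]
should be computed via power sums. Taking logarithms, \(\log\frac{1+qu}{1+u}=\sum_{r\ge1}\frac{(-1)^{r-1}(q^r-1)u^r}{r}\) and \(\log\frac{1-u}{1-qu}=\sum_{r\ge1}\frac{(q^r-1)u^r}{r}\). Hence
\[
 \log F(t)=\sum_{r\ge1}\frac{(-1)^{r-1}}{r}\,p_r(x|y)\,p_r[(q-1)t],
\]
using the super specialization \(p_r(x|y)=\sum_ix_i^r+(-1)^{r-1}\sum_jy_j^r\) and \(p_r[(q-1)t]=(q^r-1)t^r\). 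The right-hand side is the standard dual Cauchy kernel
\[
 \sum_\lambda s_\lambda[X]\,s_{\lambda'}[Y]=\exp\Bigl(\sum_{r\ge1}\frac{(-1)^{r-1}}{r}p_r[X]p_r[Y]\Bigr),
\]
evaluated at \(X\) given by the super specialization and \(Y=(q-1)t\). This kernel identity holds in the ring of symmetric functions in two sets of variables. Applying the specialization homomorphism \(f\mapsto f(x|y)\) in the first set and \(f\mapsto f[(q-1)t]\) in the second then gives
\[
 F(t)=\sum_\lambda s_\lambda(x|y)\,s_{\lambda'}[(q-1)t].
\]

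Two compatibility points must be checked. The first is that the hook Schur function defined by the Jacobi--Trudi determinant in \(h_r(x|y)\) is the image of \(s_\lambda\) under this specialization. This holds because the generating function \(\prod_j(1+y_jt)/\prod_i(1-x_it)\) has logarithm \(\sum_r p_r(x|y)t^r/r\), so \(h_r\mapsto h_r(x|y)\) is consistent with the power-sum specialization. The second is convergence: the sum over \(\lambda\) is interpreted degree by degree in \(t\), since \(s_{\lambda'}[(q-1)t]\) is homogeneous of degree \(|\lambda|\) in \(t\). Finally, by \cite[Corollary~6.5]{BR87} we have \(s_\lambda(x|y)=0\) for \(\lambda\notin H(m,s)\), so the sum restricts to \(H(m,s)\). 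Multiplying by \(\sigma_{-t}[1-q]\) gives \eqref{eq:one-variable-mirabolic-super-kernel-mn}.

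The main point requiring care is the sign bookkeeping. We must confirm that the factor \((-1)^{r-1}\) arising from the even letters (through \(-x_it\)) and the sign pattern for the odd letters combine exactly into \((-1)^{r-1}p_r(x|y)\), rather than into \(p_r(x|y)\) with the \(\omega\)-twisted convention. I expect a direct check of \(r=1,2\) on a single \(x\) and a single \(y\) to settle this.
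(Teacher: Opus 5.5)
Your proposal is correct and is essentially the paper's proof. It splits off the trivial-letter factor \((1+qt)/(1+t)=\sigma_{-t}[1-q]\), identifies the remaining product with the dual Cauchy kernel specialized at \(p_r(x|y)\) and \(p_r[(q-1)t]=(q^r-1)t^r\), and restricts to \(H(m,s)\) via \cite[Corollary~6.5]{BR87}. The sign check you flag is already settled by your logarithm computation, since \((-1)^{r-1}p_r(x|y)=(-1)^{r-1}\sum_ix_i^r+\sum_jy_j^r\) matches the even and odd contributions exactly.
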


\begin{proof}
In the defining product \eqref{eq:super-mirabolic-one-cycle-series},
the factor for the extra even letter \(1\) is
\((1+qt)/(1+t)=\sigma_{-t}[1-q]\).
Since \(p_d[(q-1)t]=(q^d-1)t^d\), the dual Cauchy identity gives
\[
 \sum_{\lambda\in H(m,s)}
 s_\lambda(x|y)s_{\lambda'}[(q-1)t]
 =\exp\left(
 \sum_{d\ge1}\frac{(-1)^{d-1}}{d}
 p_d(x|y)(q^d-1)t^d\right).
\]
Substituting the definition of \(p_d(x|y)\) separates the even
and odd variables and gives
\[
        \prod_{i=1}^m\frac{1+qx_it}{1+x_it}
        \prod_{j=1}^s\frac{1-y_jt}{1-qy_jt}
        =
        \sum_{\lambda\in H(m,s)}
        s_\lambda(x|y)s_{\lambda'}[(q-1)t].
\]
Multiplying by the factor for the trivial summand proves
\eqref{eq:one-variable-mirabolic-super-kernel-mn}.
\end{proof}

For partitions \(\mu\subseteq\lambda\), define
\[
        d(\lambda/\mu)=|\lambda|-|\mu|.
\]
The next lemma expresses \(\mathsf q_r(x|y;q)s_\mu(x|y)\) as a
sum of hook Schur functions.  For \(r\ge1\), define its coefficients by
\begin{equation}
\label{eq:def-mirabolic-mn-coefficient}
        \mathcal R_{\lambda/\mu}(r;q)
        =
        \frac{1}{q-1}
        [t^r]\,
        \sigma_{-t}[1-q]\,
        s_{\lambda'/\mu'}[(q-1)t].
\end{equation}
We set \(\mathcal R_{\lambda/\mu}(r;q)=0\) if
\(\mu\not\subseteq\lambda\).

\begin{lemma}
\label{lem:mirabolic-skew-schur-specialization}
For every partition \(\mu\), the product has the expansion
\begin{equation}
\label{eq:mirabolic-skew-schur-specialization}
        \mathsf q_r(x|y;q)
        s_\mu(x|y)
        =
        \sum_{\lambda\in H(m,s)}
        \mathcal R_{\lambda/\mu}(r;q)
        s_\lambda(x|y),
        \qquad r\ge1.
\end{equation}
\end{lemma}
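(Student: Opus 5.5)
The plan is to multiply the generating identity of Lemma~\ref{lem:one-variable-super-mirabolic-kernel} by \(s_\mu(x|y)\) and expand the product of the two hook Schur functions with the skew dual Cauchy identity.

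\emph{Step 1.} Start from Lemma~\ref{lem:one-variable-super-mirabolic-kernel} with \(\lambda\) renamed \(\nu\), and multiply both sides by \(s_\mu(x|y)\). The right side becomes
\[
\sigma_{-t}[1-q]\sum_{\nu}s_\mu(x|y)s_\nu(x|y)\,s_{\nu'}[(q-1)t].
\]
Here \(\nu\) may run over all partitions, because \(s_\nu(x|y)=0\) for \(\nu\notin H(m,s)\).

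\emph{Step 2.} Expand \(s_\mu s_\nu=\sum_\lambda c^\lambda_{\mu\nu}s_\lambda\) with Littlewood--Richardson coefficients. This is valid after the super specialization, since \(f\mapsto f(x|y)\) is a ring homomorphism from symmetric functions defined by its values on power sums. Use \(c^\lambda_{\mu\nu}=c^{\lambda'}_{\mu'\nu'}\), and combine the \(\nu\)-sum with the skew expansion \(s_{\lambda'/\mu'}=\sum_{\nu}c^{\lambda'}_{\mu'\nu'}s_{\nu'}\), applied at the virtual alphabet \((q-1)t\). This gives
\[
s_\mu(x|y)\Bigl(1+(q-1)\sum_{r\ge1}\mathsf q_r t^r\Bigr)
=\sigma_{-t}[1-q]\sum_{\lambda\supseteq\mu}s_\lambda(x|y)\,s_{\lambda'/\mu'}[(q-1)t].
\]
Terms with \(\lambda\notin H(m,s)\) vanish, so the sum may be restricted to \(\lambda\in H(m,s)\). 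Terms with \(\mu\not\subseteq\lambda\) are zero by convention, matching the definition of \(\mathcal R_{\lambda/\mu}(r;q)\) as \(0\) in that case.

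\emph{Step 3.} Extract \([t^r]\) for \(r\ge1\) and divide by \(q-1\). On the left this gives \(\mathsf q_r s_\mu\). On the right it gives \(\sum_\lambda\mathcal R_{\lambda/\mu}(r;q)s_\lambda(x|y)\), directly from \eqref{eq:def-mirabolic-mn-coefficient}.

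\emph{Finiteness and the main point.} For fixed \(r\), only finitely many \(\lambda\) contribute. The series \(s_{\lambda'/\mu'}[(q-1)t]\) is homogeneous of degree \(|\lambda|-|\mu|\) in \(t\), and \(\sigma_{-t}[1-q]\) has only nonnegative powers, so \(|\lambda|\le|\mu|+r\). The step needing the most care is Step 2: I must justify using the LR and skew expansions inside the specializations at both \((x|y)\) and \((q-1)t\). I would handle this by proving the identity first in the ring of symmetric functions \(\Lambda\otimes\Lambda\), as \(\sum_{\lambda,\nu}c^\lambda_{\mu\nu}s_\lambda\otimes s_{\nu'}=\sum_\lambda s_\lambda\otimes s_{\lambda'/\mu'}\). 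I would then apply the two ring specializations, which is legitimate coefficientwise in \(t\).
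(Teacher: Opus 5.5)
Your proposal is correct and follows the paper's proof essentially step by step: you multiply the kernel identity of Lemma~\ref{lem:one-variable-super-mirabolic-kernel} by \(s_\mu(x|y)\), combine the Littlewood--Richardson expansion and \(c^\lambda_{\mu\nu}=c^{\lambda'}_{\mu'\nu'}\) with the skew expansion at \((q-1)t\), and then extract \([t^r]\) and divide by \(q-1\). Your remarks on finiteness and on first proving the identity in \(\Lambda\otimes\Lambda\) before specializing are sound, and they make explicit points the paper leaves implicit.
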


\begin{proof}
We determine the coefficient of each \(s_\lambda(x|y)\) after
multiplying \eqref{eq:one-variable-mirabolic-super-kernel-mn}
by \(s_\mu(x|y)\).
Let \(A\) be a virtual alphabet, and let \(c_{\mu\nu}^{\lambda}\)
denote the Littlewood--Richardson coefficients.
The product and skew Schur expansions give
\[
 s_\mu(x|y)s_\nu(x|y)
 =\sum_{\lambda\in H(m,s)}
 c_{\mu\nu}^{\lambda}s_\lambda(x|y),
 \qquad
 s_{\lambda'/\mu'}[A]
 =\sum_\nu c_{\mu\nu}^{\lambda}s_{\nu'}[A].
\]
The second identity uses
\(c_{\mu\nu}^{\lambda}=c_{\mu'\nu'}^{\lambda'}\).
Hence multiplying
\eqref{eq:one-variable-mirabolic-super-kernel-mn}
by \(s_\mu(x|y)\) gives
\[
 s_\mu(x|y)\left(1+(q-1)\sum_{r\ge1}
 \mathsf q_r(x|y;q)t^r\right)
 =
 \sigma_{-t}[1-q]\sum_{\lambda\in H(m,s)}
 s_\lambda(x|y)s_{\lambda'/\mu'}[(q-1)t].
\]
Taking the coefficient of \(t^r\), with \(r\ge1\), removes the
constant term on the left.  Dividing by \(q-1\) and using
\eqref{eq:def-mirabolic-mn-coefficient} proves the result.
\end{proof}

For \(\mu\subseteq\lambda\), the coefficient
\(\mathcal R_{\lambda/\mu}(r;q)\) defined in
\eqref{eq:def-mirabolic-mn-coefficient} can be evaluated explicitly.
Since \(s_{\lambda'/\mu'}[(q-1)t]\) is homogeneous of degree
\(d(\lambda/\mu)\) in \(t\), we have
\[
        s_{\lambda'/\mu'}[(q-1)t]
        =
        t^{d(\lambda/\mu)}
        s_{\lambda'/\mu'}[q-1].
\]
Moreover,
\[
        \sigma_{-t}[1-q]
        =
        \frac{1+qt}{1+t}
        =
        1+(q-1)\sum_{j\ge1}(-1)^{j-1}t^j.
\]
Hence \(\mathcal R_{\lambda/\mu}(r;q)=0\) unless
\(0\le d(\lambda/\mu)\le r\), and
\begin{equation}
\label{eq:explicit-mirabolic-mn-coefficient}
        \mathcal R_{\lambda/\mu}(r;q)
        =
        \begin{cases}
        \displaystyle
        \frac{1}{q-1}\,
        s_{\lambda'/\mu'}[q-1],
        & d(\lambda/\mu)=r,\\[10pt]
        \displaystyle
        (-1)^{r-d(\lambda/\mu)-1}
        s_{\lambda'/\mu'}[q-1],
        & 0\le d(\lambda/\mu)<r,\\[6pt]
        0,
        & \text{otherwise.}
        \end{cases}
\end{equation}
In particular,
\[
        \mathcal R_{\mu/\mu}(r;q)=(-1)^{r-1}.
\]

\begin{theorem}
\label{thm:mirabolic-murnaghan-nakayama-rule}
Let \(r\ge1\), let \(\nu\) be a composition with \(r+|\nu|\le n\), and let
\(\lambda\vdash a\) with \(0\le a\le n\).
The character values satisfy the recurrence
\begin{equation}
\label{eq:mirabolic-mn-rule}
        \chi^{(n)}_{\lambda}
        \left(
        \widehat T^{(n)}_{(r,\nu)}
        \right)
        =
        \sum_{b=0}^{a}
        \sum_{\mu\vdash b}
        \mathcal R_{\lambda/\mu}(r;q)\,
        \chi^{(n)}_{\mu}
        \left(
        \widehat T^{(n)}_{\nu}
        \right).
\end{equation}
\end{theorem}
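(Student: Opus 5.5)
The plan is to combine the Frobenius formula (Theorem~\ref{thm:super-mirabolic-frobenius}) with the multiplication rule of Lemma~\ref{lem:mirabolic-skew-schur-specialization}, and then compare coefficients of hook Schur functions. By Remark~\ref{rem:stable-hook-comparisons}, we may take \(m,s\) large enough, say \(m\ge n\), that every partition of size at most \(n\) lies in \(H(m,s)\). The character values \(\chi^{(n)}_\lambda(\widehat T^{(n)}_\mu)\) do not depend on this choice. The polynomials \(s_\lambda(x|y)\) with \(|\lambda|\le n\) are then linearly independent.

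\emph{Left side.} Write \(k=r+|\nu|\le n\). Theorem~\ref{thm:super-mirabolic-frobenius} applied to the composition \((r,\nu)\) gives
\[
 \mathsf q_r(x|y;q)\,\mathsf q_\nu(x|y;q)
 =\sum_{a=0}^{k}\sum_{\lambda\vdash a}
 \chi^{(n)}_\lambda\bigl(\widehat T^{(n)}_{(r,\nu)}\bigr)s_\lambda(x|y).
\]

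\emph{Right side.} Applying the same theorem to \(\nu\), with \(|\nu|\le n\), expands \(\mathsf q_\nu\) as
\[
 \mathsf q_\nu(x|y;q)=\sum_{b\le|\nu|}\sum_{\mu\vdash b}\chi^{(n)}_\mu\bigl(\widehat T^{(n)}_\nu\bigr)s_\mu(x|y).
\]
Multiplying by \(\mathsf q_r\) and using Lemma~\ref{lem:mirabolic-skew-schur-specialization} term by term gives
\[
 \sum_{\mu}\chi^{(n)}_\mu\bigl(\widehat T^{(n)}_\nu\bigr)\sum_{\lambda}\mathcal R_{\lambda/\mu}(r;q)\,s_\lambda(x|y).
\]
By \eqref{eq:explicit-mirabolic-mn-coefficient}, \(\mathcal R_{\lambda/\mu}(r;q)\) vanishes unless \(\mu\subseteq\lambda\) and \(|\lambda|-|\mu|\le r\). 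Hence every \(\lambda\) that occurs satisfies \(|\lambda|\le k\le n\) and lies in \(H(m,s)\), so the comparison stays inside the independent family.

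\emph{Comparing coefficients.} Equating the coefficient of \(s_\lambda\) for \(\lambda\vdash a\) gives
\[
 \chi^{(n)}_\lambda\bigl(\widehat T^{(n)}_{(r,\nu)}\bigr)
 =\sum_{\mu}\mathcal R_{\lambda/\mu}(r;q)\,\chi^{(n)}_\mu\bigl(\widehat T^{(n)}_\nu\bigr).
\]
The terms of this sum are zero unless \(\mu\subseteq\lambda\), which forces \(b=|\mu|\le a\). So the sum can be written over \(b=0,\dots,a\) and \(\mu\vdash b\), exactly as in \eqref{eq:mirabolic-mn-rule}. The Frobenius formula already truncates to \(|\mu|\le|\nu|\). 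For \(|\nu|<b\le a\) the formula requires \(\chi^{(n)}_\mu(\widehat T^{(n)}_\nu)=0\), and this follows from the rank-truncation argument in the proof of Theorem~\ref{thm:super-mirabolic-frobenius}: \(P_{n-|\nu|}\) annihilates \(N^{(n)}_{(\mu,b)}\) when \(b>|\nu|\).

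\emph{Remaining case and main care point.} If \(a>k\), both sides must be zero. The left side vanishes by the same truncation argument. On the right, every nonzero term needs \(|\mu|\le|\nu|\) and \(|\lambda|-|\mu|\le r\), so \(a\le k\), a contradiction; hence the right side is also zero. The only delicate point is the legitimacy of coefficient comparison. This is handled by the stable choice of \(m,s\) above, together with the \(\mathbb F\)-rationality of all the quantities involved.
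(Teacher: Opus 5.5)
Your proposal is correct and follows essentially the same argument as the paper: expand \(\mathsf q_\nu\) by the Frobenius formula, multiply by \(\mathsf q_r\) using Lemma~\ref{lem:mirabolic-skew-schur-specialization}, compare coefficients of \(s_\lambda(x|y)\) with the Frobenius expansion of \(\mathsf q_{(r,\nu)}\), and treat \(a>r+|\nu|\) separately via the vanishing of \(\mathcal R_{\lambda/\mu}(r;q)\). The only cosmetic difference is that you fix \(m\ge n\) so that all partitions of size at most \(n\) are hook partitions at once, whereas the paper chooses \(m,s\) with \(\lambda\in H(m,s)\) and notes that every \(\mu\subseteq\lambda\) then also lies in \(H(m,s)\).
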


\begin{proof}
Choose \(m,s\) such that \(\lambda\in H(m,s)\).  Then every
\(\mu\subseteq\lambda\) also lies in \(H(m,s)\), and coefficient comparison in
the hook Schur expansion is legitimate by
Remark~\ref{rem:stable-hook-comparisons}.
Apply Theorem~\ref{thm:super-mirabolic-frobenius} first to \(\mathsf q_\nu\), multiply by
\(\mathsf q_r\), and use
Lemma~\ref{lem:mirabolic-skew-schur-specialization}:
\[
\begin{aligned}
        \mathsf q_{(r,\nu)}(x|y;q)
        &=
        \sum_{\substack{\eta\in H(m,s)\\|\eta|\le r+|\nu|}}
        \left(
        \sum_{b=0}^{|\nu|}
        \sum_{\substack{\mu\vdash b\\\mu\in H(m,s)}}
        \mathcal R_{\eta/\mu}(r;q)
        \chi^{(n)}_{\mu}
        \left(
        \widehat T^{(n)}_{\nu}
        \right)
        \right)
        s_\eta(x|y).
\end{aligned}
\]
Comparing the coefficient of \(s_\lambda(x|y)\) with the expansion of
\(\mathsf q_{(r,\nu)}(x|y;q)\) in
Theorem~\ref{thm:super-mirabolic-frobenius} gives
\eqref{eq:mirabolic-mn-rule} when \(a\le r+|\nu|\).
Terms with \(\mu\not\subseteq\lambda\) or \(b>a\) vanish by
the definition of \(\mathcal R\); terms with \(b>|\nu|\) vanish by
Theorem~\ref{thm:super-mirabolic-frobenius}.

If \(a>r+|\nu|\), the left side is zero by
Theorem~\ref{thm:super-mirabolic-frobenius}.
On the right, only \(|\mu|\le|\nu|\) can contribute, but then
\(|\lambda|-|\mu|>r\), so every coefficient \(\mathcal R\) vanishes by
\eqref{eq:explicit-mirabolic-mn-coefficient}.
\end{proof}

\begin{remark}
\label{rem:wan-mn-comparison}
The coefficients \(\mathcal R_{\lambda/\mu}(r;q)\) have a combinatorial
interpretation.  For a generalized ribbon \(\lambda/\mu\), meaning a
skew diagram with no \(2\times2\) block, let \(r_\xi\) and \(c_\xi\)
be the numbers of rows and columns in each connected component \(\xi\).
By \cite[Lemma~6.7(a), Equation~(6.6)]{HR95} and
\cite[Lemma~4.1, Equation~(4.4)]{JL24}, applied to the transposed diagram,
\[
 s_{\lambda'/\mu'}[q-1]
 =\prod_\xi\bigl((q-1)(-1)^{c_\xi-1}q^{r_\xi-1}\bigr),
\]
with the empty product equal to \(1\); the specialization is zero
if \(\lambda/\mu\) is not a generalized ribbon.
See also \cite[Lemma~4.5, Equation~(4.3.7)]{JL26}.
Substitution into \eqref{eq:explicit-mirabolic-mn-coefficient} gives
the corresponding formula for \(\mathcal R_{\lambda/\mu}(r;q)\).

Wan's derivation of the mirabolic Murnaghan--Nakayama rule is based on
the \(q\)-rook rule \cite[Proposition~5.3 and Theorem~5.4]{Wan26}.
Our proof of Theorem~\ref{thm:mirabolic-murnaghan-nakayama-rule} gives
a self-contained derivation directly from the super Frobenius formula
(Theorem~\ref{thm:super-mirabolic-frobenius}).
\end{remark}

\subsection{A Regev-type formula}
\label{sec:regev-type}

Specializing Theorem~\ref{thm:super-mirabolic-frobenius} at \(x_i=y_j=1\) gives the
following Regev-type identity.

For a partition \(\lambda\in H(m,s)\), set
\[
        c^{m|s}_\lambda=s_\lambda(1^m|1^s),
\]
which equals \(\dim_{\mathbb K}L_{m|s}(\lambda)\).
The one-part factors will be the specializations
\(\mathsf q_r(1^m|1^s;q)\).  For \(r\ge1\), put
\[
        F_r(m,s;q)
        =
        (-1)^{r-1}
        \sum_{(\alpha;\beta)\vDash r}
        \binom{m+1}{\ell(\alpha)}
        \binom{s}{\ell(\beta)}
        (-q)^{|\beta|-\ell(\beta)}
        (1-q)^{\ell(\alpha)+\ell(\beta)-1}.
\]

\begin{theorem}
\label{thm:super-mirabolic-theorem-B}
Let \(\mu=(\mu_1,\ldots,\mu_\ell)\vdash k\le n\).
The weighted sum of character values factors as
\[
        \sum_{a=0}^{k}
        \sum_{\substack{\lambda\vdash a\\ \lambda\in H(m,s)}}
        c^{m|s}_\lambda\,
        \chi^{(n)}_{\lambda}
        \left(\widehat T^{(n)}_\mu\right)
        =
        \prod_{j=1}^{\ell}
        F_{\mu_j}(m,s;q).
\]
\end{theorem}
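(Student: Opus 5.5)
The plan is to specialize the super Frobenius formula of Theorem~\ref{thm:super-mirabolic-frobenius} at \(x_1=\cdots=x_m=1\) and \(y_1=\cdots=y_s=1\). Both sides of that theorem are polynomials in the commuting weight variables with coefficients in \(\mathbb F\), so evaluation at all ones is a ring homomorphism \(\mathbb F[x,y]\to\mathbb F\). Applied to the right side, it replaces each \(s_\lambda(x|y)\) by \(s_\lambda(1^m|1^s)=c^{m|s}_\lambda\). The character values \(\chi^{(n)}_\lambda(\widehat T^{(n)}_\mu)\) are scalars and do not change. This gives exactly the left side of Theorem~\ref{thm:super-mirabolic-theorem-B}, with the range \(0\le a\le k\) and \(\lambda\in H(m,s)\) inherited verbatim. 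A partition is a composition, so the theorem applies to \(\mu\vdash k\).

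On the left, \(\mathsf q_\mu=\prod_j\mathsf q_{\mu_j}\), and evaluation is multiplicative. It therefore suffices to show \(\mathsf q_r(1^m|1^s;q)=F_r(m,s;q)\) for each \(r\ge1\). For this I would use the bicomposition expansion \eqref{eq:super-mirabolic-bicomposition-expansion}. Setting all \(x_i=1\) turns the extended alphabet \(z=(1,x_1,\ldots,x_m)\) into \(m+1\) letters equal to \(1\). Then \(M_\alpha(z)\) counts the strictly increasing index tuples \(0\le i_1<\cdots<i_{\ell(\alpha)}\le m\), which number \(\binom{m+1}{\ell(\alpha)}\). Similarly, \(M_\beta(1^s)=\binom{s}{\ell(\beta)}\), and the empty-composition conventions give \(\binom{m+1}{0}=\binom s0=1\). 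The coefficients \((-1)^{r-1}(-q)^{|\beta|-\ell(\beta)}(1-q)^{\ell(\alpha)+\ell(\beta)-1}\) carry over unchanged, so the specialized expansion is the defining sum for \(F_r(m,s;q)\).

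There is no serious obstacle. The one point needing care is that the specialization of \(M_\alpha(z)\) counts the extra letter \(z_0=1\), so the binomial involves \(m+1\) rather than \(m\). One should also note that the exponent \(\ell(\alpha)+\ell(\beta)-1\ge0\), since a bicomposition of \(r\ge1\) has at least one part. As a sanity check, one can instead specialize the generating function \eqref{eq:super-mirabolic-one-cycle-series} to \(\bigl(\tfrac{1+qt}{1+t}\bigr)^{m+1}\bigl(\tfrac{1-t}{1-qt}\bigr)^{s}\) and expand each factor as in the verification of \eqref{eq:super-mirabolic-bicomposition-expansion}. This gives the same \(F_r\).
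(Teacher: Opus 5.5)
Your proposal is correct and matches the paper's proof: specialize Theorem~\ref{thm:super-mirabolic-frobenius} at \(x_i=y_j=1\) so that \(s_\lambda(x|y)\) becomes \(c^{m|s}_\lambda\). Then use \(M_\alpha(1^{m+1})=\binom{m+1}{\ell(\alpha)}\) and \(M_\beta(1^s)=\binom{s}{\ell(\beta)}\) in \eqref{eq:super-mirabolic-bicomposition-expansion} to identify \(\mathsf q_r(1^m|1^s;q)\) with \(F_r(m,s;q)\), and conclude by multiplicativity over the parts of \(\mu\).
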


\begin{proof}
Set \(x_i=y_j=1\) in Theorem~\ref{thm:super-mirabolic-frobenius}.  Then
\(s_\lambda(x|y)=c^{m|s}_\lambda\), while
\[
 M_\alpha(1^{m+1})=\binom{m+1}{\ell(\alpha)},\qquad
 M_\beta(1^s)=\binom{s}{\ell(\beta)}.
\]
Substitution in \eqref{eq:super-mirabolic-bicomposition-expansion}
gives \(\mathsf q_r(1^m|1^s;q)=F_r(m,s;q)\).
Multiplicativity over the parts of \(\mu\) proves the character identity.
\end{proof}

Specializing \eqref{eq:super-mirabolic-one-cycle-series} at the same
alphabet gives the generating series for the one-part factors:
\[
        1+(q-1)\sum_{r\ge1}F_r(m,s;q)t^r
        =
        \left(\frac{1+qt}{1+t}\right)^{m+1}
        \left(\frac{1-t}{1-qt}\right)^s.
\]

For \(h\in H^{\mathrm{mir}}_n(q)\), define the tensor character by
\(\chi_{m|s,n}(h)=\operatorname{Tr}_{W_{\mathbb F,n}}
(\Phi_{m|s,n}(h))\).

\begin{corollary}
\label{thm:super-mirabolic-theorem-A}
For \(\mu=(\mu_1,\ldots,\mu_\ell)\vdash k\le n\), the tensor character value is
\[
        \chi_{m|s,n}(\widehat T^{(n)}_\mu)
        =\prod_{j=1}^{\ell}F_{\mu_j}(m,s;q).
\]
\end{corollary}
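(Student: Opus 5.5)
The plan is to compute the unweighted trace \(\chi_{m|s,n}(\widehat T^{(n)}_\mu)\) in two ways and compare. I would not go through the full Frobenius formula at all. The quickest route is to specialize the direct trace computation in the proof of Theorem~\ref{thm:super-mirabolic-frobenius}.

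First, the tensor character is the weighted trace \(\operatorname{Ch}_{x|y}\) evaluated at \(x_i=y_j=1\). Indeed, \(W_{1|1}^{\otimes n}\) is the identity operator on \(\widetilde V^{\otimes n}\). Moreover, \(\Phi_{m|s,n,\mathbb K}\) is the scalar extension of \(\Phi_{m|s,n}\), so traces over \(W_{\mathbb F,n}\) and over \(\widetilde V^{\otimes n}\) coincide. Next, I would reuse equation \eqref{eq:frobenius-direct-trace}. There \(\mathcal P_{n-k}\) restricts to the subspace \(\mathbb K v_\star^{\otimes(n-k)}\otimes\bigotimes_j\widetilde V^{\otimes\mu_j}\), the cycle operators factor across the blocks, and Lemma~\ref{lem:one-cycle-weighted-trace} gives the factor \(\mathsf q_{\mu_j}(x|y;q)\) for each block. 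This identity holds as polynomials in the weight variables. Specializing all weights to \(1\) therefore gives \(\prod_j\mathsf q_{\mu_j}(1^m|1^s;q)\).

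Finally, the proof of Theorem~\ref{thm:super-mirabolic-theorem-B} already identifies \(\mathsf q_r(1^m|1^s;q)=F_r(m,s;q)\) via \eqref{eq:super-mirabolic-bicomposition-expansion}, using \(M_\alpha(1^{m+1})=\binom{m+1}{\ell(\alpha)}\) and \(M_\beta(1^s)=\binom{s}{\ell(\beta)}\). This yields the product formula.

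As a consistency check, the same value also equals the left side of Theorem~\ref{thm:super-mirabolic-theorem-B}. The bimodule decomposition of Theorem~\ref{thm:super-mirabolic-schur-weyl} gives \(\chi_{m|s,n}=\sum_{a,\lambda}\dim L_{m|s}(\lambda)\,\chi^{(n)}_\lambda\), and \(\dim L_{m|s}(\lambda)=c^{m|s}_\lambda\). The only point needing care is the specialization step, namely that evaluating the polynomial identity at \(1\) is legitimate and that the trace is taken over \(\mathbb F\) versus \(\mathbb K\). Both points are routine: the entries involved are polynomials with \(\mathbb F\)-coefficients, and traces are unchanged under scalar extension.
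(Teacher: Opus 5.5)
Your proposal is correct and follows essentially the same route as the paper: it specializes the direct trace identity \eqref{eq:frobenius-direct-trace} at \(x_i=y_j=1\), observes that \(W_{x|y}^{\otimes n}\) becomes the identity and that traces are unchanged by scalar extension, and then uses \(\mathsf q_r(1^m|1^s;q)=F_r(m,s;q)\) from the proof of Theorem~\ref{thm:super-mirabolic-theorem-B}. Your extra consistency check via the bimodule decomposition is not needed for the argument, but it correctly shows agreement with Theorem~\ref{thm:super-mirabolic-theorem-B}.
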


\begin{proof}
Set every weight variable equal to \(1\) in
\eqref{eq:frobenius-direct-trace}.  Then \(W_{x|y}^{\otimes n}\)
is the identity, so the left side is the trace of
\(\Phi_{m|s,n,\mathbb K}(\widehat T^{(n)}_\mu)\).
This equals \(\chi_{m|s,n}(\widehat T^{(n)}_\mu)\), since traces
are preserved by scalar extension.  Each factor on the right is
\(\mathsf q_{\mu_j}(1^m|1^s;q)=F_{\mu_j}(m,s;q)\), as shown in
Theorem~\ref{thm:super-mirabolic-theorem-B}.
\end{proof}

For the next corollary, specialize to \(m=s=1\), and let
\[
        \mu=(\mu_1,\ldots,\mu_\ell)\vdash k\le n.
\]
For \(d\ge 1\), set
\[
        A_d(q)
        =
        2d+1
        +
        4\sum_{j=1}^{d-1}(d-j)(-q)^j.
\]
An ordinary nonempty hook partition has shape \((a-r,1^r)\), with
\(a\ge1\) and \(0\le r<a\).  We also include the empty partition
among the hook labels in the next sum.

\begin{corollary}
\label{cor:super-mirabolic-theorem-B-hook}
The sum of character values indexed by hook partitions, including the empty partition, is
\[
        \sum_{a=0}^{k}
        \sum_{\substack{\lambda\vdash a\\ \lambda\text{ is a hook}}}
        \chi^{(n)}_{\lambda}
        \left(\widehat T^{(n)}_\mu\right)
        =
        \frac{(-1)^{k-\ell(\mu)}}{2}
        \left(
        \prod_{i=1}^{\ell(\mu)} A_{\mu_i}(q)
        +1
        \right).
\]
\end{corollary}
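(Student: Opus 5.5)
The plan is to specialize Theorem~\ref{thm:super-mirabolic-theorem-B} to \(m=s=1\) and then identify each ingredient explicitly. For \(m=s=1\), the hook set \(H(1,1)=\{\lambda:\lambda_2\le1\}\) consists exactly of the ordinary hook partitions \((a-r,1^r)\) together with the empty partition. So the left side of Theorem~\ref{thm:super-mirabolic-theorem-B} already runs over the labels in the corollary, weighted by \(c^{1|1}_\lambda\).

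\emph{Step 1: the weights.} Compute \(c^{1|1}_\lambda=s_\lambda(1|1)\). For \(\lambda=\varnothing\) it is \(1\). For a nonempty hook \(\lambda=(a-r,1^r)\), the hook Schur function in one even and one odd variable is \(s_\lambda(x|y)=(x+y)x^{a-r-1}y^r\). One can verify this from the Jacobi--Trudi determinant with \(h_j(x|y)=x^{j-1}(x+y)\) for \(j\ge1\), or from \(\dim L_{1|1}(\lambda)=2\) via Berele--Regev tableaux. Hence \(c^{1|1}_\lambda=2\).

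Theorem~\ref{thm:super-mirabolic-theorem-B} then reads
\[
 2\sum_{a=1}^{k}\sum_{\substack{\lambda\vdash a\\\lambda\text{ hook}}}\chi^{(n)}_\lambda(\widehat T^{(n)}_\mu)+\chi^{(n)}_\varnothing(\widehat T^{(n)}_\mu)=\prod_{j}F_{\mu_j}(1,1;q).
\]
Adding \(\chi^{(n)}_\varnothing\) to both sides and dividing by \(2\) gives the full hook sum as \(\tfrac12\bigl(\prod_j F_{\mu_j}(1,1;q)+\chi^{(n)}_\varnothing(\widehat T^{(n)}_\mu)\bigr)\).

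\emph{Step 2: the empty character.} By the description after Corollary~\ref{cor:mirabolic-specht-label}, on the rank-zero module every \(P_j\) acts by \(1\) and every \(T_i\) acts by \(-1\). The element \(\widehat T^{(n)}_\mu=P_{n-k}T_{\gamma_{\mu\uparrow n}}\) contains \(d(\mu)=k-\ell(\mu)\) Hecke generators. Therefore \(\chi^{(n)}_\varnothing(\widehat T^{(n)}_\mu)=(-1)^{k-\ell(\mu)}\).

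\emph{Step 3: the one-part factors, which is the main computation.} I must show \(F_d(1,1;q)=(-1)^{d-1}A_d(q)\). Then \(\prod_jF_{\mu_j}=(-1)^{k-\ell(\mu)}\prod_jA_{\mu_j}\), and factoring out \((-1)^{k-\ell(\mu)}\) yields the stated formula.

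I will evaluate the defining bicomposition sum directly. Only \(\ell(\alpha)\in\{1,2\}\) and \(\ell(\beta)\in\{0,1\}\) contribute, since the weights are \(\binom{2}{\ell(\alpha)}\binom1{\ell(\beta)}\). Write \(b=|\beta|\). This leaves four cases:
\begin{itemize}
\item \(\alpha=(d)\), \(\beta=\varnothing\): contributes \(2\).
\item \(\ell(\alpha)=2\), \(\beta=\varnothing\): contributes \((d-1)(1-q)\).
\item \(\ell(\alpha)=1\), \(\beta=(b)\): contributes \(2(-q)^{b-1}(1-q)\), for \(1\le b\le d-1\).
\item \(\ell(\alpha)=2\), \(\beta=(b)\): contributes \((d-b-1)(-q)^{b-1}(1-q)^2\), for \(1\le b\le d-2\).
\end{itemize}

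Collecting the coefficient of \((-q)^j\) should be a telescoping computation. Write \(1-q=1+(-q)\) and \((1-q)^2=1+2(-q)+(-q)^2\), and combine the shifted sums. The constant term should give \(2+(d-1)+2+(d-2)=2d+1\), and the coefficient of \((-q)^j\) for \(1\le j\le d-1\) should reduce to \(4(d-j)\), matching \(A_d(q)\).

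As an independent check, I will also compare generating functions. The specialized series \(\bigl(\tfrac{1+qt}{1+t}\bigr)^2\tfrac{1-t}{1-qt}\) should equal \(1+(q-1)\sum_d(-1)^{d-1}A_d(q)t^d\); this can be confirmed by multiplying through by \((1+t)^2(1-qt)\) and comparing low-order coefficients. This bookkeeping is the only step where errors are likely, so I expect to do it carefully, perhaps verifying \(d=1,2\) by hand: \(A_1=3\) and \(A_2=5-4q\).
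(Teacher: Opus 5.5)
Your Steps 1 and 2 and the final assembly match the paper's argument. Step 3, however, does not go through as written, because your case list is incomplete. A bicomposition may have empty even part. Since $\binom{m+1}{\ell(\alpha)}=\binom{2}{0}=1$, the pair $\alpha=\varnothing$, $\beta=(d)$ contributes
$\binom20\binom11(-q)^{d-1}(1-q)^{0}=(-q)^{d-1}$
to the inner sum. In the trace picture this is the all-odd word $y_1y_1\cdots y_1$.

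Without this term the identity $F_d(1,1;q)=(-1)^{d-1}A_d(q)$ is false:
\begin{itemize}
\item for $d=1$ your four cases give $2$, not $A_1=3$; indeed $F_1(1,1;q)=\dim\widetilde V=3$;
\item for $d=2$ they give $5+3(-q)$, not $5-4q$;
\item in general the coefficient of $(-q)^{d-1}$ comes out as $3$ rather than $4$.
\end{itemize}
Once the fifth case is included, your bookkeeping closes. The constant term is $2d+1$ for every $d\ge1$: for $d=1$ it is $2+1$, and for $d\ge3$ it is $2+(d-1)+2+(d-2)$. For $1\le j\le d-2$, the coefficient of $(-q)^j$ is $4(d-j)$, coming only from your original four cases. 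The top coefficient of $(-q)^{d-1}$ becomes $4$; for $d\ge3$ it is $2+1+1$, from $b=d-1$ in your third case, $b=d-2$ in your fourth case, and the new case.

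Your generating-function cross-check would have caught the omission at $d=1$. As a proof, though, comparing low-order coefficients after clearing denominators is only a spot check. To make it conclusive you need $\sum_{d\ge1}(-1)^{d-1}A_d(q)t^d$ in closed form, and this is the route the paper takes as its actual proof. It subtracts $1$ from the specialized product and divides by $q-1$, obtaining $\sum_d F_d(1,1;q)t^d=t(3+(1+q)t-qt^2)/\bigl((1+t)^2(1-qt)\bigr)$. It then sums the definition of $A_d$ directly to $t(3+t)/(1+t)^2+4qt^2/\bigl((1+t)^2(1-qt)\bigr)$ and observes that the two rational functions agree. This avoids enumerating bicompositions altogether. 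Your direct route, once repaired, is also valid, but it needs the corrected five-case split.
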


\begin{proof}
We first determine the multiplicities \(c^{1|1}_\lambda\) in
Theorem~\ref{thm:super-mirabolic-theorem-B}.
Write \(x=x_1\) and \(y=y_1\) in this proof.  The \((1,1)\)-hook
partitions are \(\varnothing\) and
\((a-r,1^r)\), with \(a\ge1\) and \(0\le r<a\).
The factorization in \cite[Theorem~6.20]{BR87} gives
\[
        s_{(a-r,1^r)}(x|y)
        =x^{a-r-1}y^r(x+y),
\]
so \(c^{1|1}_\varnothing=1\) and
\(c^{1|1}_{(a-r,1^r)}=2\).

We next evaluate the factors \(F_d(1,1;q)\).  At \(x=y=1\),
\eqref{eq:super-mirabolic-one-cycle-series} gives
\[
1+(q-1)\sum_{d\ge1}F_d(1,1;q)t^d
=\frac{(1+qt)^2(1-t)}{(1+t)^2(1-qt)}.
\]
Subtracting \(1\), dividing by \(q-1\), and expanding gives
\[
\sum_{d\ge1}F_d(1,1;q)t^d
=\frac{t(3+(1+q)t-qt^2)}{(1+t)^2(1-qt)}
=\sum_{d\ge1}(-1)^{d-1}A_d(q)t^d.
\]
For the last equality, directly summing the definition of \(A_d(q)\) yields
\[
\sum_{d\ge1}(-1)^{d-1}A_d(q)t^d
=\frac{t(3+t)}{(1+t)^2}
 +\frac{4qt^2}{(1+t)^2(1-qt)}.
\]
Thus \(F_d(1,1;q)=(-1)^{d-1}A_d(q)\).

Finally, we separate the empty-partition term, whose multiplicity is \(1\).
The rank-zero module is spanned by \(v_\star^{\otimes n}\).
By \eqref{eq:mirabolic-tensor-operator} and
\eqref{eq:tensor-projection-action}, every \(T_i\) acts there by \(-1\)
and every \(P_j\) by \(1\).  Since the cycle product in
\eqref{eq:mirabolic-standard-element} has \(k-\ell(\mu)\) generators,
\[
\chi^{(n)}_{\varnothing}(\widehat T^{(n)}_\mu)
=(-1)^{k-\ell(\mu)}.
\]
Let \(\Sigma\) denote the character sum in the corollary.
Applying Theorem~\ref{thm:super-mirabolic-theorem-B} and using the above
multiplicities gives
\[
 2\Sigma-(-1)^{k-\ell(\mu)}
 =(-1)^{k-\ell(\mu)}\prod_iA_{\mu_i}(q).
\]
Solving for \(\Sigma\)
proves the formula.
\end{proof}

\section{A super mirabolic RSK correspondence and Roichman formulas}
\label{sec:mirabolic-rsk}

We first express the functions \(\mathsf q_\mu(x|y;q)\) in
Theorem~\ref{thm:super-mirabolic-frobenius} as weighted sums over words.
We then construct the super mirabolic RSK correspondence and rewrite
these sums in terms of tableaux.  Finally, we obtain a Roichman formula
for irreducible mirabolic characters.

Throughout this section we use the signed alphabet
\[
        \mathcal A_{m|s}
        =
        \{\star,x_1,\ldots,x_m,y_1,\ldots,y_s\}.
\]
We declare
\[
        |\star|=|x_i|=\bar 0,
        \qquad
        |y_j|=\bar 1.
\]
For each letter \(a\), write \(x_a\) for its weight.  We use the same
symbols \(x_i,y_j\) for the nonstar letters and their commuting weights:
\[
        x_\star=1,\qquad
        x_{x_i}=x_i,\qquad
        x_{y_j}=y_j.
\]
The alphabet is ordered by
\[
        \star\prec x_1\prec\cdots\prec x_m
        \prec y_1\prec\cdots\prec y_s.
\]
For \(a,b\in \mathcal A_{m|s}\), define the super comparison
\(a\triangleleft b\) by
\[
        a\triangleleft b
        \quad\Longleftrightarrow\quad
        a\prec b
        \ \text{or}\
        a=b\ \text{and}\ |a|=\bar 1.
\]
Thus equality is treated as an ascent for odd letters and as a descent for
even letters.  We write \(a\ntriangleleft b\) for the negation of
\(a\triangleleft b\).

For a word
\[
        w=a_1a_2\cdots a_N
        \in
        \left(\mathcal A_{m|s}\right)^N,
\]
set
\[
        x^w=x_{a_1}x_{a_2}\cdots x_{a_N}.
\]
If \(\mu=(\mu_1,\ldots,\mu_\ell)\) is a composition of \(N\), put
\[
        B(\mu)
        =
        \left\{
        \mu_1,\,
        \mu_1+\mu_2,\,
        \ldots,\,
        \mu_1+\cdots+\mu_\ell=N
        \right\}.
\]
Thus \(B(\mu)\) is the set of right endpoints of the \(\mu\)-blocks.

\subsection{A super word expansion}

We assign a scalar weight to each word so that the sum of its weighted
monomials gives \(\mathsf q_\mu(x|y;q)\).  This scalar uses adjacent
comparisons within each \(\mu\)-block.  For \(1\le j<N\) with
\(j\notin B(\mu)\), define
\[
\varphi_\mu(j,w)
=
\begin{cases}
q,
    & \text{if } a_j\triangleleft a_{j+1},\\[3pt]
0,
    & \text{if } a_j\ntriangleleft a_{j+1},\
      a_{j+1}\triangleleft a_{j+2},\
      \text{and } j+1\notin B(\mu),\\[3pt]
-1,
    & \text{otherwise.}
\end{cases}
\]
If \(j+2>N\), the middle case is ignored.  Define
\[
        \wt_\mu(w)
        =
        \prod_{\substack{1\le j<N\\ j\notin B(\mu)}}
        \varphi_\mu(j,w).
\]
The empty product is understood to be \(1\).

Let
\[
        e_1\prec\cdots\prec e_r
        \qquad\text{and}\qquad
        o_1\prec\cdots\prec o_t
\]
be possibly empty lists of distinct even and odd letters in
\(\mathcal A_{m|s}\), respectively.  Let
\(\alpha=(\alpha_1,\ldots,\alpha_r)\) and
\(\beta=(\beta_1,\ldots,\beta_t)\) be compositions, and put
\[
        N=|\alpha|+|\beta|.
\]
Assume \(N>0\).
Let \(\mathcal W\) be the set of words in which \(e_i\)
occurs \(\alpha_i\) times and \(o_j\) occurs \(\beta_j\) times.

\begin{lemma}
\label{lem:one-block-super-hook-word-sum}
For the prescribed letters and multiplicities, the sum of the word weights is
\[
\sum_{w\in\mathcal W}
\wt_{(N)}(w)
=
        (-1)^{N-1}
        (-q)^{|\beta|-\ell(\beta)}
        (1-q)^{r+t-1}.
\]
\end{lemma}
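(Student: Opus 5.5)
The plan is to show that $\wt_{(N)}(w)$ vanishes except on explicitly described ``ascending-then-descending'' words, and then to factor the remaining sum letter by letter. For the block $(N)$ we have $B((N))=\{N\}$, so every $j$ with $1\le j<N$ contributes a factor $\varphi_{(N)}(j,w)$. The middle case is skipped only for $j=N-1$. By definition $\varphi=0$ exactly when a super descent $a_j\ntriangleleft a_{j+1}$ is immediately followed by a super ascent $a_{j+1}\triangleleft a_{j+2}$.

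\textbf{Step 1: the support of $\wt_{(N)}$.} I will show that $\wt_{(N)}(w)\ne0$ forces $w$ to have the shape
\[
 a_1\triangleleft a_2\triangleleft\cdots\triangleleft a_p,\qquad
 a_p\ntriangleleft a_{p+1}\ntriangleleft\cdots\ntriangleleft a_N
\]
for a unique $1\le p\le N$. To see this, let $p$ be the first descent position, or $p=N$ if there is none. Once a descent occurs, no ascent may follow, so all later comparisons are descents. Such a word has weight $q^{p-1}(-1)^{N-p}=(-1)^{N-1}(-q)^{p-1}$.

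Unwinding the definition of $\triangleleft$ gives a concrete description of the two parts.
\begin{itemize}
 \item The ascending part $a_1\cdots a_p$ is weakly increasing in $\prec$, with no repeated even letter.
 \item The descending part $a_{p+1}\cdots a_N$ is weakly decreasing, with no repeated odd letter.
 \item The junction condition $a_p\ntriangleleft a_{p+1}$ means $a_{p+1}\prec a_p$, or $a_{p+1}=a_p$ is even.
\end{itemize}
Conversely, every word of this form lies in the support. Each part is determined by its multiset, so a contributing word in $\mathcal W$ is the same thing as a choice of how many copies of each letter go into the ascending part, subject to the constraints above and the junction condition.

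\textbf{Step 2: letter-by-letter factorization.} An even letter $e_i$ contributes $0$ or $1$ copies to the ascending part. An odd letter $o_j$ contributes $\beta_j$ or $\beta_j-1$ copies, since at most one copy can sit in the descending part. Let $z$ be the overall largest letter, so $z=e_r$ if $t=0$ and $z=o_t$ otherwise. The junction condition then says exactly the following.
\begin{itemize}
 \item If $z$ is even, $z$ must occur in the ascending part; otherwise the descending part would begin with a letter exceeding $a_p$.
 \item If $z$ is odd, all $\beta_t$ copies must be ascending, since an equal odd pair $a_p=a_{p+1}=o_t$ would be a super ascent.
\end{itemize}
All other letters choose freely. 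Writing $(-q)^{p-1}$ as a product of $(-q)^{(\text{ascending copies})}$ over the letters, divided by $(-q)$, the choices factor independently:
\begin{itemize}
 \item each non-maximal even letter gives $1+(-q)=1-q$;
 \item each non-maximal odd letter gives $(-q)^{\beta_j}+(-q)^{\beta_j-1}=(-q)^{\beta_j-1}(1-q)$;
 \item the maximal letter gives $(-q)^1$ if even and $(-q)^{\beta_t}$ if odd, and after dividing by $(-q)$ this becomes $1$, respectively $(-q)^{\beta_t-1}$.
\end{itemize}
Multiplying these factors gives $(-1)^{N-1}(1-q)^{r+t-1}(-q)^{|\beta|-\ell(\beta)}$, as claimed.

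The main obstacle is Step 1: getting the boundary conditions exactly right. This includes the ignored middle case at $j=N-1$, which only ensures that a final descent contributes $-1$. It also includes the precise translation of $\ntriangleleft$ at the junction, since equal even letters count as descents and equal odd letters as ascents. I will check these against small cases, for example $N=2$ with a single repeated letter, before assembling the product. The consistency check at the end is that the formula agrees with the diagonal weight of the unique weakly increasing word in Lemma~\ref{lem:one-cycle-weighted-trace}, namely $(-1)^{N-1}(-q)^{|\beta|-\ell(\beta)}(1-q)^{r+t-1}$.
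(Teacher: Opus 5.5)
Your proposal is correct and follows essentially the same route as the paper's proof. Both arguments show that the support consists of words with comparison string $A^{h}D^{N-1-h}$, split such a word into a weakly increasing left part and a weakly decreasing right part, force the placement of the largest letter and of the repeated copies, and let one copy of each remaining letter go either way; your letter-by-letter product is just the factored form of the paper's sum over subsets $I\subseteq\{1,\ldots,r+t-1\}$. One small point: when the largest letter is odd with $\beta_t=1$, placing its single copy on the right breaks the junction because the right part would begin with a letter exceeding $a_p$, not because of an equal odd pair, but your conclusion that all copies must go left still holds.
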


\begin{proof}
We first determine which words have nonzero weight.  For
\(w=a_1\cdots a_N\in\mathcal W\), write \(A\) for a comparison
\(a_j\triangleleft a_{j+1}\) and \(D\) for its negation.
The definition of \(\wt_{(N)}\) assigns zero precisely when a \(D\)
is immediately followed by an \(A\).  Thus a word has nonzero weight
if and only if its comparison string has the form
\[
        A^hD^{N-1-h}\qquad(0\le h\le N-1).
\]
Split such a word after its \((h+1)\)-st letter.  The left part is
weakly increasing, with repetitions allowed only for odd letters;
the right part is weakly decreasing, with repetitions allowed only
for even letters.  The comparison between the parts, when the right
part is nonempty, is a descent.

List all selected letters as
\[
        z_1\prec z_2\prec\cdots\prec z_L,
        \qquad L=r+t,
\]
and let \(m_i\) be the multiplicity of \(z_i\).  The largest letter
\(z_L\) occurs at the end of the left part.  An odd letter can occur
at most once in the right part, so at least \(m_i-1\) copies of
an odd \(z_i\) must be on the left.  Similarly, an even letter can
occur at most once in the left part, so at least \(m_i-1\) copies
of an even \(z_i\) must be on the right.  If \(z_L\) is odd, it
cannot also occur in the right part, since the comparison between
the parts would then be an ascent.  If \(z_L\) is even, exactly
one of its copies is in the left part.

Only the placement of one copy of each \(z_i\), for \(i<L\),
remains to be chosen.  For \(I\subseteq\{1,\ldots,L-1\}\),
construct \(w_I\) by placing one copy of \(z_L\) on the left,
and one copy of \(z_i\), for \(i<L\), on the left if \(i\in I\)
and on the right otherwise.  Place all remaining copies of odd
letters on the left, and all remaining copies of even letters on
the right.  Write the left part in weakly increasing order and the
right part in weakly decreasing order, and concatenate them.
The total number of copies forced to the left by odd repetitions is
\[
        f=\sum_{\substack{1\le i\le L\\ |z_i|=\bar 1}}(m_i-1)
        =|\beta|-\ell(\beta).
\]

Every comparison within the left part of \(w_I\) is an ascent;
the comparison between the parts and every comparison within the
right part are descents.  Hence \(w_I\) has nonzero weight.
Conversely, the split determined by the comparison string of any
word with nonzero weight recovers \(I\) uniquely.  For \(i<L\)
with \(z_i\) even, we have \(i\in I\) exactly when \(z_i\) occurs
on the left.  For \(i<L\) with \(z_i\) odd, we have \(i\in I\)
exactly when \(z_i\) does not occur on the right.  The forced copies
and the ordering within the two parts then recover the entire word.  Thus the words in \(\mathcal W\)
with nonzero weight are precisely the \(w_I\).

The left part has length \(1+f+|I|\), so the number of ascents
in \(w_I\) is \(f+|I|\).  Each ascent contributes \(q\) and each
descent contributes \(-1\).  Therefore
\[
        \wt_{(N)}(w_I)
        =
        q^{f+|I|}(-1)^{N-1-f-|I|}
        =
        (-1)^{N-1}(-q)^{f+|I|}.
\]
Summing over all subsets \(I\subseteq\{1,\ldots,L-1\}\) gives
\[
\begin{aligned}
\sum_{w\in\mathcal W}\wt_{(N)}(w)
&=(-1)^{N-1}(-q)^f
  \sum_{I\subseteq\{1,\ldots,L-1\}}(-q)^{|I|}\\
&=(-1)^{N-1}
  (-q)^{|\beta|-\ell(\beta)}(1-q)^{r+t-1}.
\end{aligned}
\]
This proves the lemma.
\end{proof}

\begin{proposition}
\label{prop:super-mirabolic-word-expansion}
Let \(\mu\) be a composition of \(N\).
The function \(\mathsf q_\mu\) is the weighted sum over words
\begin{equation}
\label{eq:super-mirabolic-word-expansion}
        \mathsf q_\mu(x|y;q)
        =
        \sum_{w\in(\mathcal A_{m|s})^N}
        \wt_\mu(w)x^w.
\end{equation}
\end{proposition}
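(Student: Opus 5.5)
The proof will reduce to the single-block case via a factorization of the weight over the \(\mu\)-blocks. For \(j\notin B(\mu)\), the factor \(\varphi_\mu(j,w)\) depends only on \(a_j,a_{j+1}\), and on \(a_{j+2}\) only when \(j+1\notin B(\mu)\). In that case \(j\), \(j+1\) and \(j+2\) all lie in the same \(\mu\)-block. So when \(w\) is cut into consecutive blocks \(w=w^{(1)}\cdots w^{(\ell)}\) with \(|w^{(i)}|=\mu_i\), the factors indexed inside block \(i\) are exactly the factors of \(\wt_{(\mu_i)}(w^{(i)})\). The boundary conventions agree: the middle case is ignored when \(j+1\) is a block endpoint, just as when \(j+2>\mu_i\) in the one-block weight. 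This gives
\[
 \wt_\mu(w)=\prod_{i=1}^{\ell}\wt_{(\mu_i)}(w^{(i)}),
\]
and likewise \(x^w=\prod_i x^{w^{(i)}}\). The sum over \((\mathcal A_{m|s})^N\) therefore factors as a product of one-block sums. Since \(\mathsf q_\mu=\prod_i\mathsf q_{\mu_i}\) by definition, it suffices to prove the case \(\mu=(d)\) with \(d\ge1\). The case \(\mu=\varnothing\), where \(N=0\), is trivial: the empty word has weight \(1\).

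For \(\mu=(d)\), I will group the words by their content. Content means the set of distinct even letters \(e_1\prec\cdots\prec e_r\) with multiplicities \(\alpha\), and the set of distinct odd letters \(o_1\prec\cdots\prec o_t\) with multiplicities \(\beta\). Here \((\alpha;\beta)\vDash d\) is a bicomposition, and the even letters range over \(\{\star,x_1,\ldots,x_m\}\). Every word in such a class \(\mathcal W\) has the same monomial \(x^w=\prod_i x_{e_i}^{\alpha_i}\prod_j y_{o_j}^{\beta_j}\). Lemma~\ref{lem:one-block-super-hook-word-sum} then gives the total weight of the class as
\[
(-1)^{d-1}(-q)^{|\beta|-\ell(\beta)}(1-q)^{\ell(\alpha)+\ell(\beta)-1}.
\]

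Next I sum over all choices of letters with a fixed bicomposition. Using \(x_\star=1\), the even letters contribute exactly \(M_\alpha(z)\) with \(z=(1,x_1,\ldots,x_m)\), and the odd letters contribute \(M_\beta(y)\). Summing over \((\alpha;\beta)\vDash d\) then reproduces \eqref{eq:super-mirabolic-bicomposition-expansion}, which gives \(\mathsf q_d(x|y;q)\).

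The only delicate step is the block factorization. I need to confirm that the three-letter lookahead in \(\varphi_\mu\) never crosses a block boundary. This holds because the middle case explicitly requires \(j+1\notin B(\mu)\), and \(j\notin B(\mu)\) holds by the indexing. Everything else is direct bookkeeping using the lemma.
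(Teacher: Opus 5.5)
Your proposal is correct and follows essentially the same route as the paper. It treats the one-block case by grouping words by content, applying Lemma~\ref{lem:one-block-super-hook-word-sum} and the bicomposition expansion \eqref{eq:super-mirabolic-bicomposition-expansion}, and handles general \(\mu\) by factoring the weight over the \(\mu\)-blocks; you only reverse the order of these two steps and verify more explicitly than the paper that the three-letter lookahead never crosses a block boundary.
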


\begin{proof}
For \(N=0\), both sides equal \(1\).  Assume \(N\ge1\), and first
take \(\mu=(N)\).  Partition the words according to the distinct even
letters used, the distinct odd letters used, and their multiplicities.  If
these multiplicities are recorded by a bicomposition
\((\alpha;\beta)\vDash N\), then summing over the possible choices of the even
and odd letters gives the monomial factor
\[
        M_\alpha(1,x_1,\ldots,x_m)M_\beta(y).
\]
For each fixed choice of letters, Lemma~\ref{lem:one-block-super-hook-word-sum}
gives the scalar
\[
        (-1)^{N-1}
        (-q)^{|\beta|-\ell(\beta)}
        (1-q)^{\ell(\alpha)+\ell(\beta)-1}.
\]
Therefore
\[
\begin{aligned}
        \sum_{w\in(\mathcal A_{m|s})^N}
        \wt_{(N)}(w)x^w
        &=
        (-1)^{N-1}
        \sum_{(\alpha;\beta)\vDash N}
        M_\alpha(1,x_1,\ldots,x_m)M_\beta(y)       \\
        &\qquad\qquad\qquad \times
        (-q)^{|\beta|-\ell(\beta)}
        (1-q)^{\ell(\alpha)+\ell(\beta)-1}.
\end{aligned}
\]
By \eqref{eq:super-mirabolic-bicomposition-expansion}, the last
expression is \(\mathsf q_N(x|y;q)\).

For general \(\mu=(\mu_1,\ldots,\mu_\ell)\), write
\(w=w^{(1)}\cdots w^{(\ell)}\), with
\(|w^{(i)}|=\mu_i\).  No comparison crosses a block boundary, so
\[
 \wt_\mu(w)x^w
 =\prod_{i=1}^{\ell}\wt_{(\mu_i)}(w^{(i)})x^{w^{(i)}}.
\]
Summing independently over the blocks gives
\[
 \sum_w\wt_\mu(w)x^w
 =\prod_{i=1}^{\ell}
   \left(\sum_{w^{(i)}\in\mathcal A_{m|s}^{\mu_i}}
          \wt_{(\mu_i)}(w^{(i)})x^{w^{(i)}}\right)
 =\prod_{i=1}^{\ell}\mathsf q_{\mu_i}(x|y;q)
 =\mathsf q_\mu(x|y;q).
\]
\end{proof}

\subsection{The super mirabolic RSK correspondence}
\label{subsec:super-mirabolic-rsk}

We now define the column form of the super mirabolic RSK correspondence needed
for the word expansion in Proposition~\ref{prop:super-mirabolic-word-expansion}.
The nonstar letters are processed by the signed column insertion of
La Scala--Nardozza--Senato
\cite[Definitions~2.2 and~2.6 and Proposition~2.8]{LNS06}, whose parity
\(0\) and parity \(1\) letters are our even and odd letters, respectively.
The positions of \(\star\) are recorded separately.

For a partition \(\lambda\) with \(|\lambda|\le N\), let
\(\mathrm{HT}_{m|s}(\lambda)\) be the set of hook semistandard tableaux
of shape \(\lambda\) on the nonstar letters of \(\mathcal A_{m|s}\).
Their entries weakly increase along rows and columns, with even letters
strict down columns and odd letters strict along rows.
For \(P\in\mathrm{HT}_{m|s}(\lambda)\), let \(x^P\) be the product of
the weights of its entries, with the empty product equal to \(1\).
Let \(\mathrm{RT}_N(\lambda)\) be the set of pairs
\(Q=(Q^{(0)},Q^{(1)})\) such that \(Q^{(1)}\) is a standard tableau
of shape \(\lambda\) with entries an arbitrary subset of
\(\{1,\ldots,N\}\), and \(Q^{(0)}\) lists the complementary entries
in one increasing row.  We call such a pair a super mirabolic
recording tableau.

For a word \(w=a_1a_2\cdots a_N\in\mathcal A_{m|s}^{N}\), let
\[
        S(w)=\{j\in\{1,\ldots,N\}\mid a_j\ne\star\}
\]
be its set of nonstar positions.

\begin{definition}[RSK correspondence]
\label{def:super-mirabolic-rsk}
Start with three empty tableaux \(P,Q^{(0)},Q^{(1)}\) and read
\(w=a_1a_2\cdots a_N\) from left to right.  At position \(j\),
proceed as follows.
\begin{enumerate}[label=(\roman*)]
\item If \(a_j=\star\), append \(j\) to the right end of
\(Q^{(0)}\), leaving \(P\) and \(Q^{(1)}\) unchanged.
\item If \(a_j\ne\star\), insert \(a_j\) into the first column of
\(P\).  To insert a letter \(b\) into a column, scan its entries
from top to bottom and select an entry according to the following rule:
\begin{enumerate}[label=(\alph*)]
\item if \(b\) is even, select the first entry \(c\) with \(c\succeq b\);
\item if \(b\) is odd, select the first entry \(c\) with \(c\succ b\).
\end{enumerate}
If such an entry \(c\) exists, replace it by \(b\) and insert \(c\)
into the next column, now using the rule determined by the parity
of \(c\).  If no such entry exists, append \(b\) at the bottom of
the column and stop; an empty column is treated in the same way.
Place \(j\) in a new box of \(Q^{(1)}\) at the position of the
new box of \(P\).
\end{enumerate}
At the end, \(Q^{(1)}\) has entry set \(S(w)\), and \(Q^{(0)}\)
lists the complementary positions in increasing order.
Set \(Q=(Q^{(0)},Q^{(1)})\).  The RSK correspondence is the map
\[
\begin{aligned}
        \mathrm{RSK}:\mathcal A_{m|s}^{N}
        &\longrightarrow
        \coprod_{\substack{\lambda\text{ a partition}\\|\lambda|\le N}}
        \mathrm{HT}_{m|s}(\lambda)\times\mathrm{RT}_N(\lambda),\\
        w&\longmapsto(P,Q).
\end{aligned}
\]
\end{definition}

\begin{example}
\label{ex:super-mirabolic-rsk-insertion}
Take \(m=s=2\) and the word
\[
 w=x_2\,y_1\,y_1\,y_2\,x_1\,\star\,x_1\,y_1\,x_2.
\]
Here \(x_1,x_2\) are even and \(y_1,y_2\) are odd, with
\(\star\prec x_1\prec x_2\prec y_1\prec y_2\).
Each triple below lists \((P,Q^{(0)},Q^{(1)})\) after the indicated
step.  Each arrow is labelled by the letter being read.
Follow the arrows from left to right on the first row, from right
to left on the second row, and then down to the final triple.
\begingroup
\small
\setlength{\arraycolsep}{1.5pt}
\renewcommand{\arraystretch}{1.05}
\begin{center}
\begin{tikzpicture}[x=1pt,y=1pt,>=stealth,
  every node/.style={inner sep=0pt,outer sep=0pt}]
\node[anchor=west] (rsk0) at (4.00000,0) {$\left(\varnothing,\;\varnothing,\;\varnothing\right)$};
\node[anchor=west] (rsk1) at (86.96467,0) {$\left(\begin{array}{|c|}\hline
x_2\\\hline
\end{array},\;\varnothing,\;\begin{array}{|c|}\hline
1\\\hline
\end{array}\right)$};
\node[anchor=west] (rsk2) at (178.50011,0) {$\left(\begin{array}{|c|}\hline
x_2\\\hline
y_1\\\hline
\end{array},\;\varnothing,\;\begin{array}{|c|}\hline
1\\\hline
2\\\hline
\end{array}\right)$};
\node[anchor=west] (rsk3) at (276.11888,0) {$\left(\begin{array}{|c|}\hline
x_2\\\hline
y_1\\\hline
y_1\\\hline
\end{array},\;\varnothing,\;\begin{array}{|c|}\hline
1\\\hline
2\\\hline
3\\\hline
\end{array}\right)$};
\node[anchor=center] (rsk4) at (410.56883,0) {$\left(\begin{array}{|c|}\hline
x_2\\\hline
y_1\\\hline
y_1\\\hline
y_2\\\hline
\end{array},\;\varnothing,\;\begin{array}{|c|}\hline
1\\\hline
2\\\hline
3\\\hline
4\\\hline
\end{array}\right)$};
\node[anchor=north] (rsk5) at ([yshift=-24pt]rsk4.south) {$\left(\begin{array}{|c|c|}\hline
x_1&x_2\\\hline
y_1&\multicolumn{1}{c}{}\\\cline{1-1}
y_1&\multicolumn{1}{c}{}\\\cline{1-1}
y_2&\multicolumn{1}{c}{}\\\cline{1-1}
\end{array},\;\varnothing,\;\begin{array}{|c|c|}\hline
1&5\\\hline
2&\multicolumn{1}{c}{}\\\cline{1-1}
3&\multicolumn{1}{c}{}\\\cline{1-1}
4&\multicolumn{1}{c}{}\\\cline{1-1}
\end{array}\right)$};
\node[anchor=east] (rsk6) at ([xshift=-13.25pt]rsk5.west) {$\left(\begin{array}{|c|c|}\hline
x_1&x_2\\\hline
y_1&\multicolumn{1}{c}{}\\\cline{1-1}
y_1&\multicolumn{1}{c}{}\\\cline{1-1}
y_2&\multicolumn{1}{c}{}\\\cline{1-1}
\end{array},\;\begin{array}{|c|}\hline
6\\\hline
\end{array},\;\begin{array}{|c|c|}\hline
1&5\\\hline
2&\multicolumn{1}{c}{}\\\cline{1-1}
3&\multicolumn{1}{c}{}\\\cline{1-1}
4&\multicolumn{1}{c}{}\\\cline{1-1}
\end{array}\right)$};
\node[anchor=east] (rsk7) at ([xshift=-13.25pt]rsk6.west) {$\left(\begin{array}{|c|c|c|}\hline
x_1&x_1&x_2\\\hline
y_1&\multicolumn{2}{c}{}\\\cline{1-1}
y_1&\multicolumn{2}{c}{}\\\cline{1-1}
y_2&\multicolumn{2}{c}{}\\\cline{1-1}
\end{array},\;\begin{array}{|c|}\hline
6\\\hline
\end{array},\;\begin{array}{|c|c|c|}\hline
1&5&7\\\hline
2&\multicolumn{2}{c}{}\\\cline{1-1}
3&\multicolumn{2}{c}{}\\\cline{1-1}
4&\multicolumn{2}{c}{}\\\cline{1-1}
\end{array}\right)$};
\node[anchor=east] (rsk8) at ([xshift=-13.25pt]rsk7.west) {$\left(\begin{array}{|c|c|c|}\hline
x_1&x_1&x_2\\\hline
y_1&y_2&\multicolumn{1}{c}{}\\\cline{1-2}
y_1&\multicolumn{2}{c}{}\\\cline{1-1}
y_1&\multicolumn{2}{c}{}\\\cline{1-1}
\end{array},\;\begin{array}{|c|}\hline
6\\\hline
\end{array},\;\begin{array}{|c|c|c|}\hline
1&5&7\\\hline
2&8&\multicolumn{1}{c}{}\\\cline{1-2}
3&\multicolumn{2}{c}{}\\\cline{1-1}
4&\multicolumn{2}{c}{}\\\cline{1-1}
\end{array}\right)$};
\node[anchor=north] (rsk9) at ([yshift=-24pt]rsk8.south) {$\left(\begin{array}{|c|c|c|}\hline
x_1&x_1&x_2\\\hline
x_2&y_1&y_2\\\hline
y_1&\multicolumn{2}{c}{}\\\cline{1-1}
y_1&\multicolumn{2}{c}{}\\\cline{1-1}
\end{array},\;\begin{array}{|c|}\hline
6\\\hline
\end{array},\;\begin{array}{|c|c|c|}\hline
1&5&7\\\hline
2&8&9\\\hline
3&\multicolumn{2}{c}{}\\\cline{1-1}
4&\multicolumn{2}{c}{}\\\cline{1-1}
\end{array}\right)$};
\draw[->] (rsk0.east) -- node[above,yshift=1.5pt,font=\scriptsize] {\(x_2\)} (rsk1.west);
\draw[->] (rsk1.east) -- node[above,yshift=1.5pt,font=\scriptsize] {\(y_1\)} (rsk2.west);
\draw[->] (rsk2.east) -- node[above,yshift=1.5pt,font=\scriptsize] {\(y_1\)} (rsk3.west);
\draw[->] (rsk3.east) -- node[above,yshift=1.5pt,font=\scriptsize] {\(y_2\)} (rsk4.west);
\draw[->] (rsk4.south) -- node[midway,right,xshift=3pt,font=\scriptsize] {\(x_1\)} (rsk5.north);
\draw[->] (rsk5.west) -- node[above,yshift=1.5pt,font=\scriptsize] {\(\star\)} (rsk6.east);
\draw[->] (rsk6.west) -- node[above,yshift=1.5pt,font=\scriptsize] {\(x_1\)} (rsk7.east);
\draw[->] (rsk7.west) -- node[above,yshift=1.5pt,font=\scriptsize] {\(y_1\)} (rsk8.east);
\draw[->] (rsk8.south) -- node[midway,right,xshift=3pt,font=\scriptsize] {\(x_2\)} (rsk9.north);
\end{tikzpicture}
\end{center}
\endgroup

At step \(3\), the odd letter \(y_1\) passes the equal \(y_1\)
and is appended at the bottom of the first column.
At step \(6\), \(\star\) contributes only the entry \(6\) to \(Q^{(0)}\).
At step \(7\), the even letter \(x_1\) bumps the equal \(x_1\)
from the first column; that \(x_1\) bumps \(x_2\) from the second
column, and \(x_2\) creates the third column.
At step \(8\), the odd \(y_1\) passes both equal entries in the
first column, bumps \(y_2\), and \(y_2\) is appended to the second
column.  At step \(9\), \(x_2\) bumps the odd \(y_1\), which in
turn bumps \(y_2\) using the odd-letter rule in the second column;
\(y_2\) is then appended to the third column.
In each nonstar step, \(Q^{(1)}\) receives the current position
label in the new box, while its earlier entries stay in place.

The last triple gives
\(\mathrm{RSK}(w)=(P,(Q^{(0)},Q^{(1)}))\), with insertion shape
\((3,3,1,1)\) and
\(x^P=x^w=x_1^2x_2^2y_1^3y_2\).
\end{example}

We now return to general \(m,s,N\).

\begin{theorem}
\label{thm:super-mirabolic-rsk-bijection}
The map \(\mathrm{RSK}\) in Definition~\ref{def:super-mirabolic-rsk}
is a weight-preserving bijection:
\[
        \mathrm{RSK}(w)=(P,Q)\quad\Longrightarrow\quad x^w=x^P.
\]
\end{theorem}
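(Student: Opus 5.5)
The plan is to reduce the statement to the known bijectivity of signed column insertion \cite[Definitions~2.2 and~2.6, Proposition~2.8]{LNS06} on the nonstar subword. The star positions are handled by a separate bookkeeping of subsets.

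Fix \(w\in\mathcal A_{m|s}^N\), and write \(S=S(w)=\{j_1<\cdots<j_a\}\). Let \(w^\circ=a_{j_1}\cdots a_{j_a}\) be the nonstar subword. By Definition~\ref{def:super-mirabolic-rsk}, star steps change neither \(P\) nor \(Q^{(1)}\). Hence \(P\) is exactly the LNS insertion tableau of \(w^\circ\). Likewise \(Q^{(1)}\) is the LNS recording tableau of \(w^\circ\) with each entry \(r\) relabelled \(j_r\), and \(Q^{(0)}\) is the increasing row of \([N]\setminus S\).

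The first step is to confirm that the LNS result yields the target sets. One must check that our selection rule coincides with theirs under the parity dictionary: even letters bump the first entry \(\succeq b\), odd letters the first entry \(\succ b\). With this, the LNS insertion tableau lies in \(\mathrm{HT}_{m|s}(\lambda)\), meaning weakly increasing, even letters strict in columns and odd letters strict in rows. One should also check that each insertion adds exactly one outer corner box. The recording tableau is then standard, and relabelling by the increasing bijection \([a]\to S\) preserves standardness. Therefore \((Q^{(0)},Q^{(1)})\in\mathrm{RT}_N(\lambda)\) with \(|\lambda|=a\le N\). Weight preservation is immediate: column insertion only permutes the multiset of nonstar letters, and stars have weight \(1\), so \(x^w=x^{w^\circ}=x^P\).

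The second step is bijectivity, which I would prove by constructing the inverse. Given \((P,Q)\) with \(Q^{(1)}\) of shape \(\lambda\), let \(S\) be the entry set of \(Q^{(1)}\). Its complement is then forced to be the content of \(Q^{(0)}\). Standardize \(Q^{(1)}\) to a standard tableau on \([a]\) and apply the LNS inverse, namely reverse column bumping, to \((P,\operatorname{std}Q^{(1)})\). This recovers a unique word \(w^\circ\) of length \(a\). Then define \(w\) by placing \(w^\circ\) in the positions of \(S\) in order and \(\star\) in the remaining positions. Both composites are the identity: the nonstar part is handled by the LNS bijection and the star part by the set decomposition. A word is determined by \((S,w^\circ)\), and a recording pair is determined by \((S,\operatorname{std}Q^{(1)})\), since \(Q^{(0)}\) is forced.

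The main obstacle is the matching with \cite{LNS06}. Their conventions on column versus row insertion, on which parity is strict in rows, and on the order of the alphabet must agree with ours; otherwise a transpose or a parity swap is needed. If the conventions differ, I would either transport along the involution exchanging parities and transposing shapes, or verify directly that reverse bumping is well defined. For the direct route, the key fact is that, column by column, the selected entry is uniquely recoverable as the last entry \(\prec c\) (for \(c\) even) or \(\preceq c\) (for \(c\) odd) in the previous column. This is the standard check that the super bumping rules are mutually inverse.
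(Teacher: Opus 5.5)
Your main route is essentially the paper's. Star steps leave \(P\) and \(Q^{(1)}\) untouched, so bijectivity reduces to La Scala--Nardozza--Senato's signed column insertion on the nonstar letters, with \(Q^{(0)}\) recording the complementary positions. Your factorization through \((S,w^\circ)\) and \(\operatorname{std}Q^{(1)}\) is equivalent to the paper's reverse processing of \(j=N,N-1,\ldots,1\).

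The convention issue you leave open is settled in the paper without any transpose or parity swap. The paper identifies the selection rule of Definition~\ref{def:super-mirabolic-rsk} with their Definition~2.6, their parity \(0,1\) letters being our even and odd letters. To invoke their biword inverse (their Definition~4.4 and Theorem~4.2), the paper declares the recording labels odd. This makes their Definition~4.3 choose column insertion at every step, and a standard \(Q^{(1)}\), having distinct entries, is then super semistandard on that odd alphabet.

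The ``key fact'' in your direct fallback is false as stated, because the parities are reversed. An even \(b\) may select an equal entry (the first one \(\succeq b\)), whereas an odd \(b\) selects only a strictly larger one. Hence, when \(c\) is reverse-bumped into the previous column, the entry to replace is the last \(d\) in that column with \(d\preceq c\) if \(c\) is even, and with \(d\prec c\) if \(c\) is odd.

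Two examples show the failure of your rule.
The word \(x_1x_1\) inserts to the single row \(x_1\,x_1\), the second \(x_1\) bumping the first into a new column. Your rule cannot reverse this, since the first column has no entry \(\prec x_1\).
For \(w=y_1y_1x_1\), the insertion tableau has first row \(x_1,y_1\) and first column \(x_1,y_1\). Reversing the last box with your rule selects the \(y_1\) in row two and ultimately returns \(x_1y_1y_1\), whose insertion tableau is a single column.

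Since the conventions do match, the main argument never needs this fallback. If you keep it, the rule must be corrected as above.
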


\begin{proof}
First we check that the output belongs to the stated target.
By \cite[Definition~2.6 and Proposition~2.8]{LNS06}, each nonstar
insertion in Definition~\ref{def:super-mirabolic-rsk} preserves the
hook semistandard conditions on \(P\) and adds one box to its Young
diagram.  The same box is added to \(Q^{(1)}\), with a label larger
than all labels already present.  In particular, the boxes to its
left or above have smaller labels, so the rows and columns of
\(Q^{(1)}\) are strictly increasing.  Thus \(P\) and \(Q^{(1)}\)
have the same shape \(\lambda\), with \(|\lambda|=|S(w)|\le N\),
and \(Q^{(1)}\) has entry set \(S(w)\).
The remaining positions appear in increasing order in \(Q^{(0)}\).
Consequently,
\[
 P\in\mathrm{HT}_{m|s}(\lambda),\qquad
 Q=(Q^{(0)},Q^{(1)})\in\mathrm{RT}_N(\lambda),
\]
which proves that the map is well defined.

Next we construct an inverse starting from an arbitrary pair
\[
 (P,Q)\in\mathrm{HT}_{m|s}(\lambda)\times\mathrm{RT}_N(\lambda).
\]
To apply the inverse algorithm in \cite[Definition~4.4]{LNS06},
regard the entries of \(Q^{(1)}\) as an ordered alphabet and assign
odd parity to each of these position labels.  This makes
\cite[Definition~4.3]{LNS06} select column insertion at every step,
while the comparison used in each column is still determined by
the parity of the inserted letter, as in \cite[Definition~2.6]{LNS06}.
Since the entries of \(Q^{(1)}\) are distinct, it is also a super
semistandard tableau on this alphabet.  Thus the pair
\((P,Q^{(1)})\) satisfies the hypotheses of
\cite[Theorem~4.2]{LNS06}, whose forward and inverse procedures
apply to every such pair.

Recover the letters \(a_j\) by processing \(j=N,N-1,\ldots,1\):
\begin{enumerate}[label=(\roman*)]
\item If \(j\in Q^{(0)}\), set \(a_j=\star\) and delete \(j\)
from \(Q^{(0)}\).
\item If \(j\in Q^{(1)}\), it is the largest remaining label of
that standard tableau, so its box is an outer corner.  Delete this
box from \(Q^{(1)}\) and apply reverse column insertion to the
corresponding box of \(P\), recovering \(a_j\).
\end{enumerate}
By \cite[Definition~4.4 and Theorem~4.2]{LNS06}, each reverse
insertion is defined, leaves a hook semistandard tableau of the
smaller shape, and uniquely undoes the corresponding forward
insertion.  Hence the procedure produces a unique word
\(w=a_1\cdots a_N\).  Applying Definition~\ref{def:super-mirabolic-rsk}
to this word returns the prescribed pair \((P,Q)\), proving
surjectivity.  Conversely, applying the reverse procedure to the
output of any word recovers that word, proving injectivity.
If \(Q^{(1)}\) is empty, the procedure gives the all-star word;
when \(N=0\), it gives the empty word.

Finally, insertion only rearranges the nonstar letters and does
not change their multiplicities.  Since each star has weight \(1\),
we obtain \(x^w=x^P\).
\end{proof}

Let \(w\in\mathcal A_{m|s}^{N}\), with \(\mathrm{RSK}(w)=(P,Q)\).
To express \(\wt_\mu(w)\) using only the recording tableau \(Q\),
we will recover each adjacent comparison from the positions of its two
recording labels.  We introduce the following relation on
\(\mathrm{RT}_N(\lambda)\).  Let
\[
        Q=(Q^{(0)},Q^{(1)}).
\]
For entries \(i,j\in\{1,\ldots,N\}\), we write
\[
        i\mathrel{\swarrow} j
\]
if one of the following holds.

\begin{enumerate}[label=(\roman*)]
\item \(i,j\in Q^{(1)}\), and \(i\) lies weakly southwest of \(j\) in the
ordinary diagram of \(Q^{(1)}\): the row index of \(i\) is at least that of
\(j\), and its column index is at most that of \(j\).

\item \(i\in Q^{(1)}\) and \(j\in Q^{(0)}\).

\item \(i,j\in Q^{(0)}\), and \(i\) appears to the left of \(j\) in the row
\(Q^{(0)}\).
\end{enumerate}
We write
\[
        i\mathrel{\nearrow} j
\]
for the complementary relation.  Thus exactly one of
\[
        i\mathrel{\swarrow} j,
        \qquad
        i\mathrel{\nearrow} j
\]
holds.

Let \(T\) be a hook semistandard tableau, and let
\[
        b,c\in
        \mathcal A_{m|s}\setminus\{\star\}.
\]
Insert \(b\) into \(T\), and then insert \(c\) into the resulting tableau, by
the column insertion in Definition~\ref{def:super-mirabolic-rsk}.
Let \(\square_b\) and
\(\square_c\) be the two new boxes created by these insertions.

\begin{lemma}
\label{lem:column-super-local-position}
The new boxes satisfy
\[
        b\triangleleft c
        \quad\Longleftrightarrow\quad
        \square_c
        \text{ lies weakly southwest of }
        \square_b.
\]
\end{lemma}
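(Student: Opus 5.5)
We prove the lemma by the standard bumping-path comparison for column insertion, adapted to the signed rule. For a column insertion of \(b\) into \(T\), let the bumping path be the sequence of boxes \((r_1,1),(r_2,2),\ldots,(r_k,k)\). Here \((r_i,i)\) is the box in column \(i\) where the travelling letter \(b_i\) is placed, with \(b_1=b\) and \(b_{i+1}\) the entry it displaced. The final box \((r_k,k)=\square_b\) is the new box. For the insertion of \(c\) into the resulting tableau \(T'\), let the path be \((r'_i,i)\) for \(i\le k'\), with travelling letters \(c_i\), and new box \(\square_c=(r'_{k'},k')\). Then \(\square_c\) lies weakly southwest of \(\square_b\) exactly when \(k'\le k\) and \(r'_{k'}\ge r_k\). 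Since new boxes are outer corners of successive shapes, this amounts to \(k'\le k\) together with strict containment in the appropriate sense.

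The key step is a column-by-column induction. First suppose \(b\triangleleft c\). We show that for each column \(i\) reached by both paths, \(b_i\triangleleft c_i\) and \(r'_i>r_i\). We also show that the \(c\)-path stops no later than the \(b\)-path does. Consider column \(i\). After the first insertion, position \(r_i\) holds \(b_i\), and the entries above it satisfy the selection failure for \(b_i\). If \(b_i\) is even, every entry above is \(\prec b_i\). If \(b_i\) is odd, every entry above is \(\preceq b_i\).

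Now \(c_i\) scans from the top. The hypothesis \(b_i\triangleleft c_i\) gives two cases.
\begin{itemize}
\item If \(b_i\prec c_i\), then \(c_i\) does not select any entry above row \(r_i\). It also does not select the entry \(b_i\) itself, under either parity rule of \(c_i\).
\item If \(b_i=c_i\) is odd, the odd rule requires an entry \(\succ c_i\), so again \(c_i\) passes row \(r_i\).
\end{itemize}
In both cases \(r'_i>r_i\). The letter bumped by \(c_i\), namely \(c_{i+1}\), is then compared with \(b_{i+1}\). We need \(b_{i+1}\triangleleft c_{i+1}\). The letter \(b_{i+1}\) was the entry at \((r_i,i)\) in \(T\), and \(c_{i+1}\) lies below it in the column of \(T'\), which equals that of \(T\) below row \(r_i\). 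The hook conditions give weak increase down columns, with equality allowed only for odd letters. This yields \(b_{i+1}\triangleleft c_{i+1}\). If the \(b\)-path ended at column \(i\) by appending \(b_i\) at the bottom, then \(c_i\) also passes the bottom and is appended just below, so \(k'=i=k\) and \(r'_{k'}=r_k+1\). If the \(c\)-path ends earlier, we already have \(k'<k\) and \(r'_{k'}>r_{k'}\ge r_k\). Column paths move weakly upward, since column lengths weakly decrease; this should also follow from \cite[Proposition~2.8]{LNS06}. In every case \(\square_c\) is weakly southwest of \(\square_b\).

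Conversely, suppose \(b\ntriangleleft c\), that is \(c\prec b\), or \(c=b\) even. The symmetric induction shows that \(c_i\) selects an entry at or above row \(r_i\) (possibly \(b_i\) itself), giving \(r'_i\le r_i\). It also gives \(c_{i+1}\ntriangleleft b_{i+1}\) in the dual sense. In that dual sense the \(c\)-path stays weakly above the \(b\)-path and continues strictly past column \(k\). When the \(b\)-path appends at the bottom of column \(k\), the letter \(c_k\) selects some entry there, so the \(c\)-path reaches column \(k+1\). Hence \(k'>k\), so \(\square_c\) is strictly east of \(\square_b\) and therefore not weakly southwest.

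The main obstacle is the bookkeeping of the four parity combinations in the step that propagates the comparison. We must check that "\(b_i\triangleleft c_i\) and \(c_i\) passes \(b_i\)" implies "\(b_{i+1}\triangleleft c_{i+1}\)" when the bumped letters change parity. The plan is to use only the hook-semistandard column condition, that \(x\preceq x'\) down a column with equality only for odd letters, which is exactly the relation \(\triangleleft\) between vertically adjacent entries. Each parity case then reduces to transitivity-type facts for \(\triangleleft\) that we verify by hand.
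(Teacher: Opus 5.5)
Your argument is correct, but it takes a different route from the paper. The paper does not trace bumping paths. It quotes the column-bumping lemma of La Scala--Nardozza--Senato \cite[Proposition~2.10]{LNS06}: \(b\triangleleft c\) holds exactly when \(\square_c\) lies in a strictly lower row than \(\square_b\), and then weakly to its left. It disposes of the same-row case by noting that two successive new boxes in one row appear from left to right. You instead reprove that bumping lemma directly from the insertion rule. Your conclusions (\(k'\le k\) with \(\square_c\) strictly lower in the first case, \(k'>k\) in the second) make the same-row remark unnecessary. The paper's route is shorter but leans on the literature. Yours is self-contained and shows why the signed rule behaves like the classical one: a letter \(u\) selects the first entry \(e\) with \(e\ntriangleleft u\), columns are exactly \(\triangleleft\)-chains, and \(\triangleleft\) is transitive. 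That observation also dissolves the parity bookkeeping you identify as the main obstacle, since no case analysis on parities is needed.

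Two points need tightening when you write it out.

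\emph{The converse invariant.} It should be \(b_{i+1}\ntriangleleft c_{i+1}\), not \(c_{i+1}\ntriangleleft b_{i+1}\); when \(r'_i<r_i\) one has \(c_{i+1}\prec b_{i+1}\), so the latter is false.
\begin{itemize}
\item When \(r'_i=r_i\), you have \(c_{i+1}=b_i\), and the invariant is exactly the selection condition \(b_{i+1}\ntriangleleft b_i\).
\item When \(r'_i<r_i\), you must exclude \(c_{i+1}=b_{i+1}\) odd, which the column condition alone allows. This holds because \(c_{i+1}\) lies above row \(r_i\) and was passed by \(b_i\), so \(c_{i+1}\triangleleft b_i\), whereas \(b_{i+1}\ntriangleleft b_i\). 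Hence \(c_{i+1}\ne b_{i+1}\) and \(c_{i+1}\prec b_{i+1}\).
\end{itemize}

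\emph{The weakly upward path.} This comes from the row condition, not merely from the weak decrease of column lengths, which only covers the case where \(b_{i+1}\) is appended. The right neighbour \(f\) of \(b_{i+1}\) in row \(r_i\) satisfies \(f\ntriangleleft b_{i+1}\), so \(b_{i+1}\) selects an entry at or above row \(r_i\).
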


\begin{proof}
By the column-bumping lemma \cite[Proposition~2.10]{LNS06},
\(b\triangleleft c\) holds if and only if \(\square_c\) lies in a
strictly lower row than \(\square_b\); in this case it also lies weakly
to its left.  Two successive new boxes in the same row occur from left
to right, so \(\square_c\) cannot lie weakly southwest of \(\square_b\)
in that case.  This gives the stated equivalence.
\end{proof}

Let
\[
        \mathrm{RSK}(w)=(P,Q),
        \qquad
        w=a_1a_2\cdots a_N.
\]

\begin{proposition}
\label{prop:super-relative-position}
For \(1\le j<N\), the adjacent letters and their recording positions satisfy
\begin{equation}
\label{eq:super-relative-position-sw}
        a_j\triangleleft a_{j+1}
        \quad\Longleftrightarrow\quad
        j+1\mathrel{\swarrow}j.
\end{equation}
\end{proposition}

\begin{proof}
If \(a_j=a_{j+1}=\star\), then
\(a_j\ntriangleleft a_{j+1}\), because \(\star\) is even.  Both labels lie in
\(Q^{(0)}\), and \(j+1\) appears to the right of \(j\).  Hence
\(j+1\mathrel{\nearrow}j\).

If \(a_j=\star\) and \(a_{j+1}\ne\star\), then
\(a_j\triangleleft a_{j+1}\).  In this case \(j\in Q^{(0)}\) and
\(j+1\in Q^{(1)}\), so by definition
\(j+1\mathrel{\swarrow}j\).

If \(a_j\ne\star\) and \(a_{j+1}=\star\), then
\(a_j\ntriangleleft a_{j+1}\).  Here \(j\in Q^{(1)}\) and
\(j+1\in Q^{(0)}\), so
\(j+1\mathrel{\nearrow}j\).

Finally, suppose that neither \(a_j\) nor \(a_{j+1}\) is \(\star\).
Consider the insertion tableau just before the \(j\)-th letter is
processed.  The next two steps insert \(a_j\) and then
\(a_{j+1}\) into this tableau, with no intervening insertion.
Let \(\square_j\) and \(\square_{j+1}\) be the two new boxes.
By Lemma~\ref{lem:column-super-local-position},
\[
        a_j\triangleleft a_{j+1}
        \quad\Longleftrightarrow\quad
        \square_{j+1}
        \text{ lies weakly southwest of }
        \square_j.
\]
The algorithm puts \(j\) and \(j+1\) in these positions in
\(Q^{(1)}\).  Later insertions add recording boxes without moving
any existing recording entry, so their relative positions in the
final \(Q^{(1)}\) are unchanged.  This gives
\eqref{eq:super-relative-position-sw}.
\end{proof}

Since the two relative-position relations are complementary, equivalently
\begin{equation}
\label{eq:super-relative-position-ne}
        a_j\ntriangleleft a_{j+1}
        \quad\Longleftrightarrow\quad
        j+1\mathrel{\nearrow}j.
\end{equation}

\begin{remark}
Zhao's super RSK \cite[Section~5]{Z24} uses parity-dependent row/column
insertion and reads \(Q\) with the input parities.  Our signed column insertion
encodes the parity comparison in the relative positions of \(Q\), while
\(Q^{(0)}\) records the star positions.  Its length
\(|Q^{(0)}|\) is the number of stars, so \(N-|Q^{(0)}|\) gives
the mirabolic rank.
\end{remark}

\subsection{Recording-tableau weights and character values}

Fix a composition \(\mu\) of \(N\), and let
\[
        Q\in \mathrm{RT}_N(\lambda).
\]
For \(1\le j<N\) with \(j\notin B(\mu)\), define
\[
\varphi_\mu(j,Q)
=
\begin{cases}
q,
    & \text{if } j+1\mathrel{\swarrow}j,\\[3pt]
0,
    & \text{if } j+1\mathrel{\nearrow}j,\
      j+2\mathrel{\swarrow}j+1,\
      \text{and } j+1\notin B(\mu),\\[3pt]
-1,
    & \text{otherwise.}
\end{cases}
\]
If \(j+2>N\), the middle case is ignored.  Define
\begin{equation}
\label{eq:def-super-roichman-weight}
        \wt_\mu(Q)
        =
        \prod_{\substack{1\le j<N\\ j\notin B(\mu)}}
        \varphi_\mu(j,Q).
\end{equation}

If
\[
        \mathrm{RSK}(w)=(P,Q),
\]
Proposition~\ref{prop:super-relative-position} identifies each adjacent
comparison in \(w\) with the corresponding relative position in \(Q\).
The two comparisons in the zero case agree as well.  Thus
\[
 \varphi_\mu(j,w)=\varphi_\mu(j,Q)
 \qquad(1\le j<N,\ j\notin B(\mu)).
\]
Multiplying these equalities gives
\begin{equation}
\label{eq:word-weight-equals-recording-weight}
        \wt_\mu(w)
        =
        \wt_\mu(Q).
\end{equation}

\begin{theorem}
\label{thm:super-mirabolic-schur-expansion}
Let \(\mu\) be a composition of \(N\).  The hook Schur expansion is
\begin{equation}
\label{eq:super-mirabolic-schur-expansion}
        \mathsf q_\mu(x|y;q)
        =
        \sum_{\substack{\lambda\in H(m,s)\\|\lambda|\le N}}
        \left(
        \sum_{Q\in\mathrm{RT}_N(\lambda)}
        \wt_\mu(Q)
        \right)
        s_\lambda(x|y).
\end{equation}
\end{theorem}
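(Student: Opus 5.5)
The plan is to start from the word expansion of Proposition~\ref{prop:super-mirabolic-word-expansion},
\[
 \mathsf q_\mu(x|y;q)=\sum_{w\in\mathcal A_{m|s}^N}\wt_\mu(w)x^w,
\]
and transport this sum through the super mirabolic RSK bijection of Theorem~\ref{thm:super-mirabolic-rsk-bijection}. Three ingredients are needed for each word \(w\) with \(\mathrm{RSK}(w)=(P,Q)\):
\begin{enumerate}[label=(\roman*)]
\item \(x^w=x^P\), which is part of Theorem~\ref{thm:super-mirabolic-rsk-bijection};
\item \(\wt_\mu(w)=\wt_\mu(Q)\), which is \eqref{eq:word-weight-equals-recording-weight}, coming from Proposition~\ref{prop:super-relative-position};
\item bijectivity, so that summing over words is the same as summing over pairs \((P,Q)\) in \(\coprod_\lambda\mathrm{HT}_{m|s}(\lambda)\times\mathrm{RT}_N(\lambda)\).
\end{enumerate}
Substituting all three, the sum factors shape by shape:
\[
 \mathsf q_\mu(x|y;q)=\sum_{|\lambda|\le N}\Bigl(\sum_{Q\in\mathrm{RT}_N(\lambda)}\wt_\mu(Q)\Bigr)\Bigl(\sum_{P\in\mathrm{HT}_{m|s}(\lambda)}x^P\Bigr).
\]
The factorization holds because the weight of \(Q\) depends only on \(Q\), and that of \(P\) only on \(P\).

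The remaining step is to identify the inner generating function \(\sum_{P\in\mathrm{HT}_{m|s}(\lambda)}x^P\) with the hook Schur function \(s_\lambda(x|y)\), for the ordering \(x_1\prec\cdots\prec x_m\prec y_1\prec\cdots\prec y_s\). Here I would invoke Berele--Regev's combinatorial description of hook Schur functions \cite{BR87}, where the even letters are column-strict and the odd letters are row-strict. I would check that their convention agrees with our determinantal definition via \(h_r(x|y)\) and the generating function \(\prod(1+y_jt)/\prod(1-x_it)\). This is where I expect the only real care to be needed. If a convention mismatch arises, for instance the roles of rows and columns swapped, I would verify directly that \(\sum_{P\in\mathrm{HT}_{m|s}((r))}x^P=h_r(x|y)\): a one-row tableau has weakly increasing even letters and distinct odd letters, which gives exactly that generating function. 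I would then appeal to the Jacobi--Trudi determinant, which holds for these tableaux by the standard factorization of an \((m|s)\)-tableau into an \(x\)-part of shape \(\nu\subseteq\lambda\) and a transposed \(y\)-part of shape \(\lambda/\nu\). This gives \(s_\lambda(x|y)=\sum_\nu s_\nu(x)s_{\lambda'/\nu'}(y)\), matching \cite[Theorem~6.10]{BR87}.

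Finally, Berele--Regev show that \(\mathrm{HT}_{m|s}(\lambda)\) is empty exactly when \(\lambda\notin H(m,s)\), consistent with \(s_\lambda(x|y)=0\) there. The sum therefore restricts to \(\lambda\in H(m,s)\) with \(|\lambda|\le N\), which is \eqref{eq:super-mirabolic-schur-expansion}. The case \(N=0\) is trivial: both sides equal \(1\), from the empty word and \(\lambda=\varnothing\).
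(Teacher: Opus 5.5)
Your proposal is correct and follows the paper's proof essentially step for step. Like the paper, you reindex the word expansion of Proposition~\ref{prop:super-mirabolic-word-expansion} through the RSK bijection, factor shape by shape using $x^w=x^P$ and \eqref{eq:word-weight-equals-recording-weight}, and identify the $P$-sum with $s_\lambda(x|y)$ via Berele--Regev, which vanishes off $H(m,s)$; your extra convention check (the one-row case giving $h_r(x|y)$ and the factorization $s_\lambda(x|y)=\sum_\nu s_\nu(x)s_{\lambda'/\nu'}(y)$) only spells out what the paper cites from \cite[Definition~6.3 and Theorem~6.10]{BR87}.
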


\begin{proof}
Use the bijection \(\mathrm{RSK}\) from
Theorem~\ref{thm:super-mirabolic-rsk-bijection} to reindex the sum in
\eqref{eq:super-mirabolic-word-expansion}.
For each fixed shape \(\lambda\), the bijection allows \(P\) and
\(Q\) to range independently over
\(\mathrm{HT}_{m|s}(\lambda)\) and \(\mathrm{RT}_N(\lambda)\).
By \eqref{eq:word-weight-equals-recording-weight}, the scalar
\(\wt_\mu(w)\) depends only on \(Q\), whereas content preservation
gives \(x^w=x^P\).  We can therefore separate the two finite sums:
\[
\mathsf q_\mu(x|y;q)=
\sum_{\substack{\lambda\text{ a partition}\\|\lambda|\le N}}
\left(\sum_{Q\in\mathrm{RT}_N(\lambda)}\wt_\mu(Q)\right)
\left(\sum_{P\in\mathrm{HT}_{m|s}(\lambda)}x^P\right).
\]
The bound \(|\lambda|\le N\) holds because each nonstar letter
creates one insertion box.  For each \(\lambda\), the sum over
\(P\) is \(s_\lambda(x|y)\) by
\cite[Definition~6.3 and Theorem~6.10]{BR87}.  It vanishes outside
\(H(m,s)\), proving the formula.
\end{proof}

For the character formula, let \(\mu\) be a composition of \(N\le n\), and
let \(\lambda\vdash a\) with \(0\le a\le N\).

\begin{theorem}[Super mirabolic Roichman formula]
\label{thm:super-mirabolic-roichman}
The character value is
\begin{equation}
\label{eq:super-mirabolic-roichman}
        \chi^{(n)}_{\lambda}
        \left(
        \widehat T^{(n)}_\mu
        \right)
        =
        \sum_{Q\in\mathrm{RT}_N(\lambda)}
        \wt_\mu(Q).
\end{equation}
\end{theorem}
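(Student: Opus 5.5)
The plan is to compare two hook Schur expansions of the same polynomial \(\mathsf q_\mu(x|y;q)\). Theorem~\ref{thm:super-mirabolic-frobenius} expresses it as a combination of the \(s_\lambda(x|y)\) with coefficients \(\chi^{(n)}_\lambda(\widehat T^{(n)}_\mu)\). Theorem~\ref{thm:super-mirabolic-schur-expansion} expresses it with coefficients \(\sum_{Q\in\mathrm{RT}_N(\lambda)}\wt_\mu(Q)\). In both expansions the sum runs over \(\lambda\in H(m,s)\) with \(|\lambda|\le N\), where \(N=|\mu|=k\) plays the role of \(k\) in the Frobenius formula.

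Fix \(\lambda\vdash a\) with \(a\le N\). The only subtlety is that \(\lambda\) need not lie in \(H(m,s)\) for the given \((m,s)\), and the coefficients must be comparable. I will use the freedom noted in Remark~\ref{rem:stable-hook-comparisons}, as the introduction suggests (``sufficiently many even variables''). Take \(s=0\) and \(m\ge N\), or simply any \(m\ge N\). Then every partition of size at most \(N\) satisfies \(\lambda_{m+1}=0\le s\), so all partitions \(|\lambda|\le N\) lie in \(H(m,s)\).

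Two facts make this choice legitimate. The character values \(\chi^{(n)}_\lambda(\widehat T^{(n)}_\mu)\) are intrinsic to \(H_n^{\mathrm{mir}}(q)\) and do not depend on \((m,s)\). The recording weights \(\wt_\mu(Q)\) depend only on \(Q\), \(\mu\) and \(N\), not on the alphabet. So changing \((m,s)\) alters neither side of \eqref{eq:super-mirabolic-roichman}.

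With this choice, both expansions are finite sums over the same index set. The polynomials \(s_\lambda(x|y)\), \(\lambda\in H(m,s)\), are linearly independent over \(\mathbb F\), again by Remark~\ref{rem:stable-hook-comparisons} (for \(s=0\) these are ordinary Schur polynomials in \(m\ge N\) variables). Subtracting the two expansions and reading off the coefficient of \(s_\lambda\) gives the identity.

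Some bookkeeping remains.
\begin{itemize}
\item \emph{Boundary case \(N=0\).} Here \(\mu=\varnothing\), and both sides equal \(1\) for \(\lambda=\varnothing\): the set \(\mathrm{RT}_0(\varnothing)\) has one element, carrying the empty product.
\item \emph{Fields.} Theorem~\ref{thm:super-mirabolic-frobenius} has already been descended to \(\mathbb F\), so the comparison happens in \(\mathbb F[x,y]\).
\end{itemize}

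I expect no serious obstacle. The only step needing care is justifying the independence of the coefficients from \((m,s)\) and the linear independence of the hook Schur functions, and both are supplied by Remark~\ref{rem:stable-hook-comparisons}.
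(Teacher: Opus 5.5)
Your proposal is correct and is essentially the paper's own proof. The paper also takes $s=0$ with $m$ large, so that every partition of size at most $N$ lies in $H(m,0)$, and compares the coefficients of the linearly independent Schur polynomials in Theorems~\ref{thm:super-mirabolic-frobenius} and~\ref{thm:super-mirabolic-schur-expansion}. The only difference is that it chooses $m\ge\max\{1,N\}$, which respects the standing assumption $m+s\ge1$ and makes your separate treatment of $N=0$ unnecessary.
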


\begin{proof}
Take \(s=0\) and \(m\ge\max\{1,N\}\), so every partition of size at
most \(N\) lies in \(H(m,0)\).  Theorems
\ref{thm:super-mirabolic-frobenius} and
\ref{thm:super-mirabolic-schur-expansion} then give
\[
 \sum_{\substack{\nu\text{ a partition}\\|\nu|\le N}}
 \left(
   \chi^{(n)}_\nu(\widehat T^{(n)}_\mu)
   -\sum_{Q\in\mathrm{RT}_N(\nu)}\wt_\mu(Q)
 \right)s_\nu(x_1,\ldots,x_m)=0.
\]
These Schur polynomials are linearly independent, so each coefficient
vanishes, including the one indexed by \(\lambda\).
The set \(\mathrm{RT}_N(\lambda)\) and the weights \(\wt_\mu(Q)\) depend only on
the recording labels and their relative positions, hence the resulting
formula is independent of the auxiliary alphabet.
\end{proof}

\section{Parameter inversion}
\label{sec:q-rook-normalization}

We first use \(\Theta_q\) from Proposition~\ref{prop:parameter-inversion-isomorphism} to
transfer the duality, annihilator, and character formulas from the mirabolic Hecke algebra to the
\(q\)-rook algebra.  We then give a representation-theoretic interpretation
of the hook and two-row character sums for the \(q\)-rook algebra.
Finally, we transport the formulas for \(q\)-rook character tables and
generating functions to the mirabolic Hecke algebra.

\subsection{Transport of irreducible characters}

Recall the irreducible \(R_n(q)\)-modules \(M^{R,n}_\lambda\),
for \(\lambda\in\mathscr P_{\le n}\), defined in
Subsection~\ref{subsec:irreducible-characters}.
Write \(\Upsilon^{(n)}_\lambda\) for the character of \(M^{R,n}_\lambda\).
Fix an integer \(r\ge1\), and set \(x=(x_1,\ldots,x_r)\) and
\(z=(1,x_1,\ldots,x_r)\).  For \(d\ge1\), define \(q_d(z;q)\) by
\[
 q_d(z;q)=\sum_{\alpha\vDash d}
 q^{d-\ell(\alpha)}(q-1)^{\ell(\alpha)-1}M_\alpha(z).
\]
For a composition \(\mu=(\mu_1,\ldots,\mu_\ell)\), set
\(q_\mu(z;q)=\prod_{i=1}^{\ell}q_{\mu_i}(z;q)\), with
\(q_\varnothing(z;q)=1\).

We first compare \(q_\mu(z;q)\) with
\(\mathsf q_\mu(x|\varnothing;q^{-1})\).  Recall that
\(d(\mu)=|\mu|-\ell(\mu)\).  These functions satisfy
\begin{equation}
\label{eq:ordinary-hl-parameter-inversion}
        q_\mu(1,x_1,\ldots,x_r;q)
        =
        (-q)^{d(\mu)}
        \mathsf q_\mu(x|\varnothing;q^{-1}).
\end{equation}
Here \(\mathsf q_\mu(x|\varnothing;q^{-1})\) is taken with
\(m=r\) and \(s=0\).
Indeed, \eqref{eq:super-mirabolic-bicomposition-expansion} gives
\[
\begin{aligned}
 (-q)^{d-1}\mathsf q_d(x|\varnothing;q^{-1})
 &=q^{d-1}\sum_{\alpha\vDash d}
       (1-q^{-1})^{\ell(\alpha)-1}M_\alpha(z)\\
 &=\sum_{\alpha\vDash d}
       q^{d-\ell(\alpha)}(q-1)^{\ell(\alpha)-1}M_\alpha(z)
 =q_d(z;q).
\end{aligned}
\]
Multiplication over the parts of \(\mu\) proves
\eqref{eq:ordinary-hl-parameter-inversion}; for the empty composition
both sides are \(1\).
We next compare \(\Upsilon^{(n)}_\lambda(T^R_\mu;q)\) with
\(\chi^{(n)}_\lambda(\widehat T^{(n)}_\mu;q^{-1})\) using \(\Theta_q\).

For \(\lambda\vdash a\), with \(0\le a\le n\), define an
\(H^{\mathrm{mir}}_n(q^{-1})\)-action on the vector space
\(M^{R,n}_\lambda\) by
\[
 h\cdot w:=\Theta_q(h)w,
 \qquad h\in H^{\mathrm{mir}}_n(q^{-1}),\quad w\in M^{R,n}_\lambda,
\]
where the right-hand side uses the original \(R_n(q)\)-action.
The notation \(N^{(n)}_{(\lambda,a)}\big|_{q\mapsto q^{-1}}\) denotes
the \(H^{\mathrm{mir}}_n(q^{-1})\)-module obtained by replacing \(q\)
with \(q^{-1}\) in every entry of the matrices of \(T_i\) and \(P_j\)
acting on \(N^{(n)}_{(\lambda,a)}\), in the basis from
Subsection~\ref{subsec:irreducible-characters}.

By the definition in Subsection~\ref{subsec:irreducible-characters},
this action makes \(M^{R,n}_\lambda\) isomorphic to
\(N^{(n)}_{(\lambda,a)}\big|_{q\mapsto q^{-1}}\): replacing \(q\)
by \(q^{-1}\) in that definition restores the original action matrices
of \(M^{R,n}_\lambda\) and changes \(\Theta_{q^{-1}}\) to \(\Theta_q\).

\begin{lemma}
\label{lem:ordinary-frobenius-comparison}
For every composition \(\mu\) of size at most \(n\) and every
\(\lambda\vdash a\le n\), the character values are related by
\begin{equation}
\label{eq:q-rook-mirabolic-character-comparison}
        \Upsilon^{(n)}_\lambda(T^R_\mu;q)
        =
        (-q)^{d(\mu)}
        \chi^{(n)}_{\lambda}
        \left(
        \widehat T^{(n)}_\mu
        \right)
        \bigg|_{q\mapsto q^{-1}}.
\end{equation}
\end{lemma}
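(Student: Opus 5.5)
The plan is to compute the trace of \(T^R_\mu\) on \(M^{R,n}_\lambda\) in two ways, using the module identification stated just before the lemma.

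\textbf{Step 1: rewrite \(T^R_\mu\) as a mirabolic element.} By Lemma~\ref{lem:standard-elements-under-theta},
\[
 T^R_\mu=(-q)^{d(\mu)}\,\Theta_q\!\left(\widehat T^{(n)}_\mu\big|_{q\mapsto q^{-1}}\right),
\]
since \((-q^{-1})^{-d(\mu)}=(-q)^{d(\mu)}\). Here \(\widehat T^{(n)}_\mu\big|_{q\mapsto q^{-1}}\) denotes the element \(P_{n-k}T_{\gamma_{\mu\uparrow n}}\) of \(H^{\mathrm{mir}}_n(q^{-1})\).

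\textbf{Step 2: take traces.} Taking the trace on \(M^{R,n}_\lambda\) and using the twisted action \(h\cdot w=\Theta_q(h)w\) gives
\[
 \Upsilon^{(n)}_\lambda(T^R_\mu;q)
 =(-q)^{d(\mu)}\operatorname{Tr}\bigl(\widehat T^{(n)}_\mu\big|_{q\mapsto q^{-1}},\,M^{R,n}_\lambda\text{ with the twisted action}\bigr).
\]

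\textbf{Step 3: identify the twisted module.} The paragraph before the lemma identifies this twisted module with \(N^{(n)}_{(\lambda,a)}\big|_{q\mapsto q^{-1}}\). In the basis from Subsection~\ref{subsec:irreducible-characters}, the matrices of \(T_i\) and \(P_j\) on \(N^{(n)}_{(\lambda,a)}\big|_{q\mapsto q^{-1}}\) are obtained from those on \(N^{(n)}_{(\lambda,a)}\) by substituting \(q\mapsto q^{-1}\) entrywise.

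\textbf{Step 4: commute the substitution with the trace.} The matrix of the word \(P_{n-k}T_{\gamma_{\mu\uparrow n}}\) is a product of generator matrices. The substitution \(q\mapsto q^{-1}\) is the field automorphism of \(\mathbb F=\mathbb C(q)\), so it respects products and sums of matrix entries, hence traces. The trace in Step~2 is therefore \(\chi^{(n)}_\lambda(\widehat T^{(n)}_\mu)\big|_{q\mapsto q^{-1}}\), and substituting into Step~2 gives \eqref{eq:q-rook-mirabolic-character-comparison}.

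The only delicate point is the bookkeeping in Step~3. The module \(N^{(n)}_{(\lambda,a)}\) was defined by pulling back \(M^{R,n}_\lambda\) at parameter \(q^{-1}\) along \(\Theta_{q^{-1}}\). Applying the automorphism \(q\mapsto q^{-1}\) to every coefficient therefore turns that construction into the pullback of \(M^{R,n}_\lambda\) at parameter \(q\) along \(\Theta_q\). This works because \(\Theta_{q^{-1}}\) sends \(T_i\mapsto -q\mathsf T_i\), which becomes \(-q^{-1}\mathsf T_i\), exactly the formula for \(\Theta_q\).

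I would state this compatibility explicitly as a twist of scalars along the automorphism of \(\mathbb F\). I would also check the empty composition separately: there \(d(\varnothing)=0\) and \(T^R_\varnothing=\mathsf P_n=\Theta_q(P_n)\), so both sides agree trivially.
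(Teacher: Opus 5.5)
Your proof is correct and takes the paper's route: rewrite \(T^R_\mu\) using Lemma~\ref{lem:standard-elements-under-theta}, then take traces on \(M^{R,n}_\lambda\) with the twisted action \(h\cdot w=\Theta_q(h)w\), which the paper identifies with \(N^{(n)}_{(\lambda,a)}\big|_{q\mapsto q^{-1}}\) in the paragraph just before the lemma. Your Step~4 (the field automorphism \(q\mapsto q^{-1}\) commutes with products and traces) and your check that \(\Theta_{q^{-1}}\) becomes \(\Theta_q\) under this substitution make explicit what the paper leaves implicit; the separate check for \(\mu=\varnothing\) is harmless but already covered by the general argument.
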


\begin{proof}
With the action \(h\cdot w=\Theta_q(h)w\) just defined,
Lemma~\ref{lem:standard-elements-under-theta} gives
\[
 \Theta_q\!\left(
       \widehat T^{(n)}_\mu\big|_{q\mapsto q^{-1}}\right)
 =(-q^{-1})^{d(\mu)}T^R_\mu.
\]
Taking traces in \(M^{R,n}_\lambda\) and multiplying by
\((-q)^{d(\mu)}\) proves
\eqref{eq:q-rook-mirabolic-character-comparison}.
\end{proof}

\begin{corollary}
\label{cor:type-a-parameter-inversion}
For \(\lambda,\mu\in\mathscr P_k\), the type \(A\) character values satisfy
\begin{equation}
\label{eq:type-a-parameter-inversion}
        \chi^{A,\lambda'}(T_{\gamma_\mu};q)
        =
        (-q)^{|\mu|-\ell(\mu)}
        \chi^{A,\lambda}(T_{\gamma_\mu};q^{-1}).
\end{equation}
\end{corollary}

\begin{proof}
For \(k=0\), both sides are \(1\).
For \(k\ge1\), Lemma~\ref{lem:hecke-specht-labels} identifies
\(S^{\lambda'}_{k,\mathrm{std}}\) with the pullback of
\(\widetilde S^\lambda_{k,\mathrm{std}}\) along
\(\phi\circ\psi:H_k(q)\to H_k(q^{-1})\), which sends
\(T_i\) to \(-qu_i\).  Here \(u_i\) are the standard generators
at parameter \(q^{-1}\).  The standard product
\(T_{\gamma_\mu}\) has \(|\mu|-\ell(\mu)\) generators.
Its action on the pullback is therefore \((-q)^{|\mu|-\ell(\mu)}\)
times the action of \(T_{\gamma_\mu}\) at parameter \(q^{-1}\).
Taking traces proves \eqref{eq:type-a-parameter-inversion}.
\end{proof}

\subsection{Transport of tensor actions and annihilators}

We transport \(\Phi_{m|s,n}\) from
Proposition~\ref{prop:super-mirabolic-representation}, at parameter
\(q^{-1}\), through
\(\Theta_q:H^{\mathrm{mir}}_n(q^{-1})\to R_n(q)\).
In the fixed tensor basis, write \(R_i(q^{-1})\) for the operator obtained
from \(R_i\) by replacing \(q\) with \(q^{-1}\), and put
\[
        \mathcal T_i^R=qR_i(q^{-1}),\qquad
        \mathcal P_j^R=\mathcal P_j.
\]
These operators have coefficients in \(\mathbb F\) and act on
\(W_{\mathbb F,n}=\widetilde V_{\mathbb F}^{\otimes_{\mathbb F}n}\).

\begin{proposition}
\label{prop:super-q-rook-representation}
The assignments \(\mathsf T_i\mapsto\mathcal T_i^R\) and
\(\mathsf P_j\mapsto\mathcal P_j^R\) extend to a representation of
\(R_n(q)\) on \(W_{\mathbb F,n}\).
\end{proposition}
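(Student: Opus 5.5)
The most economical route is to transport the mirabolic representation through the isomorphism \(\Theta_q\), so that no rook relation has to be checked on tensor words by hand. Proposition~\ref{prop:super-mirabolic-representation} holds for the indeterminate \(q\), so it also holds after replacing \(q\) by \(q^{-1}\) throughout. Concretely, all defining relations of \(H^{\mathrm{mir}}_n(q)\) among the matrices \(\mathcal T_i=-R_i\) and \(\mathcal P_j\) are identities in \(\operatorname{End}_{\mathbb F}(W_{\mathbb F,n})\) with entries in \(\mathbb F=\mathbb C(q)\). Applying the field automorphism \(q\mapsto q^{-1}\) of \(\mathbb F\) entrywise preserves these identities, and the relations change into those of \(H^{\mathrm{mir}}_n(q^{-1})\). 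The projections \(\mathcal P_j\) have entries \(0,1\) and are unchanged. Hence \(T_i\mapsto -R_i(q^{-1})\) and \(P_j\mapsto\mathcal P_j\) define a representation \(\Phi'\) of \(H^{\mathrm{mir}}_n(q^{-1})\) on \(W_{\mathbb F,n}\).

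Next I compose \(\Phi'\) with the inverse isomorphism \(\Theta_q^{-1}\) from Proposition~\ref{prop:parameter-inversion-isomorphism}. Its proof records that \(\Theta_q^{-1}\) sends \(\mathsf T_i\mapsto -qT_i\) and \(\mathsf P_j\mapsto P_j\). The composite \(\Phi'\circ\Theta_q^{-1}\) is therefore a representation of \(R_n(q)\), with
\[
 \mathsf T_i\longmapsto -q\bigl(-R_i(q^{-1})\bigr)=qR_i(q^{-1})=\mathcal T_i^R,
 \qquad
 \mathsf P_j\longmapsto\mathcal P_j=\mathcal P_j^R.
\]
These are exactly the assignments in the statement, which proves the proposition. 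The relation \(\mathsf T_w\mapsto\) product of the \(\mathcal T^R_i\) is independent of the reduced expression, since it is the image of a well-defined algebra element.

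As a consistency check, I would verify the quadratic relation directly from \eqref{eq:R-conjugation-standard}. At \(q^{-1}\) it reads \(R_i(q^{-1})^2=(1-q^{-1})R_i(q^{-1})+q^{-1}\), which gives \((\mathcal T^R_i)^2=(q-1)\mathcal T^R_i+q\), as required in \(R_n(q)\).

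The main point needing care, rather than a real obstacle, is justifying the substitution \(q\mapsto q^{-1}\). The substitution is applied to matrix entries in \(\mathbb F\), not to elements of \(\mathbb K\), and it is legitimate because \(q\mapsto q^{-1}\) is an automorphism of \(\mathbb C(q)\). It is also important that the relations verified in Proposition~\ref{prop:super-mirabolic-representation} hold over \(\mathbb F\), which is stated explicitly there.
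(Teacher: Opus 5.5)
Your proposal is correct and follows the paper's proof: you compose the parameter-inverted mirabolic tensor representation with \(\Theta_q^{-1}\), which sends \(\mathsf T_i\mapsto -qT_i\) and \(\mathsf P_j\mapsto P_j\), and this yields \(\mathsf T_i\mapsto qR_i(q^{-1})\) and \(\mathsf P_j\mapsto\mathcal P_j\). Your added justification makes explicit a step the paper leaves implicit: the substitution \(q\mapsto q^{-1}\) is the entrywise action of a field automorphism of \(\mathbb F\), which the paper later names \(\sigma_{\mathbb F}\).
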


\begin{proof}
The inverse of \(\Theta_q\) sends \(\mathsf T_i\) to \(-qT_i\) and
\(\mathsf P_j\) to \(P_j\).  The representation \(\Phi_{m|s,n}\) from
Proposition~\ref{prop:super-mirabolic-representation}, at parameter
\(q^{-1}\), sends these elements to \(qR_i(q^{-1})\) and
\(\mathcal P_j\), respectively.  Composing the two homomorphisms proves
the assertion.
\end{proof}

Write \(R_{n,\mathbb K}(q)=\mathbb K\otimes_{\mathbb F}R_n(q)\).
Denote this representation and its scalar extension by
\[
 \Phi^R_{m|s,n}:R_n(q)\longrightarrow
       \operatorname{End}_{\mathbb F}(W_{\mathbb F,n}),\qquad
 \Phi^R_{m|s,n,\mathbb K}:R_{n,\mathbb K}(q)\longrightarrow
       \operatorname{End}_{\mathbb K}(\widetilde V^{\otimes n}).
\]
The representation identity is
\begin{equation}
\label{eq:q-rook-representation-transport}
 \Phi^R_{m|s,n}
 =\Phi_{m|s,n}\big|_{q\mapsto q^{-1}}\circ\Theta_q^{-1}.
\end{equation}

Let \(\sigma_{\mathbb K}\) be the \(\mathbb C\)-automorphism of \(\mathbb K\) with
\(\sigma_{\mathbb K}(v)=v^{-1}\), and let
\(\sigma_{\mathbb F}\) be its restriction to \(\mathbb F\), so that
\(\sigma_{\mathbb F}(q)=q^{-1}\).  Applying \(\sigma_{\mathbb K}\)
to the matrices of the standard action \(\widetilde\rho_n\) gives the
standard action \(\widetilde\rho_n^R\) of
\(U_v(\mathfrak{gl}(m|s))\).  Since
\(\sigma_{\mathbb K}(\widetilde D_n)=\widetilde D_n^{-1}\), we
define \(\widetilde\Psi_n^R\) on \(\widetilde V^{\otimes n}\) by
\begin{equation}
\label{eq:q-rook-quantum-action}
 \widetilde\Psi_n^R(u)
 =\widetilde D_n^{-1}\widetilde\rho_n^R(u)\widetilde D_n,
 \qquad u\in U_v(\mathfrak{gl}(m|s)).
\end{equation}
If \(S_i(v)\) denotes \(S_i\) in \eqref{eq:R-conjugation-standard}
with its quantum parameter replaced by \(v\), then \(\mathcal T_i^R\)
satisfies
\begin{equation}
\label{eq:q-rook-standard-operator-conjugation}
 \mathcal T_i^R
 =\widetilde D_n^{-1}\bigl(vS_i(v)\bigr)\widetilde D_n.
\end{equation}
It follows by applying \(\sigma_{\mathbb K}\) to
\eqref{eq:R-conjugation-standard} and multiplying by \(q=v^2\).

For \(\lambda\in H(m,s)\), write \(L^R_{m|s}(\lambda)\) for the module
obtained from \(L_{m|s}(\lambda)\) by applying \(\sigma_{\mathbb K}\)
entrywise to the matrices of the quantum generators.  It is a covariant
\(U_v(\mathfrak{gl}(m|s))\)-module with character
\(s_\lambda(x|y)\).  Also set
\(M^{R,n}_{\lambda,\mathbb K}
=\mathbb K\otimes_{\mathbb F}M^{R,n}_\lambda\).

\begin{theorem}[Super \(q\)-rook Schur--Weyl duality]
\label{thm:super-q-rook-schur-weyl}
The algebras \(\Phi^R_{m|s,n,\mathbb K}(R_{n,\mathbb K}(q))\) and
\(\widetilde\Psi_n^R(U_v(\mathfrak{gl}(m|s)))\) are mutual full
centralizers on \(\widetilde V^{\otimes n}\).
The tensor space \(\widetilde V^{\otimes n}\) has the bimodule decomposition
\begin{equation}
\label{eq:q-rook-bimodule-decomposition}
 \widetilde V^{\otimes n}
 \cong
 \bigoplus_{a=0}^{n}
 \bigoplus_{\substack{\lambda\vdash a\\\lambda\in H(m,s)}}
 L^R_{m|s}(\lambda)\otimes_{\mathbb K}M^{R,n}_{\lambda,\mathbb K}.
\end{equation}
\end{theorem}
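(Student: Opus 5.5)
The plan is to transport Theorem~\ref{thm:super-mirabolic-schur-weyl}, applied at parameter \(q^{-1}\), through \(\Theta_q\) and the field automorphism \(\sigma_{\mathbb K}\). First I apply \(\sigma_{\mathbb K}\) entrywise to every matrix appearing in Theorem~\ref{thm:super-mirabolic-schur-weyl}. This is a ring automorphism of \(\mathbb K\), so it preserves products, sums and spans. Hence if two subalgebras of \(\operatorname{End}_{\mathbb K}(\widetilde V^{\otimes n})\) are mutual full centralizers, their \(\sigma_{\mathbb K}\)-twists are as well: a matrix \(X\) commutes with \(Y\) exactly when \(\sigma_{\mathbb K}(X)\) commutes with \(\sigma_{\mathbb K}(Y)\). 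Under \(\sigma_{\mathbb K}\), the mirabolic image \(\Phi_{m|s,n,\mathbb K}(H^{\mathrm{mir}}_{n,\mathbb K}(q))\) becomes the \(\mathbb K\)-span of the operators \(-R_i(q^{-1})\) and \(\mathcal P_j\). The quantum image \(\widetilde\Psi_n(U)=\widetilde D_n\widetilde\rho_n(U)\widetilde D_n^{-1}\) becomes \(\widetilde D_n^{-1}\widetilde\rho^R_n(U_v)\widetilde D_n=\widetilde\Psi^R_n(U_v(\mathfrak{gl}(m|s)))\). One should check that the \(\mathbb K\)-span of the twisted image equals the image of the twisted algebra. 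This holds because \(\sigma_{\mathbb K}\) maps \(\mathbb K\) onto \(\mathbb K\), and the twisted generators of \(U_{v^{-1}}\) are exactly the standard generators of \(U_v\) acting via \(\widetilde\rho_n^R\), as stated before \eqref{eq:q-rook-quantum-action}.

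Next I identify the twisted mirabolic image with \(\Phi^R_{m|s,n,\mathbb K}(R_{n,\mathbb K}(q))\). The images \(\mathcal T_i^R=qR_i(q^{-1})\) and \(\mathcal P_j^R=\mathcal P_j\) are nonzero scalar multiples of the twisted generators \(-R_i(q^{-1})\) and \(\mathcal P_j\). They therefore generate the same \(\mathbb K\)-algebra, which is consistent with \eqref{eq:q-rook-representation-transport}. This gives the double centralizer statement.

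For the bimodule decomposition I twist each summand \(L_{m|s}(\lambda)\otimes N^{(n)}_{(\lambda,a),\mathbb K}\) by \(\sigma_{\mathbb K}\). The \(L\)-factor becomes \(L^R_{m|s}(\lambda)\) by definition. For the \(N\)-factor, the twist replaces \(q\) by \(q^{-1}\) in the matrices of \(T_i,P_j\) on the \(\mathbb F\)-basis. This produces \(N^{(n)}_{(\lambda,a)}|_{q\mapsto q^{-1}}\), extended to \(\mathbb K\) via \(\sigma_{\mathbb K}\), as an \(H^{\mathrm{mir}}_n(q^{-1})\)-module. By the remark preceding Lemma~\ref{lem:ordinary-frobenius-comparison}, this module is \(M^{R,n}_\lambda\) with the action \(h\cdot w=\Theta_q(h)w\). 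So, regarded as an \(R_n(q)\)-module through \(\Theta_q^{-1}\), it is \(M^{R,n}_{\lambda,\mathbb K}\).

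The main obstacle I expect is this last bookkeeping step. I have to confirm that the twisted tensor-space action of \(H^{\mathrm{mir}}_n(q^{-1})\) is precisely \(\Phi^R\circ\Theta_q\), including the sign and scalar \(-q\), rather than some rescaled variant that could shift the Specht labels. The check is that \(\Theta_q^{-1}(\mathsf T_i)=-qT_i\) maps to \(-q\cdot(-R_i(q^{-1}))=\mathcal T_i^R\). With this, equation \eqref{eq:q-rook-representation-transport} matches the generators exactly. Since the \(\sigma_{\mathbb K}\)-twisted isomorphism is \(\sigma_{\mathbb K}\)-semilinear on the underlying spaces but \(\mathbb K\)-linear for the twisted structures, the decomposition \eqref{eq:q-rook-bimodule-decomposition} follows.
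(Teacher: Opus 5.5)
Your proposal is correct and follows essentially the same route as the paper. You apply \(\sigma_{\mathbb K}\) entrywise to Theorem~\ref{thm:super-mirabolic-schur-weyl}, identify the twisted quantum image with \(\widetilde\Psi_n^R(U_v(\mathfrak{gl}(m|s)))\) and the twisted mirabolic image with \(\Phi^R_{m|s,n,\mathbb K}(R_{n,\mathbb K}(q))\) via \eqref{eq:q-rook-representation-transport}, and recognize the twisted \(N\)-factors, viewed through \(\Theta_q^{-1}\), as \(M^{R,n}_{\lambda,\mathbb K}\); your explicit checks that \(-q\cdot(-R_i(q^{-1}))=\mathcal T_i^R\) and that the twisted isomorphism is \(\mathbb K\)-linear for the twisted structures only spell out details the paper leaves implicit.
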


\begin{proof}
We first prove the centralizer equalities by applying \(\sigma_{\mathbb K}\)
to Theorem~\ref{thm:super-mirabolic-schur-weyl}.  For matrices \(X,Y\)
over \(\mathbb K\), applying the automorphism entrywise gives
\[
 [\sigma_{\mathbb K}(X),\sigma_{\mathbb K}(Y)]
 =\sigma_{\mathbb K}([X,Y]).
\]
Since the automorphism is invertible, \(X\) commutes with \(Y\)
if and only if \(\sigma_{\mathbb K}(X)\) and
\(\sigma_{\mathbb K}(Y)\) commute.  Hence applying
\(\sigma_{\mathbb K}\) entrywise preserves the centralizer equalities.  The action \(\widetilde\Psi_n\) becomes
\(\widetilde\Psi_n^R\) in \eqref{eq:q-rook-quantum-action}.
Composing the parameter-inverted action \(\Phi_{m|s,n,\mathbb K}\)
with \(\Theta_q^{-1}\) gives \(\Phi^R_{m|s,n,\mathbb K}\) by
\eqref{eq:q-rook-representation-transport}.  This proves both
centralizer equalities.

We next obtain \eqref{eq:q-rook-bimodule-decomposition} from the bimodule
decomposition in Theorem~\ref{thm:super-mirabolic-schur-weyl}.
Applying \(\sigma_{\mathbb K}\) gives the quantum factors
\(L^R_{m|s}(\lambda)\).  By the definition in
Subsection~\ref{subsec:irreducible-characters}, composing the actions on the
parameter-inverted factors \(N^{(n)}_{(\lambda,a),\mathbb K}\)
with \(\Theta_q^{-1}\) gives \(M^{R,n}_{\lambda,\mathbb K}\).
The condition \(\lambda_{m+1}\le s\) and the characters
\(s_\lambda(x|y)\) are preserved, since the
field automorphism changes coefficients and fixes the weight variables.
\end{proof}

\begin{remark}
Theorem~\ref{thm:super-q-rook-schur-weyl} extends the ordinary
\(q\)-rook Schur--Weyl duality of \cite[Corollary~4.3]{Hal04} to the super setting.
\end{remark}

Let
\[
 I^R_{m|s,n}(q)=\ker\Phi^R_{m|s,n}\subseteq R_n(q).
\]
The coefficient substitution convention from
Section~\ref{sec:tensor-space-kernels} agrees with
\(\sigma_{\mathbb F}\).  Taking kernels in
\eqref{eq:q-rook-representation-transport} gives
\begin{equation}
\label{eq:q-rook-annihilator-transport}
 I^R_{m|s,n}(q)
 =\Theta_q\bigl(I^{\mathrm{mir}}_{m|s,n}(q^{-1})\bigr).
\end{equation}
Indeed, for \(h\in R_n(q)\), \eqref{eq:q-rook-representation-transport}
gives \(\Phi^R_{m|s,n}(h)=0\) exactly when
\(\Theta_q^{-1}(h)\in I^{\mathrm{mir}}_{m|s,n}(q^{-1})\).
Equivalently, at the inverse parameter,
\begin{equation}
\label{eq:mirabolic-annihilator-transport}
 I^{\mathrm{mir}}_{m|s,n}(q)
 =\Theta_{q^{-1}}^{-1}\bigl(I^R_{m|s,n}(q^{-1})\bigr).
\end{equation}
Recall \(\rho=((s+1)^{m+1})\), \(r_0=(m+1)(s+1)\), and
\(f_T,f_\rho\) from \eqref{eq:rectangle-jm-primitive-idempotent}
and \eqref{eq:rectangle-jm-central-idempotent}.
For \(n\ge r_0\), recall from
Subsection~\ref{subsec:rectangular-jm-idempotents} that
\(\iota_{r_0,n}:R_{r_0}(q)\hookrightarrow R_n(q)\) is the embedding
\(\mathsf T_i\mapsto\mathsf T_i\) for \(1\le i<r_0\) and
\(\mathsf P_j\mapsto\mathsf P_j\) for \(1\le j\le r_0\).

\begin{corollary}
\label{cor:super-q-rook-annihilator}
If \(n<r_0\), then \(I^R_{m|s,n}(q)=0\).
If \(n\ge r_0\), then, for every \(T\in\operatorname{SYT}(\rho)\),
\begin{equation}
\label{eq:q-rook-annihilator-principal}
 I^R_{m|s,n}(q)=R_n(q)\,\iota_{r_0,n}(f_T)\,R_n(q).
\end{equation}
\end{corollary}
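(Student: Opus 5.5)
The plan is to transport Theorem~\ref{thm:super-mirabolic-annihilator} at parameter \(q^{-1}\) through \(\Theta_q\), using the kernel identity \eqref{eq:q-rook-annihilator-transport}, namely \(I^R_{m|s,n}(q)=\Theta_q\bigl(I^{\mathrm{mir}}_{m|s,n}(q^{-1})\bigr)\). First I apply the coefficient substitution \(\sigma_{\mathbb F}\) to Theorem~\ref{thm:super-mirabolic-annihilator}. Its statement is an identity among elements of \(H_n^{\mathrm{mir}}(q)\) whose structure constants lie in \(\mathbb F\). The substitution is a ring automorphism of \(\mathbb F\), and it carries the defining relations at \(q\) to those at \(q^{-1}\). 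It therefore yields the same assertions for \(H_n^{\mathrm{mir}}(q^{-1})\) and the representation \(\Phi_{m|s,n}|_{q\mapsto q^{-1}}\). For \(n<r_0\) this gives \(I^{\mathrm{mir}}_{m|s,n}(q^{-1})=0\), and hence \(I^R_{m|s,n}(q)=0\). For \(n\ge r_0\) it gives
\[
 I^{\mathrm{mir}}_{m|s,n}(q^{-1})
 =H_n^{\mathrm{mir}}(q^{-1})\,\jmath_{r_0,n}\bigl(g_T(q^{-1})\bigr)\,H_n^{\mathrm{mir}}(q^{-1}).
\]

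Next I identify \(g_T(q^{-1})\). By \eqref{eq:mirabolic-rectangle-idempotent}, \(g_T=\Theta_{q^{-1}}^{-1}(f_T(q^{-1}))\). Substituting \(q\mapsto q^{-1}\) turns \(\Theta_{q^{-1}}\) into \(\Theta_q\) and turns \(f_T(q^{-1})\) back into \(f_T\in R_{r_0}(q)\). Hence \(g_T(q^{-1})=\Theta_q^{-1}(f_T)\). This is exactly the convention already used in Subsection~\ref{subsec:irreducible-characters} for \(N^{(n)}|_{q\mapsto q^{-1}}\).

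It remains to check that the inclusions commute with the isomorphisms, that is, \(\Theta_q\circ\jmath_{r_0,n}=\iota_{r_0,n}\circ\Theta_q\) on \(H_{r_0}^{\mathrm{mir}}(q^{-1})\). Both sides are algebra homomorphisms, so it suffices to compare them on generators. A generator \(T_i\) maps to \(-q^{-1}\mathsf T_i\) in \(R_n(q)\) along either route, and \(P_j\) maps to \(\mathsf P_j\) along either route. I expect this compatibility, together with the bookkeeping of the parameter substitution, to be the only real obstacle. The care needed is to ensure that \(\jmath\) at parameter \(q^{-1}\) is the generator-preserving inclusion, as stipulated after \eqref{eq:mirabolic-rectangle-idempotent}.

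Finally I apply the algebra isomorphism \(\Theta_q\) to the principal-ideal formula. Because \(\Theta_q\) is surjective, it sends \(H_n^{\mathrm{mir}}(q^{-1})\) onto \(R_n(q)\). It also sends the generator \(\jmath_{r_0,n}(\Theta_q^{-1}f_T)\) to \(\iota_{r_0,n}(f_T)\). The image of the two-sided ideal is therefore \(R_n(q)\,\iota_{r_0,n}(f_T)\,R_n(q)\), which proves \eqref{eq:q-rook-annihilator-principal}.

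As an alternative check, one could argue directly. Lemma~\ref{lem:rectangle-tower-support} gives the support of \(\iota_{r_0,n}(f_T)\), and Theorem~\ref{thm:super-q-rook-schur-weyl} identifies which simple modules occur in tensor space. From these, the descent argument in the proof of Theorem~\ref{thm:super-mirabolic-annihilator} applies verbatim.
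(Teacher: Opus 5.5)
Your proposal is correct and is essentially the paper's own proof. You transport Theorem~\ref{thm:super-mirabolic-annihilator} at parameter \(q^{-1}\) through \eqref{eq:q-rook-annihilator-transport}, identify \(\Theta_q(g_T(q^{-1}))=f_T\), check on generators that \(\Theta_q\) intertwines \(\jmath_{r_0,n}\) with \(\iota_{r_0,n}\), and use that an algebra isomorphism carries the two-sided ideal generated by an element to the ideal generated by its image; the direct alternative you sketch would also work but is not needed.
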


\begin{proof}
Apply \eqref{eq:q-rook-annihilator-transport} to
Theorem~\ref{thm:super-mirabolic-annihilator} at parameter \(q^{-1}\).
Inverting the parameter in \eqref{eq:mirabolic-rectangle-idempotent}
gives \(\Theta_q(g_T(q^{-1}))=f_T\).  Since \(\Theta_q\)
commutes with the generator-preserving inclusions,
\[
 \Theta_q\bigl(\jmath_{r_0,n}(g_T(q^{-1}))\bigr)
 =\iota_{r_0,n}(f_T).
\]
An algebra isomorphism takes the ideal generated by an element to the
ideal generated by its image.  This proves
\eqref{eq:q-rook-annihilator-principal}; for \(n<r_0\),
\eqref{eq:q-rook-annihilator-transport} gives \(I^R_{m|s,n}(q)=0\).
\end{proof}

Using \(f_\rho\) in place of \(f_T\) gives in the same way
\[
 I^R_{m|s,n}(q)=R_n(q)\,\iota_{r_0,n}(f_\rho)\,R_n(q)
 \qquad(n\ge r_0).
\]
Parameter inversion preserves dimensions.  Hence \(\Phi^R_{m|s,n}\)
is faithful exactly when \(n<(m+1)(s+1)\), and
\eqref{eq:mirabolic-kernel-dimension} gives
\[
 \dim_{\mathbb F}I^R_{m|s,n}(q)
 =\sum_{\substack{\lambda\in\mathscr P_{\le n}\\\lambda_{m+1}>s}}
       \left(\binom n{|\lambda|}f^\lambda\right)^2.
\]
Equivalently, the decomposition \eqref{eq:q-rook-bimodule-decomposition}
gives the following expression for \(\ker\Phi^R_{m|s,n,\mathbb K}\):
\begin{equation}
\label{eq:q-rook-kernel-wedderburn}
 \ker\Phi^R_{m|s,n,\mathbb K}
 =\bigoplus_{\substack{\lambda\in\mathscr P_{\le n}\\\lambda_{m+1}>s}}
       \operatorname{End}_{\mathbb K}(M^{R,n}_{\lambda,\mathbb K}).
\end{equation}

\begin{remark}
Corollary~\ref{cor:super-q-rook-annihilator} extends Xiao's pure-even
annihilator theorem \cite[Theorem~1.2]{X17} to tensor superspace:
for \(n\ge r_0\), the annihilator is generated by
\(\iota_{r_0,n}(f_T)\), where \(T\in\operatorname{SYT}(\rho)\)
and \(\rho=((s+1)^{m+1})\).
When \(s=0\), this rectangle becomes the column \((1^{m+1})\),
and the condition \(\lambda_{m+1}>s\) in
\eqref{eq:q-rook-kernel-wedderburn} becomes \(\ell(\lambda)>m\).
Writing \(v\) for Xiao's parameter, with \(v^2=q\), we identify
his Hecke generators multiplied by \(v\) with \(\mathsf T_i\)
and leave the projection generators unchanged.
His quasi-idempotent \(\Phi_{m+1}\) in \cite[Section~3]{X17}
is then a nonzero scalar multiple of \(f_\rho\) over \(\mathbb K\); for
\(n\ge m+1\), their images under \(\iota_{m+1,n}\) therefore
generate the same annihilator ideal.
\end{remark}

Define \(\mathscr S_q^R(m|s;\le n)\) and
\(\overline R_{m|s,n}(q)\) by
\[
 \mathscr S_q^R(m|s;\le n)
 =\widetilde\Psi_n^R\bigl(U_v(\mathfrak{gl}(m|s))\bigr),\qquad
 \overline R_{m|s,n}(q)=R_n(q)/I^R_{m|s,n}(q).
\]
The subscript \(q=v^2\) records the parameter of the commuting
\(q\)-rook algebra.  Write
\(\overline R_{m|s,n,\mathbb K}(q)=\mathbb K\otimes_{\mathbb F}
\overline R_{m|s,n}(q)\), identified with
\(\Phi^R_{m|s,n,\mathbb K}(R_{n,\mathbb K}(q))\).

\begin{corollary}
\label{cor:q-rook-quotient-double-centralizer}
The algebras \(\overline R_{m|s,n,\mathbb K}(q)\) and
\(\mathscr S_q^R(m|s;\le n)\) are mutual full centralizers on
\(\widetilde V^{\otimes n}\).
\end{corollary}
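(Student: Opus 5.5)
The plan is to copy the proof of Corollary~\ref{cor:mirabolic-quotient-double-centralizer}, with Theorem~\ref{thm:super-q-rook-schur-weyl} taking the place of Theorem~\ref{thm:super-mirabolic-schur-weyl}. The argument reduces to two points. First, the scalar extension \(\overline R_{m|s,n,\mathbb K}(q)\) coincides with the image \(\Phi^R_{m|s,n,\mathbb K}(R_{n,\mathbb K}(q))\) inside \(\operatorname{End}_{\mathbb K}(\widetilde V^{\otimes n})\). Second, an acting algebra and its image have the same commutant.

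For the identification, I start from the definition \(I^R_{m|s,n}(q)=\ker\Phi^R_{m|s,n}\). The first isomorphism theorem then gives \(\overline R_{m|s,n}(q)\cong\Phi^R_{m|s,n}(R_n(q))\subseteq\operatorname{End}_{\mathbb F}(W_{\mathbb F,n})\). Scalar extension \(\mathbb K\otimes_{\mathbb F}-\) is exact. It also identifies \(\mathbb K\otimes_{\mathbb F}\operatorname{End}_{\mathbb F}(W_{\mathbb F,n})\) with \(\operatorname{End}_{\mathbb K}(\widetilde V^{\otimes n})\), and under this identification the extended representation is \(\Phi^R_{m|s,n,\mathbb K}\). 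Hence:
\begin{itemize}
\item \(\mathbb K\otimes_{\mathbb F}I^R_{m|s,n}(q)=\ker\Phi^R_{m|s,n,\mathbb K}\);
\item \(\overline R_{m|s,n,\mathbb K}(q)\) is identified with the image \(\Phi^R_{m|s,n,\mathbb K}(R_{n,\mathbb K}(q))\).
\end{itemize}
This second point is exactly the identification already built into the notation just before the corollary.

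Next I apply Theorem~\ref{thm:super-q-rook-schur-weyl}. It states that \(\Phi^R_{m|s,n,\mathbb K}(R_{n,\mathbb K}(q))\) and \(\widetilde\Psi_n^R(U_v(\mathfrak{gl}(m|s)))=\mathscr S_q^R(m|s;\le n)\) are mutual full centralizers on \(\widetilde V^{\otimes n}\). Combined with the identification above, this gives both centralizer equalities for \(\overline R_{m|s,n,\mathbb K}(q)\) and \(\mathscr S_q^R(m|s;\le n)\).

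The only point needing care is the compatibility of scalar extension with taking images, and exactness over a field settles it. An alternative route would apply \(\sigma_{\mathbb K}\) to Corollary~\ref{cor:mirabolic-quotient-double-centralizer} together with \eqref{eq:q-rook-annihilator-transport}. The direct route above is shorter, so I would use it.
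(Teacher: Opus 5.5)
Your proposal is correct and follows the paper's proof: identify \(\overline R_{m|s,n,\mathbb K}(q)\) with the image \(\Phi^R_{m|s,n,\mathbb K}(R_{n,\mathbb K}(q))\) using exactness of scalar extension and the first isomorphism theorem. Then apply Theorem~\ref{thm:super-q-rook-schur-weyl}, noting that an algebra and its image have the same commutant.
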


\begin{proof}
Exactness of scalar extension and the first isomorphism theorem give
\[
 \overline R_{m|s,n,\mathbb K}(q)
 \cong\Phi^R_{m|s,n,\mathbb K}(R_{n,\mathbb K}(q)).
\]
The assertion now follows from Theorem~\ref{thm:super-q-rook-schur-weyl}.
\end{proof}

Equations~\eqref{eq:q-rook-annihilator-transport} and
\eqref{eq:mirabolic-annihilator-transport} induce isomorphisms
between \(\overline H^{\mathrm{mir}}_{m|s,n}(q^{-1})\) and
\(\overline R_{m|s,n}(q)\), and between
\(\overline H^{\mathrm{mir}}_{m|s,n}(q)\) and
\(\overline R_{m|s,n}(q^{-1})\).
The algebra \(\mathscr S_q^R(m|s;\le n)\) is likewise obtained by
applying \(\sigma_{\mathbb K}\) to the matrices in
\(\mathscr S_q(m|s;\le n)\).  The dimensions of
\(\overline R_{m|s,n}(q)\) and \(\mathscr S_q^R(m|s;\le n)\) are therefore
\begin{align*}
 \dim_{\mathbb F}\overline R_{m|s,n}(q)
 &=\dim_{\mathbb F}\overline H^{\mathrm{mir}}_{m|s,n}(q)
 =\sum_{a=0}^{n}\sum_{\substack{\lambda\vdash a\\\lambda\in H(m,s)}}
       \left(\binom na f^\lambda\right)^2,\\
 \dim_{\mathbb K}\mathscr S_q^R(m|s;\le n)
 &=\dim_{\mathbb K}\mathscr S_q(m|s;\le n)
 =\sum_{a=0}^{n}\sum_{\substack{\lambda\vdash a\\\lambda\in H(m,s)}}
       s_\lambda(1^m|1^s)^2,
\end{align*}
by \eqref{eq:mirabolic-quotient-dimension} and
\eqref{eq:mirabolic-schur-dimension}.

\subsection{Super \texorpdfstring{\(q\)}{q}-rook character formulas}

We now translate Theorem~\ref{thm:super-mirabolic-frobenius} into the normalization
used in \cite{DHP03}.  For \(d\ge1\), define
\(\mathsf q^R_d(x|y;q)\) by
\[
        \mathsf q^R_d(x|y;q)
        =
        (-q)^{d-1}\,
        \mathsf q_d(x|y;q^{-1})
        \qquad (d\ge1),
\]
and, for a composition \(\mu=(\mu_1,\ldots,\mu_\ell)\), set
\[
        \mathsf q^R_\mu(x|y;q)
        =
        \prod_{i=1}^{\ell}
        \mathsf q^R_{\mu_i}(x|y;q).
\]
For the empty composition, we set
\[
        \mathsf q^R_\varnothing(x|y;q)=1.
\]
Equivalently, since \(d(\mu)=|\mu|-\ell(\mu)\),
\[
        \mathsf q^R_\mu(x|y;q)
        =
        (-q)^{d(\mu)}
        \mathsf q_\mu(x|y;q^{-1}).
\]

Recall \(z=(1,x_1,\ldots,x_m)\), and let \(w\) be a formal variable.

\begin{lemma}
\label{lem:q-rook-super-hl-explicit}
For \(d\ge1\), \(\mathsf q^R_d(x|y;q)\) has the expansion
\begin{equation}
\label{eq:q-rook-super-hl-explicit}
\begin{aligned}
        \mathsf q^R_d(x|y;q)
        =
        \sum_{(\alpha;\beta)\vDash d}
        &M_\alpha(z)M_\beta(y)
        (-1)^{|\beta|-\ell(\beta)}
        q^{|\alpha|-\ell(\alpha)}
        (q-1)^{\ell(\alpha)+\ell(\beta)-1}.
\end{aligned}
\end{equation}
\end{lemma}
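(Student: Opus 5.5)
The plan is to substitute \(q\mapsto q^{-1}\) directly in the bicomposition expansion \eqref{eq:super-mirabolic-bicomposition-expansion}, multiply by \((-q)^{d-1}\), and simplify the scalar coefficient of each \(M_\alpha(z)M_\beta(y)\) term by term. This parallels the computation already carried out for \eqref{eq:ordinary-hl-parameter-inversion} in the pure-even case. The monomial factors \(M_\alpha(z)M_\beta(y)\) do not involve \(q\) and pass through unchanged, so only the scalar needs to be checked.

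Fix \((\alpha;\beta)\vDash d\) and write \(a=\ell(\alpha)\) and \(b=\ell(\beta)\). By definition of \(\mathsf q^R_d\), the coefficient of \(M_\alpha(z)M_\beta(y)\) in \(\mathsf q^R_d(x|y;q)\) is
\[
 (-q)^{d-1}(-1)^{d-1}(-q^{-1})^{|\beta|-b}(1-q^{-1})^{a+b-1}
 =q^{d-1}(-1)^{|\beta|-b}q^{-(|\beta|-b)}q^{-(a+b-1)}(q-1)^{a+b-1}.
\]
The sign is \((-1)^{|\beta|-b}\), because the two factors \((-1)^{d-1}\) cancel. The power of \(q\) is
\[
 d-1-|\beta|+b-a-b+1=d-|\beta|-a=|\alpha|-\ell(\alpha),
\]
where the last step uses \(|\alpha|+|\beta|=d\). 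The remaining factor is \((q-1)^{\ell(\alpha)+\ell(\beta)-1}\). Together these give exactly the coefficient in \eqref{eq:q-rook-super-hl-explicit}.

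The only step needing care is the bookkeeping of signs and exponents in the second display, which relies on \(|\alpha|+|\beta|=d\). Everything else is a formal substitution. As a consistency check, taking \(s=0\) (so \(\beta=\varnothing\)) should recover the formula for \(q_d(z;q)\) obtained in \eqref{eq:ordinary-hl-parameter-inversion}.
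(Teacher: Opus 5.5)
Your proposal is correct and follows essentially the same route as the paper: substitute \(q\mapsto q^{-1}\) in \eqref{eq:super-mirabolic-bicomposition-expansion}, multiply by \((-q)^{d-1}\), and simplify the scalar coefficient of each \(M_\alpha(z)M_\beta(y)\) using \(|\alpha|+|\beta|=d\). Your exponent and sign bookkeeping matches the paper's computation. The extra generating-series derivation in the paper's proof goes beyond the stated expansion and is not needed here.
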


\begin{proof}
Replace \(q\) by \(q^{-1}\) in
\eqref{eq:super-mirabolic-bicomposition-expansion} and multiply by
\((-q)^{d-1}\).  For a fixed bicomposition \((\alpha;\beta)\),
the coefficient of \(M_\alpha(z)M_\beta(y)\) is
\[
\begin{aligned}
 &(-q)^{d-1}(-1)^{d-1}(-q^{-1})^{|\beta|-\ell(\beta)}
       (1-q^{-1})^{\ell(\alpha)+\ell(\beta)-1}\\
 &\qquad=(-1)^{|\beta|-\ell(\beta)}q^{|\alpha|-\ell(\alpha)}
       (q-1)^{\ell(\alpha)+\ell(\beta)-1},
\end{aligned}
\]
where the exponent of \(q\) simplifies using
\(d=|\alpha|+|\beta|\).  This proves \eqref{eq:q-rook-super-hl-explicit}.
For one even letter \(a\) and one odd letter \(b\), respectively,
summing the possible positive run lengths gives
\[
\begin{aligned}
 1+(q-1)\sum_{r\ge1}q^{r-1}(aw)^r
 &=\frac{1-aw}{1-qaw},\\
 1+(q-1)\sum_{r\ge1}(-1)^{r-1}(bw)^r
 &=\frac{1+qbw}{1+bw}.
\end{aligned}
\]
Multiplying \((1-aw)/(1-qaw)\) over
\(a\in\{1,x_1,\ldots,x_m\}\) and \((1+qbw)/(1+bw)\) over
\(b\in\{y_1,\ldots,y_s\}\) gives
\(1+(q-1)\sum_{d\ge1}\mathsf q^R_d(x|y;q)w^d\).
\end{proof}

The functions \(\mathsf q^R_d(x|y;q)\) also have the generating series
\begin{equation}
\label{eq:q-rook-super-hl-generating-series}
        1+(q-1)\sum_{d\ge1}\mathsf q^R_d(x|y;q)w^d
        =
        \prod_{a\in\{1,x_1,\ldots,x_m\}}
        \frac{1-aw}{1-qaw}
        \prod_{b\in\{y_1,\ldots,y_s\}}
        \frac{1+qbw}{1+bw}.
\end{equation}

\begin{theorem}[Super Frobenius formula for the \(q\)-rook algebra]
\label{thm:super-q-rook-frobenius}
Let \(\mu\) be a composition of \(k\le n\).
The hook Schur expansion of \(\mathsf q^R_\mu(x|y;q)\) is
\begin{equation}
\label{eq:super-q-rook-frobenius}
        \mathsf q^R_\mu(x|y;q)
        =
        \sum_{a=0}^{k}
        \sum_{\substack{\lambda\vdash a\\ \lambda_{m+1}\le s}}
        \Upsilon^{(n)}_\lambda(T^R_\mu)
        s_\lambda(x|y).
\end{equation}
\end{theorem}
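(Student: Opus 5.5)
The plan is to deduce \eqref{eq:super-q-rook-frobenius} from the super mirabolic Frobenius formula (Theorem~\ref{thm:super-mirabolic-frobenius}) by inverting the parameter. The conversion factor is supplied by Lemma~\ref{lem:ordinary-frobenius-comparison}, and it matches the normalization $\mathsf q^R_\mu=(-q)^{d(\mu)}\mathsf q_\mu(x|y;q^{-1})$.

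First, apply Theorem~\ref{thm:super-mirabolic-frobenius} and view it as an identity in $\mathbb F[x,y]$, i.e., polynomials in the weight variables with coefficients rational functions of $q$. Apply the field automorphism $\sigma_{\mathbb F}$ coefficientwise, which fixes the weight variables. The hook Schur functions $s_\lambda(x|y)$ have integer coefficients and are independent of $q$, so they are unchanged. We obtain
\[
 \mathsf q_\mu(x|y;q^{-1})
 =\sum_{a=0}^{k}\sum_{\substack{\lambda\vdash a\\ \lambda_{m+1}\le s}}
 \chi^{(n)}_\lambda\bigl(\widehat T^{(n)}_\mu\bigr)\Big|_{q\mapsto q^{-1}}
 s_\lambda(x|y).
\]
The index set of the sum does not depend on $q$.

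Second, multiply both sides by $(-q)^{d(\mu)}$. By the definition of $\mathsf q^R_\mu$, the left side becomes $\mathsf q^R_\mu(x|y;q)$. By \eqref{eq:q-rook-mirabolic-character-comparison} in Lemma~\ref{lem:ordinary-frobenius-comparison}, each coefficient $(-q)^{d(\mu)}\chi^{(n)}_\lambda(\widehat T^{(n)}_\mu)|_{q\mapsto q^{-1}}$ equals $\Upsilon^{(n)}_\lambda(T^R_\mu)$. This is exactly \eqref{eq:super-q-rook-frobenius}. The empty composition is covered as well, since both $\mathsf q^R_\varnothing$ and $(-q)^{0}$ equal $1$ and $T^R_\varnothing=\mathsf P_n$ corresponds to $\widehat T^{(n)}_\varnothing=P_n$.

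The only step needing care is legitimizing the substitution $q\mapsto q^{-1}$ in a character value. The character values lie in $\mathbb F$, being traces of matrices with entries in $\mathbb F$ in the tableau basis, so $\sigma_{\mathbb F}$ applies to them. The meaning of $|_{q\mapsto q^{-1}}$ in Lemma~\ref{lem:ordinary-frobenius-comparison} is precisely this entrywise substitution in the basis of Subsection~\ref{subsec:irreducible-characters}. Applying $\sigma_{\mathbb F}$ to a trace therefore gives the trace of the substituted matrices, so the two uses of the substitution agree. Alternatively, one could argue directly from the transported duality (Theorem~\ref{thm:super-q-rook-schur-weyl}). In that approach one takes weighted traces of $\Phi^R_{m|s,n,\mathbb K}(T^R_\mu)$ and computes the one-cycle trace as $(-q)^{d-1}$ times the parameter-inverted trace of Lemma~\ref{lem:one-cycle-weighted-trace}, using $\mathcal T_i^R=qR_i(q^{-1})=-q\,\mathcal T_i(q^{-1})$. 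This would serve as a consistency check but is not needed.
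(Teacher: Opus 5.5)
Your proposal is correct and follows the paper's proof: substitute \(q\mapsto q^{-1}\) in Theorem~\ref{thm:super-mirabolic-frobenius}, multiply by \((-q)^{d(\mu)}\), and use \eqref{eq:q-rook-mirabolic-character-comparison} to identify each coefficient with \(\Upsilon^{(n)}_\lambda(T^R_\mu)\). Your remark that applying \(\sigma_{\mathbb F}\) to a trace gives the trace of the entrywise-substituted matrices is a useful explicit justification of a step the paper leaves implicit.
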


\begin{proof}
Replace \(q\) by \(q^{-1}\) in
Theorem~\ref{thm:super-mirabolic-frobenius} and multiply by
\((-q)^{d(\mu)}\).  The left-hand side becomes
\(\mathsf q^R_\mu(x|y;q)\) by definition.  The Schur functions
are independent of \(q\), and
\eqref{eq:q-rook-mirabolic-character-comparison} identifies each
coefficient on the right with
\(\Upsilon^{(n)}_\lambda(T^R_\mu)\).
The summation range is unchanged, giving \eqref{eq:super-q-rook-frobenius}.
\end{proof}

\begin{remark}
Theorem~\ref{thm:super-q-rook-frobenius} extends the ordinary
\(q\)-rook Frobenius formula of \cite[Theorem~2.3]{DHP03} to tensor superspace.
For \(s=0\), \(\mathsf q^R_d(x|\varnothing;q)=q_d(1,x_1,\ldots,x_m;q)\),
and \eqref{eq:super-q-rook-frobenius} recovers that formula.

For \(\mu\vdash n\), the full-rank part \(|\lambda|=n\) of
\eqref{eq:super-q-rook-frobenius} recovers Mitsuhashi's type \(A\)
super Frobenius formula \cite[Theorem~5.5]{Mit10}.
\end{remark}

For a composition \(\mu\) of \(N\le n\), a partition
\(\lambda\) with \(|\lambda|\le N\), and
\(Q\in\mathrm{RT}_N(\lambda)\), define \(\wt^R_\mu(Q)\) by
\begin{equation}
\label{eq:q-rook-roichman-weight}
 \wt^R_\mu(Q)=(-q)^{d(\mu)}
                  \left.\wt_\mu(Q)\right|_{q\mapsto q^{-1}}.
\end{equation}
The product in \eqref{eq:def-super-roichman-weight} contains
\(d(\mu)=N-\ell(\mu)\) factors \(\varphi_\mu(j,Q)\).
Each factor in \(\wt^R_\mu(Q)\) is therefore
\((-q)\left.\varphi_\mu(j,Q)\right|_{q\mapsto q^{-1}}\),
which takes the values \(-1,0,q\) when \(\varphi_\mu(j,Q)\)
is \(q,0,-1\), respectively.

\begin{corollary}[Roichman formula for the \(q\)-rook algebra]
\label{cor:q-rook-roichman}
For every \(\lambda\vdash a\) with \(0\le a\le N\),
\(\Upsilon^{(n)}_\lambda(T^R_\mu)\) is given by
\[
 \Upsilon^{(n)}_\lambda(T^R_\mu)
 =\sum_{Q\in\mathrm{RT}_N(\lambda)}\wt^R_\mu(Q).
\]
\end{corollary}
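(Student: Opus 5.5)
The plan is to transport Theorem~\ref{thm:super-mirabolic-roichman} through parameter inversion, exactly as Theorem~\ref{thm:super-q-rook-frobenius} was obtained from Theorem~\ref{thm:super-mirabolic-frobenius}.

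First, apply Theorem~\ref{thm:super-mirabolic-roichman} with the same \(n\), \(\mu\vDash N\le n\) and \(\lambda\vdash a\le N\). This gives the identity
\[
 \chi^{(n)}_\lambda(\widehat T^{(n)}_\mu)=\sum_{Q\in\mathrm{RT}_N(\lambda)}\wt_\mu(Q)
\]
in \(\mathbb F=\mathbb C(q)\). Both sides are rational functions of \(q\): the character values lie in \(\mathbb F\), and each \(\wt_\mu(Q)\) is a polynomial in \(q\). We may therefore apply the field automorphism \(\sigma_{\mathbb F}\colon q\mapsto q^{-1}\) to both sides. The index set \(\mathrm{RT}_N(\lambda)\) depends only on \(N\) and \(\lambda\), and it is unchanged by this substitution.

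Next, multiply the substituted identity by \((-q)^{d(\mu)}\). On the left, Lemma~\ref{lem:ordinary-frobenius-comparison} identifies \((-q)^{d(\mu)}\chi^{(n)}_\lambda(\widehat T^{(n)}_\mu)|_{q\mapsto q^{-1}}\) with \(\Upsilon^{(n)}_\lambda(T^R_\mu)\). That lemma is stated for arbitrary compositions of size at most \(n\), so no partition hypothesis is needed. On the right, the definition \eqref{eq:q-rook-roichman-weight} turns each summand into \(\wt^R_\mu(Q)\). The sum is finite, so it passes through the substitution and the scalar multiplication term by term.

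The only point requiring care is that "\(|_{q\mapsto q^{-1}}\)" means the same operation on both sides. In Lemma~\ref{lem:ordinary-frobenius-comparison} it is applied to the character value viewed as an element of \(\mathbb F\); in \eqref{eq:q-rook-roichman-weight} it is applied to the polynomial \(\wt_\mu(Q)\). Both are \(\sigma_{\mathbb F}\), so they agree. With this checked, the argument is a direct composition of earlier results, and I expect no real obstacle.
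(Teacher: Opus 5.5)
Your proposal is correct and matches the paper's proof: apply \(q\mapsto q^{-1}\) to Theorem~\ref{thm:super-mirabolic-roichman}, multiply by \((-q)^{d(\mu)}\), then identify the left side via \eqref{eq:q-rook-mirabolic-character-comparison} and the right side via \eqref{eq:q-rook-roichman-weight}. Your extra check that both substitutions are the same field automorphism \(\sigma_{\mathbb F}\) is a harmless clarification that the paper leaves implicit.
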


\begin{proof}
Apply \(q\mapsto q^{-1}\) to
Theorem~\ref{thm:super-mirabolic-roichman} and multiply by
\((-q)^{d(\mu)}\).  Equation~\eqref{eq:q-rook-mirabolic-character-comparison}
identifies the left-hand side with \(\Upsilon^{(n)}_\lambda(T^R_\mu)\),
and \eqref{eq:q-rook-roichman-weight} gives the right-hand side.
\end{proof}

\begin{remark}
For partitions \(\mu\), Corollary~\ref{cor:q-rook-roichman}
is the \(q\)-rook Roichman formula of \cite[Theorem~4.3]{DHP03}.
\end{remark}

We next specialize \eqref{eq:super-q-rook-frobenius} at \(x_i=y_j=1\).
Recall that \(c^{m|s}_\lambda=s_\lambda(1^m|1^s)\). For every integer \(r\ge1\), define
\begin{equation}
\label{eq:def-q-rook-regev-factor}
        F^R_r(m,s;q)
        =
        \sum_{(\alpha;\beta)\vDash r}
        \binom{m+1}{\ell(\alpha)}
        \binom{s}{\ell(\beta)}
        (-1)^{|\beta|-\ell(\beta)}
        q^{|\alpha|-\ell(\alpha)}
        (q-1)^{\ell(\alpha)+\ell(\beta)-1}.
\end{equation}
The generating series of \(F^R_r(m,s;q)\) is
\begin{equation}
\label{eq:q-rook-regev-generating-series}
        \sum_{r\ge1}F^R_r(m,s;q)w^r
        =
        \frac{1}{q-1}
        \left[
        \left(\frac{1-w}{1-qw}\right)^{m+1}
        \left(\frac{1+qw}{1+w}\right)^s
        -1
        \right].
\end{equation}

\begin{theorem}
\label{thm:super-q-rook-regev}
Let \(\mu=(\mu_1,\ldots,\mu_\ell)\vdash k\le n\).
The weighted sum of character values factors as
\begin{equation}
\label{eq:super-q-rook-regev}
        \sum_{a=0}^{k}
        \sum_{\substack{\lambda\vdash a\\ \lambda\in H(m,s)}}
        c^{m|s}_\lambda\,
        \Upsilon^{(n)}_\lambda(T^R_\mu)
        =
        \prod_{j=1}^{\ell}
        F^R_{\mu_j}(m,s;q).
\end{equation}
\end{theorem}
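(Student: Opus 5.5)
The plan is to specialize the super \(q\)-rook Frobenius formula, Theorem~\ref{thm:super-q-rook-frobenius}, at \(x_i=y_j=1\), exactly as Theorem~\ref{thm:super-mirabolic-theorem-B} was obtained from Theorem~\ref{thm:super-mirabolic-frobenius}. On the right-hand side of \eqref{eq:super-q-rook-frobenius}, each hook Schur function \(s_\lambda(x|y)\) becomes \(s_\lambda(1^m|1^s)=c^{m|s}_\lambda\). The character values \(\Upsilon^{(n)}_\lambda(T^R_\mu)\) do not involve the weight variables, so they are unchanged. This produces the left-hand side of \eqref{eq:super-q-rook-regev}. The summation range \(0\le a\le k\), \(\lambda_{m+1}\le s\), is inherited directly.

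For the left-hand side, \(\mathsf q^R_\mu\) is multiplicative over the parts of \(\mu\), so it suffices to show \(\mathsf q^R_r(1^m|1^s;q)=F^R_r(m,s;q)\) for each \(r\ge1\). I will use the explicit expansion \eqref{eq:q-rook-super-hl-explicit} from Lemma~\ref{lem:q-rook-super-hl-explicit}. At the all-ones specialization, the alphabet \(z=(1,x_1,\ldots,x_m)\) becomes \(m+1\) ones, so
\[
M_\alpha(1^{m+1})=\binom{m+1}{\ell(\alpha)},\qquad M_\beta(1^s)=\binom{s}{\ell(\beta)},
\]
since \(M_\alpha\) sums over strictly increasing index choices of length \(\ell(\alpha)\). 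Substituting these into \eqref{eq:q-rook-super-hl-explicit} gives \eqref{eq:def-q-rook-regev-factor} term by term.

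As a consistency check, specializing \eqref{eq:q-rook-super-hl-generating-series} at \(x_i=y_j=1\), subtracting \(1\), and dividing by \(q-1\) recovers \eqref{eq:q-rook-regev-generating-series}. The specialization is a ring homomorphism from polynomials in \(x,y\) over \(\mathbb F\), so the identity survives term by term.

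There is no real obstacle. The only care needed is to note that evaluating at \(x_i=y_j=1\) is a ring homomorphism applied to an identity of polynomials. Alternatively, one could apply \(q\mapsto q^{-1}\) to Theorem~\ref{thm:super-mirabolic-theorem-B}, multiply by \((-q)^{d(\mu)}\), and use \eqref{eq:q-rook-mirabolic-character-comparison} together with \(F^R_r(m,s;q)=(-q)^{r-1}F_r(m,s;q^{-1})\); this matches the direct specialization because \(c^{m|s}_\lambda\) is independent of \(q\).
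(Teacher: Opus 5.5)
Your proposal is correct and follows essentially the paper's proof: you specialize the super \(q\)-rook Frobenius formula at \(x_i=y_j=1\), use \(M_\alpha(1^{m+1})=\binom{m+1}{\ell(\alpha)}\) and \(M_\beta(1^s)=\binom{s}{\ell(\beta)}\) to identify \(\mathsf q^R_r(1^m|1^s;q)\) with \(F^R_r(m,s;q)\), and then specialize the generating series in the same way. Your alternative via parameter inversion of Theorem~\ref{thm:super-mirabolic-theorem-B} is also valid, since \(c^{m|s}_\lambda\) does not depend on \(q\) and \(F^R_r(m,s;q)=(-q)^{r-1}F_r(m,s;q^{-1})\).
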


\begin{proof}
Specialize \eqref{eq:super-q-rook-frobenius} at \(x_i=y_j=1\).  Since
\[
        M_\alpha(1^{m+1})=\binom{m+1}{\ell(\alpha)},
        \qquad
        M_\beta(1^s)=\binom{s}{\ell(\beta)},
\]
\(\mathsf q^R_r(1^m|1^s;q)=F^R_r(m,s;q)\), proving
\eqref{eq:super-q-rook-regev}.  The same specialization of
\eqref{eq:q-rook-super-hl-generating-series} gives
\eqref{eq:q-rook-regev-generating-series}.
\end{proof}

For the specialization \(m=s=1\), recall the polynomial \(A_d(q)\) from
Corollary~\ref{cor:super-mirabolic-theorem-B-hook}, and define, for
\(d\ge1\),
\[
        A^R_d(q)
        =
        q^{d-1}A_d(q^{-1})
        =
        (2d+1)q^{d-1}
        +
        4\sum_{j=1}^{d-1}(d-j)(-1)^j q^{d-1-j}.
\]
In sums over hook
partitions we include the empty partition at rank zero.

\begin{corollary}
\label{cor:super-q-rook-regev-hook}
For \(\mu=(\mu_1,\ldots,\mu_\ell)\vdash k\le n\),
the sum of character values indexed by hook partitions is
\[
        \sum_{a=0}^{k}
        \sum_{\substack{\lambda\vdash a\\ \lambda\text{ is a hook}}}
        \Upsilon^{(n)}_\lambda(T^R_\mu)
        =
        \frac{1}{2}
        \left(
        \prod_{i=1}^{\ell(\mu)} A^R_{\mu_i}(q)
        +
        q^{k-\ell(\mu)}
        \right).
\]
\end{corollary}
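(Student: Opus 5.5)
The plan is to derive this from Corollary~\ref{cor:super-mirabolic-theorem-B-hook} by parameter inversion, using the character comparison \eqref{eq:q-rook-mirabolic-character-comparison}. A direct alternative is also available: specialize Theorem~\ref{thm:super-q-rook-regev} at \(m=s=1\), reusing the multiplicities \(c^{1|1}_\varnothing=1\) and \(c^{1|1}_\lambda=2\) for nonempty hooks, which were computed in the proof of Corollary~\ref{cor:super-mirabolic-theorem-B-hook}. I will carry out the first route and use the second only as a consistency check.

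First, apply \(q\mapsto q^{-1}\) to the identity of Corollary~\ref{cor:super-mirabolic-theorem-B-hook} and multiply by \((-q)^{d(\mu)}\), where \(d(\mu)=k-\ell(\mu)\). Lemma~\ref{lem:ordinary-frobenius-comparison} turns each term on the left into \(\Upsilon^{(n)}_\lambda(T^R_\mu)\). The index set is unchanged, since it does not depend on \(q\).

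On the right, the result is
\[
 (-q)^{k-\ell(\mu)}\frac{(-1)^{k-\ell(\mu)}}{2}\Bigl(\prod_i A_{\mu_i}(q^{-1})+1\Bigr)
 =\frac{1}{2}\Bigl(q^{k-\ell(\mu)}\prod_i A_{\mu_i}(q^{-1})+q^{k-\ell(\mu)}\Bigr).
\]
Since \(k-\ell(\mu)=\sum_i(\mu_i-1)\), distribute the power of \(q\) over the factors:
\[
 q^{k-\ell(\mu)}\prod_iA_{\mu_i}(q^{-1})=\prod_i q^{\mu_i-1}A_{\mu_i}(q^{-1})=\prod_iA^R_{\mu_i}(q).
\]
This gives the stated formula.

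It remains to check the closed form of \(A^R_d\). This is routine: multiply \(A_d(q^{-1})=2d+1+4\sum_j(d-j)(-1)^jq^{-j}\) by \(q^{d-1}\). The only point needing care is that the empty partition contributes exactly \(q^{k-\ell(\mu)}\). This is consistent because the rank-zero rook module has every \(\mathsf T_i\) acting by \(q\), so its character value on \(T^R_\mu\) is \(q^{d(\mu)}\). I do not expect a genuine obstacle; the main work is careful sign bookkeeping in the factor \((-q)^{d(\mu)}(-1)^{d(\mu)}=q^{d(\mu)}\).
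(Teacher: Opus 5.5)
Your argument is correct, but it takes a different route from the paper. Applying \(\sigma_{\mathbb F}\) to the identity of Corollary~\ref{cor:super-mirabolic-theorem-B-hook} and multiplying by \((-q)^{d(\mu)}\), Lemma~\ref{lem:ordinary-frobenius-comparison} turns every term on the left into \(\Upsilon^{(n)}_\lambda(T^R_\mu)\). This includes the empty partition, since that lemma covers \(a=0\). The sign bookkeeping \((-q)^{d(\mu)}(-1)^{d(\mu)}=q^{d(\mu)}\), together with \(d(\mu)=\sum_i(\mu_i-1)\) and the paper's definition \(A^R_d(q)=q^{d-1}A_d(q^{-1})\), gives exactly the stated right-hand side.

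The paper takes the alternative you only mention. It specializes Theorem~\ref{thm:super-q-rook-regev} at \(m=s=1\) and reuses \(c^{1|1}_\varnothing=1\) and \(c^{1|1}_\lambda=2\). It obtains \(F^R_d(1,1;q)=(-q)^{d-1}F_d(1,1;q^{-1})=A^R_d(q)\), computes \(\Upsilon^{(n)}_\varnothing(T^R_\mu)=q^{k-\ell(\mu)}\) directly from the trivial rook module, and solves \(2\Sigma-q^{k-\ell(\mu)}=\prod_iA^R_{\mu_i}(q)\) for the hook sum \(\Sigma\).

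So the paper inverts the parameter on the one-cycle factors and reassembles the hook sum on the \(q\)-rook side. You instead invert the finished mirabolic identity in one step. In your route the multiplicities and the empty-partition term come along automatically, so your remark about \(q^{d(\mu)}\) is only a consistency check. It also shows that the two hook identities are exact parameter inverses of each other. The paper's version in exchange presents the formula as a specialization of the \(q\)-rook tensor character of Theorem~\ref{thm:super-q-rook-regev}. That is the form used right afterwards to realize the sum on the odd subspace \(W_{\bar1}\) in Proposition~\ref{cor:jwl-hook-representation}.
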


\begin{proof}
As in the proof of Corollary~\ref{cor:super-mirabolic-theorem-B-hook},
\(c^{1|1}_\varnothing=1\) and \(c^{1|1}_\lambda=2\) for every nonempty
hook partition \(\lambda\).  Parameter inversion gives
\[
        F^R_d(1,1;q)
        =
        (-q)^{d-1}F_d(1,1;q^{-1})
        =
        q^{d-1}A_d(q^{-1})
        =
        A^R_d(q).
\]
On \(M^{R,n}_\varnothing\), each \(\mathsf P_j\) acts as \(1\) and each
\(\mathsf T_i\) as \(q\), hence
\[
        \Upsilon^{(n)}_{\varnothing}(T^R_\mu)
        =
        q^{k-\ell(\mu)}.
\]
Let \(\Sigma^R\) denote the sum on the left-hand side of
Corollary~\ref{cor:super-q-rook-regev-hook}.
Theorem~\ref{thm:super-q-rook-regev} now reads
\(2\Sigma^R-q^{k-\ell(\mu)}=\prod_iA^R_{\mu_i}(q)\).  Solving proves the formula.
\end{proof}

\begin{remark}
Corollary~\ref{cor:super-q-rook-regev-hook} recovers the formula in
\cite[Theorem~3.12, Equation~(3.35)]{JWL26}, which is derived there
using the vertex operator realization of Schur functions.
Restricting the tensor action to full rank also recovers Zhao's formula
for the sum of hook characters of the type \(A\) Hecke algebra
\cite[Theorem~4.6]{Z19}.
\end{remark}

\subsection{Representations for the hook and two-row character sums}
\label{subsec:jwl-representation-realizations}

By constructing \(R_n(q)\)-modules over \(\mathbb F\), we give a
representation-theoretic interpretation of the hook and two-row character sums in
\cite[Equations~(3.35) and~(3.36)]{JWL26}.
First take \(m=s=1\), and write
\(W_{\mathbb F,n}=W_{\bar0}\oplus W_{\bar1}\) for its even
and odd subspaces.  The \(R_n(q)\)-action is even, so both subspaces
are submodules.

\begin{proposition}
\label{cor:jwl-hook-representation}
The spaces \(W_{\bar1}\) and \(W_{\bar0}\) have the following decompositions
as ordinary \(R_n(q)\)-modules over \(\mathbb F\):
\begin{align*}
 W_{\bar1}
 &\cong\bigoplus_{a=1}^{n}\bigoplus_{r=0}^{a-1}
             M^{R,n}_{(a-r,1^r)},\\
 W_{\bar0}
 &\cong M^{R,n}_{\varnothing}
       \oplus\bigoplus_{a=1}^{n}\bigoplus_{r=0}^{a-1}
             M^{R,n}_{(a-r,1^r)}.
\end{align*}
\end{proposition}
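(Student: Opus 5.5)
We use the super \(q\)-rook Schur--Weyl duality, Theorem~\ref{thm:super-q-rook-schur-weyl}, with \(m=s=1\), and refine the bimodule decomposition \eqref{eq:q-rook-bimodule-decomposition} by parity. The \((1,1)\)-hook partitions are \(\varnothing\) and the hooks \((a-r,1^r)\) with \(a\ge1\), \(0\le r<a\). The parity grading on \(\widetilde V^{\otimes n}\) is preserved by the quantum action, since \(U_v(\mathfrak{gl}(1|1))\) acts by parity-homogeneous operators. It is also preserved by \(\Phi^R_{1|1,n}\), since \(\mathcal T^R_i\) and \(\mathcal P_j\) permute words up to scalars and keep the multiset of letters. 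Hence the isotypic component \(L^R_{1|1}(\lambda)\otimes M^{R,n}_{\lambda,\mathbb K}\) splits as \(L^R_{1|1}(\lambda)_{\bar0}\otimes M\oplus L^R_{1|1}(\lambda)_{\bar1}\otimes M\). More precisely, the multiplicity space \(\operatorname{Hom}_U(L,\widetilde V^{\otimes n})\) is graded, and we fix the grading of \(L\) so that the highest-weight vector has parity \(\sum_{i>m}\lambda_i\), as in the paper.

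The key computation is \(\dim L^R_{1|1}(\lambda)_{\bar0}\) and \(\dim L^R_{1|1}(\lambda)_{\bar1}\). Parity of a weight vector of weight \((\alpha_1;\beta_1)\) in tensor space is \(\beta_1\bmod2\). By the factorization \(s_{(a-r,1^r)}(x|y)=x^{a-r-1}y^r(x+y)\) from \cite[Theorem~6.20]{BR87}, used in Corollary~\ref{cor:super-mirabolic-theorem-B-hook}, the module \(L_{1|1}((a-r,1^r))\) has exactly two one-dimensional weight spaces, with odd exponents \(r\) and \(r+1\). So it contributes one even and one odd vector. The empty partition contributes the single even vector \(v_\star^{\otimes n}\). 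Therefore, over \(\mathbb K\), both \(W_{\bar0}\) and \(W_{\bar1}\) contain each hook module \(M^{R,n}_{(a-r,1^r),\mathbb K}\) exactly once, and \(W_{\bar0}\) additionally contains \(M^{R,n}_{\varnothing,\mathbb K}\). To make this rigorous, we take the weight space of \(\widetilde V^{\otimes n}\) as a \(U\)-module. Under the decomposition, each weight space is the corresponding weight space of \(L\) tensored with \(M\), exactly as in the proof of Theorem~\ref{thm:super-mirabolic-frobenius}. Summing over the weights of a fixed parity then gives the decomposition of \(W_{\bar\epsilon,\mathbb K}\).

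The remaining step, which we expect to be the main obstacle, is descending from \(\mathbb K\) to \(\mathbb F\). Proposition~\ref{prop:q-rook-split-decomposition} shows that \(R_n(q)\) is split semisimple over \(\mathbb F\) with absolutely irreducible modules \(M^{R,n}_\lambda\). Hence \(W_{\bar\epsilon}\) is a direct sum of these modules over \(\mathbb F\), with multiplicity \(\dim_{\mathbb F}\operatorname{Hom}_{R_n(q)}(M^{R,n}_\lambda,W_{\bar\epsilon})\). This dimension is unchanged by scalar extension to \(\mathbb K\), so the multiplicities computed over \(\mathbb K\) are the multiplicities over \(\mathbb F\). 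An alternative check is to compare characters: traces of \(T^R_\mu\) on \(W_{\bar\epsilon}\) can be computed by setting \(x=1\) and \(y=\pm1\)-graded weights, and the split semisimplicity makes the character determine the module. We prefer the Hom-dimension argument as cleaner.
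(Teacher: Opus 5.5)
Your proposal is correct and follows the paper's proof. You refine the super \(q\)-rook bimodule decomposition at \(m=s=1\) by parity: each nonempty hook module \(L^R_{1|1}((a-r,1^r))\) has one even and one odd one-dimensional weight space, and \(L^R_{1|1}(\varnothing)\) is even. You then descend from \(\mathbb K\) to \(\mathbb F\) by split semisimplicity; your Hom-dimension formulation is a precise version of the paper's remark that multiplicities are preserved under scalar extension.

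One correction: the odd generators \(E_1,F_1\) do not preserve \(W_{\bar0}\) or \(W_{\bar1}\), so it is false that the quantum action preserves parity. The splitting of each isotypic component must instead come from your weight-space argument: the \(K_j\) and the \(R_n(q)\)-action both preserve weight spaces, and each parity part is a sum of weight spaces.
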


\begin{proof}
The nonempty \((1,1)\)-hook partitions are \((a-r,1^r)\), with
\(a\ge1\) and \(0\le r<a\).  Let \(x\) and \(y\) be the
weights of the even and odd letters.  In a semistandard hook tableau,
every box below the first row contains \(y\).  The first row contains
only \(x\), or a possibly empty string of \(x\)'s followed by a
single \(y\).  These two tableaux give
\begin{equation}
\label{eq:one-one-hook-schur-parity}
 s_{(a-r,1^r)}(x|y)=x^{a-r}y^r+x^{a-r-1}y^{r+1}.
\end{equation}
The parity of a weight space is the exponent of \(y\) modulo \(2\).
Thus \(L^R_{1|1}(a-r,1^r)\) has one-dimensional even and odd
subspaces.  The space \(L^R_{1|1}(\varnothing)\) is even.  Taking parity
parts of \eqref{eq:q-rook-bimodule-decomposition} proves the displayed
decompositions after scalar extension to \(\mathbb K\).

By Proposition~\ref{prop:q-rook-split-decomposition}, the algebra
\(R_n(q)\) is split semisimple over \(\mathbb F\), and its simple modules \(M^{R,n}_\lambda\) remain irreducible over
\(\mathbb K\).  Decompose \(W_{\bar0}\) and \(W_{\bar1}\)
over \(\mathbb F\) and extend scalars.  The multiplicities of their
simple factors are unchanged by this extension, so the decompositions
over \(\mathbb K\) determine the same decompositions over
\(\mathbb F\).
\end{proof}

Taking traces on \(W_{\bar1}\) and using
Corollary~\ref{cor:super-q-rook-regev-hook} gives, for \(\mu\vdash n\),
\begin{align}
\label{eq:jwl-hook-representation-character}
 \operatorname{Tr}_{W_{\bar1}}(T^R_\mu)
 =\sum_{a=1}^{n}\sum_{r=0}^{a-1}
       \Upsilon^{(n)}_{(a-r,1^r)}(T^R_\mu)
 =\frac12\left(\prod_{i=1}^{\ell(\mu)}A^R_{\mu_i}(q)
                  -q^{n-\ell(\mu)}\right).
\end{align}
The polynomial denoted by \(A(d;q)\) in
\cite[Equation~(3.35)]{JWL26} is \((-1)^{d-1}A^R_d(q)\).
Thus \(\prod_{i=1}^{\ell(\mu)}A(\mu_i;q)
=(-1)^{n-\ell(\mu)}\prod_{i=1}^{\ell(\mu)}A^R_{\mu_i}(q)\),
which converts \eqref{eq:jwl-hook-representation-character} into the
normalization of \cite[Equation~(3.35)]{JWL26}.  Hence \(W_{\bar1}\)
gives a representation-theoretic interpretation of the sum in
\eqref{eq:jwl-hook-representation-character}.

For the character sum in \cite[Equation~(3.36)]{JWL26}, take \(m=2\) and \(s=0\).
We omit trailing
zero parts, so \((a,0)=(a)\), and \((0,0)\) denotes the empty
partition.

\begin{proposition}
\label{prop:jwl-two-row-representation}
The space \(W_{\mathbb F,n}\) has the following decomposition as an
\(R_n(q)\)-module over \(\mathbb F\):
\begin{equation}
\label{eq:jwl-two-row-module-decomposition}
 W_{\mathbb F,n}
 \cong\bigoplus_{a=0}^{n}\bigoplus_{r=0}^{\lfloor a/2\rfloor}
       \left(M^{R,n}_{(a-r,r)}\right)^{\oplus(a-2r+1)}.
\end{equation}
\end{proposition}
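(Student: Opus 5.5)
The plan is to specialize the super \(q\)-rook bimodule decomposition of Theorem~\ref{thm:super-q-rook-schur-weyl} to \(m=2\), \(s=0\), and read off multiplicities as dimensions of the quantum factors. For \(s=0\), the hook condition \(\lambda\in H(2,0)\) means \(\lambda_3=0\). So the partitions that occur are exactly the two-row partitions \((a-r,r)\) with \(0\le a\le n\) and \(0\le r\le\lfloor a/2\rfloor\), including the empty partition at \(a=0\). Equation~\eqref{eq:q-rook-bimodule-decomposition} then gives, over \(\mathbb K\),
\[
 \widetilde V^{\otimes n}\cong\bigoplus_{a=0}^{n}\bigoplus_{r=0}^{\lfloor a/2\rfloor}
 L^R_{2|0}(a-r,r)\otimes_{\mathbb K}M^{R,n}_{(a-r,r),\mathbb K}.
\]
As an \(R_{n,\mathbb K}(q)\)-module this is \(\bigoplus (M^{R,n}_{(a-r,r),\mathbb K})^{\oplus d}\), where \(d=\dim_{\mathbb K}L^R_{2|0}(a-r,r)\).

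Next I compute \(d\). The character of \(L^R_{2|0}(\lambda)\) is \(s_\lambda(x_1,x_2)\), so \(d=s_{(a-r,r)}(1,1)\). This is the number of semistandard tableaux of shape \((a-r,r)\) in the letters \(1,2\). The second row must be all \(2\)'s under \(1\)'s, and the first row has \(r\) forced \(1\)'s followed by a free split of the remaining \(a-2r\) boxes. That gives \(a-2r+1\) tableaux, the familiar \(\mathfrak{gl}_2\) dimension count. The empty partition contributes \(1=a-2r+1\) at \(a=r=0\), consistent with the formula.

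Finally I descend from \(\mathbb K\) to \(\mathbb F\), exactly as in the proof of Proposition~\ref{cor:jwl-hook-representation}. By Proposition~\ref{prop:q-rook-split-decomposition}, \(R_n(q)\) is split semisimple over \(\mathbb F\), with absolutely irreducible simples \(M^{R,n}_\lambda\). So \(W_{\mathbb F,n}\), which is an \(R_n(q)\)-module by Proposition~\ref{prop:super-q-rook-representation}, decomposes over \(\mathbb F\) as \(\bigoplus_\lambda (M^{R,n}_\lambda)^{\oplus c_\lambda}\). Scalar extension carries this to a decomposition of \(\widetilde V^{\otimes n}\) into the pairwise nonisomorphic simples \(M^{R,n}_{\lambda,\mathbb K}\), and those multiplicities are unique. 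Hence \(c_\lambda\) equals the \(\mathbb K\)-multiplicity, which gives \eqref{eq:jwl-two-row-module-decomposition}.

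The only step needing any care is this descent, specifically the identification of \(\mathbb K\otimes_{\mathbb F}W_{\mathbb F,n}\) with \(\widetilde V^{\otimes n}\) as \(R_{n,\mathbb K}(q)\)-modules. That identification holds by the definition of \(\Phi^R_{m|s,n,\mathbb K}\). No Hecke-parameter issue arises, because the quantum side only contributes dimensions.
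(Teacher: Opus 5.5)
Your proposal is correct and follows the paper's proof essentially step for step. You specialize \eqref{eq:q-rook-bimodule-decomposition} at \((m,s)=(2,0)\), obtain \(\dim_{\mathbb K}L^R_{2|0}(a-r,r)=s_{(a-r,r)}(1,1)=a-2r+1\) by counting two-letter semistandard tableaux, and descend from \(\mathbb K\) to \(\mathbb F\) using split semisimplicity and preservation of multiplicities, as in the hook case.
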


\begin{proof}
The \((2,0)\)-hook partitions have at most two rows and are
\(\lambda=(a-r,r)\), with \(0\le r\le\lfloor a/2\rfloor\).
In a semistandard tableau on the two even letters, each of the first
\(r\) columns is forced to contain \(1\) above \(2\).  The
remaining \(a-2r\) boxes lie in the first row and consist of a string
of \(1\)'s followed by a string of \(2\)'s.  There are
\(a-2r+1\) possibilities.  Therefore
\[
 \dim_{\mathbb K}L^R_{2|0}(a-r,r)
 =s_{(a-r,r)}(1,1)=a-2r+1.
\]
Equation~\eqref{eq:q-rook-bimodule-decomposition} gives
\eqref{eq:jwl-two-row-module-decomposition} after extension to \(\mathbb K\).  Split semisimplicity
over \(\mathbb F\) and preservation of simple multiplicities under
scalar extension descend it to \(\mathbb F\), as in the proof of
Proposition~\ref{cor:jwl-hook-representation}.
\end{proof}

For \(\mu\vdash n\), the trace of \(T^R_\mu\) on \(W_{\mathbb F,n}\) is
\begin{align}
\label{eq:jwl-two-row-representation-character}
 \operatorname{Tr}_{W_{\mathbb F,n}}(T^R_\mu)
 &=\sum_{a=0}^{n}\sum_{r=0}^{\lfloor a/2\rfloor}
       (a-2r+1)\,\Upsilon^{(n)}_{(a-r,r)}(T^R_\mu)\\*
 &=q^{n-\ell(\mu)}\prod_{i=1}^{\ell(\mu)}
       \left(3\mu_i-3q^{-1}(\mu_i-1)
       +\frac{(\mu_i-1)(\mu_i-2)(1-q^{-1})^2}{2}\right).\notag
\end{align}
Indeed, specialize \eqref{eq:def-q-rook-regev-factor} at
\((m,s)=(2,0)\).  For \(d\ge1\), only even compositions with
one, two or three parts contribute, and the number with \(j\) parts
is \(\binom{d-1}{j-1}\).  Hence
\[
 F^R_d(2,0;q)
 =3q^{d-1}+3(d-1)q^{d-2}(q-1)
       +\binom{d-1}{2}q^{d-3}(q-1)^2.
\]
Terms with more than \(d\) parts are zero, which includes the
cases \(d=1,2\).  After factoring out \(q^{d-1}\), the first
two terms become \(3d-3q^{-1}(d-1)\).
Theorem~\ref{thm:super-q-rook-regev} now gives
\eqref{eq:jwl-two-row-representation-character}, the character identity
in \cite[Equation~(3.36)]{JWL26}.
Thus \(W_{\mathbb F,n}\) gives a representation-theoretic interpretation
of the two-row character sum in \cite[Equation~(3.36)]{JWL26}.

\subsection{Character tables and generating functions}
\label{subsec:ordinary-mirabolic-consequences}

We use \eqref{eq:q-rook-mirabolic-character-comparison}, with \(q\)
replaced by \(q^{-1}\), to transport ordinary \(q\)-rook results of
\cite{DHP03} at parameter \(q^{-1}\) to ordinary mirabolic results
at parameter \(q\).  For \(\lambda,\mu\in
\mathscr P_{\le n}\), set
\[
        \Xi_n(q)
        =
        \left(
        \chi^{(n)}_{\lambda}
        \bigl(\widehat T^{(n)}_\mu;q\bigr)
        \right)_{\lambda,\mu},
        \qquad
        \Xi_n^R(q)
        =
        \left(
        \Upsilon^{(n)}_\lambda
        (T^R_\mu;q)
        \right)_{\lambda,\mu}.
\]
Rows and columns are ordered by rank,
\[
        \mathscr P_0,\mathscr P_1,\ldots,\mathscr P_n,
\]
where rank means the size of the indexing partition.
Replacing \(q\) by \(q^{-1}\) in
\eqref{eq:q-rook-mirabolic-character-comparison} gives
\begin{equation}
\label{eq:mirabolic-q-rook-character-transport}
        \chi^{(n)}_{\lambda}
        \bigl(\widehat T^{(n)}_\mu;q\bigr)
        =
        (-q)^{d(\mu)}
        \Upsilon^{(n)}_\lambda
        (T^R_\mu;q^{-1}),
        \qquad
        d(\mu)=|\mu|-\ell(\mu).
\end{equation}
Thus, with our convention that rows are labelled by irreducibles and columns by
standard elements,
\begin{equation}
\label{eq:mirabolic-q-rook-character-table-scaling}
        \Xi_n(q)
        =
        \Xi_n^R(q^{-1})
        \cdot
        \operatorname{diag}_{\mu\in\mathscr P_{\le n}}
        \left(
        (-q)^{|\mu|-\ell(\mu)}
        \right).
\end{equation}
Thus the \(\mu\)-column of \(\Xi_n^R(q^{-1})\) is multiplied by \((-q)^{|\mu|-\ell(\mu)}\).

We recall the convention for Hecke algebras of type \(A\) needed for the diagonal blocks of \(\Xi_n(q)\).  Let
\[
        \Xi^A_k(q)
        =
        \left(
        \chi^{A,\lambda}(T_{\gamma_\mu};q)
        \right)_{\mu,\lambda\in\mathscr P_k}
\]
be the usual character table of the Hecke algebra of type \(A\), with rows indexed by cycle type
\(\mu\) and columns by irreducible labels \(\lambda\), and set
\[
        \Xi^{A,\vee}_k(q)
        =
        \left(
        \chi^{A,\lambda'}(T_{\gamma_\mu};q)
        \right)_{\mu,\lambda\in\mathscr P_k}.
\]

Write \(\Xi_n[k,k]\) for the block with row and column labels in
\(\mathscr P_k\), and let a superscript \(\mathsf t\) denote matrix
transpose.

\begin{corollary}
\label{thm:mirabolic-block-upper-triangular}
\label{thm:diagonal-block-type-A}
The mirabolic character table \(\Xi_n(q)\) is block upper triangular with
respect to the rank ordering.  Its rank \(k\) diagonal block is independent of
\(n\) and is
\[
        \Xi_n[k,k]
        =
        \left(\Xi^{A,\vee}_k(q)\right)^{\mathsf t}.
\]
\end{corollary}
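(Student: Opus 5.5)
The plan is to read off both assertions directly from the modules \(N^{(n)}_{(\lambda,a)}\) and the projection formula \eqref{eq:q-rook-projection-action}, without passing through \(\Xi_n^R\). Block upper triangularity means that \(\chi^{(n)}_\lambda(\widehat T^{(n)}_\mu)=0\) whenever the row rank \(a=|\lambda|\) exceeds the column rank \(k=|\mu|\). The diagonal statement requires computing these values when \(a=k\).

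\emph{Vanishing below the diagonal.} Write \(\widehat T^{(n)}_\mu=P_{n-k}T_{\gamma_{\mu\uparrow n}}\). By \eqref{eq:q-rook-projection-action}, pulled back as in the proof of Corollary~\ref{cor:mirabolic-specht-label}, \(P_{n-k}\) retains exactly the basis vectors \(w_L\) whose tableaux have no entries in \([n-k]\). A tableau with \(a>k\) entries drawn from \([n]\) must meet \([n-k]\), so \(P_{n-k}\) acts by zero on \(N^{(n)}_{(\lambda,a)}\). Hence \(\widehat T^{(n)}_\mu\) acts by zero and its trace vanishes. Alternatively, one can quote the truncation \(a\le k\) established in the proof of Theorem~\ref{thm:super-mirabolic-frobenius}, taking \(m\) large with \(s=0\) as in Remark~\ref{rem:stable-hook-comparisons}. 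The direct argument is cleaner.

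\emph{Diagonal block.} Take \(a=k\ge1\); the case \(k=0\) is the scalar \(1\) on both sides, since \(\widehat T^{(n)}_\varnothing=P_n\) acts by \(1\) on the rank-zero module. The nontrivial cycles of \(\gamma_{\mu\uparrow n}\) involve only \(T_{n-k+i}\) with \(1\le i<k\). These generators commute with \(P_{n-k}\), as noted before Corollary~\ref{cor:mirabolic-specht-label}. Thus \(X=T_{\gamma_{\mu\uparrow n}}\) preserves both the image and the kernel of the idempotent \(P_{n-k}\). It follows that
\[
\operatorname{Tr}_N(P_{n-k}X)=\operatorname{Tr}_{P_{n-k}N}(X).
\]
On \(P_{n-k}N^{(n)}_{(\lambda,k)}\), the element \(X\) is the action of \(T_{\gamma_\mu}\in H_k(q)\) through \(t_i\mapsto T_{n-k+i}\). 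Corollary~\ref{cor:mirabolic-specht-label} identifies this module with \(S^{\lambda'}_{k,\mathrm{std}}\), so
\[
\chi^{(n)}_\lambda(\widehat T^{(n)}_\mu)=\chi^{A,\lambda'}(T_{\gamma_\mu};q).
\]
The right side does not involve \(n\).

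\emph{Indexing.} In \(\Xi_n[k,k]\), the \((\lambda,\mu)\)-entry is \(\chi^{A,\lambda'}(T_{\gamma_\mu})\). This is the \((\mu,\lambda)\)-entry of \(\Xi^{A,\vee}_k(q)\), which gives the transpose. The only delicate point is the trace identity. It needs \(X\) to commute with \(P_{n-k}\), so that \(X\) is block diagonal for the decomposition \(N=P_{n-k}N\oplus(1-P_{n-k})N\). This commutation is exactly the mixed relation \(P_iT_j=T_jP_i\) for \(i<j\).
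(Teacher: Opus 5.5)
Your proof is correct, but it takes a different route from the paper's.

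The paper does no computation inside the mirabolic modules. It cites \cite[Corollary~2.5]{DHP03} for two facts about \(\Xi_n^R(q)\): it is block upper triangular, and its diagonal blocks are \((\Xi^A_k(q))^{\mathsf t}\). It then notes that the column scaling \eqref{eq:mirabolic-q-rook-character-table-scaling} preserves the block structure. Finally, Corollary~\ref{cor:type-a-parameter-inversion} turns the scaled entry \((-q)^{|\mu|-\ell(\mu)}\chi^{A,\lambda}(T_{\gamma_\mu};q^{-1})\) into \(\chi^{A,\lambda'}(T_{\gamma_\mu};q)\).

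You stay inside \(H_n^{\mathrm{mir}}(q)\) throughout.
\begin{itemize}
\item Your vanishing argument below the diagonal is the same projection argument the paper uses for the truncation \(a\le k\) in the proof of Theorem~\ref{thm:super-mirabolic-frobenius}.
\item Your diagonal block comes from the identity \(\operatorname{Tr}_N(P_{n-k}X)=\operatorname{Tr}_{P_{n-k}N}(X)\), which is valid because every \(T_{n-k+i}\) commutes with \(P_{n-k}\). You combine it with Corollary~\ref{cor:mirabolic-specht-label}.
\end{itemize}

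What your version buys is self-containedness. It needs no appeal to \cite{DHP03} and no matching of that paper's normalization. The transposed label \(\lambda'\) enters only once, through Lemma~\ref{lem:hecke-specht-labels} inside Corollary~\ref{cor:mirabolic-specht-label}, rather than through a separate parameter-inversion step. It also makes visible that the diagonal entries do not depend on \(n\).

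What the paper's version buys is that it exhibits \(\Xi_n(q)\) as a column rescaling of \(\Xi_n^R(q^{-1})\). The paper reuses that relationship in the following remark to transport the full recursion of \cite[Corollary~2.5]{DHP03}, including the off-diagonal block \(C_n\).
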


\begin{proof}
Dieng--Halverson--Poladian prove the corresponding block upper triangularity
for \(\Xi_n^R(q)\), and its rank \(k\) diagonal block is
\[
        \left(\Xi^A_k(q)\right)^{\mathsf t}
\]
in the normalization of
\cite[Corollary~2.5 and~(0.3)]{DHP03}.  The column scaling
\eqref{eq:mirabolic-q-rook-character-table-scaling} preserves block upper
triangularity.  For \(\lambda,\mu\in\mathscr P_k\), the \(\lambda,\mu\)-entry of \(\Xi_n[k,k]\) is
\[
        (-q)^{|\mu|-\ell(\mu)}
        \chi^{A,\lambda}(T_{\gamma_\mu};q^{-1}).
\]
Corollary~\ref{cor:type-a-parameter-inversion} identifies this entry with
\(\chi^{A,\lambda'}(T_{\gamma_\mu};q)\), proving the formula for
\(\Xi_n[k,k]\).
\end{proof}

Thus, for \(\lambda,\mu\in\mathscr P_k\), the diagonal-block identity is
\[
        \chi^{(n)}_{\lambda}
        \bigl(\widehat T^{(n)}_\mu;q\bigr)
        =
        \chi^{A,\lambda'}(T_{\gamma_\mu};q).
\]

\begin{remark}
Set \(\Xi_0(q)=(1)\).
Let \(C_n\) be the submatrix of \(\Xi_n(q)\) with rows of rank less than \(n\)
and columns of rank \(n\).  The recursion in \cite[Corollary~2.5]{DHP03} and
\eqref{eq:mirabolic-q-rook-character-table-scaling} give
\[
        \Xi_n(q)
        =
        \begin{pmatrix}
        \Xi_{n-1}(q) & C_n\\
        0 & \left(\Xi^{A,\vee}_n(q)\right)^{\mathsf t}
        \end{pmatrix}.
\]
For \(|\lambda|<n\) and \(|\mu|=n\), its entries are
\((C_n)_{\lambda,\mu}=(-q)^{|\mu|-\ell(\mu)}
\Upsilon^{(n)}_\lambda(T^R_\mu;q^{-1})\), by
\eqref{eq:mirabolic-q-rook-character-table-scaling}; they may also be
computed from Theorem~\ref{thm:super-mirabolic-roichman}.
\end{remark}

Fix a partition \(\lambda\) and an ordinary alphabet
\(Z=(z_1,z_2,\ldots)\).
Let \(A_1\) be the set of finite-support sequences of nonnegative integers and
write
\[
        |\alpha|=\sum_i\alpha_i,\qquad
        \ell(\alpha)=\#\{i\mid \alpha_i\ne0\},\qquad
        Z^\alpha=\prod_i z_i^{\alpha_i}.
\]
For a sequence \(\alpha\in A_1\), omit its zero entries when forming
\(\widehat T^{(n)}_\alpha\).  The values
\(\chi^{(n)}_\lambda(\widehat T^{(n)}_\alpha;q)\) are independent of the
ambient rank \(n\ge\max\{1,|\lambda|,|\alpha|\}\), by
\cite[Corollary~2.5]{DHP03} and
\eqref{eq:mirabolic-q-rook-character-transport}.  Put
\(\chi_{\lambda,\alpha}(q)=
\chi^{(n)}_{\lambda}(\widehat T^{(n)}_\alpha;q)\), with value zero when
\(|\lambda|>|\alpha|\), and define
\begin{equation}
\label{eq:def-mir-character-generating-function}
        G_\lambda(Z;q)
        =
        \sum_{\alpha\in A_1}
        (q-1)^{\ell(\alpha)}
        \chi_{\lambda,\alpha}(q)
        Z^\alpha.
\end{equation}
Similarly, in each coefficient choose
\(n\ge\max\{1,|\lambda|,|\alpha|\}\), and set
\[
G^R_\lambda(Z;q)
        =
        \sum_{\alpha\in A_1}
        (q-1)^{\ell(\alpha)}
        \Upsilon^{(n)}_\lambda(T^R_\alpha;q)
        Z^\alpha.
\]
The formula for \(G^R_\lambda(Z;q)\) proved in
\cite[Theorem~6.12]{JL26} is
\[
        G^R_\lambda(Z;q)
        =
        \sigma_Z[q-1]\,
        s_\lambda[(q-1)Z].
\]

\begin{corollary}
\label{thm:mirabolic-character-generating-function}
For every partition \(\lambda\), the series \(G_\lambda(Z;q)\) satisfies
\begin{equation}
\label{eq:mirabolic-character-generating-function}
        G_\lambda(Z;q)
        =
        \sigma_{-Z}[1-q]\,
        s_{\lambda'}[(q-1)Z].
\end{equation}
\end{corollary}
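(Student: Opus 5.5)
The plan is to obtain \eqref{eq:mirabolic-character-generating-function} from the \(q\)-rook formula \(G^R_\lambda(Z;q)=\sigma_Z[q-1]\,s_\lambda[(q-1)Z]\) of \cite[Theorem~6.12]{JL26}. We apply the character transport \eqref{eq:mirabolic-q-rook-character-transport} coefficientwise and then invert the parameter. For \(\alpha\in A_1\), the composition obtained by omitting the zeros satisfies \(d(\alpha)=|\alpha|-\ell(\alpha)\). Choosing \(n\) large, \eqref{eq:mirabolic-q-rook-character-transport} gives
\[
 \chi_{\lambda,\alpha}(q)=(-q)^{|\alpha|-\ell(\alpha)}\Upsilon^{(n)}_\lambda(T^R_\alpha;q^{-1}).
\]
We also need \((q-1)^{\ell(\alpha)}=(-q)^{\ell(\alpha)}(q^{-1}-1)^{\ell(\alpha)}\). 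Combining these, the coefficient of \(Z^\alpha\) in \(G_\lambda(Z;q)\) is \((-q)^{|\alpha|}(q^{-1}-1)^{\ell(\alpha)}\Upsilon^{(n)}_\lambda(T^R_\alpha;q^{-1})\). Since \(|\alpha|\) is the total degree of \(Z^\alpha\), this says
\[
 G_\lambda(Z;q)=G^R_\lambda(-qZ;q^{-1}),
\]
where \(-qZ\) denotes the ordinary substitution \(z_i\mapsto -qz_i\). The identity is checked coefficientwise, and both sides are computed at a common \(n\). The vanishing convention for \(|\lambda|>|\alpha|\) is consistent on both sides, because the rook characters also vanish there by the truncation in Theorem~\ref{thm:super-q-rook-frobenius}.

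Next we evaluate the right side using the JL26 formula at parameter \(q^{-1}\) and ordinary variables \(-qz_i\).

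For the first factor, \(\sigma_{-qz}[q^{-1}-1]=\frac{1+qz}{1+z}\) for a single letter. This equals \(\sigma_{-z}[1-q]\), as recorded in the preliminaries. Taking the product over all letters gives \(\sigma_{-Z}[1-q]\).

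For the second factor, \(s_\lambda[(q^{-1}-1)(-qZ)]\) is computed from power sums. Its power sums are \(p_r=(q^{-r}-1)(-q)^r p_r(Z)=(-1)^r(1-q^r)p_r(Z)=(-1)^{r-1}(q^r-1)p_r(Z)\). These are exactly the power sums of \((q-1)Z\) twisted by \(\omega\), the involution \(p_r\mapsto(-1)^{r-1}p_r\) on the alphabet \((q-1)Z\). Since \(\omega s_\lambda=s_{\lambda'}\), the second factor equals \(s_{\lambda'}[(q-1)Z]\). Multiplying the two factors yields \eqref{eq:mirabolic-character-generating-function}.

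The main obstacle is bookkeeping rather than ideas. First, the sign in \(-qZ\) must be kept distinct from a virtual negative alphabet: I will treat \(z_i\mapsto -qz_i\) as ordinary variable substitution and recompute power sums directly, as above, rather than using plethystic negation rules. Second, the \(\omega\) step must be justified as the ring automorphism \(p_r\mapsto(-1)^{r-1}p_r\) composed with the specialization. I will check both sign conventions on the one-letter case \(\lambda=(1)\), where the coefficient of \(z\) should equal \((q-1)\chi_{(1),(1)}=(q-1)\) from the diagonal-block identity of Corollary~\ref{thm:mirabolic-block-upper-triangular}.
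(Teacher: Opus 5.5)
Your proposal is correct and follows essentially the same route as the paper's proof. You establish \(G_\lambda(Z;q)=G^R_\lambda(-qZ;q^{-1})\) coefficientwise from \eqref{eq:mirabolic-q-rook-character-transport}, and then evaluate the two factors of the formula from \cite[Theorem~6.12]{JL26} using \(\sigma_{-qZ}[q^{-1}-1]=\sigma_{-Z}[1-q]\) and the involution \(p_r\mapsto(-1)^{r-1}p_r\) on the power sums \((-1)^{r-1}(q^r-1)p_r[Z]\), which yields \(s_{\lambda'}[(q-1)Z]\).
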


\begin{proof}
We first compare \(G_\lambda(Z;q)\) with \(G^R_\lambda(-qZ;q^{-1})\).
In \eqref{eq:mirabolic-q-rook-character-transport}, the factor
\((-q)^{|\alpha|-\ell(\alpha)}\) is exactly produced by replacing \(q\) by
\(q^{-1}\) in \(G^R_\lambda(Z;q)\) and then making the ordinary
variable substitution
\[
        z_i\longmapsto -qz_i.
\]
Indeed, the coefficient of \(Z^\alpha\) in \(G^R_\lambda(-qZ;q^{-1})\) carries
the scalar
\[
        (q^{-1}-1)^{\ell(\alpha)}(-q)^{|\alpha|}
        =
        (q-1)^{\ell(\alpha)}(-q)^{|\alpha|-\ell(\alpha)}.
\]
Together with
\[
        \chi_{\lambda,\alpha}(q)
        =
        (-q)^{|\alpha|-\ell(\alpha)}
        \Upsilon^{(n)}_\lambda(T^R_\alpha;q^{-1}),
\]
this gives
\[
        G_\lambda(Z;q)
        =
        G^R_\lambda(-qZ;q^{-1}).
\]
The variables are replaced by the ordinary scalar multiples \(-qz_i\).
On power sums,
\[
        p_r\bigl[(q^{-1}-1)(-qZ)\bigr]
        =
        (-1)^{r-1}(q^r-1)p_r[Z].
\]
The standard involution \(p_r\mapsto(-1)^{r-1}p_r\) sends
\(s_\lambda\) to \(s_{\lambda'}\), so
\[
        s_\lambda\bigl[(q^{-1}-1)(-qZ)\bigr]
        =
        s_{\lambda'}[(q-1)Z],
\]
and
\[
        \sigma_{-qZ}[q^{-1}-1]
        =
        \sigma_{-Z}[1-q].
\]
Substitution in the formula for \(G^R_\lambda(Z;q)\) from
\cite[Theorem~6.12]{JL26} gives
\eqref{eq:mirabolic-character-generating-function}; coefficient extraction
recovers every \(\chi_{\lambda,\alpha}(q)\).
\end{proof}

\section*{Acknowledgments}
We thank Jinkui Wan for sharing results from her earlier manuscript with us. N.J. is partially supported by the Simons Foundation (No. MP-TSM-00002518). 
N.L. is supported by Beijing Natural Science Foundation (No. 1264053).

\end{document}